%% file: Tmanifolds.tex
\documentclass{amsart}

\usepackage[utf8]{inputenc} 
\usepackage[T1]{fontenc}    
\usepackage{hyperref}       
\usepackage{url}            
\usepackage{booktabs}       
\usepackage{amsfonts}       
\usepackage{amsmath}
\usepackage{amsthm}
\usepackage{nicefrac}       
\usepackage{microtype}      
\usepackage{graphicx}
\usepackage{natbib}
\usepackage{doi}
\usepackage{xcolor}
\usepackage{amssymb}

\usepackage{leftindex}

\usepackage{lmodern}

\usepackage{mathrsfs}
\usepackage{pgfplots}

\usepackage{caption}
\usepackage{subcaption}

\pgfplotsset{compat=1.18}

\theoremstyle{plain}
\newtheorem{theo}{Theorem }[section]
\newtheorem{lemma}[theo]{Lemma }
\newtheorem{prop}[theo]{Proposition }
\newtheorem{coro}[theo]{Corollary }

\theoremstyle{definition}
\newtheorem{defi}[theo]{Definition }
\newtheorem{Rem}[theo]{Remark }
\newtheorem{Ex}[theo]{Example }

\newcommand{\Hom}{\operatorname{Hom}}
\newcommand{\Vol}{\operatorname{Vol}}
\newcommand{\Bary}{\operatorname{Bar}}
\newcommand{\Conv}{\operatorname{Conv}}
\newcommand{\spn}{\operatorname{span}}
\newcommand{\Cone}{\operatorname{Cone}}

\title{On the topology of T-manifolds of higher codimension}
\author{Enzo Pasquereau}
\date{\today}
\subjclass{Primary 14P25, 52B70}

\begin{document}

\begin{abstract}
	This paper undertakes the study of the topology of T-manifolds of arbitrary codimension obtained by combinatorial patchworking with real phase structure as described in \cite{brugalle2024combinatorial}.
    We prove new bounds on the number of connected components of T-curves and T-surfaces.
    For sufficiently high codimension, this improves the results from \cite{brugalle2024combinatorial}.
    
    In addition, we present a new description of patchworking à la Viro for T-manifold of codimension 2.
    We use this method to construct a family of maximal real algebraic curves in $\mathbb RP^3$.
\end{abstract}

\maketitle
\tableofcontents

\section{Introduction}

Viro's patchworking is a powerful method for constructing real algebraic varieties with prescribed topology\cite{Viro84, viro2006patchworkingrealalgebraicvarieties, risler1992construction}.
In its original form, one starts with a collection $(f_i)_i$ of non-degenerate polynomials whose Newton polytope $\Delta_i$ form a convex (also known as regular or coherent) subdivision $\Gamma$ of a polytope $\Delta$.
The polynomials $f_i$ are chosen so that the coefficients of common monomials coincide.
From this data, Viro's construction produces a real algebraic hypersurface with Newton polytope $\Delta$, whose topology is described as a gluing of the real hypersurfaces defined by the polynomials $f_i$.

This procedure has proved to be extremely effective in the study of topology of real algebraic varieties.
For instance, Viro used it to describe all isotopy types of smooth planar curves of degree 7 \cite{Viro84}.
It has also inspired numerous developments and generalizations.

One of these is combinatorial patchworking.
It corresponds to the case where the polytopes $\Delta_i$ are simplices (i.e. $\Gamma$ is a triangulation) and where the monomials of each polynomial $f_i$ correspond exactly to the vertices of $\Delta_i$ \cite{contreRagsdale,Logpaper}.
As the name suggests, this method reduces to a combinatorial description, given by a sign distribution on the vertices of the triangulation.
The piecewise linear manifold obtained by this procedure is called a T-hypersurface.
Here, the assumption of convexity of the triangulation is unnecessary for constructing T-hypersurfaces from the combinatorial description.
It remains, however, a key hypothesis to apply Viro's method and deduce an algebraic hypersurface with corresponding topology.

This method has yielded significant advances (including the disproving of Ragsdale conjecture \cite{Ragsdale} and the construction of maximal hypersurfaces \cite{ItenbergMaxTcurve} \cite{itenberg1997topology} \cite{Alois}).
An extension of patchworking to the case of complete intersections has been developed by Sturmfels (for combinatorial patchworking) \cite{sturmfels1994viro} and then, by Bihan (for general patchworking) \cite{Bihan}.

Recently, Renaudineau and Shaw \cite{RS18} proved a conjecture by Itenberg giving an upper bound on the Betti number $b_p(X) := \dim H_p(X; \mathbb Z_2)$ of a primitive T-hypersurface $X$.
A triangulation is said to be primitive (or unimodular) when every full-dimensional simplex $\sigma$ of the triangulation has a lattice volume $\Vol(\sigma)$ of 1.
A central concept introduced by Renaudineau and Shaw for their proofs is the one of real phase structure, which reformulates sign distribution.

This result has, since, been generalized in two directions.
Firstly, real phase structure have been extended to not only describe patchworks of hypersurfaces but also patchworks of more general varieties \cite{rau2023realphase,rau2022real,brugalle2024combinatorial}.

Another generalization arose from forgetting the convexity of the triangulation.
Indeed, combinatorial patchworking has been reinterpreted in the framework of tropical geometry, where primitive convex triangulations become non-singular tropical hypersurfaces.
This geometric point of view provides new intuition and additional tools notably tropical homology. 
However, going back to triangulation, Brugallé, López de Medrano, and Rau \cite{brugalle2024combinatorial} proved that the inequalities still hold without any convexity assumption.

\begin{theo}[\cite{brugalle2024combinatorial}]
    Let $X$ be a T-manifold of codimension $k$ with a smooth Newton polytope $\Delta$. Then,
    \begin{equation}\label{boundhodge}
        \forall p \geq 0, ~~~   b_p(X) \leq \sum_{q=0}^{n-k} h^{p,q}(Y_\Delta^k)
    \end{equation}
    where $Y^k_{\Delta}$ denotes a smooth complete intersection of $k$ hypersurfaces with Newton polytope $\Delta$ in the toric variety $\mathbb CX_{\Delta}$ associated to $\Delta$ and $h^{p,q}(Y^{k}_{\Delta})$ is it $(p,q)$-Hodge number.
\end{theo}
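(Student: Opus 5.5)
I would follow the Renaudineau--Shaw strategy, adapted to higher codimension as in \cite{brugalle2024combinatorial}. The T-manifold $X$ sits inside the real part of the toric variety $\mathbb RX_\Delta$, built from the tropical complete intersection $V$ (the dual complex of the primitive triangulation, intersected $k$ times) together with a real phase structure $\mathcal E$. The first step is to set up the real-phase cosheaf/sign cosheaf $\mathcal S_{\mathcal E}$ on $V$, with coefficients in $\mathbb Z_2$, whose cellular homology computes $H_*(X;\mathbb Z_2)$. This should follow from the cellular decomposition of $X$ into the pieces lying over the cells of $V$, each of which is a disjoint union of affine pieces indexed by the real phase.

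Next, I would filter $\mathcal S_{\mathcal E}$ by the tropical $\mathbb Z_2$-cosheaves $\mathcal F_p$. In Renaudineau--Shaw these are defined locally on each cell $\sigma$ by $\mathcal F_p(\sigma)=\sum_{\tau\supset\sigma}\bigwedge^p T_{\mathbb Z_2}(\tau)$. The goal is a filtration
\[
0=\mathcal K_{n-k+1}\subset \mathcal K_{n-k}\subset\cdots\subset\mathcal K_0=\mathcal S_{\mathcal E}
\]
with graded pieces $\mathcal K_q/\mathcal K_{q+1}\cong\mathcal F_q$. Locally, $\mathcal S_{\mathcal E}(\sigma)$ is the group ring of the set of real phases at $\sigma$. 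I would identify it with $\mathbb Z_2[T_{\mathbb Z_2}]$ restricted to an affine subspace, filtered by powers of the augmentation ideal; the successive quotients are the exterior powers. For a primitive (matroidal) local model of codimension $k$, the real phase structure guarantees that the phases over a cell form a coset-like structure. This is what makes the graded pieces exactly $\mathcal F_q$. The resulting spectral sequence gives
\[
b_p(X)=\dim H_p(V;\mathcal S_{\mathcal E})\le\sum_{q}\dim H_p(V;\mathcal F_q).
\]
(Indices may need swapping, $b_p$ against $H_q(V;\mathcal F_p)$, which is harmless by symmetry after the next step.)

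The final step is to identify $\dim_{\mathbb Z_2} H_q(V;\mathcal F_p)$ with $h^{p,q}(Y^k_\Delta)$, using a tropical analogue of Itenberg--Katzarkov--Mikhalkin--Zharkov with $\mathbb Z_2$ coefficients. Over $\mathbb Z$, tropical homology of a smooth, compactified, primitive tropical complete intersection is torsion-free and has ranks equal to the Hodge numbers. So I would prove that the integral tropical homology groups have no torsion, via a Lefschetz-type or shellability argument on the compactification in $\mathbb TX_\Delta$. This must be done without convexity of the triangulation, so I cannot use degeneration arguments. Instead I would compare with a convex triangulation of the same polytope. Tropical homology ranks depend only on $\Delta$ through a local-to-global Euler characteristic computation, and vanishing outside a range of degrees comes from a tropical Lefschetz hyperplane theorem. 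Combined with Poincaré duality, these determine all the dimensions.

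I expect the main obstacle to be this last step: establishing the $\mathbb Z_2$ tropical homology computation, including torsion-freeness, for a non-convex primitive complete intersection. The second hardest point is checking that the real phase structure gives the exact local filtration in codimension $k>1$, where the local models are products and intersections of matroidal fans rather than hyperplane fans.
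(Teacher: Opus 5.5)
The paper does not prove this statement. It is quoted from \cite{brugalle2024combinatorial}, so there is no internal argument to compare against. Your architecture is the Renaudineau--Shaw strategy: a sign cosheaf, its augmentation filtration with graded pieces $\mathcal F_p$, the spectral-sequence inequality, and the identification of $\mathbb Z_2$-tropical homology with Hodge numbers. That is the right frame. However, the proposal has a genuine gap exactly where the statement goes beyond Renaudineau--Shaw, namely no convexity and $k\ge 2$.

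\textbf{The global step is missing, and your suggested routes do not apply.} For a non-convex $\Gamma$ there is no tropical variety, only the abstract complex dual to the simplices of dimension $\ge k$. The results you lean on are proved in the convex setting:
\begin{itemize}
\item Itenberg--Katzarkov--Mikhalkin--Zharkov needs a degenerating family.
\item The tropical Lefschetz theorem of Arnal--Renaudineau--Shaw also concerns hypersurfaces of toric varieties. For $k\ge 2$ your $V$ is the $(n-k)$-skeleton of a hypersurface, not a hypersurface in a toric variety.
\end{itemize}
Your two substitutes fail as stated. Comparing ``with a convex triangulation of the same polytope'' presupposes that $\Delta$ has a regular unimodular triangulation. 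That is an open problem for smooth polytopes, and the hypothesis only gives you the possibly non-regular $\Gamma$. Shellability is guaranteed only for regular triangulations.

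What must be supplied is an argument intrinsic to $\Gamma$:
\begin{itemize}
\item[(i)] $\chi(V;\mathcal F_p)$ is a linear combination of the face numbers of $\Gamma$ and of its restrictions to the faces of $\Delta$, with the local dimensions of $\mathcal F_p$ as coefficients. For a unimodular triangulation these numbers are fixed by Ehrhart data. One must check directly that this equals the Danilov--Khovanskii value $\sum_q(-1)^q h^{p,q}(Y^k_\Delta)$.
\item[(ii)] A combinatorial Lefschetz-type vanishing and Poincar\'e duality on the abstract complex are needed to pin down $H_q(V;\mathcal F_p)$ away from $p+q=n-k$.
\end{itemize}
Integral torsion-freeness is more than you need. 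The local $\mathcal F_p$ have mod-2 dimension equal to the integral rank, so (i) and (ii) with $\mathbb Z_2$-coefficients suffice. As written, you list these ingredients as expectations, so the argument stops short of the theorem.

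\textbf{The local step is fixable but misdescribed.} Over the cell dual to an $m$-simplex $\tau$ with $m>k$, the phases are $\mathcal E(\tau)=\bigcup_{\sigma\in F_k(\tau)}\mathcal E(\sigma)$. This is not a coset: for $\mathcal E_1$ in Example~\ref{ex:E_1} it is three of the four quadrants. So the graded pieces do not come from any coset structure. The correct argument runs as follows. Each $\mathbb Z_2\langle\mathcal E(\sigma)\rangle$ is a translate of $\mathbb Z_2[T_2^\perp(\sigma)]$, whose associated graded is $\bigwedge^\bullet T_2^\perp(\sigma)$. Hence $\mathrm{gr}_p\,\mathbb Z_2\langle\mathcal E(\tau)\rangle\supseteq\mathcal F_p(\tau)$. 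Equality in every degree then follows from the count $|\mathcal E(\tau)|=\sum_p\dim\mathcal F_p(\tau)=2^{n-m}\sum_{i=k}^{m}\binom{m}{i}$. This is Proposition~\ref{nborthant} applied to the restriction of $\mathcal E$ to $\tau$, and it is where the parity condition actually enters.
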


We refer to Section \ref{sec:construction} for precise definitions of T-manifolds, real phase structures and smooth polytopes.

In the setting of Renaudineau and Shaw, a T-hypersurface is homeomorphic to the real part $\mathbb RX$ of a smooth algebraic hypersurface $\mathbb CX$ with Newton polytope $\Delta$.
In this case, the sum of all the Betti numbers satisfies the Smith-Thom inequality:
\[ \sum_{p \geq 0} b_p(\mathbb RX) \leq \sum_{p \geq 0} b_p(\mathbb CX)\]
A real algebraic variety attaining this bound is said to be maximal.
Without convexity and in higher codimension, no homeomorphism is known in general between T-manifold and any algebraic variety.
Still, by analogy with real algebraic geometry, we adopt the following terminology.
\begin{defi}{\ }

A T-manifold for which the inequalities (\ref{boundhodge}) are equalities for each $p$ is called \textbf{maximal}.

Let $\Delta$ be a polytope and $k$ an integer. For any $d \geq 1$, let $X_d$ be a T-manifold of codimension $k$  with  Newton polytope the dilation $d\Delta$.
    We say the family $(X_d)_{d \geq 1}$ is \textbf{asymptotically maximal} on $\Delta$ if
\[\forall p \geq 0,  ~~~ b_p(X_{d}) \sim_{d \to \infty} \sum_q h^{p,q}(Y^k_{d \Delta}).\]
    
\end{defi}

T-hypersurfaces have been studied for almost thirty years, and many examples of maximal T-hypersurfaces (\cite{ItenbergMaxTcurve, itenberg1997topology, Alois})
and asymptotically maximal families of T-hypersurfaces (\cite{AsympmaxViro, BertrandThese}) have been constructed. 
In contrast, since their definition is more recent, not much is known for T-manifolds of higher codimension.

The present paper explores this new setting.
We are interested in "extremal" T-manifolds with large Betti numbers, and in particular, on the number of connected components.

We establish two new upper bounds on the number of connected components for T-curves and T-surfaces.
\begin{theo}Let $\Delta$ be a smooth lattice polytope of dimension $n$.
    \begin{itemize}
        \item A T-curve $X$ with Newton polytope $\Delta$ satisfies:
            \[ b_0(X) \leq \frac{n+1}{3}\Vol(\Delta) \]
        \item A T-surface $X$ with Newton polytope $\Delta$ satisfies:
            \[ b_0(X) \leq \frac{7n^2+5n+12}{30}\Vol(\Delta)\]
    \end{itemize}
\end{theo}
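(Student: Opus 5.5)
We bound $b_0$ by counting connected components locally, using the polyhedral structure of the T-manifold. Recall that a T-manifold $X$ of codimension $k$ with Newton polytope $\Delta$ (of dimension $n$) is built from a primitive triangulation $K$ of $\Delta$ with a real phase structure. Its real part is a union, over the orthants $\varepsilon\in\mathbb Z_2^n$, of pieces lying in the copies $\varepsilon\Delta$. Each piece is a polyhedral complex dual to the $k$-codimensional tropical skeleton. In particular every cell of $X$ corresponds to a pair $(\sigma,\varepsilon)$, where $\sigma$ is a simplex of $K$ of dimension at least $k$, and the top cells of $X$ (of dimension $n-k$) correspond to simplices of dimension $n$ together with a choice of local "phase".

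The plan is to assign to each connected component of $X$ a weight, and to distribute this weight among the full-dimensional simplices of $K$ and the $2^n$ orthants. Summing the weights gives a bound of the form $c(n)\cdot\#\{n\text{-simplices}\}=c(n)\Vol(\Delta)$. This uses primitivity of the triangulation, which holds since $\Delta$ is smooth and T-manifolds are defined on primitive triangulations.

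\textbf{T-curves ($k=n-1$).} Each component of $X$ is a closed polygonal curve, i.e.\ a cycle in a graph. The vertices of this graph correspond to pairs ($n$-simplex $\sigma$, symmetric copy), and its edges correspond to $(n-1)$-faces. Inside a single $n$-simplex $\sigma$, the real phase structure of codimension $n-1$ determines the local picture; in each copy of $\sigma$ it is a finite set of points. Each such point is a vertex of degree $2$.

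1. First compute the total number of vertices. Summed over orthants, each $n$-simplex contributes exactly $n+1$ points after quotienting by the relevant symmetry. This matches the degree of $Y^{n-1}_\Delta$, i.e.\ $\Vol(\Delta)$ times the number of points of a generic line-type intersection.

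2. Then show that every component has at least $3$ vertices. A component using only one or two simplex-points would have to cross back through the same facet, and the sign conventions of the real phase structure forbid this.

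Combining the two steps gives $b_0\le \frac{n+1}{3}\Vol(\Delta)$. The key lemma, and the step I expect to be delicate, is the minimum-length claim: no closed component consists of fewer than three edges. I would prove it by examining the real phase structure on a pair of adjacent simplices and checking that the two copies of a common facet force different successors.

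\textbf{T-surfaces ($k=n-2$).} Use the same counting with Euler characteristic. Each component $C$ is a closed surface, so $\chi(C)\le 2$, and hence
\[ b_0(X)\le \tfrac12\sum_C \chi(C)+\tfrac12\sum_C(2-\chi(C)). \]
A cleaner route is to bound $b_0$ by a combination of the numbers of vertices $V$, edges $E$ and faces $F$ of the cell decomposition. All of these are explicit functions of the numbers of $n$-, $(n-1)$- and $(n-2)$-dimensional simplices of $K$ with their real phases. Each component has at least as many faces as some minimal polyhedral sphere, and the smallest sphere that can occur has a bounded-below combinatorics, for example at least a tetrahedron-like configuration.

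The steps are as follows.

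1. Express $F$, $E$ and $V$ locally. Per $n$-simplex, each copy contributes a polygon with a number of edges depending on $n$; the counts yield quadratic expressions in $n$.

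2. Use the relations $f$-vector $\Rightarrow$ $b_0\le \alpha V-\beta E+\gamma F$, valid for each component under the minimality constraints.

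3. Eliminate the interior simplex counts via the Dehn–Sommerville type relations for the triangulation of $\Delta$ and the relation $\Vol(\Delta)=\#\{n\text{-simplices}\}$, leaving $\frac{7n^2+5n+12}{30}\Vol(\Delta)$.

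The main obstacle is identifying the right minimal configuration per component and optimizing the linear combination. The denominator $30$ suggests mixing a bound from $\chi$ with a bound from the minimal number of polygons. I would first try weights coming from the tetrahedron and the triangle-only cases, and then verify that the resulting inequality holds cell by cell.
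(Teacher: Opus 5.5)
Your T-curve argument has the same structure as the paper's: there are $(n+1)\Vol(\Delta)$ one-cells in total, and each component uses at least three of them. However, neither step is justified by the right reason.

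The count has nothing to do with a degree of $Y^{n-1}_\Delta$. It is the local fact $|\mathcal E(\tau)|=n+1$ for an $(n-1)$-real phase structure on a unimodular $n$-simplex $\tau$. Note also that in each copy $s(\tau)$ with $s\in\mathcal E(\tau)$ the curve is one arc, not a finite set of points. The fact follows from the parity condition together with the observation that the only linear relation among the $e_F$, for $F$ a facet of $\tau$, is $\sum_F e_F=0$. This forces the $n+1$ two-point sets $\mathcal E(F)$ to form a single cycle through $n+1$ orthants (Lemma \ref{necklace}).

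The minimum-length claim involves no sign conventions either. Each copy of an $n$-simplex carries at most one $1$-cell, and its endpoints lie on two distinct facets of that copy. Two $n$-simplices of $\widetilde\Gamma$ share at most one facet (for $n\ge 2$). So a component cannot close up with only one or two $1$-cells.

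The genuine gap is the T-surface part. It is a search strategy rather than a proof, and the tools you propose cannot deliver the bound. Your displayed inequality is an identity, and $\chi(C)\le 2$ only bounds $b_0$ from below. Passing through $V$ and $E$ brings in the numbers of $(n-1)$- and $(n-2)$-simplices of $\Gamma$. These are not determined by $\Vol(\Delta)$ (they depend on the boundary and on the lattice points), so no Dehn--Sommerville manipulation will leave a pure multiple of $\Vol(\Delta)$.

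The paper counts $2$-cells only, using two local results you do not have:
(i) each $n$-simplex carries exactly $\binom{n}{2}+n+1$ two-cells (Proposition \ref{nborthant}, proved by induction via $|\mathcal E(\Delta_n)|=|\mathcal E_{|F}(F)|+|\mathcal E^F(F)|$, which needs Lemma \ref{pathsimplex} and Corollary \ref{adjacency});
(ii) at most $\frac{2}{n-1}\binom{n+1}{n-2}=\frac{n(n+1)}{3}$ of them are triangles (Proposition \ref{nbsimplicial}, via Lemma \ref{l:gamma} and double counting over facets).

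With these in hand the argument is short:
\begin{itemize}
\item Each $2$-cell has at least three edges (Lemma \ref{minorthant}) and shares at most one edge with any other $2$-cell.
\item Hence every component has at least $4$ faces, and a component with exactly $4$ faces consists of triangles.
\item With $p_4$ the number of such components, $4p_4\le t$ and $f\ge 5b_0-p_4$.
\item Therefore $b_0\le (4f+t)/20=\frac{7n^2+7n+12}{60}\Vol(\Delta)$.
\end{itemize}
The proof of Theorem \ref{boundsurface} writes $t$ as $\frac{n(n-1)}{3}\Vol(\Delta)$, which is an arithmetic slip. Either way, the constant $\frac{7n^2+5n+12}{30}$ in the statement has ample slack, so reading the $30$ as a hint about mixing $\chi$ with polygon counts is a red herring. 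Your ``tetrahedron-like minimal configuration'' is the right intuition for the refinement, but without (i) and (ii) there is nothing to optimize.
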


We also compute the following asymptotic of the right-hand side of \eqref{boundhodge}, when $p=0$:

 \begin{lemma}
     The sum $\sum_{q=0}^{n-k}h^{0,q}(Y_{d\Delta}^k)$ is a polynomial in $d$ whose leading term is
     \[\frac{k!}{n!} {n \brace k} \Vol(\Delta)d^n,\]
     where  ${n \brace k}$ denotes the Stirling number of the second kind.
 \end{lemma}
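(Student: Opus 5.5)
We compute the sum through the holomorphic Euler characteristic $\chi(\mathcal O_Y)$, using vanishing theorems to show that only the two extreme terms $q=0$ and $q=n-k$ survive. Set $m=n-k$ and $Y=Y^k_{d\Delta}\subset X:=\mathbb CX_\Delta$. Since $\Delta$ is smooth, $X$ is a smooth projective toric variety, and $\Delta$ defines an ample line bundle $L$ on it. The variety $Y$ is a smooth complete intersection of $k$ general sections of $L^{\otimes d}$.

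\textbf{Step 1: vanishing.} We first show that $h^{0,q}(Y)=h^q(Y,\mathcal O_Y)=0$ for $0<q<m$, and that $h^{0,0}(Y)=1$ when $m>0$. We resolve $\mathcal O_Y$ by the Koszul complex
\[0\to \mathcal O_X(-kdL)^{\binom kk}\to\cdots\to \mathcal O_X(-dL)^{k}\to\mathcal O_X\to\mathcal O_Y\to 0.\]
By Bott--Danilov--Steenbrink vanishing (or Kodaira vanishing together with Serre duality) on the smooth toric variety $X$:
\begin{itemize}
\item $H^i(X,\mathcal O_X)=0$ for $i>0$;
\item $H^i(X,\mathcal O_X(-jdL))=0$ for $i<n$ and $j\ge1$.
\end{itemize}
Chasing the complex then gives the claims. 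This is exactly the Lefschetz-type statement for ample complete intersections. I expect this step to be the main technical point, though it is standard.

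Consequently, for $m>0$,
\[\sum_q h^{0,q}(Y)=1+h^{0,m}(Y)=1+(-1)^m\bigl(\chi(\mathcal O_Y)-1\bigr).\]
For $m=0$ the sum is simply $h^{0,0}=\chi(\mathcal O_Y)$. In both cases the sum equals $(-1)^m\chi(\mathcal O_Y)$ plus a term bounded independently of $d$.

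\textbf{Step 2: Euler characteristic via Ehrhart.} The Koszul resolution and additivity of $\chi$ give
\[\chi(\mathcal O_Y)=\sum_{j=0}^k(-1)^j\binom kj\,\chi\bigl(\mathcal O_X(-jdL)\bigr).\]
On a smooth toric variety, $\chi(\mathcal O_X(tL))=E_\Delta(t)$ for every integer $t$, where $E_\Delta$ is the Ehrhart polynomial of $\Delta$. (For negative $t$ this is Ehrhart reciprocity combined with Serre duality.) Hence
\[\chi(\mathcal O_Y)=\sum_j(-1)^j\binom kj E_\Delta(-jd),\]
which is a polynomial in $d$.

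\textbf{Step 3: leading term.} The leading coefficient of $E_\Delta$ is the Euclidean volume, which equals $\Vol(\Delta)/n!$ in the paper's normalization (the standard simplex has volume $1$). So the $d^n$ coefficient of $\chi(\mathcal O_Y)$ is
\[\frac{\Vol(\Delta)}{n!}(-1)^n\sum_{j}(-1)^j\binom kj j^n=\frac{\Vol(\Delta)}{n!}(-1)^n(-1)^k k!{n\brace k}.\]
This uses the classical identity
\[\sum_j(-1)^{k-j}\binom kj j^n=k!{n\brace k}.\]
Multiplying by $(-1)^m=(-1)^{n-k}$ gives the total sign $(-1)^{2n}=+1$. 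The leading term of $\sum_q h^{0,q}(Y^k_{d\Delta})$ is therefore
\[\frac{k!}{n!}{n\brace k}\Vol(\Delta)d^n.\]
For $k\le n$ this coefficient is nonzero, so it is indeed the leading term.
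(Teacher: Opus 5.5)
Your proof is correct, and its final step is the same as the paper's. Both write the sum as $1+h^{0,n-k}$ with $h^{0,n-k}(Y^k_{d\Delta})=\sum_{l=1}^k(-1)^{n+k-l}\binom kl E_\Delta(-ld)$. Both then read off the $d^n$-coefficient from $\sum_l(-1)^{k-l}\binom kl l^n=k!{n \brace k}$.

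The difference lies upstream. The paper takes Khovanskii's table of the $h^{0,q}$ as a black box: vanishing for $0<q<n-k$, and an alternating sum of interior lattice point counts for $q=n-k$. It then converts these counts into values $E_\Delta(-l)$ by Ehrhart reciprocity. You rederive exactly this input for $p=0$, using the Koszul resolution, vanishing of $H^{>0}(\mathcal O_X)$, Kodaira vanishing with Serre duality, and the identity $\chi(\mathcal O_X(tL))=E_\Delta(t)$. For $t<0$ that identity also follows simply from both sides being polynomials in $t$ that agree for $t\ge 0$.

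What this buys you is a self-contained lemma, at the cost of a little sheaf cohomology. Passing through $(-1)^{n-k}\chi(\mathcal O_Y)$ also makes the sign bookkeeping transparent. For comparison, the paper's intermediate display writes $(-1)^{k-l}$ where $(-1)^{n+k-l}$ is meant, and only restores the missing factor in the next line.

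One small correction to your last sentence: the coefficient $\frac{k!}{n!}{n \brace k}\Vol(\Delta)$ is nonzero only for $1\le k\le n$. Since ${n \brace 0}=0$ for $n\ge 1$, in the case $k=0$ the sum is identically $1$.
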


Comparing these two results yields the following result on the asymptotic maximality of T-manifolds:
\begin{coro}
    Let $\Delta$ be a smooth lattice polytope of dimension $n$.
    \begin{itemize}
        \item If $n \geq 6$, there is no asymptotically maximal family of T-curves on $\Delta$.
        \item If $n \geq 65$, there is no asymptotically maximal family of T-surfaces on $\Delta$.
    \end{itemize}
\end{coro}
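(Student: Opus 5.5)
The corollary follows by comparing the upper bounds of the preceding Theorem, applied to the dilations $d\Delta$, with the asymptotic growth computed in the Lemma. Suppose $(X_d)_{d\ge1}$ is a family of T-curves (codimension $k=n-1$) or T-surfaces (codimension $k=n-2$) with Newton polytopes $d\Delta$. Since $d\Delta$ is again smooth of dimension $n$ and $\Vol(d\Delta)=d^n\Vol(\Delta)$, the Theorem gives
\[ b_0(X_d)\le C_n\,\Vol(\Delta)\,d^n, \]
with $C_n=\frac{n+1}{3}$ for curves and $C_n=\frac{7n^2+5n+12}{30}$ for surfaces. Asymptotic maximality at $p=0$ would force
\[ b_0(X_d)\sim \sum_q h^{0,q}(Y^k_{d\Delta}) \sim \frac{k!}{n!}{n\brace k}\Vol(\Delta)d^n \]
by the Lemma. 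So it suffices to show that $C_n<\frac{k!}{n!}{n\brace k}$ in the stated ranges, since then the ratio is eventually bounded by a constant smaller than $1$.

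Next I compute the Stirling numbers. We have ${n\brace n-1}=\binom n2$, hence
\[ \frac{(n-1)!}{n!}\binom n2=\frac{n-1}{2}. \]
Also ${n\brace n-2}=\binom n3+3\binom n4$, which counts partitions with one block of size $3$ or two blocks of size $2$. Then
\[ \frac{(n-2)!}{n!}\left(\binom n3+3\binom n4\right)=\frac{n-2}{6}+\frac{(n-2)(n-3)}{8}=\frac{(n-2)(3n-5)}{24}. \]

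It remains to compare. For curves, $\frac{n+1}{3}<\frac{n-1}{2}$ is equivalent to $2n+2<3n-3$, that is $n>5$, so it holds for $n\ge6$. For surfaces, the required inequality is
\[ \frac{7n^2+5n+12}{30}<\frac{3n^2-11n+10}{24}. \]
Clearing denominators gives $28n^2+20n+48<15n^2-55n+50$, i.e. $13n^2+75n<2$, which is never true. This signals that I have the constant wrong or misremember the Lemma's normalisation; the comparison is precisely the step to double-check. The plan is to verify which quantity is being compared, presumably the exact normalisation of $\Vol$ (lattice versus Euclidean volume, a factor of $n!$) in the Theorem versus the Lemma. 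Then I would redo the quadratic comparison and check that its largest real root lies below $65$, so that the inequality holds for $n\ge65$.

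In any case, once the strict inequality $C_n<\frac{k!}{n!}{n\brace k}$ holds, the ratio $b_0(X_d)/\sum_q h^{0,q}(Y^k_{d\Delta})$ has $\limsup$ strictly less than $1$. This contradicts the asymptotic equivalence, so no asymptotically maximal family exists.
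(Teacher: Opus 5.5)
Your curve case is correct and is the paper's argument. The paper reads the leading coefficient $\frac{n-1}{2}$ off $b_1(G_\Gamma)+1$ (Lemma \ref{bettigraph}) rather than off the Stirling-number Lemma, but the comparison with $\frac{n+1}{3}$ and the threshold $n>5$ are the same.

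\textbf{The surface case is not proved.} You arrive at an inequality that fails for every $n$ and defer the repair. Your proposed diagnosis is also wrong: there is no lattice-versus-Euclidean mismatch, since both the bound and the Lemma are stated in the lattice volume $\Vol(\Delta)$ (the Ehrhart leading coefficient is $\Vol(\Delta)/n!$). The actual discrepancy is a factor $2$ in the constant. The surface bound as proved in Section 4 (Theorem \ref{boundsurface}) is $b_0\le\frac{4f+t}{20}=\frac{7n^2+5n+12}{60}\Vol(\Delta)$, where $f$ and $t$ count the $2$-cells and the triangular $2$-cells. The denominator $30$ in the introduction is a misprint. With $60$, your inequality becomes $2(7n^2+5n+12)<5(3n^2-11n+10)$, i.e.\ $n^2-65n+26>0$. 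Its larger root is $\frac{65+\sqrt{4121}}{2}\approx 64.6$, and this is exactly the paper's proof of the corollary.

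Be careful if you re-derive the constant from the cell counts rather than quote it, as your plan suggests. Theorem \ref{nbsimplex} with $k=n-2$ gives
$t=\frac{2}{n-1}\binom{n+1}{3}\Vol(\Delta)=\frac{n(n+1)}{3}\Vol(\Delta)$,
not the $\frac{n(n-1)}{3}\Vol(\Delta)$ written in the proof of Theorem \ref{boundsurface}. Already for $n=3$, Example \ref{ex2} has four simplicial orthants in a single simplex, more than $\frac{n(n-1)}{3}=2$. Together with $f=\frac{n^2+n+2}{2}\Vol(\Delta)$, this yields $\frac{4f+t}{20}=\frac{7n^2+7n+12}{60}\Vol(\Delta)$. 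The comparison then becomes $n^2-69n+26>0$, which holds only for $n\ge 69$; at $n=65$ the left side is $-234$.

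So your intended check, that the largest root lies below $65$, would fail against a correct count. The argument you share with the paper establishes the surface statement for $n\ge 69$, and the cases $65\le n\le 68$ are not covered by it.
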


For T-curves, an additional obstruction to maximality arises from the planarity of the dual graph of the triangulation. This yields even stronger restrictions:

\begin{theo}
    Let $\Delta$ be a smooth lattice polytope of dimension $n$.
    If $n \geq 4$ and $d \geq n+1$, there is no maximal T-curve with Newton polytope $d\Delta$.
\end{theo}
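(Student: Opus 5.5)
The plan is to show that maximality forces $b_0(X)$ to exceed an upper bound that the combinatorics of the triangulation imposes, once the lower-order terms in $d$ are taken into account. Let $X$ be a T-curve, i.e.\ of codimension $k=n-1$, with Newton polytope $d\Delta$. Maximality at $p=0$ means
\[b_0(X)=h^{0,0}(Y^{n-1}_{d\Delta})+h^{0,1}(Y^{n-1}_{d\Delta})=1+g_d,\]
where $g_d$ is the genus of a smooth complete intersection curve of $n-1$ hypersurfaces with Newton polytope $d\Delta$. The first step is to compute $g_d$ exactly, not just its leading term, via adjunction or the Khovanskii formulas for complete intersections. This should give an explicit polynomial in $d$ of the form
\[g_d=\tfrac{n-1}{2}\Vol(\Delta)d^n-c_{n-1}(\Delta)d^{n-1}+\dots,\]
whose leading term $\frac{(n-1)!}{n!}{n\brace n-1}=\frac{n-1}{2}$ agrees with the Lemma above. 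The lower-order coefficients are mixed volumes, or lattice volumes of faces of $\Delta$, and smoothness of $\Delta$ makes them controllable.

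\textbf{Case $n\geq 6$.} Here the leading coefficient $\frac{n-1}{2}$ strictly exceeds the coefficient $\frac{n+1}{3}$ in the bound $b_0\leq\frac{n+1}{3}\Vol(d\Delta)$ of the first Theorem. So it only remains to check that the inequality $1+g_d>\frac{n+1}{3}\Vol(\Delta)d^n$ already holds for all $d\geq n+1$, not merely asymptotically. I would do this with a crude lower bound on $g_d$, for instance by bounding the subleading terms by multiples of $\Vol(\Delta)d^{n-1}$ using smoothness of $\Delta$ (at least $n$ facets at each vertex, lattice-volume comparisons).

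\textbf{Cases $n=4,5$.} This is the heart of the proof, where the planarity obstruction enters. I would revisit the proof of the $\frac{n+1}{3}$ bound, in which connected components are charged to certain cells of the triangulation. In each orthant, the pieces of the curve give cycles in a graph $G$ built from the dual graph of the triangulation restricted to two-dimensional data, and this graph is planar. For a planar graph, Euler's formula $E\leq 3V-6$ (and $F=E-V+2$) bounds the number of independent cycles, hence of components, by a quantity that loses a term proportional to the boundary of $d\Delta$, i.e.\ of order $d^{n-1}$, compared with the naive count. The goal is an improved bound of the form
\[b_0(X)\leq \tfrac{n+1}{3}\Vol(\Delta)d^n-\beta\, d^{n-1}\operatorname{Vol}_{n-1}(\partial\Delta)+O(d^{n-2}).\]
Comparing this with $1+g_d$ should yield a contradiction exactly when the leading gap, $\frac{n-1}{2}-\frac{n+1}{3}=\frac{n-5}{6}$, which is $\le 0$ in these cases, is compensated by the boundary terms, and this should happen for $d\geq n+1$.

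The main obstacle is that last step: getting the planarity-improved bound with explicit enough lower-order terms to beat the exact genus polynomial for all $d\geq n+1$, especially when $n=4$, where the leading coefficient $\frac32$ is smaller than $\frac53$. I would first try the case $\Delta$ equal to the standard simplex, where every quantity is an explicit binomial coefficient, to find the correct constant $\beta$. I would then extend to general smooth $\Delta$ by decomposing along faces and using that smoothness gives unimodular vertex cones.
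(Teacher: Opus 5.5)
Your strategy of beating $1+g_d$ with a refined cell-counting bound cannot prove this statement. Write $\alpha=\Vol(\Delta)$ and $\beta=\Vol(\partial\Delta)$. By Lemma~\ref{bettigraph}, maximality means $b_0(X)=b_1(G_\Gamma)+1=\frac{(n-1)\alpha d^n-\beta d^{n-1}}{2}+2$.

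For $n=4$ this equals $\frac32\alpha d^4-O(d^3)$. Any refinement of the count that keeps the leading term $\frac{n+1}{3}\alpha d^n=\frac53\alpha d^4$ is larger than this for all large $d$. A negative leading gap can never be compensated by $O(d^{n-1})$ boundary terms, yet the statement covers every $d\ge n+1$.

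For $n=5$ the leading terms coincide, so you would need a correction of at least $\frac{\beta}{2}d^4$. The ``planar graph built from two-dimensional data'' with $E\le 3V-6$ gives no actual mechanism for this; the only graph in play, $G_\Gamma$, is precisely non-planar here.

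Your case $n\ge 6$ also fails as stated. The inequality $1+g_d>\frac{n+1}{3}\alpha d^n$ amounts to $(n-5)\alpha d>3\beta-12d^{1-n}$. This is false for $\Delta=\Delta_6$, $d=7$, where $1+g_7=2\cdot 7^6+2<\frac73\cdot 7^6$. The cell count only excludes maximality once $d$ is large compared to $\beta/\alpha$, never at the uniform threshold $d\ge n+1$.

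The missing idea is that planarity is an obstruction, not a counting tool, and that $d\ge n+1$ is an interior-point condition.

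\textbf{Maximality forces planarity.} In the proof of Proposition~\ref{boundcc}, you glue a disc along the cycle of $G_\Gamma$ traced by each component of $X$. This yields a closed connected surface $S$ with $\chi(S)=b_0(X)+1-b_1(G_\Gamma)$. Equality $b_0(X)=b_1(G_\Gamma)+1$ therefore forces $S\cong S^2$, so $G_\Gamma$ is planar.

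\textbf{An interior point forbids planarity.} By Proposition~\ref{Kn}, an interior lattice point $p$ of $d\Delta$ produces a subdivision of $K_{n+1}$ in $G_\Gamma$. Take a maximal simplex containing $p$ and its $n$ neighbours across the facets through $p$. Join the neighbours pairwise through the cycles of maximal simplices around the interior $(n-2)$-simplices through $p$. For $n\ge 4$ this graph is non-planar.

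\textbf{The threshold.} The codegree of an $n$-dimensional lattice polytope is at most $n+1$ (Proposition~\ref{codeg}). Hence $d\Delta$ has an interior lattice point for every $d\ge n+1$, and the theorem follows at once with no estimates. Smoothness only enters to identify $b_1(G_\Gamma)+1$ with $\sum_q h^{0,q}(Y^{n-1}_{d\Delta})$.
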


Real phase structures, and thus T-manifolds, are more abstruse to describe than sign distributions and T-hypersurfaces.
Even the existence of real phase structure on a given unimodular triangulation is not immediately clear.

In a second part of this text, we introduce a new method for constructing real phase structures.
Given an orientation of the edges of a triangulation and two real phase structures, we describe a new real phase structure.
The associated T-manifold can be interpreted as a combinatorial equivalent of real stable intersections in tropical geometry (see \cite[Section 4.3]{TropicalBook} for background on stable intersections).

This construction provides:

\begin{theo}
    Let $\Gamma$ be a triangulation of $\Delta$.
    For any $k$ between 0 and $n$, there exists a $k$-real phase structure on $\Gamma$.
\end{theo}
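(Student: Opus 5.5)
We argue by induction on $k$. The base case $k=0$ is trivial: the $0$-real phase structure is the canonical one, whose T-manifold is the whole real toric variety. For $k=1$ we may take a real phase structure coming from any sign distribution on the vertices of $\Gamma$. Indeed, in the sense of \cite{RS18} and \cite{brugalle2024combinatorial}, a sign distribution always defines a $1$-real phase structure, with no convexity needed. So codimension $0$ and $1$ are always available.

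For the inductive step we use the construction announced just above the statement. Given an orientation of the edges of $\Gamma$, a $k_1$-real phase structure $\mathcal E_1$ and a $k_2$-real phase structure $\mathcal E_2$, it produces a $(k_1+k_2)$-real phase structure. It plays the role of a combinatorial stable intersection. We apply it with $\mathcal E_1$ a $(k-1)$-real phase structure, given by induction, and $\mathcal E_2$ a $1$-real phase structure from a sign distribution. This yields a $k$-real phase structure for every $k\le n$.

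The substantive work is checking that the output is genuinely a $k$-real phase structure. That means verifying the defining axioms on each simplex $\sigma$ of $\Gamma$: the assigned affine subspaces of $\mathbb Z_2^n$ (or of $H_1(\sigma)\otimes\mathbb Z_2$) have the right dimension and codimension, and they satisfy the compatibility condition along faces. The orientation is used to decide, on each simplex, which vertex or edge data of $\mathcal E_2$ is "pushed" against $\mathcal E_1$, mimicking a generic translation in the stable intersection. We will check the face-compatibility condition first, because the orientation is chosen globally and restricts coherently to faces.

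The main obstacle is the dimension count: we must show the combined subspace has codimension exactly $k_1+k_2$ rather than dropping rank. This is where the orientation matters, since a transversality-type choice is needed to avoid degenerate overlaps. If that fails for an arbitrary orientation, we would try a specific one, for instance an orientation induced by a generic linear functional, or by a total order on the vertices so that it is acyclic on each simplex. With such an orientation, each simplex has a unique source vertex, which gives a canonical choice making the spans independent. Since the statement only asks for existence, the freedom to pick a convenient orientation and sign distributions suffices.
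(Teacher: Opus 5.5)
Your argument is the paper's proof: take the unique $\mathcal E_0$, a sign-distribution structure $\mathcal E_\mu$, and an acyclic orientation $\mathcal O$ from a total order on the vertices, then set $\mathcal E_k=\mathcal E_{k-1}\cap_{\mathcal O}\mathcal E_\mu$; the verification you defer is exactly Lemma \ref{intersectionrps}. One correction to your commentary: the dimension count is not the obstacle and does not depend on $\mathcal O$, because unimodularity gives $T(\sigma)=T(\sigma_1)\oplus T(\sigma_2)$ for $\sigma_1=[v_0,\dots,v_{k_1}]$, $\sigma_2=[v_{k_1},\dots,v_{k_1+k_2}]$, so $\mathcal E_1(\sigma_1)\cap\mathcal E_2(\sigma_2)$ is automatically nonempty and directed by $T_2^\perp(\sigma)$; the orientation is needed for the parity condition instead, since consistent vertex orderings on the faces of a $(k_1+k_2+1)$-simplex reduce that count to the parity conditions of $\mathcal E_1$ and $\mathcal E_2$.
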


This combinatorial real stable intersection is particularly well-suited for codimension 2, since the description of real phase structures for T-hypersurfaces reduces to sign distributions.
This yields a combinatorial description of a subclass of T-manifolds of codimension 2 reminiscent of classical combinatorial patchworking.

Combined with Sturmfels' method, we show a realizability criterion ensuring that the resulting T-manifolds have the topology of smooth complete intersections. 
\begin{theo}
    Let $\Delta$ be a smooth lattice polytope, $\Gamma$ be a convex triangulation of $\Delta$, $\mathcal O$ an acyclic orientation of $\Gamma$ and two sign distributions $\mu_1, \mu_2$.
    Consider the T-hypersurface $X_{\mathcal E_{\mu_1}}$ and the T-manifold $X_{\mathcal E_{\mu_1} \cap_{\mathcal O} \mathcal E_{\mu_2}}$ of codimension 2.
    There exists two transverse real algebraic hypersurfaces $H_1$ and $H_2$ with Newton polytope $\Delta$ in $\mathbb RX_\Delta$ such that
    the triple $(\widetilde{\Delta}, X_{\mathcal E_{\mu_1}}, X_{\mathcal E_{\mu_1} \cap_{\mathcal O} \mathcal E_{\mu_2}})$ is homeomorphic to $(\mathbb RX_\Delta, H_1, H_1 \cap H_2)$.
\end{theo}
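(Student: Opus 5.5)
We use Sturmfels' combinatorial patchworking for complete intersections, together with the Viro-type gluing that the orientation $\mathcal O$ encodes. Since $\Gamma$ is convex, fix a convex piecewise-linear function $\nu$ on $\Delta$, integral on vertices, whose domains of linearity are the simplices of $\Gamma$. Set
\[
f_{1,t}(x)=\sum_{v}\mu_1(v)\,t^{\nu(v)}x^v,\qquad f_{2,t}(x)=\sum_{v}\mu_2(v)\,t^{\nu(v)+\epsilon\lambda(v)}x^v .
\]
The perturbation $\lambda$ is an integer-valued function on the vertices, chosen compatibly with $\mathcal O$. Acyclicity of $\mathcal O$ lets us take $\lambda$ to be a linear extension of the partial order it induces on vertices, so $\lambda(a)<\lambda(b)$ whenever $a\to b$.

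For $0<\epsilon\ll 1$, the tropical hypersurfaces of $\nu$ and of $\nu+\epsilon\lambda$ are both dual to $\Gamma$, but they are slightly displaced. Their intersection is then transverse, and it is dual to a subdivision of the mixed subdivision $\Gamma+\Gamma$ (a Cayley-trick triangulation). The orientation $\mathcal O$ prescribes, for each edge, which way the second hypersurface is shifted relative to the first. This is exactly the combinatorial datum entering the definition of $\cap_{\mathcal O}$. Sturmfels' theorem then gives, for small $t>0$, transverse hypersurfaces $H_1=\{f_{1,t}=0\}$ and $H_2=\{f_{2,t}=0\}$ with Newton polytope $\Delta$ such that the pair $(\mathbb RX_\Delta,H_1\cap H_2)$ is homeomorphic to the T-complete intersection built from the mixed cells. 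Viro's theorem applied to $f_{1,t}$ alone makes $H_1$ isotopic to the T-hypersurface $X_{\mathcal E_{\mu_1}}$. Because both statements come from the same family of charts, simultaneously for all orthants and compatibly with the identification $\mathbb RX_\Delta\cong\widetilde\Delta$, we obtain a homeomorphism of the triple rather than only of each member.

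The key steps are as follows.
\begin{enumerate}
\item Verify that the mixed (Cayley) subdivision induced by $(\nu,\nu+\epsilon\lambda)$ is a fine mixed subdivision. Here we need unimodularity, which holds because $\Delta$ is smooth and $\Gamma$ is unimodular, so that each mixed cell has the form $\sigma_1+\sigma_2$ with $\sigma_1,\sigma_2$ faces of a common simplex of $\Gamma$. Transversality of $H_1\cap H_2$ then follows.
\item Identify Sturmfels' local piecewise-linear pieces inside each simplex $\sigma$ of $\widetilde\Gamma$ with the pieces of $X_{\mathcal E_{\mu_1}\cap_{\mathcal O}\mathcal E_{\mu_2}}$. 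Within a simplex the two sign distributions cut out two T-hypersurface pieces, and the orientation decides which cells of $\sigma_1+\sigma_2$ appear. This is a local simplex-by-simplex check against the definition of $\cap_{\mathcal O}$.
\item Glue the local identifications, using that they are compatible on common faces. The compatibility holds because $\lambda$ restricted to a face is determined by $\mathcal O$ restricted to that face.
\end{enumerate}

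The main obstacle is step 2: matching Sturmfels' mixed-cell description to the combinatorial definition of the real stable intersection. We have to show that the real phase structure $\mathcal E_{\mu_1}\cap_{\mathcal O}\mathcal E_{\mu_2}$ coincides, on each simplex, with the patchwork of the mixed cells selected by the shift $\epsilon\lambda$. Two points need care here. First, the selected mixed cells must depend only on $\mathcal O$ and not on the particular linear extension $\lambda$. Second, the resulting piecewise-linear subset must be isotopic to, not merely combinatorially equivalent to, the T-manifold inside the fixed hypersurface $X_{\mathcal E_{\mu_1}}$. For the first point I would reduce to a single simplex, where $\mathcal O$ is a total order on its vertices, and compute the stable intersection directly. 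For the second, I would use the standard barycentric-subdivision straightening to obtain the isotopy.
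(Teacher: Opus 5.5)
Your overall route is the paper's: realize $\mathcal O$ as the coherent fine mixed subdivision $\Gamma_{\mathcal O}$ of $\Delta+\Delta$, apply Sturmfels' theorem, then compare the resulting triple with the T-manifold triple simplex by simplex. Your Step 1 is sound, and it is more explicit than the paper's appeal to the Cayley trick (Proposition \ref{regularmixed}). For small $\epsilon$ the heights $(\nu,\nu+\epsilon\lambda)$ induce a subdivision refining $2\Gamma$, and on each $2\tau$ it is the staircase $[v_0,\dots,v_j]+[v_j,\dots,v_n]$. With your convention ($a\to b\Rightarrow\lambda(a)<\lambda(b)$) the $\mu_1$-summand is the initial segment, exactly as $\cap_{\mathcal O}$ requires. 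Fineness comes from $\lambda$ being injective on the vertices of each simplex, not from smoothness of $\Delta$. Independence from the choice of linear extension is automatic, since any two vertices of a simplex span an edge.

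The gap is everything after Sturmfels' theorem, which is where the paper's proof actually lives (Theorem \ref{posethomeo}).

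First, you cannot build the triple homeomorphism out of Viro's homeomorphism for $H_1$ and Sturmfels' homeomorphism for $(H_1,H_2)$. They are different maps, and ``the same family of charts'' does not produce a single map sending $X_{\mathcal E_{\mu_1}}$ to $H_1$ and $X_{\mathcal E_{\mu_1}\cap_{\mathcal O}\mathcal E_{\mu_2}}$ to $H_1\cap H_2$. An isotopy ``inside $X_{\mathcal E_{\mu_1}}$'' presupposes that identification. Drop Viro: Sturmfels alone gives $(\mathbb RX_\Delta,H_1,H_1\cap H_2)\cong(\widetilde{2\Delta},Y_{\Gamma_{\mathcal O},\mathcal E_{\mu_1}},Y_{\Gamma_{\mathcal O},\mathcal E_{\mu_1}}\cap Y_{\Gamma_{\mathcal O},\mathcal E_{\mu_2}})$. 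What remains is a purely combinatorial homeomorphism between this triple and $(\widetilde\Delta,X_{\mathcal E_{\mu_1}},X_{\mathcal E_{\mu_1}\cap_{\mathcal O}\mathcal E_{\mu_2}})$.

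Second, your plan for that step (check which cells appear, then ``barycentric straightening'') does not supply it. The two triples are built from barycentric subdivisions of \emph{different} complexes: $\widetilde\Gamma$ versus the finer mixed subdivision of each $2\sigma$. So there is no simplicial correspondence to straighten, and agreement of the combinatorial data on faces does not give local homeomorphisms that agree on faces. What you need is a common coarse cell structure, with a proof that every piece is a ball and the incidences match:
\begin{itemize}
\item in each $\sigma$: the two sides of $C_{\mathcal E_{\mu_1}}(\sigma)$ in $\sigma$, the two sides of $C_{\mathcal E_{\mu_1}\cap_{\mathcal O}\mathcal E_{\mu_2}}(\sigma)$ in $C_{\mathcal E_{\mu_1}}(\sigma)$, and $C_{\mathcal E_{\mu_1}\cap_{\mathcal O}\mathcal E_{\mu_2}}(\sigma)$ itself;
\item in each $2\sigma$: the analogous pieces cut out by $Y_1$ and $Y_2$.
\end{itemize}
The paper gets the ball property on the T-side from the weak PL Schoenflies theorem applied to the PL spheres $\partial C_{\mathcal E}(\sigma)$ (Proposition \ref{PLmanifold}), followed by coning from the barycenter. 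On the Sturmfels side it applies Sturmfels' theorem to the single simplex $2\sigma$, which identifies the pieces with $\{\pm l_1\ge0\}$, $\{l_1=0,\ \pm l_2\ge0\}$ and $\{l_1=l_2=0\}$ in the positive orthant. It then concludes because regular cell complexes with isomorphic face posets are homeomorphic.

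An induction over skeleta, extending boundary homeomorphisms across each simplex by coning, would also work. But it needs you to show first that the Sturmfels-side triple in $2\sigma$ is a cone over its boundary triple, and that again requires the single-simplex Sturmfels argument, which your proposal does not give.
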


Using this description we explicitly construct maximal T-curves in $\mathbb RP^3$.
\begin{theo}
    For each integer $d\geq1$, there exists a pair $(\Sigma_d, C_d)$ where $\Sigma_d$ is a maximal T-surface and $C_d$ a maximal T-curve both with Newton polytope $d\Delta_3$ such that $C_d \subset \Sigma_d$.
    The triple $(\widetilde{d\Delta_3}, \Sigma_d, C_d)$ is homeomorphic to $(\mathbb RP^3, H_1, H_1 \cap H_2)$ where $H_1$ and $H_2$ are transverse maximal real algebraic hypersurfaces of degree $d$.
\end{theo}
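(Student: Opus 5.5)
The approach is to build the pair by combinatorial real stable intersection (the theorem on $\mathcal E_{\mu_1} \cap_{\mathcal O} \mathcal E_{\mu_2}$) and then apply the realizability criterion. I would take $\Gamma_d$ to be the standard convex primitive triangulation of $d\Delta_3$ used by Itenberg--Viro for maximal T-surfaces: cut by the hyperplanes $x_i = c$ and $x_i - x_j = c$ and restrict to the simplex. For the first sign distribution I take Itenberg's (Viro's) harnack distribution $\mu_1(a) = (-1)^{\text{(parity function of } a)}$, which gives a maximal T-surface $\Sigma_d = X_{\mathcal E_{\mu_1}}$ in $\mathbb RP^3$ for every $d$. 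For the orientation $\mathcal O$ I choose an acyclic orientation given by a generic linear functional, e.g.\ $\ell(x)=x_1+\varepsilon x_2+\varepsilon^2 x_3$, orienting each edge toward increasing $\ell$. I then choose $\mu_2$ as another harnack-type distribution, either the same parity rule composed with a coordinate permutation or twisted by a sign character $(-1)^{x_i}$. The goal is that the resulting curve $C_d = X_{\mathcal E_{\mu_1} \cap_{\mathcal O} \mathcal E_{\mu_2}}$ lies on $\Sigma_d$ by construction.

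The key steps, in order, are as follows.
\begin{enumerate}
\item Describe $C_d$ cell by cell. In each $3$-simplex of $\Gamma_d$ and each orthant copy, the stable intersection along $\mathcal O$ is a union of segments joining midpoints of edges. These are the edges where both sign distributions change, shifted according to the orientation.
\item Compute $b_0(C_d)$ and $b_1(C_d)$. Since $C_d$ is a closed curve, $b_0 = b_1$, so it suffices to count components. The target is the value of (\ref{boundhodge}) for $n=3$, $k=2$. For the complete intersection of two degree $d$ surfaces, $h^{0,0}+h^{0,1} = 1+g$ with $g = d^3-2d^2+1$, so the target is $b_0 = d^3-2d^2+2$.
\item Apply the realizability criterion, which is valid because $\Gamma_d$ is convex and $\mathcal O$ is acyclic. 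It gives $H_1,H_2$ with $(\widetilde{d\Delta_3},\Sigma_d,C_d)\cong(\mathbb RP^3,H_1,H_1\cap H_2)$. Maximality of $H_1$ follows from that of $\Sigma_d$. For $H_2$, I would check that $X_{\mathcal E_{\mu_2}}$ is also maximal, since it is harnack-type, and then invoke the same Viro realization for it, or choose $\mu_2$ so that this is automatic.
\end{enumerate}

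The main obstacle is the component count in step 2. My plan is induction on $d$ via the slicing $d\Delta_3 = (d-1)\Delta_3 \cup (\text{layer } x_1+x_2+x_3\in[d-1,d])$. The parity distributions restrict compatibly, so $C_{d-1}$ appears inside $C_d$ up to a controlled modification near the gluing face. I would show that the new layer contributes exactly $b_0(C_d)-b_0(C_{d-1}) = 3d^2-7d+2+\dots$ new ovals, matching the difference of the Hodge sums. I would do this by counting small ovals around interior lattice points of the layer, following the Harnack-type ``one oval per interior point'' pattern. In addition, I would verify that components crossing the gluing face reconnect as in degree $d-1$. If the naive choice of $\mu_2$ fails, I would tune $\mu_2$ and the tie-breaking in $\ell$ on small cases ($d=1,2,3$) until the count matches, then prove the inductive step.
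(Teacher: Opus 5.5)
Your step 2 fails, and no tuning of $\mu_2$ or $\ell$ can fix it: on the triangulation you chose, no T-curve is maximal. (As written, the arrangement $x_i=c$, $x_i-x_j=c$ does not give a lattice triangulation of $d\Delta_3$ in the original coordinates, since $x_1+x_2+x_3=d$ cuts through its alcoves. Presumably you mean the staircase triangulation, obtained in the coordinates $y_k=x_1+\dots+x_k$, in which $d\Delta_3=\{0\le y_1\le y_2\le y_3\le d\}$.)

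For $d\ge 7$, the alcoves in the cube $(1,3,5)+[0,1]^3$ have all four vertices interior, so they are tetrahedra with no edge on $\partial\Delta$. For such a tetrahedron the argument of Proposition \ref{Kn} applies:
its four facets are interior, which gives four neighbours; each of its six edges is interior, so the tetrahedra around it form a cycle in $G_\Gamma$; together these give a subdivision of $K_5$. Hence $G_\Gamma$ is not planar. Since $b_1(G_\Gamma)+1=d^3-2d^2+2$ by Lemma \ref{bettigraph}, Proposition \ref{boundcc} then gives $b_0(X_{\mathcal E})<d^3-2d^2+2$ for every $2$-real phase structure on this triangulation, whatever $\mu_1,\mu_2,\mathcal O$. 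The paper avoids this by using Itenberg's floor triangulation, which has the special property that every tetrahedron has an edge on $\partial\Delta$ (see the closing remark). It is also on this triangulation that the paper invokes Itenberg's maximality result for $\Sigma_d$.

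Your counting heuristic is also off by a factor of about six. The target $d^3-2d^2+2$ is asymptotic to $\Vol(d\Delta_3)=d^3$, roughly one component per tetrahedron. By contrast, $d\Delta_3$ has only $\binom{d-1}{3}\sim d^3/6$ interior lattice points, and your top layer has only $\binom{d-2}{2}$, against a required increment of $3d^2-7d+3$. Even the maximal surface has only $1+\binom{d-1}{3}$ components, so many of its components must carry several curve components.

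The paper instead counts cycles around edges. It takes $\mu_1=\mu_2=\mu$ and defines both $\mu$ and $\mathcal O$ from the parity code of each vertex. Then the signs and the vertex order in every copy of an edge star depend only on parity types. Proposition \ref{propaxis} then gives, for each edge of suitable type, an octant in which $C_d$ winds once around that edge through all four tetrahedra of its star. The tally is:
\begin{itemize}
\item horizontal, transversal and pure join cycles, about $d^3/3$ each;
\item boundary join cycles, $(d-1)^2$;
\item axisless cycles, $(d-1)(d-2)$;
\item one further global cycle.
\end{itemize}
This totals $d^3-2d^2+2$, matching the upper bound. A generic linear-functional orientation destroys the parity periodicity this argument relies on. Finally, choosing $\mu_2=\mu_1$ makes the maximality of $H_2$ automatic, via Theorem \ref{realizibility} with $i=2$.
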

Here $d\Delta_3$ is the tetrahedron with vertices $(0,0,0)$, $(d,0,0)$, $(0,d,0)$ and $(0,0,d)$.

Note that the pair $(H_1, H_1 \cap H_2)$ is not maximal.

\subsubsection*{Organization of the paper}

\begin{itemize}
    \item In Section 2, we present the construction of T-manifolds via real phase structures as presented in \cite{brugalle2024combinatorial}.
    \item Section 3 is devoted to a local study of real phase structures, which provides the key technical ingredients needed for our later results.
    \item In Section 4, we establish our results on the number of connected components of T-curves and T-surfaces.
    \item Section 5 introduces a construction of real phase structures and its application to T-manifolds of codimension 2.
    \item In Section 6, we apply this method to construct a family of maximal T-curves in the three-dimensional projective space.
\end{itemize}

\subsubsection*{Acknowledgments}
We are deeply grateful to Erwan Brugallé for his guidance, for many insightful discussions, and for his helpful suggestions and advice during the preparation of this work.

\section{Patchworking of T-manifolds}\label{sec:construction}

\subsection{Real phase structure and T-manifold}

We begin by defining T-manifold following \cite{brugalle2024combinatorial}.
For T-hypersurfaces, this construction has first been described by Renaudineau and Shaw in the context of tropical geometry \cite{RS18}.

Let $\Delta \subset \mathbb R^n$ be a full-dimensional convex polytope with vertices in the lattice $\mathbb Z^n$. Such polytopes are referred to as lattice polytopes.
Consider a triangulation $\Gamma$ of $\Delta$ with vertices in $\mathbb Z^n$. 
We denote by $F_k(\Gamma)$ the set of $k$-simplices of $\Gamma$, and for a simplex $\tau$, we denote by $F_k(\tau)$ the set of its $k$-faces.
A simplex with vertices $v_0, v_1, \dots v_k$ will be denoted by $[v_0, v_1, \dots, v_k]$.
We suppose throughout the text that $\Gamma$ is unimodular meaning that all simplices of $F_n(\Gamma)$ have euclidean volume $\frac{1}{n!}$.
The lattice volume of a full-dimensional lattice polytope $\Delta \subset \mathbb R^n$, denoted by $\Vol(\Delta)$, is the euclidean volume of $\Delta$ divided by $\frac{1}{n!}$ the volume of a unimodular simplex.

For each simplex $\sigma$, we define its integral tangent space $T(\sigma) \subset \mathbb Z^n$ as the sub-lattice spanned by the differences of two points of $\sigma \cap \mathbb Z^n$.
We denote by $T_2(\sigma) := T(\sigma) \otimes \mathbb F_2 \subset \mathbb F_2^n$ and $T_2^\perp (\sigma) \subset (\mathbb F_2^n)^{\vee} = \Hom(\mathbb F_2^n, \mathbb F_2)$ the orthogonal of $T_2(\sigma)$ in the dual of $\mathbb F_2^n$.
We suppose throughout the text that $\Delta$ is smooth, that is for any vertex $v$ of $\Delta$, there is a decomposition $\mathbb Z^n = \bigoplus T(\rho)$, where the sum runs over all the edges $\rho$ of $\Delta$ incident to $v$.

Fix an affine space $\mathcal Q^n$ directed by $(\mathbb F_2^n)^\vee$.
We will identify $\mathcal Q^n$ with the set $\{+,-\}^n$ of orthant of $\mathbb R^n$,
so the action of $(\mathbb F_2^n)^{\vee}$ on $\mathcal Q^n$ corresponds to the one of the reflection group generated by reflections across the coordinate hyperplanes.

\begin{defi}
    A real phase structure $\mathcal{E}$ on the $k$-skeleton of $\Gamma$ is a choice for each $\sigma \in F_k(\Gamma)$ of an affine subspace $\mathcal E(\sigma)$ of $\mathcal Q^n$ directed by $T_2^\perp(\sigma)$ satisfying the following \textbf{parity condition}:
    for each $\tau \in F_{k+1}(\Gamma)$ and  $s \in \mathcal Q^n$ there is an even number of $\sigma \in F_k(\tau)$ such that $s \in \mathcal E(\sigma)$.
\end{defi}

We also call $\mathcal E$ a $k$-real phase structure on $\Gamma$.

\begin{Ex}\label{ex:E_1}
    Consider the triangle $\Delta_2 = [a, b, c]$ with vertices $a=(0,0), b=(1,0), c=(0,1)$. We endowed it with its trivial (and unique triangulation).
    A real phase structure $\mathcal E_1$ on the 1-skeleton of $\Delta_2$ is given by
    \begin{itemize}
        \item $\mathcal E_1([a,b]) =\{(+,+), (+,-) \}$
        \item $\mathcal E_1([a,c]) =\{(+,+), (-,+) \}$
        \item $\mathcal E_1([b,c]) =\{(+,-), (-,+) \}$
    \end{itemize}
\end{Ex}

In the following, given a simplex $\tau$ of $\Gamma$, we denote by $\mathcal E(\tau) = \bigcup_{\sigma \in F_k(\tau)} \mathcal E(\sigma)$.

Given a real phase structures on the $k$-skeleton of $\Gamma$, we define a PL manifolds called T-manifolds by the following construction.

For each $s \in \mathcal Q^n$, consider a copy $s(\Delta)$ of $\Delta$ with the triangulation $s(\Gamma)$, which is a copy of $\Gamma$.
We say that $s(\Delta)$ is the symmetric copy of $\Delta$ in the orthant $s$.

We form a topological space gluing together the $s(\Delta)$ on their faces.
For each face $F$ of $\Delta$, we identify the face $s(F)$ and $t(F)$ each time that $t-s \in T_2^\perp(F)$.
The resulting space is denoted by $\widetilde{\Delta}$, and it is triangulated by the extended triangulation $\widetilde \Gamma$ obtained by the gluing of the copies $s(\Gamma)$.

Let $\Bary(\widetilde \Gamma)$ be the barycentric subdivision of $\widetilde \Gamma$. The vertices $v_\sigma$ of $\Bary(\widetilde{\Gamma})$ are the barycenter of the simplex $\sigma$ of $\widetilde{\Gamma}$.
For a simplex $\tau \in \Gamma$ and an orthant $s \in \mathcal{Q}^n$,
we associate to the simplex $s(\tau)$ of $\widetilde{\Gamma}$ the subspace $C_{\mathcal E}(s(\tau))$ defined as the full subcomplex of $\Bary(\widetilde \Gamma)$ whose vertices are the barycenters $v_{s(\sigma)}$ of the faces of $s(\sigma)$ of $s(\tau)$ such that $s \in \mathcal E(\sigma)$.
Each subcomplex $C_{\mathcal E}(s(\tau))$ can be made of several simplices of $\widetilde{\Gamma}$ or empty if $s \notin \mathcal E(\tau)$.

\begin{Ex}
    This construction is illustrated in the simple case of Example \ref{ex:E_1} in Figure \ref{n2k1}.
    On the top left, the four symmetric copies of $\Delta_2$ appear in their corresponding quadrant.
    On the top right, we obtain $\widetilde{\Delta_2}$ by gluing these four copies together. The boundary edges must be identified according to the arrows.
    At the bottom, the barycentric subdivision of $\widetilde{\Delta_2}$ is shown (the identification of the boundary edges is not shown).
    For the triangle $\tau =\Delta_2$, if $s \in \mathcal E_1(\Delta_2)$,  the corresponding subcomplex $C_{\mathcal E_1}(s(\tau))$ is drawn in green.
    It is made of two edges and three vertices of the barycentric subdivision $\Bary(s(\Delta_2))$.
    For an edge $\gamma$ of $\Delta_2$, if $s \in \mathcal E_1(\gamma)$, the corresponding 0-cell $C_{\mathcal E_1}(s(\gamma))$ is circled in a darker green.
\end{Ex}
    
\begin{figure}[h]
    \centering
    \begin{subfigure}{0.48\textwidth}

 \begin{tikzpicture} [scale=0.85, line cap=round,line join=round,x=1cm,y=1cm]

\draw[->, thick] (-3.5,0)--(3.5,0) node[above]{$x_1$};
\draw[->, thick] (0,-3.5)--(0,3.5) node[right]{$x_2$};

\fill[line width=2pt,fill=black,fill opacity=0.1] (1,1) -- (1,3) -- (3,1) -- cycle;
\fill[line width=2pt,fill=black,fill opacity=0.1] (-1,3) -- (-1,1) -- (-3,1) -- cycle;
\fill[line width=2pt,fill=black,fill opacity=0.1] (1,-1) -- (1,-3) -- (3,-1) -- cycle;
\fill[line width=2pt,fill=black,fill opacity=0.1] (-3,-1) -- (-1,-1) -- (-1,-3) -- cycle;
\draw [line width=2pt] (1,1)-- (1,3);
\draw [line width=2pt] (1,3)-- (3,1);
\draw [line width=2pt] (3,1)-- (1,1);
\draw [line width=2pt] (-1,3)-- (-1,1);
\draw [line width=2pt] (-1,1)-- (-3,1);
\draw [line width=2pt] (-3,1)-- (-1,3);
\draw [line width=2pt] (1,-1)-- (1,-3);
\draw [line width=2pt] (1,-3)-- (3,-1);
\draw [line width=2pt] (3,-1)-- (1,-1);
\draw [line width=2pt] (-3,-1)-- (-1,-1);
\draw [line width=2pt] (-1,-1)-- (-1,-3);
\draw [line width=2pt] (-1,-3)-- (-3,-1);
\begin{scriptsize}
\draw [fill=black] (1,1) circle (2.5pt);
\draw [fill=black] (1,3) circle (2.5pt);
\draw [fill=black] (3,1) circle (2.5pt);
\draw[color=black] (1.6,1.7) node {\large $++$};
\draw [fill=black] (-1,3) circle (2.5pt);
\draw [fill=black] (-1,1) circle (2.5pt);
\draw [fill=black] (-3,1) circle (2.5pt);
\draw[color=black] (-1.6,1.7) node {\large $-+$};
\draw [fill=black] (1,-1) circle (2.5pt);
\draw [fill=black] (1,-3) circle (2.5pt);
\draw [fill=black] (3,-1) circle (2.5pt);
\draw[color=black] (1.7,-1.6) node {\large $+-$};
\draw [fill=black] (-3,-1) circle (2.5pt);
\draw [fill=black] (-1,-1) circle (2.5pt);
\draw [fill=black] (-1,-3) circle (2.5pt);
\draw[color=black] (-1.6,-1.6) node {\large $--$};
\end{scriptsize}

 \end{tikzpicture}   
   \end{subfigure}
\hfill
\begin{subfigure}{0.48\textwidth}
    
\includegraphics{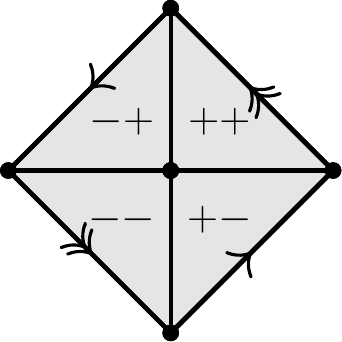} 

\end{subfigure}

\vspace{0.5em}

\begin{subfigure}{0.8\textwidth}
    \centering

\includegraphics{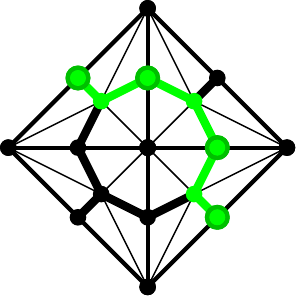}

\end{subfigure}
\caption{The construction of $\widetilde{\Delta_2}$ the T-curve associated to $\mathcal E_1$}
\label{n2k1}
\end{figure}

\begin{defi}
    A regular cell complex subdividing a topological space $X$ is a finite set $\mathcal C$ of topological closed balls (called cells) in $X$ satisfying:
    \begin{itemize}
        \item the interiors $ \mathring \sigma$ of the cells $\sigma \in \mathcal C$ partition $X$
        \item the boundary $\partial \sigma$ of a cell $\sigma \in \mathcal C$ is a union of cells of $\mathcal C$
    \end{itemize}
    The underlying space $|\mathcal C|$ of a cell complex $\mathcal C$ is the topological space $X$.
\end{defi}

\begin{prop}[Proposition 4.11, \cite{brugalle2024combinatorial}]\label{PLmanifold}
The collection of all the of the $C_{\mathcal E}(\tau)$, for $\tau \in \widetilde \Gamma$, defines a regular cell complex.
The $m$-cells of this complex are precisely the $C_{\mathcal E}(s(\tau))$ where $\tau$ is a $(m+k)$-simplex of $\Gamma$ and $s \in \mathcal E(\tau)$.
The underlying space $X_\mathcal E$ of this cell decomposition is a PL manifold of dimension $n-k$
and the cells $C_\mathcal E(\sigma)$ are PL balls.
\end{prop}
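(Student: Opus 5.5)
The plan is to work locally, one simplex of $\widetilde\Gamma$ at a time, and glue the local pictures together. Fix $\tau \in F_m(\Gamma)$ with $m \ge k$ and $s \in \mathcal E(\tau)$. By definition, $C_{\mathcal E}(s(\tau))$ is the full subcomplex of $\Bary(s(\tau))$ spanned by the barycenters $v_{s(\sigma)}$ of the faces $\sigma$ of $\tau$ containing some $k$-face $\rho$ with $s \in \mathcal E(\rho)$. Equivalently, it is the order complex of the poset
\[P_s(\tau)=\{\sigma \le \tau : \exists \rho \in F_k(\sigma),\ s\in\mathcal E(\rho)\},\]
which is an upper set in the face poset of $\tau$. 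Its minimal elements are the $k$-faces $\rho$ of $\tau$ with $s \in \mathcal E(\rho)$.

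\textbf{Step 1: each cell is a PL ball of dimension $m-k$.} I will prove this by induction on $m-k$.
\begin{itemize}
\item \emph{Base case $m=k$.} The cell is the single vertex $v_{s(\tau)}$.
\item \emph{Reduction to a vertex set.} Each $\mathcal E(\rho)$ is an affine space directed by $T_2^\perp(\rho)$. Unimodularity of $\tau$ identifies $T_2(\tau)$ with $\mathbb F_2^m$, with the edges from one vertex as a basis. Under this identification, the condition $s \in \mathcal E(\rho)$ depends only on the projection of $s$ to $\mathcal Q^n/T_2^\perp(\tau)$. So the data reduce to a set $S$ of $k$-faces of an $m$-simplex, namely those whose phase contains $s$.
\item \emph{Structure of $S$.} For each $(k+1)$-face, the parity condition says it contains an even number of elements of $S$. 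Affine-ness says that, within any face, the $k$-faces in $S$ form the analogue of the real phase picture of a ``tropical linear space''.
\item \emph{Ball argument.} The complex $C$ is the union, over $\rho\in S$, of the open stars of $v_{s(\rho)}$ in $\Bary(s(\tau))$, intersected appropriately. Its link structure is governed by the same data on the facets of $\tau$. I will show that $C$ is collapsible, or shellable, onto a cone. Concretely, the boundary $\partial C$ equals $\bigcup_{\sigma \subsetneq \tau,\ s\in\mathcal E(\sigma)} C_{\mathcal E}(s(\sigma))$, a union of $(m-k-1)$-balls by induction. I then need this union to be a PL sphere, after which $C$ is the cone over it from $v_{s(\tau)}$.
\end{itemize}

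\textbf{Step 2: the sphere claim.} To show that the union of the lower cells is a PL sphere, I will use the induction hypothesis that links in lower dimension are spheres or balls. Parity guarantees that each $(m-k-2)$-cell lies in exactly two $(m-k-1)$-cells, so the union is a pseudomanifold without boundary. Connectedness and sphericity come from the affine structure. After the reduction above, the configuration is the real part of a generic tropical linear space of codimension $k$ in a simplex. I expect the configurations to be classified up to symmetry by the real phase structures of \cite{brugalle2024combinatorial}, so the sphere can be identified explicitly, for instance as a join of lower-dimensional spheres.

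\textbf{Step 3: regular cell complex and manifold property.} The cells form a regular cell complex because interiors are disjoint: they correspond to distinct chains ending at $s(\tau)$. Their boundaries are unions of cells by Step 1. The gluing in $\widetilde\Delta$ is compatible because $s \in\mathcal E(\sigma)$ depends only on $s$ modulo $T_2^\perp(\sigma)$, which matches the identification of faces. For the manifold statement, I will check that the link of each cell $C_{\mathcal E}(s(\sigma))$ in $X_{\mathcal E}$ is a sphere of dimension $n-k-\dim\sigma+k-1$. This link is built from the cells over simplices $\tau\supseteq\sigma$ in the star of $s(\sigma)$ in $\widetilde\Gamma$. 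The star of a simplex in $\widetilde\Delta$ is a sphere or ball by the smoothness of $\Delta$, since on boundary faces the symmetric copies glue to give a full orthant neighbourhood. So the link computation reduces to Step 2 applied to the link of $\sigma$.

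\textbf{Main obstacle.} The hardest part will be Step 2, the sphericity of the boundary, which uses the affine-subspace condition in an essential way. Parity alone gives only a pseudomanifold. I would try first to reduce to the case where $\mathcal E$ restricted to $\tau$ is a ``coordinate'' structure via a local normal form, and then verify that case directly.
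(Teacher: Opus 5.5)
\textbf{There is a genuine gap: Step~2, the sphericity of the boundary of each cell, is not proved, and the fallback you suggest for it cannot work.}

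Your framework is correct. $C_{\mathcal E}(s(\tau))$ is the order complex of the upper set $P_s(\tau)$, so it is the cone from $v_{s(\tau)}$ over $L_s(\tau)=\bigcup_{\sigma\subsetneq\tau,\ s\in\mathcal E(\sigma)}C_{\mathcal E}(s(\sigma))$. The ball property, regularity and, using smoothness of $\Delta$, the manifold property all follow once $L_s(\tau)$ is known to be a PL $(m-k-1)$-sphere. That sphericity is essentially the whole content of the proposition, and Step~2 does not supply it.

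\emph{Parity is not doing the work you assign it.} The pseudomanifold property is automatic: a codimension-two face $\gamma$ of $\tau$ lies in exactly two facets, and $\mathcal E(\gamma)\subseteq\mathcal E(\sigma)$ for $\gamma\subseteq\sigma$. More than parity is needed already in the base case $m=k+1$. Parity gives only an even number of $k$-faces of $\tau$ containing $s$, whereas a segment needs exactly two. Getting exactly two uses the affine condition in all orthants simultaneously together with the relation $\sum_F e_F=0$, as in Lemma~\ref{necklace}. Your reduction to the single set $S$ of $k$-faces containing a fixed $s$ discards precisely this information: for $k\ge 2$, four $k$-faces of one $(k+1)$-face are compatible with parity.

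\emph{The sphere claim has no argument beyond the base case.} The sentence ``connectedness and sphericity come from the affine structure'' is not a proof. A connected closed PL manifold need not be a sphere. A homotopy-theoretic route would still run into the PL Poincar\'e problem in dimension $4$.

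\emph{The normal-form fallback cannot work.} After quotienting by $T_2^\perp(\tau)$, a $k$-real phase structure on the unimodular $m$-simplex amounts to a uniform oriented matroid of rank $m-k+1$ on $m+1$ elements. Equivalently, it is a simple arrangement of $m+1$ pseudohyperplanes in $\mathbb{RP}^{m-k}$; this is the dictionary recalled at the start of Section~3 via \cite{rau2022real} and \cite{folkman1978oriented}. These configurations fall into many inequivalent classes, and their cells have varying combinatorial type: for $m-k=2$ they are polygons with between $3$ and $m+1$ sides. Some are not realizable; there are non-stretchable simple arrangements of $9$ pseudolines. So there is no single local model to verify, and in general no honest real linear space with convex chambers to compute with.

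\textbf{The missing idea is to apply that dictionary at the level of face posets.} Under it, $P_s(\tau)$ is the face poset of the closed tope (chamber) labelled $s$. A $d$-face $\sigma\in P_s(\tau)$ corresponds to the face of that chamber lying on the $m-d$ pseudohyperplanes indexed by the facets $F\supseteq\sigma$. Hence $C_{\mathcal E}(s(\tau))$ is the barycentric subdivision of that closed chamber. That such closed cells are balls whose boundary is the union of their proper faces is the Folkman--Lawrence topological representation theorem. The PL version comes from Edmonds--Mandel shellability of the face lattice of an oriented matroid (see \cite{OM}, Chapter~4).

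The paper does not reprove this proposition; it imports it from \cite{brugalle2024combinatorial}. If you want a self-contained argument that avoids oriented matroids, you need something of equal strength. One option is an explicit shelling of $L_s(\tau)$, built inductively from the restriction $\mathcal E_{|F}$ and the projection $\mathcal E^F$ (contraction and deletion). The naive induction, cutting a chamber of $\mathcal E^F$ along $\widetilde F$, would need a PL Schoenflies theorem; shellability is exactly what avoids this.

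\textbf{One correction to Step~3.} The link of a vertex $v_{s(\sigma)}$ of $X_{\mathcal E}$ is the join of $L_s(\sigma)=\partial C_{\mathcal E}(s(\sigma))$ with the barycentric subdivision of the link of $s(\sigma)$ in $\widetilde\Gamma$. So the manifold property needs Step~2 for $\sigma$ itself, plus the fact that $\widetilde\Delta$ is a PL manifold (smoothness of $\Delta$). It does not reduce to Step~2 applied to the link of $\sigma$.
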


The space $X_\mathcal E$ is the T-manifold associated to the real phase structure $\mathcal E$,
and we refer to the above decomposition as the canonical cell decomposition of $X_\mathcal E$.
By construction, we have $C_{\mathcal E}(s(\tau)) = X_\mathcal E \cap s(\tau)$, for any simplex $\tau$ of $\Gamma$ and orthant $s \in \mathcal Q^n$ such that $s \in \mathcal E(\tau)$.

\begin{defi}
    Let $\Delta$ be a lattice polytope.
    A T-manifold of codimension $k$ with  Newton polytope $\Delta$ is a T-manifold $X_{\mathcal E}$,
    where $\mathcal E$ is a real phase structure on the $k$-skeleton of a triangulation of $\Delta$. 
\end{defi}

\begin{Ex}\label{ex1}
The T-curve $X_{\mathcal E_1}$ with  Newton polytope $\Delta_2$, from Example \ref{ex:E_1}, is represented in Figure \ref{n2k1}.
\end{Ex}

\begin{Ex}\label{ex0}
For any 0-simplex (i.e. vertex) $v$, we have $T_2^\perp(v) = (\mathbb F_2 ^n)^\vee$.
Thus, for any lattice polytope $\Delta$ and triangulation $\Gamma$, there is always a unique real phase structure $\mathcal E_0$ on the 0-skeleton of $\Gamma$.
It is defined by $\mathcal E_0(v)= \mathcal Q^n$.
The parity condition is well satisfied since for any edge $[v,v']$ of $\Gamma$, we have $\mathcal E_0(v)=\mathcal E_0(v')$.
It follows, that independently of $\Gamma$, the T-manifold $X_{\mathcal{E}_0}$ is always the full space $\widetilde{\Delta}$.
Furthermore, the canonical cell decomposition is exactly the subdivision $\widetilde{\Gamma}$.
\end{Ex}

\begin{Ex}\label{ex2}
    For $\Delta_3 = [a, b, c, d]$ with vertices $a=(0,0,0)$, $b=(1,0,0)$, $c=(0,1,0)$ and $d=(0,0,1)$,
    a real phase structure $\mathcal E_2$ on the 1-skeleton of  $\Delta_3$ is given by
    \begin{itemize}
        \item $\mathcal E_2([a,b]) =\{(+,+,+), (+,-,+), (+,+,-),(+,-,-) \}$
        \item $\mathcal E_2([a,c]) =\{(+,-,+), (-,-,+),(+,-,-),(-,-,-) \}$
        \item $\mathcal E_2([a,d]) = \{(+,+,+), (-,+,+), (+,-,+),(-,-,+) \}$
        \item $\mathcal E_2([b,c]) = \{(+,+,+), (+,+,-),(-,-,+), (-,-,-) \}$
        \item $\mathcal E_2([b,d]) = \{(-,+,+), (-,-,+),(+,+,-),(+,-,-) \}$
        \item $\mathcal E_2([c,d]) = \{(+,+,+),(-,+,+), (+,-,-), (-,-,-) \}$
    \end{itemize}
    The T-surface $X_{\mathcal E_2}$ is represented in Figure \ref{n3k1}.
    On this representation, the opposite faces of the octahedron need to be identified.
\end{Ex}

    \begin{figure}
        \centering
        \includegraphics{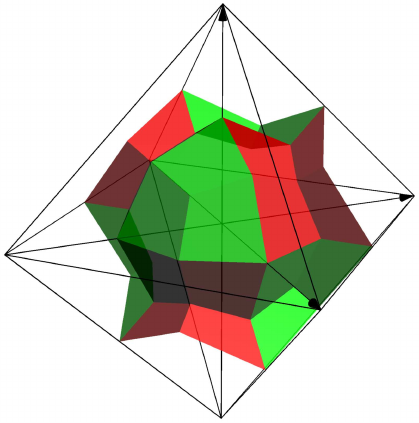}
        \caption{The T-surface $X_{\mathcal E_2}$ of Example \ref{ex2}}
        \label{n3k1}
    \end{figure}

\begin{Ex}\label{ex3}
    For  $\Delta_3 = [a,b,c,d]$ with vertices $a=(0,0,0)$, $b=(1,0,0)$, $c=(0,1,0)$ and $d=(0,0,1)$,
    a real phase structure $\mathcal E_3$ on the 2-skeleton of $\Delta_3$ is given by
    \begin{itemize}
        \item $\mathcal E_3([a,b,c]) = \{(+,+,+), (+,+,-)\}$
        \item $\mathcal E_3([a,b,d]) = \{(+,+,+), (+,-,+) \}$
        \item $\mathcal E_3([a,c,d]) = \{(+,+,-), (-,+,-) \}$
        \item $\mathcal E_3([b,c,d]) = \{(+,-,+), (-,+,-) \}$
    \end{itemize}
    The T-curve $X_{\mathcal E_3}$ is represented in Figure \ref{n3k2}.
    On this representation, the opposite faces of the octahedron need to be identified.
           
\end{Ex}

    \begin{figure}[t]
        \centering
        \includegraphics{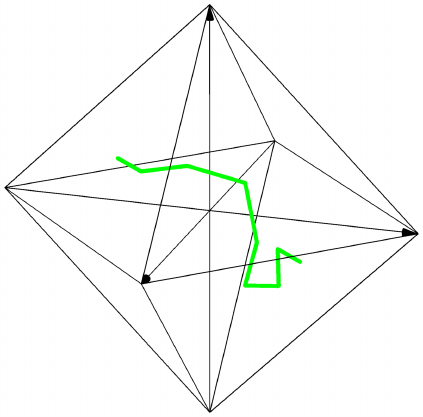}
        \caption{The T-curve $X_{\mathcal E_3}$ of Example \ref{ex3}}
        \label{n3k2}
    \end{figure}

\begin{Ex}\label{signe}

In codimension 1, a real phase structure is equivalently defined by a sign distribution $\mu \colon F_0(\Gamma) \to \{-, +\}$ on the vertices of $\Gamma$.
Such a sign distribution $\mu$ extends to vertices of $\widetilde{\Gamma}$ by the following rule: 
\[ \mu(s(v)) = (-1)^{s \cdot v}\mu(v) \]
where $s \cdot v = \sum_{i=1}^n s_i v_i$.
We then define $\mathcal E_\mu$ by saying that $s \in \mathcal E_\mu([a,b])$ if and only if $\mu(s(a)) \neq \mu(s(b))$. 

For any sign distribution $\mu$, we obtain a 1-real phase structure $\mathcal E_\mu$. Conversely, a real phase structure of codimension 1 determines, up to inversion of all the signs, a sign distribution $\mu \colon F_0(\Gamma) \to \{-, +\}$.
See \cite[Example 4.3]{brugalle2024combinatorial} for the details of this bijection.

For example, the real phase structure $\mathcal E_1$, of example \ref{ex1}, arises from the sign distribution $\mu$ (or $-\mu$), where $\mu$ is defined by $\mu(a)=+$, $\mu(b)=-$ and $\mu(c) =-$.

\end{Ex}

\subsection{Real toric variety}

In this subsection, we relate the construction of T-manifolds of codimension 0 and 1 to real algebraic geometry.

For a lattice polytope $\Delta$ of $\mathbb R^n$ with lattice points  $\Delta \cap \mathbb Z^n = \{m_0, \dots, m_N\}$,
 we can define an application
\begin{equation*}
     \phi^n_\Delta \colon \begin{aligned}
         (\mathbb C^*)^n &\longrightarrow  \mathbb C P^N\\
      (t_1, \dots, t_n) &\longmapsto  [t^{m_0} \colon \dots \colon t^{m_N}]
     \end{aligned}
 \end{equation*} where $[u_0 : u_1 : \dots : u_N]$ are the homogeneous coordinates on $\mathbb CP^N$ and $t^{m_i} := \allowbreak t_1^{m_{i,1}} \dots t^{m_{i,n}}$.

Denote by $\mathbb C \mathring X_\Delta$ the image of $\phi_\Delta$.
The Zariski closure  of the image $\mathbb C \mathring X_\Delta$ in $\mathbb C P^N$ is the complex toric variety $\mathbb C X_ \Delta$.

\begin{Ex}
    For the standard $n$-simplex 
    \[ \Delta_n = \{x \in  \mathbb R^n | x_i \geq 0 \text{ and } x_1+\dots +x_n \leq 1\}, \]
    the image $\mathbb C \mathring X_{\Delta_n}$ is the complement ${\{[u_0: \dots : u_n] | u_i \in \mathbb C^* \}}$  of the coordinate hyperplanes in $\mathbb C P^n$ and
    its Zariski closure in $\mathbb C P^n$ is \[ \mathbb CX_{\Delta_n} = \mathbb C P^n .\]
\end{Ex}

\begin{Rem}
    For any positive integer $m$, we denote by $m\Delta$ the $m$-dilation $\{mx| x \in \Delta\}$ of $\Delta$.
    A polytope and its dilations have isomorphic (as algebraic varieties) toric varieties: $\mathbb CX_{m\Delta} = \mathbb CX_{\Delta}$.
\end{Rem}

For a face $F$ of $\Delta$ with  $M+1$ lattice points,  we can see $\mathbb CP^M$ as the subset  $\{[u_0 : \dots : u_N] | u_i =0 \text{ if  } m_i \notin F \cap \mathbb Z^n \}$ of $\mathbb CP^N $.
This defines an embedding $\mathbb C X_F \hookrightarrow \mathbb CX_\Delta$.
More precisely, we have the stratification $\mathbb CX_\Delta = \bigcup_{F \text{ face of } \Delta} \mathbb C \mathring X_F$.

\begin{lemma}[Theorem 2.4.3,\cite{GBBIB3053}]
    The toric variety $\mathbb CX_\Delta$ is smooth if and only if the polytope $\Delta$ is smooth.
\end{lemma}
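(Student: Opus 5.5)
The statement is the classical smoothness criterion for projective toric varieties. I would prove it by reducing both sides to a local condition at the vertices of $\Delta$.

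\emph{Step 1: affine charts.} For each vertex $v$ of $\Delta$, set $\sigma_v^\vee := \Cone(\Delta - v)$ and let $S_v$ be the semigroup of lattice points of $\sigma_v^\vee$. The open set $\{u_v \neq 0\}$ of $\mathbb CP^N$ meets $\mathbb CX_\Delta$ in the affine toric variety $U_v$. Its coordinate ring is the semigroup algebra generated by the $t^{m_i - v}$. When $\Delta$ is normal, which holds for smooth polytopes and after dilation in general, these generate $\mathbb C[S_v]$, so $U_v = \operatorname{Spec}\mathbb C[S_v]$. Every face $F$ of $\Delta$ contains a vertex, so the stratification $\mathbb CX_\Delta = \bigcup_F \mathbb C\mathring X_F$ shows that the $U_v$ cover $\mathbb CX_\Delta$. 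Hence $\mathbb CX_\Delta$ is smooth if and only if every $U_v$ is smooth.

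\emph{Step 2: the easy direction.} Suppose $\Delta$ is smooth at $v$. Then the primitive edge vectors $e_1,\dots,e_n$ at $v$ form a $\mathbb Z$-basis of $\mathbb Z^n$. Since $\Delta$ is simple at $v$, the cone $\sigma_v^\vee$ is $\sum \mathbb R_{\ge0} e_i$, so $S_v = \bigoplus \mathbb N e_i$ and $U_v \cong \mathbb C^n$. Normality is needed here: if $\Delta$ is not normal, one first replaces $\Delta$ by a dilation, using the Remark that $\mathbb CX_{m\Delta} \cong \mathbb CX_\Delta$, so that the lattice points generate $S_v$. For smooth $\Delta$ the edge vectors already lie in $\Delta - v$, so these lattice points do generate $S_v$ directly.

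\emph{Step 3: the converse, which is the main obstacle.} Suppose $U_v$ is smooth. The torus-fixed point $p_v$ is the point where all characters $t^m$ with $m \neq 0$ vanish, and the maximal ideal there is $\mathfrak m$, generated by these characters. Its cotangent space $\mathfrak m/\mathfrak m^2$ has a basis given by the indecomposable elements of $S_v$, i.e.\ its Hilbert basis. Smoothness forces this to have dimension $n$. So $S_v$ is generated by $n$ elements, and since $S_v$ spans $\mathbb Z^n$ these form a lattice basis. Then $\sigma_v^\vee$ is simplicial, its rays are generated by that basis, and the rays are exactly the edge directions of $\Delta$ at $v$. This gives $\mathbb Z^n = \bigoplus T(\rho)$, which is the definition of smoothness at $v$.

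The delicate points are identifying $\mathfrak m/\mathfrak m^2$ with the span of the Hilbert basis, and the normality subtlety. The first is a short grading argument: $\mathfrak m^2$ is spanned by the decomposable characters. The second I would dispatch by working with the normalization, or by noting that a non-normal affine chart is already singular at $p_v$. Alternatively, the whole argument can simply be quoted from the cited reference, Theorem 2.4.3 of \cite{GBBIB3053}.
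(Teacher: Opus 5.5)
The paper gives no argument of its own for this lemma; it only cites Theorem 2.4.3 of the reference. Your route (cover by the vertex charts $U_v$, then use the criterion that an affine toric chart is smooth iff its semigroup is free on a lattice basis, via the Hilbert-basis description of $\mathfrak m/\mathfrak m^2$) is the standard proof of that theorem.

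Steps 1 and 2, i.e.\ the direction ``$\Delta$ smooth $\Rightarrow$ $\mathbb CX_\Delta$ smooth'', are correct, and that is the only direction the paper uses. One wording fix: do not say that smooth polytopes are \emph{normal}, since that is Oda's conjecture and is open. What holds, and what you actually prove in Step 2, is the weaker property you need. The primitive edge vectors at $v$ lie in $\Delta - v$, so the lattice points of $\Delta - v$ generate $\sigma_v^\vee\cap\mathbb Z^n$ (very ampleness).

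The converse has a genuine gap at ``since $S_v$ spans $\mathbb Z^n$ these form a lattice basis''. Step 3 silently uses $U_v=\operatorname{Spec}\mathbb C[S_v]$ with $S_v=\sigma_v^\vee\cap\mathbb Z^n$. Step 1 only established this when the lattice points of $\Delta - v$ generate $S_v$, which you cannot assume when $\Delta$ is not known to be smooth. With the paper's definition of $\mathbb CX_\Delta$ (closure of the image of $\phi_\Delta$, using the lattice points of $\Delta$ itself), the chart is $\operatorname{Spec}\mathbb C[S'_v]$ with $S'_v=\mathbb N(\Delta\cap\mathbb Z^n - v)$. Smoothness then only tells you that $S'_v$ is free on a basis of the group $\mathbb Z S'_v$, which can be a proper sublattice of $\mathbb Z^n$.

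Neither of your patches closes this. Normality of the chart only gives saturation inside $\mathbb Z S'_v$. The dilation trick relies on the Remark $\mathbb CX_{m\Delta}\cong\mathbb CX_\Delta$, which fails in exactly this situation. In fact the ``only if'' direction is false for the paper's definition. Take the Reeve tetrahedron $\Delta=\Conv(0,e_1,e_2,e_1+e_2+2e_3)$. Its only lattice points are its vertices, so $\phi_\Delta(t)=[1:t_1:t_2:t_1t_2t_3^2]$ and $\mathbb CX_\Delta=\mathbb CP^3$ is smooth. However, the edges at $0$ span a sublattice of index $2$, so $\Delta$ is not smooth. Moreover, $\mathbb CX_{2\Delta}$ has the singular chart $u^2=xyw$ at $0$, so the Remark fails here too.

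The equivalence holds only with the convention of the cited reference. There the toric variety is the normal one attached to the normal fan of $\Delta$, defined through a sufficiently large dilation. Then $U_v=\operatorname{Spec}\mathbb C[\sigma_v^\vee\cap\mathbb Z^n]$, the semigroup does generate $\mathbb Z^n$, and your Step 3 goes through verbatim. You should either adopt that convention explicitly or restrict the claim to the ``if'' direction.
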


The real part of the toric variety $\mathbb CX_\Delta$  is denoted $\mathbb RX_\Delta$ and
is defined as the intersection of $\mathbb CX_\Delta$ with $\mathbb RP^N \subset \mathbb CP^N$.

\begin{theo}[Theorem 5.3 and 5.4, Chapter 11, \cite{GKZ}]\label{realtoric}
    There is a homeomorphism from $\widetilde \Delta$ to the real part $\mathbb R X_\Delta$.
    This homeomorphism is stratified in the following sense: for any face $F$ of $\Delta$, the gluing $\widetilde F \subset \widetilde \Delta$ is sent to $\mathbb R X_F \subset \mathbb R X_\Delta$.
    In addition, for $s \in \mathcal Q^n$, the copy $s(\Delta)$ is sent to the orthant 
    \[ \mathbb RX_\Delta \cap \{ [u_0: \dots:u_N] \mid \forall i, \; s_i u_i  \ge 0 \} \] associated to $s$.
\end{theo}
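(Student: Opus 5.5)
The statement to prove is Theorem~\ref{realtoric}: there is a stratified homeomorphism $\widetilde\Delta \to \mathbb RX_\Delta$ sending each face-gluing $\widetilde F$ to $\mathbb RX_F$ and each copy $s(\Delta)$ to the closed orthant associated to $s$. The plan is to build the homeomorphism explicitly through the real moment map.

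\textbf{Step 1: the nonnegative part.} Let $\mathbb RX_\Delta^{\geq 0}$ be the closure of $\phi_\Delta((\mathbb R_{>0})^n)$. The algebraic moment map
\[ \mu\colon [u_0:\dots:u_N] \longmapsto \frac{\sum_i |u_i|\, m_i}{\sum_i |u_i|} \]
is well defined on $\mathbb RX_\Delta$. On the positive part, $\mu\circ\phi_\Delta(e^{x}) = \sum_i e^{\langle x,m_i\rangle}m_i \big/ \sum_i e^{\langle x,m_i\rangle}$ is the gradient of the strictly convex function $\log\sum_i e^{\langle x,m_i\rangle}$. It is therefore a diffeomorphism from $\mathbb R^n$ onto the interior $\mathring\Delta$. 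This is the standard Legendre-transform argument of Atiyah and of Guillemin--Sternberg.

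\textbf{Step 2: extension to the boundary.} Each point of $\mathbb RX_\Delta^{\geq0}$ lies in a unique stratum $\mathbb R\mathring X_F^{>0}$, namely the points whose support $\{i : u_i\neq0\}$ is exactly $F\cap\mathbb Z^n$. Applying Step 1 to $F$ shows that $\mu$ maps this stratum bijectively onto the relative interior of $F$. Hence $\mu$ is a continuous bijection from the compact space $\mathbb RX_\Delta^{\geq0}$ onto the Hausdorff space $\Delta$, and so a homeomorphism. It respects the face stratification, and its inverse $\psi$ sends each face $F$ to the closure of $\mathbb R\mathring X_F^{>0}$.

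\textbf{Step 3: the other orthants.} For $s\in\mathcal Q^n$ acting by signs $\epsilon\in\{\pm1\}^n$, define $\psi_s(x) = \epsilon\cdot\psi(x)$. Concretely, coordinate $u_i$ is multiplied by $\epsilon^{m_i}$; this preserves $\mathbb RX_\Delta$ because it is the action of the real torus element $\epsilon$. Gluing gives a map $\Psi\colon\bigsqcup_s s(\Delta)\to\mathbb RX_\Delta$, and its image of $s(\Delta)$ is exactly the orthant stated in the theorem.

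\textbf{Step 4: compatibility and bijectivity (the main obstacle).} One must show that $\Psi$ identifies $\psi_s(x)$ and $\psi_t(x)$, for $x$ in the relative interior of $F$, exactly when $t-s\in T_2^\perp(F)$.

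On that stratum only the monomials $m_i\in F$ survive. The two points agree iff $\epsilon^{m_i}=\eta^{m_i}$ for all $m_i\in F$. This holds iff $(\epsilon\eta)$ is trivial on differences of lattice points of $F$, i.e.\ $t-s$ kills $T_2(F)$; the overall projective scalar absorbs a constant sign.

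Surjectivity needs care with this projective sign: every real point of $\mathbb R\mathring X_F$ must come from some torus point up to sign. This is where I would use that $\mathbb R\mathring X_F$ is the orbit $(\mathbb R^*)^n\cdot$ point. For non-saturated lattices one would worry about whether the map $(\mathbb R^*)^n\to\mathbb R\mathring X_F$ is onto. Smoothness of $\Delta$ guarantees that the lattice points of $F$ generate $T(F)$ affinely, so $(\mathbb R^*)^n$ surjects onto $\mathbb R\mathring X_F$ via $\phi_F$.

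Granting this, $\Psi$ descends to a continuous bijection $\widetilde\Delta\to\mathbb RX_\Delta$. Since $\widetilde\Delta$ is a finite quotient of compact spaces and $\mathbb RX_\Delta$ is Hausdorff, it is a homeomorphism, and the stratification statements follow from Steps 2 and 3.
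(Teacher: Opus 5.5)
Your proposal is correct and is essentially the standard moment-map argument behind the cited result of Gelfand--Kapranov--Zelevinsky; the paper itself only quotes that result and gives no proof. As in that argument, you identify the closure of the positive part with $\Delta$ via $\mu$, then glue the $2^n$ sign-twisted copies, which are identified over the relative interior of a face $F$ exactly when they differ by an element of $T_2^\perp(F)$.

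The one step to state more precisely is surjectivity. Saying that the lattice points of $F$ generate $T(F)$ is tautological with the paper's definition of $T(F)$. What you actually need is that the lattice spanned by differences of points of $F\cap\mathbb Z^n$ is saturated in $\mathbb Z^n$, so that every sign character on it extends to $\mathbb Z^n$. Smoothness of $\Delta$ gives this, because the primitive edge vectors of $F$ at a vertex are part of a basis of $\mathbb Z^n$.
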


It follows from Example \ref{ex0} that T-manifolds of codimension 0 are toric varieties:

\begin{coro}
    Let $\Delta$ be a lattice polytope, let $\Gamma$ be a triangulation of $\Delta$, and let $\mathcal E$ a real phase structure on the 0-skeleton of $\Gamma$,
    then $X_\mathcal E$ is homeomorphic to $\mathbb RX_\Delta$.
\end{coro}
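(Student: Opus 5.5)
The plan is to combine Example \ref{ex0} with Theorem \ref{realtoric}, after checking that the T-manifold construction in codimension $0$ simply reproduces $\widetilde \Delta$.

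First, by Example \ref{ex0}, the only real phase structure on the $0$-skeleton of $\Gamma$ is $\mathcal E_0$, given by $\mathcal E_0(v)=\mathcal Q^n$ for every vertex $v$. So $\mathcal E=\mathcal E_0$, and consequently $\mathcal E(\tau)=\mathcal Q^n$ for every simplex $\tau$ of $\Gamma$.

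Second, we identify $X_{\mathcal E}$ with $\widetilde\Delta$. Fix a simplex $\tau$ of $\Gamma$ and an orthant $s$. Every vertex of $\tau$ satisfies $s\in\mathcal E(v)$, so every face $\sigma$ of $s(\tau)$ contains a vertex of $s(\tau)$ whose barycenter lies in $C_{\mathcal E}(s(\tau))$. The subcomplex $C_{\mathcal E}(s(\tau))$ is the full subcomplex of $\Bary(\widetilde\Gamma)$ spanned by the barycenters of the vertices of $s(\tau)$. This should be read with the convention of \cite{brugalle2024combinatorial}, under which the cell $C_{\mathcal E}(s(\tau))$ is the dual-type region, namely the union of the simplices of $\Bary(s(\tau))$ containing such barycenters. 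Under that convention, each cell $C_{\mathcal E_0}(s(\tau))$ is the whole closed simplex $s(\tau)$.

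This is exactly the content stated at the end of Example \ref{ex0}: the canonical cell decomposition of $X_{\mathcal E_0}$ is $\widetilde\Gamma$ itself, and $X_{\mathcal E_0}=\widetilde\Delta$. We also have $C_{\mathcal E}(s(\tau))=X_{\mathcal E}\cap s(\tau)$ for all $\tau$ and $s$. Together with Proposition \ref{PLmanifold}, this shows that the cells of dimension $m$ are exactly the $m$-simplices $s(\tau)$, which cover $\widetilde\Delta$. Hence $X_{\mathcal E}=\widetilde\Delta$ as topological spaces, independently of $\Gamma$.

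Finally, Theorem \ref{realtoric} gives a homeomorphism $\widetilde\Delta\cong\mathbb RX_\Delta$, and composing the two identifications yields the claim.

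The only delicate point is the second step, namely checking that the cell complex from Proposition \ref{PLmanifold} has underlying space all of $\widetilde\Delta$ in codimension $0$. I would settle it by invoking Proposition \ref{PLmanifold} directly. That proposition says $X_{\mathcal E}$ is a closed PL manifold of dimension $n$ contained in $\widetilde\Delta$ and meeting every top simplex $s(\tau)$ in an $n$-dimensional cell. Since $\widetilde\Delta$ is a connected closed $n$-manifold, and a closed $n$-dimensional submanifold of it is both open (by invariance of domain) and closed, $X_{\mathcal E}$ must equal $\widetilde\Delta$.
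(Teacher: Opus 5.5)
This is essentially the paper's proof, which simply combines Example \ref{ex0} ($X_{\mathcal E_0}=\widetilde\Delta$, with canonical cell decomposition $\widetilde\Gamma$) with Theorem \ref{realtoric}. Your detour through a ``dual-type'' convention is unnecessary and comes from a misreading of the definition, and the invariance-of-domain fallback is likewise not needed (it would in any case require $\Delta$ smooth and $X_{\mathcal E}$ boundaryless). In the paper's definition, the vertices of $C_{\mathcal E}(s(\tau))$ are the barycenters of the faces $\sigma$ of $s(\tau)$ with $s\in\mathcal E(\sigma)=\bigcup_{v\in F_0(\sigma)}\mathcal E_0(v)=\mathcal Q^n$, i.e.\ of \emph{all} faces, so the full subcomplex is all of $\Bary(s(\tau))$, whose underlying space is $s(\tau)$; by contrast, the full subcomplex on the original vertices alone would only be a finite set of points.
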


For a map $\nu \colon \Delta \cap \mathbb Z^n \to \mathbb R$, define the polyhedron 
\[ \Delta_\nu = \Conv\{(x, h) \in (\Delta \cap \mathbb Z^n) \times \mathbb R| h \geq \nu(x)\} \subset \mathbb R^{n+1}.\]
The orthogonal projection of the bounded faces of $\Delta_\nu$ on $\Delta$ defines a subdivision $\Gamma_\nu$ of $\Delta$.
Subdivisions obtained in this way are called convex (or regular, or coherent).

The relationship between algebraic geometry and patchworking comes from Viro's theorem. Here we reformulate it in terms of real phase structures.

\begin{theo}[Viro's theorem \cite{Viro84}]
    If $\Gamma_\nu$ is a convex triangulation on $\Delta$ and $\mathcal E$ a 1-real phase structure on $\Gamma_\nu$,
    there exists an algebraic hypersurface $H$ of $\mathbb R X_\Delta$ such that the pair $(\mathbb RX_\Delta, H)$ is homeomorphic to $(\widetilde \Delta, X_{\mathcal E})$.
    This homeomorphism is stratified in the sense of Theorem \ref{realtoric}.
\end{theo}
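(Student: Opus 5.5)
By Example \ref{signe}, the $1$-real phase structure $\mathcal E$ comes from a sign distribution $\mu \colon F_0(\Gamma_\nu) \to \{+,-\}$, unique up to a global sign, with $\mathcal E = \mathcal E_\mu$. The plan is therefore to run Viro's classical combinatorial patchworking argument for $\mu$, and then to check that the resulting piece-by-piece model coincides with the barycentric complex $X_{\mathcal E_\mu}$ built in Section \ref{sec:construction}. I will use the polynomial family
\[ f_t(x) = \sum_{v \in F_0(\Gamma_\nu)} \mu(v)\, t^{\nu(v)} x^v , \qquad t>0 \text{ small}, \]
and take $H = \{f_t = 0\}$, that is, its closure in $\mathbb RX_\Delta$, for $t$ small enough.

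\textbf{Step 1: the local model on one simplex.} Fix an orthant $s \in \mathcal Q^n$ and a simplex $\tau \in F_n(\Gamma_\nu)$. Substituting $x = s \cdot y$ with $y \in \mathbb R_{>0}^n$ multiplies the monomial $x^v$ by $(-1)^{s\cdot v}$, so the sign in front of $y^v$ is exactly $\mu(s(v))$. Because $\tau$ is unimodular, a monomial change of coordinates turns the truncation $f^\tau = \sum_{v \in \tau} \mu(v) x^v$ into an affine linear polynomial in $n$ positive variables. On the closed orthant its zero set is therefore either empty or a PL-ball-like piece. This piece crosses an edge $[a,b]$ of $s(\tau)$ exactly when $\mu(s(a)) \neq \mu(s(b))$, i.e.\ when $s \in \mathcal E_\mu([a,b])$. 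Moreover, the whole pair (closed orthant, zero set) is stratified-isotopic, through the moment map onto $s(\tau)$, to $(s(\tau), C_{\mathcal E_\mu}(s(\tau)))$. Here one uses that the full subcomplex of the barycentric subdivision spanned by the barycenters of faces containing both signs is the standard PL model of a linear hypersurface in a simplex. The same check applies to every face of $\tau$, so these identifications are compatible on common faces and with the boundary gluings defining $\widetilde\Delta$ (Theorem \ref{realtoric}).

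\textbf{Step 2: gluing.} Convexity of $\nu$ is now used. For each $\tau$ there is an affine function $\lambda_\tau$ that equals $\nu$ on the vertices of $\tau$ and is strictly smaller than $\nu$ at all other lattice points. After the torus rescaling $x \mapsto t^{-\lambda_\tau}x$ and division by $t^{\lambda_\tau(0)}$, $f_t$ becomes $f^\tau$ plus terms that are $O(t^\varepsilon)$ on the preimage under the moment map of a neighbourhood of $\tau$, shrunk away from the cells where other simplices dominate. Nondegeneracy of every truncation $f^{\sigma}$, for $\sigma$ a face of $\Gamma_\nu$, together with compactness and a transversality/isotopy argument, then shows the following. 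For small $t$, the pair $(\mathbb RX_\Delta, \overline{\{f_t=0\}})$ is covered by the charts $s(\tau)$ via the moment map, and in each chart it is isotopic to the local model. These isotopies agree on overlaps, and they respect the toric strata because restricting $f_t$ to a face $F$ gives the polynomial built from $\nu|_F$.

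The main obstacle is Step 2 near the lower-dimensional simplices $\sigma$, where several charts meet. There one has to control $f_t$ uniformly through the truncation $f^\sigma$ times a torus factor, and build isotopies that glue. I would handle this by the standard induction on the dimension of faces. First straighten the hypersurface near vertices, then near edges, and so on, each time using that $f^\sigma$ is nondegenerate, being a unimodular, hence linear, model. This produces a stratified homeomorphism as required.
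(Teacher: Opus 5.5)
Your proposal matches the paper's treatment. The paper gives no proof of its own: it translates $\mathcal E$ into a sign distribution $\mu$ via Example \ref{signe} and invokes Viro's classical theorem with the Viro polynomial $P_t$, exactly as you do, and your Steps 1--2 outline Viro's own argument. They also add a useful check the paper leaves implicit, namely that Viro's linear model in each simplex agrees with the barycentric complex $C_{\mathcal E_\mu}(s(\tau))$. One point needs stating more carefully: the charts in Step 2 must be the $t$-dependent ones coming from your rescalings, equivalently the moment map of the weighted embedding $x \mapsto [t^{\nu(\omega)}x^{\omega}]_{\omega}$, and not the fixed moment map of $\mathbb RX_\Delta$, whose cells $s(\tau)$ do not track the regions where the monomials of $\tau$ dominate $f_t$.
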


    If $\mu$ is a sign distribution corresponding to $\mathcal E$ (see Example \ref{signe}), the hypersurface $H$ is given by the polynomial equation $\{P_t =0\} \subset \mathbb RX_\Delta$, for sufficiently small $t>0$, where
    \[ P_t(x) = \sum_{a \in F_0(\Gamma_\nu)} \mu(a) t^{\nu(a)} x^a .\]
    Such a polynomial is called a \emph{Viro polynomial}.

\section{Real phase structures on the standard simplex}

The goal of this section is to prove the two following theorems.
\begin{theo}\label{nbfacet}
    Let $\Delta$ be a lattice polytope of dimension $n$, let $\Gamma$ be an unimodular triangulation of $\Delta$ and let $\mathcal E$ be a real phase structure on the $k$-skeleton of $\Gamma$.
    The number of maximal cells in the canonical cell decomposition of $X_\mathcal E$ is
    \[ \sum_{i=k}^n \binom{n}{k} \Vol(\Delta).\]
\end{theo}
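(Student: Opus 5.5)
The count starts from Proposition \ref{PLmanifold}. The maximal cells of the canonical decomposition are the $(n-k)$-cells $C_{\mathcal E}(s(\tau))$ with $\tau\in F_n(\Gamma)$ and $s\in\mathcal E(\tau)$. Their number is therefore $\sum_{\tau\in F_n(\Gamma)}|\mathcal E(\tau)|$. Since $\Gamma$ is unimodular, $|F_n(\Gamma)|=\Vol(\Delta)$. The theorem thus reduces to a local claim.

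\emph{Local claim.} For every unimodular $n$-simplex $\tau$ and every real phase structure on the $k$-skeleton of $\tau$, we have $|\mathcal E(\tau)|=\sum_{i=k}^n\binom{n}{i}$.

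I read the binomial in the statement as $\binom{n}{i}$. This matches the examples: for $n=2,k=1$ one gets $3$; for $n=3,k=2$ one gets $4$; for $k=0$ one gets $2^n$.

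To prove the local claim, I first normalize. An affine unimodular map sends $\tau$ to the standard simplex $\Delta_n$. It acts on $\mathbb F_2^n$ and its dual, hence affinely on $\mathcal Q^n$. It maps each $T_2^\perp(\sigma)$ to the corresponding orthogonal space and preserves the parity condition. Cardinalities are unchanged, so it suffices to treat $\tau=\Delta_n$.

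For each $s\in\mathcal Q^n$, let $S_s\subset F_k(\Delta_n)$ be the set of $k$-faces $\sigma$ with $s\in\mathcal E(\sigma)$. The parity condition says exactly that $S_s$, viewed as a mod $2$ $k$-cochain on the simplex, is a cocycle. The goal is to count the $s$ with $S_s\neq\emptyset$.

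I would argue by induction on $n$ (and $k$), splitting off a vertex $v$ of $\Delta_n$, using the Pascal identity
\[\sum_{i\ge k}\binom{n}{i}=\sum_{i\ge k}\binom{n-1}{i}+\sum_{i\ge k-1}\binom{n-1}{i}.\]
The $k$-faces not containing $v$ lie in the opposite facet $F$. For these faces $T_2^\perp(\sigma)\supset T_2^\perp(F)$, a line. Quotienting $\mathcal Q^n$ by $T_2^\perp(F)$ therefore yields a real phase structure on the $k$-skeleton of an $(n-1)$-simplex. The $k$-faces containing $v$ correspond to $(k-1)$-faces of the link of $v$. Restricting to a complementary hyperplane section of $\mathcal Q^n$ yields a $(k-1)$-real phase structure on an $(n-1)$-simplex.

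The cocycle description is what should make the two pieces fit together. A nonzero cocycle on the simplex is nonzero either on the faces of $F$ or on the faces through $v$. Moreover, a cocycle is determined by its restriction to the faces through $v$, being the coboundary of its contraction with $v$. I expect this to give a bijection between orthants in $\mathcal E(\Delta_n)$ and the disjoint union of the two counts supplied by the induction hypothesis.

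The base cases are straightforward. For $k=0$, Example \ref{ex0} gives $|\mathcal E(\Delta_n)|=2^n$. For $k=n$, the unique $n$-face carries an affine subspace directed by $T_2^\perp(\Delta_n)=0$, i.e. a single point, so $|\mathcal E(\Delta_n)|=1$.

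The main obstacle is the bookkeeping of the lifts between $\mathcal Q^n$ and the quotient or section spaces. I must check that each orthant is counted exactly once by one of the two pieces. I must also check that the induced data genuinely satisfy the parity condition. The delicate point is that the affine subspaces $\mathcal E(\sigma)$ for faces through $v$ need not be compatible with a naive section.

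If this fails, my fallback is a direct computation. I would use the structure of real phase structures on a simplex from \cite{brugalle2024combinatorial}, presumably all affinely equivalent to a standard model such as the "stable intersection" of $k$ generic sign distributions. I would then count $|\mathcal E(\Delta_n)|$ explicitly in that model, which amounts to counting orthants meeting a generic codimension-$k$ T-linear space.
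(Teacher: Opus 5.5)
Your reduction to the local count $|\mathcal E(\Delta_n)|=\sum_{i=k}^n\binom{n}{i}$ follows the same skeleton as the paper's proof: you split along a vertex $v$ with opposite facet $F$ and then apply Pascal's rule. However, one piece is wrong and one is missing.

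First, the $(k-1)$-structure on the link must be the projection $\mathcal E^F(\sigma\cap F)=\pi_F(\mathcal E(\sigma))$ for $\sigma\ni v$. This uses the same quotient by $T_2^\perp(F)$ that you already use for the faces of $F$. An intersection with a hyperplane section $H$ does not work. For $\mathcal E(\sigma)\cap H$ to have the required $2^{n-k}$ elements, every $\mathcal E(\sigma)$ with $\sigma\ni v$ would have to lie in one hyperplane complementary to $e_F$. This already fails in Example~\ref{ex:E_1} with $v=a$: $\mathcal E_1([a,b])\cup\mathcal E_1([a,c])$ has three elements, while such an $H$ has two. Since $e_F\notin T_2^\perp(\sigma)$ for $\sigma\ni v$, the map $\pi_F$ is injective on $\mathcal E(\sigma)$. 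With $\pi_F$, your cocycle viewpoint does give the parity condition. The cochains $S_s$ and $S_{s+e_F}$ agree on faces of $F$ and have disjoint supports on faces through $v$. Hence $S_s+S_{s+e_F}$ is a cocycle supported on the star of $v$, and its link at $v$ is exactly the cochain of $\pi_F(s)$ for $\mathcal E^F$.

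Second, and this is the genuine gap: the ``bijection'' you expect is the entire content of the proof, and the cocycle facts you list do not produce it. Group the orthants in pairs $\{s,s+e_F\}$.

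If $s\in\mathcal E(F)$, your remark that a cocycle is determined by its values on faces through $v$ settles the pair. It contributes $2$ to $|\mathcal E(\Delta_n)|$ and $1+1$ to $|\mathcal E_{|F}(F)|+|\mathcal E^F(F)|$.

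If $s\notin\mathcal E(F)$, the pair contributes the number of non-zero cochains among $S_s,S_{s+e_F}$ to the left side. On the right side it contributes only $1$ or $0$, according to whether $S_s+S_{s+e_F}\neq 0$.

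So what you must prove is: \emph{if $s$ and $s+e_F$ are both non-empty orthants, then $s\in\mathcal E(F)$}. Without it you only get $|\mathcal E(\Delta_n)|\geq|\mathcal E_{|F}(F)|+|\mathcal E^F(F)|$. This does not follow from cocycle considerations. Take $n=4$, $k=3$, $v=v_0$, and let $G_i$ be the facet opposite $v_i$. Then $\{G_1,G_2\}$ and $\{G_3,G_4\}$ are disjoint non-zero cocycles supported on the star of $v_0$. Only the affine structure rules this out.

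The paper proves the claim with three ingredients:
\begin{itemize}
\item Lemma~\ref{necklace}: on each $(k+1)$-face the spaces $\mathcal E(\sigma)$ form a single cyclic chain, ultimately because the only relation among the $e_G$ is $\sum_G e_G=0$;
\item Corollary~\ref{adjacency};
\item the path-connectivity Lemma~\ref{pathsimplex}, itself proved by an induction that uses both $\mathcal E_{|F}$ and $\mathcal E^F$.
\end{itemize}

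Your fallback does not close the gap either. It presupposes a normal form for real phase structures on a simplex that is not available.
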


\begin{theo}\label{nbsimplex}
    Let $\Delta$ be a lattice polytope of dimension $n$, let $\Gamma$ be an unimodular triangulation of $\Delta$ and let $\mathcal E$ be a real phase structure on the $k$-skeleton of $\Gamma$.
    The number of maximal and simplicial cells in the canonical cell decomposition of $X_\mathcal E$ is at most
    \[ \frac{2}{k+1} \binom{n+1}{k} \Vol(\Delta).\]
\end{theo}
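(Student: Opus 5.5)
The plan is to reduce the statement to a local count inside a single maximal simplex, and then to double count incidences between simplicial maximal cells and the $k$-faces that serve as their vertices.

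\textbf{Reduction to one simplex.} By Proposition \ref{PLmanifold}, the maximal cells of $X_\mathcal E$ are the $C_\mathcal E(s(\tau))$ with $\tau \in F_n(\Gamma)$ and $s \in \mathcal E(\tau)$. Since $\Gamma$ is unimodular, $|F_n(\Gamma)| = \Vol(\Delta)$. It therefore suffices to prove the following for one $n$-simplex $\tau$, which is lattice-equivalent to the standard simplex $\Delta_n$ (and the construction is equivariant under this equivalence): the number of $s$ for which $C_\mathcal E(s(\tau))$ is simplicial is at most $\frac{2}{k+1}\binom{n+1}{k}$.

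\textbf{Combinatorial description of a simplicial cell.} For fixed $s$, let $S_s = \{\sigma \in F_k(\tau) \mid s \in \mathcal E(\sigma)\}$.
\begin{itemize}
\item The vertices of the cell $C_\mathcal E(s(\tau))$ are the $0$-cells $C_\mathcal E(s(\sigma))$ with $\sigma \in S_s$.
\item Its edges are the $C_\mathcal E(s(\rho))$ for $(k+1)$-faces $\rho$. By the parity condition, each such $\rho$ contains exactly two elements of $S_s$ whenever it contains any.
\end{itemize}
Suppose the cell is an $(n-k)$-simplex. Then $|S_s| = n-k+1$, and any two elements of $S_s$ span a common $(k+1)$-face, so they share $k-1$ vertices. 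A family of $k$-simplices pairwise meeting in $(k-1)$-faces either contains a common $(k-1)$-face $\eta$, or lies in a common $(k+1)$-face. The second case forces $|S_s| \le k+2$ and can be treated separately, since it only occurs for small $n-k$. In the first case $S_s$ consists of $n-k+1$ of the $n-k+2$ $k$-faces of $\tau$ containing $\eta$.

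\textbf{Double counting.} The key claim is that each $k$-face $\sigma$ of $\tau$ is a vertex of at most $2$ simplicial maximal cells $C_\mathcal E(s(\tau))$. Granting this, counting pairs (simplicial cell, vertex) gives
\[ (n-k+1)\cdot\#\{\text{simplicial cells in }\tau\} \;\le\; 2\binom{n+1}{k+1} = \frac{2(n-k+1)}{k+1}\binom{n+1}{k}, \]
which is exactly the required bound. As a sanity check, for $k=n-1$ this says each facet carries at most $2$ curve edges. That holds because $|\mathcal E(\sigma)| = |T_2^\perp(\sigma)| = 2^{n-k} = 2$, and the resulting bound $n+1$ is correct.

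\textbf{Main obstacle: proving the claim.} Fix $\sigma$ and a simplicial cell through it, with central face $\eta \subset \sigma$ (there are $k+1$ choices of $\eta$). The orthant $s$ lies in the intersection of the affine spaces $\mathcal E(\eta \cup \{a_i\})$ over the $n-k+1$ chosen apices $a_i$. These affine spaces are directed by the $T_2^\perp(\eta\cup\{a_i\})$, and these directions intersect in $T_2^\perp$ of a face of dimension $n$ or $n-1$, so the intersection is at most a point or a line.

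I would first show the following: the parity condition on each $(k+1)$-face $\eta\cup\{a_i,a_j\}$ forces $s$ to avoid the other $k$-faces $\eta'\cup\{a_i,a_j\}$ (with $\eta' \neq \eta$), and this pins $s$ down up to the single reflection in $T_2^\perp(\eta)/T_2^\perp(\sigma)$-type directions. A counting-by-parity argument on the $(k+1)$-faces containing $\sigma$ then rules out a third orthant: each extra simplicial cell through $\sigma$ would create an odd configuration on some $\rho \supset \sigma$.

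This is the step I expect to be delicate. If the direct argument fails, the fallback is to prove $c=2$ by induction on $n$, restricting $\mathcal E$ to the link of $\sigma$ as a real phase structure on a lower-dimensional simplex.
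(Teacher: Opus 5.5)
Your reduction to a single simplex and the double count of pairs (simplicial cell, $k$-face vertex) are sound. They would give a genuinely different route from the paper's. The paper inducts on $n$: it restricts a simplicial orthant to the $n+1-k$ facets containing its center $\gamma$ (Lemma~\ref{l:gamma}) and double counts pairs (orthant, facet). Your approach, by contrast, rests entirely on the claim that each $k$-face is a vertex of at most two simplicial cells, and you do not prove that claim.

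The mechanism you sketch cannot work. If $s$ is simplicial with center $\eta$, each $(k+1)$-face contains either $0$ or exactly $2$ of the $k$-faces through $\eta$. So the parity condition holds orthant by orthant and never links different orthants, and no ``odd configuration'' can appear. The case $k=0$ makes this concrete: every orthant is simplicial and each vertex lies in all $2^n$ of them. Your claim therefore fails for $k=0$, $n\ge 2$, and so does the statement itself, which needs $k\ge 1$. Any proof of the claim must use $k\ge1$ and the affine structure of the sets $\mathcal E(\sigma)$, not only parity.

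The missing step can be supplied as follows. Write $F_b$ for the facet opposite the vertex $b$.

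First, there is a counting slip: $\eta$ lies in exactly $n-k+1$ (not $n-k+2$) $k$-faces of $\tau$. So $S_s$ is the set of all $k$-faces containing $\eta$. For $k\ge1$ the $n+1-k$ vectors $e_{F_b}$, $b\notin\eta$, are linearly independent, so $\bigcap_{a\notin\eta}T_2^\perp(\eta\cup\{a\})=0$. Hence a simplicial orthant is uniquely determined by its center.

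Now fix $\sigma$ and $\rho=\sigma\cup\{a\}$. Lemma~\ref{necklace} splits $\mathcal E(\sigma)$ into two halves $\mathcal B$ and $\mathcal B+e_{F_a}$. All orthants in one half have the same second $k$-face in $\rho$, namely the neighbour of $\sigma$ in the necklace. A simplicial orthant with center $\sigma\setminus\{w\}$ has second face $\rho\setminus\{w\}$. So two simplicial orthants at $\sigma$ lying in the same half for some $a$ have the same center, and therefore coincide.

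Since $\mathcal E(\sigma)$ is a coset of the space with basis $(e_{F_b})_{b\notin\sigma}$, two orthants lie in opposite halves for every $a$ only if they differ by $\sum_{b\notin\sigma}e_{F_b}$. Three orthants cannot pairwise do so. This gives at most two simplicial cells at $\sigma$, and your double count then yields the bound for $k\ge1$.

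Compared with the paper, this route is non-inductive. It also shows that the two simplicial cells at a given vertex are ``antipodal''.
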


Here an $m$-cell is said simplicial if it has exactly $m+1$ facets.

Recall that a full-dimensional simplex $\sigma \in \Gamma$ contributes to a full-dimensional cell $C_{\mathcal E}(s(\sigma))$ of $X_\mathcal E$ for each orthant $s \in \mathcal E(\sigma)$.
Thus, we will enumerate the orthants of $\mathcal E(\sigma)$ for $\sigma$ a unimodular simplex.

In this section, the lattice polytope $\Delta$ considered is a unimodular lattice simplex of dimension $n$ in $\mathbb R^n$,
 and $\mathcal E$ denotes a real phase structure on the $k$-skeleton of $\Delta$.
Up to a unimodular transformation, one can always choose the standard $n$-simplex $\Delta_n = \Conv(0, e_1, \dots, e_n) \subset \mathbb R^n$ where $(e_1, \dots, e_n)$ is the canonical basis of $\mathbb R^n$.
The polytope $\Delta_n$ has a unique triangulation, whose simplices are precisely the faces of $\Delta_n$.

In the dual setting of fans, real phase structures have been studied in \cite{rau2022real}.
In particular, it was shown there that real phase structures on a fan are in bijection with the orientations of the matroid associated to the fan.
By the topological representation theorem of Folkman and Lawrence \cite{folkman1978oriented}, every oriented matroid is realizable by arrangement of pseudohyperplanes in a real projective space.

The arrangement corresponding to a real phase structure $\mathcal E$ can be recovered from the associated T-variety $X_\mathcal E \subset \widetilde{\Delta_n}$:
For each facet $F$ of $\Delta$, $\widetilde{F}$ intersects $X_\mathcal E$ as a hypersurface $H_F$ in $X_\mathcal E$.
It turns out that the collection of the $H_F$ in $X_\mathcal E$ defines an arrangement of pseudohyperplanes in a projective space.
In particular, the T-manifold $X_\mathcal E$ is homeomorphic to $\mathbb RP^{n-k}$.

This provides the following geometrical intuition of real phase structures on unimodular simplex: 
the structure $\mathcal E$ can be thought as an arrangement $\mathcal A$ of $n+1$ (pseudo)hyperplanes in $\mathbb RP^{n-k}$. The chambers (i.e. the full-dimensional region) $c$ of the arrangement $\mathcal A$ correspond to orthants $s \in \mathcal E(\Delta_n)$,
 and the vertices of the chambers $c$ correspond to $k$-faces $\sigma$ of $\Delta_n$ such that $s \in \mathcal E(\sigma)$.

The Figure \ref{linearrangement} represents the arrangement of four pseudolines in $\mathbb RP^2$ obtained from the real phase structure $\mathcal E_2$ of Example \ref{ex2}.
Each of the four black lines corresponds to a facet of $\Delta_3$, and they delimit seven chambers corresponding to the non-empty orthants of $X_{\mathcal E_2}$.

\begin{figure}
    \includegraphics{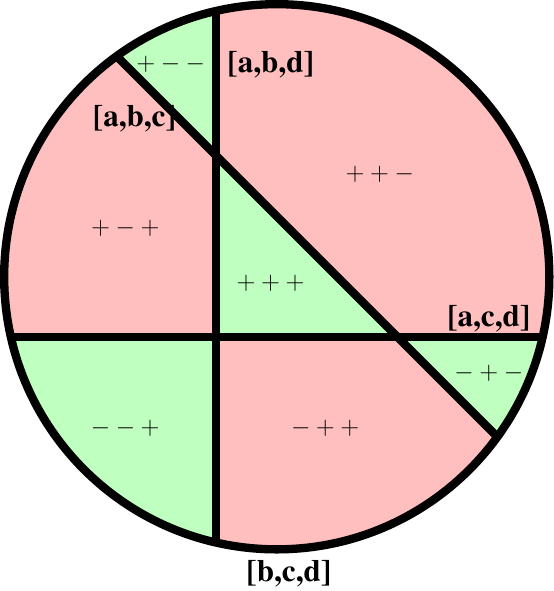}
    \caption{Arrangement of pseudolines in $\mathbb RP^2$ associated to $\mathcal E_2$}
    \label{linearrangement}
\end{figure}

With this correspondence, estimating the number of orthants in $\mathcal E(\Delta)$ is equivalent to counting the chambers in a pseudohyperplanes arrangement.
The answer of this problem is well known in the theory of oriented matroids.
However, we are here only interested to the simple case of uniform matroids.
Thus, to keep the presentation self-contained and to avoid relying on the full machinery of oriented matroids and topological representation theory, we prefer to use direct combinatorial arguments based on the elementary properties of real phase structures.

\subsection{Restrictions and projections of real phase structure}

For a facet $F$ of $\Delta_n$, the orthogonal space $T_2^\perp(F)$ is a one-dimensional $\mathbb F_2$-vector space. We denote its unique non-zero element by $e_F$.
Since $\Delta_n$ is a unimodular simplex, it follows that for any strict subset $I \subsetneq F_{n-1}(\Delta_n)$, the vectors $(e_{F})_{F \in I}$ are linearly independent.
The only non-trivial relation between the $(e_F)_{F \in F_{n-1}(\Delta_n)}$ is:
\[ \sum_{F \in F_{n-1}(\Delta_n)} e_F=0.\]
 
Given a lattice simplex, the quotient $\mathcal Q^n / T_2^\perp(\sigma)$ is an affine space of direction the dual space $T_2^{\vee}(\sigma) = \Hom(T_2(\sigma), \mathbb F_2)$, since this vector space is canonically isomorphic to the quotient $(\mathbb F_2^n)^{\vee}/T_2^\perp(\sigma)$.
Let $\pi_{\sigma} \colon \mathcal Q^n \to \mathcal Q^n / T_2^\perp(\sigma)$ be the projection map.

\begin{defi}
    Let $\mathcal E$ be a real phase structure on the $k$-skeleton of $\Delta_n$ and let  $F$ be a facet of $\Delta_n$.

    If $k<n$, the restriction $\mathcal E_{|F}$ is defined by $\mathcal E_{|F}(\sigma) = \pi_F( \mathcal E(\sigma) )$ for each $k$-simplex $\sigma$ of $F$.

    If $k>0$, the projection $\mathcal{E}^F$ is defined by $\mathcal E^F(\sigma \cap F) = \pi_F( \mathcal E(\sigma) )$ for each $k$-simplex $\sigma$ of $\Delta_n$ not contained in $F$.
\end{defi}

The restriction and projection operations on real phase structure are the equivalent of deletion and contraction for (oriented) matroids.
In the case of real phase structure on fan, they are defined and studied in \cite{rau2023realphase}.

\begin{prop}
    Let $\mathcal E$ be a real phase structure on the $k$-skeleton of $\Delta_n$, and let $F$ be a facet of $\Delta_n$.
    \begin{itemize}
        \item If $k<n$, the restriction $\mathcal{E}_{|F}$ is a real phase structure on $F$ of codimension $k$.
        \item If $k>0$, the projection $\mathcal{E}^F$ is a real phase structure on $F$ of codimension  $k-1$.
    \end{itemize}
   \end{prop}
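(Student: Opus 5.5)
The plan is to verify the two defining properties directly in each case: $\pi_F(\mathcal E(\sigma))$ must be an affine subspace with the correct direction, and the parity condition must hold. Throughout I use that $\pi_F$ is the quotient by the line $T_2^\perp(F)=\langle e_F\rangle$. Its target $\mathcal Q^n/\langle e_F\rangle$ is directed by $(\mathbb F_2^n)^\vee/\langle e_F\rangle \cong T_2^\vee(F)$. Under this isomorphism, the orthogonal of a subspace $T_2(\rho)\subset T_2(F)$, for a face $\rho\subset F$, is $T_2^\perp(\rho)/\langle e_F\rangle$, where $T_2^\perp(\rho)\supset\langle e_F\rangle$ is computed in the full space. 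The image of an affine subspace under the linear quotient map is again affine, with direction the image of the original direction.

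\emph{Restriction.} Let $\sigma\subset F$ be a $k$-simplex. Since $T_2^\perp(F)\subset T_2^\perp(\sigma)$, the set $\mathcal E(\sigma)$ is invariant under translation by $e_F$. Hence $\pi_F(\mathcal E(\sigma))$ is directed by $T_2^\perp(\sigma)/\langle e_F\rangle$, which is exactly the orthogonal of $T_2(\sigma)$ in $T_2^\vee(F)$.

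For parity, take a $(k+1)$-simplex $\tau\subset F$ and a class $[s]$. By $e_F$-invariance, $[s]\in\pi_F(\mathcal E(\sigma))$ if and only if $s\in\mathcal E(\sigma)$. So the count for $[s]$ equals the count for $s$ in $\Delta_n$, which is even. The hypothesis $k<n$ only ensures that $F$ has $k$-faces.

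\emph{Projection.} Let $v_F$ be the vertex opposite $F$. A $k$-simplex $\sigma\not\subset F$ must contain $v_F$, so $\sigma\cap F$ is a $(k-1)$-face of $F$. This gives a bijection between $k$-simplices containing $v_F$ and $(k-1)$-faces of $F$, so $\mathcal E^F$ is well defined.

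The direction of $\pi_F(\mathcal E(\sigma))$ is $(T_2^\perp(\sigma)+\langle e_F\rangle)/\langle e_F\rangle$. I therefore need the identity
\[T_2^\perp(\sigma\cap F)=T_2^\perp(\sigma)+T_2^\perp(F).\]
This follows by taking orthogonals in $T_2(\sigma\cap F)=T_2(\sigma)\cap T_2(F)$, which holds by unimodularity: the edge vectors from a vertex of $\sigma\cap F$ form part of a basis of $\mathbb F_2^n$ completing one of $T_2(F)$. This identification of directions is the step that needs the most care; the rest is bookkeeping.

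For parity, let $\tau'$ be a $k$-simplex of $F$ and set $\tau=\tau'\ast v_F$, a $(k+1)$-simplex. The $k$-faces of $\tau$ are $\tau'$ itself together with the simplices $\sigma\ni v_F$, and the latter correspond to the $(k-1)$-faces of $\tau'$.

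Fix $[s]=\{s,s+e_F\}$. For $\sigma\ni v_F$ we have $e_F\notin T_2^\perp(\sigma)$, since $\sigma\not\subset F$. Hence $s$ and $s+e_F$ cannot both lie in $\mathcal E(\sigma)$, and so
\[\mathbf 1_{[s]\in\pi_F(\mathcal E(\sigma))}=\mathbf 1_{s\in\mathcal E(\sigma)}+\mathbf 1_{s+e_F\in\mathcal E(\sigma)}.\]
Summing over $\sigma\ni v_F$, the total equals
\[\sum_{\sigma\in F_k(\tau)}\bigl(\mathbf 1_{s\in\mathcal E(\sigma)}+\mathbf 1_{s+e_F\in\mathcal E(\sigma)}\bigr)\;-\;\bigl(\mathbf 1_{s\in\mathcal E(\tau')}+\mathbf 1_{s+e_F\in\mathcal E(\tau')}\bigr).\]
The first sum is even by the parity condition for $\mathcal E$ applied at $s$ and at $s+e_F$. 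The second term is even because $\mathcal E(\tau')$ is $e_F$-invariant, as $\tau'\subset F$. Hence the number of $(k-1)$-faces $\rho$ of $\tau'$ with $[s]\in\mathcal E^F(\rho)$ is even, which is the parity condition for $\mathcal E^F$.
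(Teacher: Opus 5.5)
Your proof is correct and follows the same route as the paper. For the restriction you use $e_F$-invariance of $\mathcal E(\sigma)$; for the projection you use the identity $T_2^\perp(\sigma\cap F)=T_2^\perp(\sigma)+T_2^\perp(F)$ together with the fact that exactly one of $s$, $s+e_F$ lies in $\mathcal E(\sigma)$ whenever $[s]\in\mathcal E^F(\sigma\cap F)$. Your explicit treatment of the face $\tau'\subset F$ in the projection parity count is slightly more careful than the paper's ``sum of two even numbers'', since the two counts over faces not in $F$ need not be even individually, only of equal parity.
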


\begin{proof}

    \begin{itemize}
    \item Let $\sigma$ be a $k$-simplex of $F$. The image $\overrightarrow{\pi_F}(T_2^\perp(\sigma))$ is the dual of $T_2(\sigma)$ viewed as a subspace of $T_2(F)$.
    Thus, the affine space $\mathcal E_{|F}(\sigma) = \pi_F( \mathcal E(\sigma) )$ is directed by the dual of $T_2(\sigma) \subset T_2(F)$.
    
    It remains to show the parity condition.

    Let $s \in \mathcal Q^n$. The fiber $\pi_F^{-1}(\pi_F(s))$ consists of two elements: \[ \{s, s+e_F\} = s + T_2^\perp(F).\]
    For a $k$-simplex $\sigma$ in  $F$, since $T_2^\perp(F) \subset T_2^\perp(\sigma)$, we have $s \in \mathcal E(\sigma)$ if and only if $s + e_F \in \mathcal E(\sigma)$.
    Therefore, the condition $\pi_F(s) \in \mathcal E_{|F} (\sigma)$ is equivalent to $s \in \mathcal E(\sigma)$.
    Thus, the parity condition for $\mathcal E_{|F}$ follows directly from the parity condition of $\mathcal E$. \newline 
    \item Let $\sigma$ be a $k$-simplex of $\Delta_n$ not contained in $F$.
    Since $\Delta_n$ is unimodular, we have $T(\sigma)+T(F) = \mathbb Z^n$, and consequently,
    \[T_2^\perp(\sigma \cap F) = T_2^\perp(\sigma) + T_2^\perp(F).\]
    Thus, we have the equality $\overrightarrow{\pi_F}(T_2^\perp(\sigma \cap F)) = \overrightarrow{\pi_F}(T_2^\perp(\sigma))$.
    And $\mathcal E^F(\sigma \cap F) = \pi_F( \mathcal E(\sigma) )$ is an affine space directed by the dual of $T_2(\sigma \cap F)$ inside $T_2(F)$.
    
    It remains to show the parity condition. Let $s \in \mathcal Q^n$ such that $\pi_F(s) \in \mathcal E^F(\sigma \cap F) =  \pi_F( \mathcal E(\sigma) )$.
    Either $s \in \mathcal E(\sigma)$ or $s+e_F \in \mathcal E(\sigma)$.
    But the two possibilities can not both occur simultaneously, as this would imply that $e_F$, and thus $T_2^\perp(F)$, is contained in the direction $T_2^\perp(\sigma)$ of $\mathcal E(\sigma)$ contradicting that $\sigma$ is not contained in $F$.
    
    The number of $k$-simplices $\sigma$ such that $\pi_F(s) \in \mathcal{E}^F(\sigma \cap F)$ is thus the sum of the number of $k$-simplices $\sigma$ such that $s \in \mathcal{E}(\sigma)$ and the number of those such that $s+e_F \in \mathcal{E}(\sigma)$.
    The parity condition of $\mathcal E^F$ for the orthant $\pi_F(s)$ is then a consequence of the parity condition of $\mathcal E$ for the orthant $s$ and the orthant $s+e_F$, since
    the sum of two even numbers is even.
    \end{itemize}
\end{proof}

The T-manifolds $X_{\mathcal E_{|F}}$ and $X_{\mathcal E^F}$ are obtained from $X_\mathcal E$ in the following way.

The T-manifold $X_{\mathcal E_{|F}}$ is a subcomplex of $X_{\mathcal E}$, defined as the intersection $X_{\mathcal E} \cap \widetilde F$.
Indeed, we have observed that for a $k$-simplex $\sigma$ of $F$, we have $\pi_F(s) \in \mathcal E_{|F}(\sigma)$  if and only if $s \in \mathcal E(\sigma)$.
Thus, by identifying $C_{\mathcal E}(\pi_F(s)(\sigma)) \subset \widetilde F$ with $C_{\mathcal E}(s(\sigma)) \subset \widetilde \Delta_n$,
we conclude that $X_{\mathcal E_{|F}}$ is the subcomplex of $X_{\mathcal E}$ composed of all the cells lying in $\widetilde F$.

We now describe the T-manifold $X_{\mathcal E^F}$. Denote by $v_F$ the vertex of $\Delta_n$ opposite to $F$. For each point $p$ in $\Delta$
distinct from $v_F$, there is a unique point $\varphi(p)$ of $F$ lying on the line define by $v_F$ and $p$.
The application $\varphi$ extends to an application $\widetilde \varphi \colon \widetilde \Delta_n \setminus \{v_F\} \to \widetilde F$,
where $v_F$ denotes abusively the common image of all the symmetric copies $s(v_F)$ of $v_F$ in the gluing $\widetilde \Delta$.
The image of the barycentric subdivision of $\widetilde \Delta_n$ under  $\widetilde \varphi$ is the barycentric subdivision of $\widetilde F$.
More precisely, the image of the barycenter of a face $s(\tau)$ under $\widetilde \varphi$ is the barycenter of $s(\tau) \cap \widetilde F$. 
With this description, we see that the T-manifold associated to the projection $X_{\mathcal E^F}$ in $\widetilde F$ is the image $\varphi(X_\mathcal E)$.

\begin{Ex}
    Consider the real phase structure $\mathcal E_1$, $\mathcal E_2$ and $\mathcal E_3$ from Examples \ref{ex1}, \ref{ex2} and \ref{ex3}.
    Identifying $\Delta_2$ as the facet $[a,b,c]$ of $\Delta_3$,
    we have $\mathcal E_1 = \mathcal E_{2|\Delta_2}$ and $\mathcal E_1 = \mathcal E_3^{\Delta_2}$.
\end{Ex}

\subsection{Number of orthants of $\mathcal E(\Delta_n)$}

In this subsection, we develop the material necessary to prove of Theorem \ref{nbfacet} on the number of maximal cells.

\begin{lemma}\label{necklace}
    Let $\mathcal E$ be a $k$-real phase structure on $\Delta_n$ (with $k<n$), and let $\tau$ be a $(k+1)$-simplex of $\Delta_n$.
    There exists a cyclic order $\sigma_0, \dots, \sigma_{k+1}$ on the $k$-simplices of $\tau$
    and affine subspaces $\mathcal B_i \subset \mathcal Q^n$ directed by $T_2^\perp(\tau)$ such that
    $\mathcal E(\sigma_i) = \mathcal B_i \cup \mathcal B_{i+1}$.
    Moreover, if $F_i$ is a facet of $\Delta_n$ such that $\sigma_i = \tau \cap F_i$, then $\mathcal B_{i+1} = \mathcal B_i +e_{F_i}$.
\end{lemma}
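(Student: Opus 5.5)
Let $\tau$ have vertices $w_0,\dots,w_{k+1}$. The $k$-faces of $\tau$ are $\sigma_i=\tau\setminus\{w_i\}$, and each one is $\tau\cap F_i$, where $F_i$ is the facet of $\Delta_n$ opposite $w_i$ (choose any such facet). The plan is to reduce everything to linear algebra in the quotient $\mathcal Q^n/T_2^\perp(\tau)$.

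\textbf{Reduction to the quotient.} Since $\Delta_n$ is unimodular, $T_2^\perp(\sigma_i)=T_2^\perp(\tau)\oplus\langle e_{F_i}\rangle$. The $e_{F_i}$ are distinct vectors, one for each vertex $w_i$. They satisfy a single relation modulo $T_2^\perp(\tau)$: their images in $T_2^\vee(\tau)$ are the $k+2$ functionals coming from the facets of the unimodular $(k+1)$-simplex $\tau$, and these span a $(k+1)$-dimensional space subject only to $\sum_i \bar e_{F_i}=0$. Pass to $V=\mathcal Q^n/T_2^\perp(\tau)$, an affine space over $\mathbb F_2^{k+1}$. Each $\mathcal E(\sigma_i)$ is a union of fibers, so it projects to an affine line $L_i=\{p_i,p_i+\bar e_i\}$ in $V$, where $\bar e_i$ denotes the image of $e_{F_i}$. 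The parity condition for $\tau$ says exactly that every point of $V$ lies in an even number of the $L_i$.

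\textbf{Combinatorics of lines.} Build the multigraph $G$ on the points of $V$ whose edges are the $L_i$. Each edge joins $p$ and $p+\bar e_i$. By parity, every vertex of $G$ has even degree, so $G$ is a disjoint union of closed walks. I would show that $G$ is a single cycle through all $k+2$ edges. Take any closed walk in $G$. It uses a nonempty set $J$ of edges whose directions sum to $0$. Each edge index is used only once, so $\sum_{i\in J}\bar e_i=0$. The only such relation has $J$ equal to all indices, so the walk uses all $k+2$ edges. Moreover, the walk cannot revisit a vertex: otherwise it would split into two shorter closed walks, each giving a proper relation. Hence $G$ is a simple cycle of length $k+2$. (This requires $k+2\ge 3$, which holds since $k\ge1$ is needed for the statement to be meaningful; for $k=0$ we have two lines with the same direction, which must coincide, and that is the degenerate 2-cycle, still of the form claimed.)

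\textbf{Conclusion.} Order the edges along the cycle and relabel them $\sigma_0,\dots,\sigma_{k+1}$. Let $b_i$ be the vertex shared by consecutive edges $\sigma_{i-1}$ and $\sigma_i$, and let $\mathcal B_i=\pi^{-1}(b_i)$, an affine subspace directed by $T_2^\perp(\tau)$. Then $\mathcal E(\sigma_i)=\mathcal B_i\cup\mathcal B_{i+1}$, because $\mathcal E(\sigma_i)$ is the preimage of $L_i$. Since $b_{i+1}=b_i+\bar e_i$, we get $\mathcal B_{i+1}=\mathcal B_i+e_{F_i}$.

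The main obstacle is the step showing that the cycle is simple and uses all edges. This rests on correctly identifying that the $\bar e_i$ have exactly one linear relation, modulo $T_2^\perp(\tau)$, on the relevant index set. I expect to need care in justifying this independence from unimodularity, since $\tau$ is a face of $\Delta_n$.
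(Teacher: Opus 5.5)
Your argument is correct, but it organizes the proof differently from the paper.

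\textbf{The paper's route.} The paper proves the lemma by induction on $n-k$. The base case $n-k=1$ (where $\tau=\Delta_n$ and $T_2^\perp(\tau)=0$) is exactly your ``the lines form a single cycle'' argument, carried out in $\mathcal Q^n$ itself. The inductive step restricts $\mathcal E$ to a facet $F\supseteq\tau$, applies the hypothesis to $\mathcal E_{|F}$, and pulls the resulting subspaces back through $\pi_F^{-1}$. The ``moreover'' part is then proved separately, by noting that $e_{F_i}$ lies in the direction of $\mathcal E(\sigma_i)$ but not in that of $\mathcal B_i$.

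\textbf{Your route.} You quotient by $T_2^\perp(\tau)$ in one step, which collapses the paper's chain of restrictions into a single projection. This buys two things. You never need the proposition that $\mathcal E_{|F}$ is again a real phase structure. And $\mathcal B_{i+1}=\mathcal B_i+e_{F_i}$ falls out of $b_{i+1}=b_i+\bar e_i$ for free. The price is that you need the single-relation property of the $\bar e_i$ in $T_2^\vee(\tau)$, not just in $(\mathbb F_2^n)^\vee$.

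\textbf{The obstacle you flag.} It is settled exactly as in the paper's proof of Corollary~\ref{adjacency}. The $e_G$ for the $n-k-1$ facets $G\supseteq\tau$ are linearly independent, so they span $T_2^\perp(\tau)$, which has dimension $n-k-1$. Hence $\sum_{i\in J}\bar e_i=0$ means $\sum_{i\in J}e_{F_i}+\sum_{G\in K}e_G=0$ for some set $K$ of such facets. This is a relation among the $e_F$ over distinct facets, so it forces $J=\emptyset$ or $J=\{0,\dots,k+1\}$. Equivalently, the $\bar e_i$ are the mod-$2$ barycentric coordinate functionals of the unimodular simplex $\tau$.

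\textbf{Minor remarks.}
\begin{itemize}
\item $F_i$ is forced to be the facet of $\Delta_n$ opposite $w_i$, so there is nothing to choose.
\item You should speak of closed trails (no repeated edges) rather than closed walks. That is what your ``each edge index is used only once'' actually uses.
\item The case $k=0$ is perfectly meaningful for the statement, and your general argument already covers it: the cycle is simply a $2$-cycle of parallel edges.
\end{itemize}
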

 
Here the indices $i$ are treated modulo $k+2$.

\begin{proof}
    We prove the first part by induction on $n-k$.

    If $n-k=1$, the simplex $\tau$ is $\Delta_n$ and the $k$-simplices are the facets $F$ of $\Delta_n$.
    Recall that spaces $\mathcal E(F)$ are lines in $\mathcal Q^n$ directed by $e_F$.
    Consider all these lines corresponding to facet $F$ of $\Delta_n$.
    By the parity condition each of the two points of any such line is contained in at least one other line.
    Suppose the lines corresponding to $F_1, \dots, F_m$ form a cycle, that is, there are points $s_i \in \mathcal Q^n$ such that $\mathcal E(F_i) = \{s_i, s_{i+1}\}$.
    Then $\sum_{i=1}^m e_{F_i} = \sum_{i=1}^m \overrightarrow{s_is_{i+1}}=0$.
    However, the only non-trivial relation between the vectors $e_{F}$ is $\sum_{F \in F_{n-1}(\Delta_n)} e_F =0$.
    Thus, the lines form a unique cycle.
    In other words, there exists a cyclic order $F_0, \dots F_n$ and points $s_i \in \mathcal Q^n$ such that $\mathcal E(F_i) = \{s_i, s_{i+1}\}$,
    which proves the claim in this case.

    If $n-k>1$, consider a facet $F$ of $\Delta_n$ containing $\tau$.
    We apply the heredity property to $\mathcal E_{|F}$ in $F$.
    We then obtain a cyclic order $\sigma_0, \dots, \sigma_{k+1}$
    and affine subspaces $\mathcal C_i$ of $\mathcal Q^n /T_2^{\perp}(F)$ directed by $T_2^{\perp}(\tau)/T_2^{\perp}(F)$
    such that $\mathcal E_{|F}(\sigma_i) = \mathcal C_i \cup \mathcal C_{i+1}$.
    Setting $\mathcal B_i = \pi_F^{-1}(\mathcal C_i)$,
    since $e_F \in  T_2^\perp(\sigma_i)$, we obtain $\mathcal E(\sigma_i) = \pi_F^{-1}(\mathcal E_{|F}(\sigma_i))= \mathcal B_i \cup \mathcal B_{i+1}$.

    For the second part of the lemma, note that $e_{F_i}$ is in the direction of $\mathcal E(\sigma_i)$ but not in the direction of $\mathcal B_i$ and $\mathcal B_{i+1}$.
    Hence, there exist two orthants $s \in \mathcal B_i$ and $s' \in \mathcal B_{i+1}$ such that $e_{F_i} = s' - s$.
    Since $\mathcal B_i$ and $\mathcal B_{i+1}$ are parallel, it follows that $\mathcal B_{i+1} = \mathcal B_i + e_{F_i}$.
\end{proof}

\begin{coro}\label{adjacency}
    Let $\mathcal E$ be a $k$-real phase structure on $\Delta_n$ and let $F$ be a facet of $\Delta_n$. Let $\sigma$ and $\sigma'$ be $k$-faces of a $(k+1)$-simplex $\tau$ of $\Delta_n$.
    
    For $s \in \mathcal Q^n$, if $s \in \mathcal E(\sigma)$ and $s +e_F \in \mathcal E(\sigma')$, then either $s+e_F \in \mathcal E(\sigma)$ or $s \in \mathcal E(\sigma')$. In particular, $s \in \mathcal E(F)$.
\end{coro}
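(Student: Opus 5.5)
The plan is to read everything off the necklace description of Lemma \ref{necklace}. Write the $k$-faces of $\tau$ in the cyclic order $\sigma_0,\dots,\sigma_{k+1}$ given there, with parallel affine subspaces $\mathcal B_i$ directed by $T_2^\perp(\tau)$, $\mathcal E(\sigma_i)=\mathcal B_i\cup\mathcal B_{i+1}$ and $\mathcal B_{i+1}=\mathcal B_i+e_{F_i}$, where $F_i$ is the facet with $\sigma_i=\tau\cap F_i$. Set $\sigma=\sigma_i$ and $\sigma'=\sigma_j$.

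\textbf{The cosets $\mathcal B_i$.} First I will describe the $\mathcal B_i$ as cosets of $T_2^\perp(\tau)$. The facets of $\Delta_n$ split into two kinds.
\begin{itemize}
\item The $n-k-1$ facets $G\supset\tau$. Their vectors $e_G$ are independent and span $T_2^\perp(\tau)$.
\item The $k+2$ facets $F_0,\dots,F_{k+1}$.
\end{itemize}
By the unique relation $\sum_F e_F=0$, the images of $e_{F_0},\dots,e_{F_{k+1}}$ in $(\mathbb F_2^n)^\vee/T_2^\perp(\tau)$, which has dimension $k+1$, satisfy only the relation that their total sum vanishes. Hence the $\mathcal B_i$ are pairwise distinct. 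Moreover, $\mathcal B_j-\mathcal B_i$ is the sum of the $e_{F_l}$ over an arc of the cycle, and this sum equals the class of a single $e_{F_l}$ only for the arcs $\{l\}$ and its complement.

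\textbf{Case analysis on $e_F$.}
\begin{itemize}
\item If $F\supset\tau$, then $e_F\in T_2^\perp(\tau)$. So $s$ and $s+e_F$ lie in the same $\mathcal B$, and $s+e_F\in\mathcal E(\sigma)$ directly.
\item Otherwise $F=F_l$ for some $l$. By the previous paragraph, $s$ and $s+e_F$ lie in $\mathcal B_l$ and $\mathcal B_{l+1}$, one in each. In particular both lie in $\mathcal E(\sigma_l)$ with $\sigma_l\subset F$, which gives the final assertion $s\in\mathcal E(F)$.
\end{itemize}

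\textbf{The dichotomy.} For the first assertion, suppose $s\in\mathcal B_l$ and $s+e_F\in\mathcal B_{l+1}$ (the other orientation is symmetric). Membership $s\in\mathcal E(\sigma_i)$ forces $i\in\{l-1,l\}$, and $s+e_F\in\mathcal E(\sigma_j)$ forces $j\in\{l,l+1\}$.
\begin{itemize}
\item If $i=l$, then $s+e_F\in\mathcal B_{l+1}\subset\mathcal E(\sigma)$.
\item If $j=l$, then $s\in\mathcal B_l\subset\mathcal E(\sigma')$.
\end{itemize}

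\textbf{Main obstacle.} The remaining configuration is $i=l-1$, $j=l+1$. When $k=0$ it collapses into the previous ones, since indices are taken modulo $2$. For $k\geq1$ I do not see how to close it from the necklace alone. I would first try to exclude it with the parity condition applied to the other $(k+1)$-simplices containing $\sigma\cap\sigma'$. If that fails, I would check whether the intended hypothesis forces $\sigma$ or $\sigma'$ to be $\tau\cap F$, in which case the configuration cannot occur. The statement $s\in\mathcal E(F)$ is unaffected, since it was already obtained in every case.
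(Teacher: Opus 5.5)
Everything you prove is correct and follows the paper's route: the necklace of Lemma~\ref{necklace}, the split of the facets into those containing $\tau$ and the $F_j$, and the fact that $\sum_F e_F=0$ is the only relation. This gives the dichotomy when $F\supset\tau$. When $F\not\supset\tau$, it places $s$ and $s+e_F$ one in each of $\mathcal B_l$ and $\mathcal B_{l+1}$, hence both in $\mathcal E(\tau\cap F)$, so $s\in\mathcal E(F)$ in all cases.

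The configuration you could not close ($F=F_l$, $\sigma=\sigma_{l-1}$, $\sigma'=\sigma_{l+1}$, up to orientation) cannot be closed by any argument: for $k\ge 1$ the first assertion of the statement is false there. Take $\mathcal E_1$ from Example~\ref{ex:E_1} ($n=2$, $k=1$, $\tau=\Delta_2$), with $\sigma=[a,b]$, $\sigma'=[b,c]$, $F=[a,c]$ (so $e_F$ flips the first sign) and $s=(+,+)$. Then $s\in\mathcal E_1(\sigma)$ and $s+e_F=(-,+)\in\mathcal E_1(\sigma')$. However, $(-,+)\notin\mathcal E_1([a,b])=\{(+,+),(+,-)\}$ and $(+,+)\notin\mathcal E_1([b,c])=\{(+,-),(-,+)\}$; only $s\in\mathcal E_1(F)$ holds. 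So your first repair idea cannot work: here $\tau$ is the only $2$-simplex containing $\sigma\cap\sigma'=\{b\}$, and $\mathcal E_1$ satisfies every parity condition. Your second idea amounts to changing the hypotheses.

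The paper's proof has exactly this hole. Its third case ($i=k$, where it deduces $F=F_{k+1}$, with $\sigma=\sigma_0$ and $\sigma'=\sigma_k$) is your residual configuration in the mirrored orientation. There it concludes $s\in\mathcal E(\sigma_{k+1})=\mathcal E(\sigma_i)$, which is an index slip: $i=k$, and $\mathcal E(\sigma_k)=\mathcal B_k\cup\mathcal B_{k+1}$ does not contain $s\in\mathcal B_0$.

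The correct statement is the one your argument establishes. The dichotomy holds if $F\supset\tau$. Otherwise $s$ and $s+e_F$ both lie in $\mathcal E(\tau\cap F)$, so the dichotomy holds whenever $\sigma$ or $\sigma'$ equals $\tau\cap F$. This suffices for $s\in\mathcal E(F)$, which is all that Proposition~\ref{nborthant} uses. The proof of Lemma~\ref{pathsimplex} does invoke the dichotomy, and it should instead reroute the path through $\tau\cap F$.
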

\begin{proof}
    By Lemma \ref{necklace}, we have a cyclic ordering of the $k$-faces of $\tau$: $ \sigma_0, \sigma_1, \dots, \sigma_{k+1}$ and affine subspaces $\mathcal B_0, \mathcal B_1, \dots, \mathcal B_{k+1}$ such that
    $\mathcal E(\sigma_j) = \mathcal B_j \cup \mathcal  B_{j+1}$.
        Without loss of generality, assume that $s \in \mathcal B_{0} \subset \mathcal E(\sigma_0)$ and $s+e_F \in \mathcal B_{i+1} \subset \mathcal E(\sigma_i)$.
        
        We partition facets of $\Delta_n$ in two categories: 
    \begin{itemize}
        \item $I_1$: the facets $G$ that contain $\tau$, for which $T_2^\perp(\tau) = \spn_{G \in I_1}(e_G)$
        \item $I_2$: the facets $F_j$, for $0 \leq j \leq k+1$, that intersect $\tau$ in the $k$-simplex $\sigma_j$
        for which $T_2^\perp(\sigma_{j})=T_2^\perp(\tau) \oplus T_2^\perp(F_{j})=T_2^\perp(\tau) \oplus \{0, e_{F_{j}} \}$.
    \end{itemize}
    
    Since $\mathcal B_{i+1} = \mathcal B_i + e_{F_i} = \dots = \mathcal B_0 + e_ {F_0} + \dots + e_{F_i}$,
        we have $s' = s + e_ {F_0} + \dots + e_{F_i} \in \mathcal B_{i+1}$.
        Hence, the difference $e_ {F_0} + \dots + e_{F_i} - e_F$ between $s'$ and $s+e_F$ is in the direction $T_2^\perp(\tau)= \spn_{G \in I_1}(e_G)$ of $\mathcal B_{i+1}$.
        We deduce that 
        \[ e_F = e_{F_0} + e_{F_{1}} + \dots + e_{F_{i}} + u \]
    where $u \in T_2^\perp(\tau) = \spn_{G \in I_1}( e_G )$. This is a linear relation among the $e_F$'s, for $F$ facet of $\Delta$.
    However, the only nontrivial relation between the $e_F$'s is:
    \[ \sum_{F \in F_{n-1}(\Delta_n)} e_F=0.\]
    Thus, there are three cases:
    \begin{itemize}
        \item $F \in I_1$. In this case, we have both $s + e_F\in \mathcal E(\sigma_0)$ and $s \in \mathcal E(\sigma_i)$.
        \item $i=0$  and $u=0$. In this case, we have $\sigma_{0} \subset F_{0} =F$ and $s +e_F \in \mathcal E(\sigma_{0})$.
        \item $i=k$ and $u = \sum_{G \in I_1}{e_G}$. In this case, we have $F = F_{k+1}$  and \\ $s = (s+e_F)-e_F \in \mathcal E(\sigma_{k+1})= \mathcal E(\sigma_{i})$.
    \end{itemize}
\end{proof}

\begin{defi}
    A path between two $k$-simplices $\sigma$ and $\sigma'$ in $\Delta_n$ is
    a sequence of $k$-simplices $\sigma^{(0)}= \sigma, \sigma^{(1)}, \dots, \sigma^{(l)}=\sigma'$ such that every consecutive couple $(\sigma^{(i)}, \sigma^{(i+1)})$ share a common $(k-1)$-simplex (or equivalently, lie on a common $(k+1)$-simplex).
\end{defi}

\begin{lemma}\label{pathsimplex}
    Let  $\mathcal E$ be a $k$-real phase structure on $\Delta_n$. For any $\sigma, \sigma' \in F_k(\Delta_n)$ and  $s \in \mathcal Q^n$,
    if $s \in \mathcal{E}(\sigma) \cap \mathcal{E}(\sigma')$, then there exist $\sigma^{(0)}, \sigma^{(1)}, \dots \sigma^{(l)}$ a path between $\sigma$ and $\sigma'$ such that $s \in \mathcal E(\sigma^{(i)})$ for all $i$ between $0$ and $l$.
\end{lemma}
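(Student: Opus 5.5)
Induction on $n-k$, using the restriction and projection operations together with Lemma \ref{necklace}, should work. The idea is to reduce the $k$-simplices of $\Delta_n$ containing $s$ in $\mathcal E(\cdot)$ to a connectivity question inside a facet, or inside the link of a vertex.

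Base cases. If $k=0$, every vertex contains $s$ in $\mathcal E_0(v)=\mathcal Q^n$. Any two vertices span an edge, so they are adjacent, and there is nothing to prove. If $k=n$, there is only one $n$-simplex, so $\sigma=\sigma'$. We may therefore assume $0<k<n$.

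First case: $\sigma$ and $\sigma'$ lie in a common facet $F$. Since $k<n$, the restriction $\mathcal E_{|F}$ is a $k$-real phase structure on $F$. For $k$-simplices in $F$ we have seen that $\pi_F(s)\in\mathcal E_{|F}(\rho)$ if and only if $s\in\mathcal E(\rho)$. Applying the induction hypothesis to $F\cong\Delta_{n-1}$ (the codimension $n-1-k$ is smaller) gives a path inside $F$. Every simplex on it contains $s$ in its $\mathcal E$, and paths in $F$ are paths in $\Delta_n$.

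Second case: no facet contains $\sigma\cup\sigma'$. Then $\sigma\cup\sigma'$ uses all $n+1$ vertices, so $2(k+1)\ge n+1$.
\begin{itemize}
\item First I will try to find an intermediate $k$-simplex $\rho$ with $s\in\mathcal E(\rho)$ that shares a facet with $\sigma$ and shares a facet with $\sigma'$; then the first case applies to both pairs. To find $\rho$, I use the projection. Pick a vertex $v\in\sigma\cap\sigma'$; one exists since $2(k+1)>n+1$ unless $2(k+1)=n+1$. Let $F$ be the facet opposite $v$. By the definition of $\mathcal E^F$, $\pi_F(s)$ lies in $\mathcal E^F(\sigma\cap F)$ and in $\mathcal E^F(\sigma'\cap F)$. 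The induction hypothesis for $\mathcal E^F$ (codimension $k-1$ on $F$, the same $n-k$, but a smaller dimension, so the induction should be on $n$) gives a path of $(k-1)$-simplices $\rho_j$ in $F$ with $\pi_F(s)\in\mathcal E^F(\rho_j)$.
\item Lifting, each $\rho_j$ corresponds to $\rho_j\cup\{v\}$, and $s$ or $s+e_F$ lies in $\mathcal E(\rho_j\cup v)$. Consecutive lifts $\rho_j\cup v$ and $\rho_{j+1}\cup v$ lie in a common $(k+1)$-simplex.
\item The obstacle is the sign ambiguity. Whenever the lifted phase switches between $s$ and $s+e_F$, I will use Corollary \ref{adjacency}. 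It yields either that $s$ itself lies in both consecutive simplices, or a detour showing $s\in\mathcal E(F)$. In the latter case I will reroute through $k$-simplices of $F$ containing $s$, using the first case on $F$.
\end{itemize}

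Remaining case: $\sigma\cap\sigma'=\emptyset$, with $n+1=2k+2$. Choose any vertex $v\in\sigma$ and any vertex $w\in\sigma'$. Pass through $\tau=(\sigma\setminus\{v\})\cup\{w\}$ and use Lemma \ref{necklace} on the $(k+1)$-simplex $\sigma\cup\{w\}$. Its necklace structure says the set of its $k$-faces containing $s$ forms a consecutive arc, which has at least two elements if nonempty. This lets me move from $\sigma$ to a neighbor that meets $\sigma'$, after which the previous case applies.

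I expect the main difficulty to be the bookkeeping of the $s$ versus $s+e_F$ ambiguity when lifting paths from the projection. This is where Corollary \ref{adjacency} is designed to be used. To keep everything clean, I will run the whole induction on $n$, for all $k$ simultaneously.
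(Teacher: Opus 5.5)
This is essentially the paper's proof: reduce to a shared vertex $v$ via the parity condition on $\sigma\cup\{w\}$, project to the facet $F$ opposite $v$, lift the resulting path by adding $v$, and bypass the stretch where the lifts carry $s+e_F$ rather than $s$ by a detour through $F$ using $\mathcal E_{|F}$ and induction on $n$. The lifted path with this detour is itself the required path, so you do not need the intermediate $\rho$.

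For the detour, the bare conclusion $s\in\mathcal E(F)$ of Corollary \ref{adjacency} is not enough to join the lifted path. What Lemma \ref{necklace} actually gives is that at a switch, the face $\tau\cap F$ of the $(k+1)$-simplex $\tau$ spanned by the two consecutive lifts carries $s$, and hence is adjacent to the last good lift. This is also the correct reading of the paper's step claiming $\sigma^{(m-1)}\subset F$, which is impossible since $v\in\sigma^{(m-1)}$.
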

 \begin{proof}
    For $k=n$, necessarily $\sigma = \Delta_n = \sigma'$ and the result is trivial.
    
    For $k=0$, a vertex $v$ satisfies $T_2^\perp(v)=(\mathbb F_2^n)^{\vee}$, so $\mathcal E(v) = \mathcal Q^n$ and any path works.
     
     Otherwise, we proceed  by induction on $n$:
      We may suppose $\sigma$ and  $\sigma'$ share at least one vertex.
         
         Indeed, if they do not, select a vertex $v$ of $\sigma'$ and let $\tau$ be the $(k+1)$-simplex containing $\sigma$ and having $v$ as one of its vertices.
         
         The parity condition of $\mathcal E$ on $\tau$ and $s$ ensures that there exists another $k$-simplex $\sigma'' \neq \sigma$ inside $\tau$ such $s \in \mathcal E(\sigma'')$.
         
         Now $\sigma''$ and $\sigma'$ share the vertices $v$. So up to replacing $\sigma'$ by $\sigma''$, we suppose $\sigma$ and  $\sigma'$ share a vertex.

      Let $F$ be the facet dual to a shared vertex of $\sigma$ and $\sigma'$. Consider the projection $\mathcal E^F$. We have 
         \[ \pi_F(s) \in \mathcal{E}^F(\sigma \cap F) \cap \mathcal{E}^F( \sigma'\cap F).\]
         
         Thus, by induction hypothesis, there exists a path $\sigma^{(0)} = \sigma, \dots, \sigma^{(l)}= \sigma'$ such that $\pi_F(s) \in \mathcal E^F(\sigma^{(i)} \cap F)$. 
         
      By definition of $\mathcal E^F$, it implies that, for each $i$, either $s$ or $s+e_F$ is in $\mathcal E(\sigma ^{(i)})$.
         
         Let $m$ (resp. $M$) the minimal (resp. maximal) index such $s \notin \mathcal E(\sigma^{(i)})$.

          Since $s \in \mathcal E(\sigma^{(m-1)})$ and $s + e_F \in \mathcal E(\sigma^{(m)})$ and since $\sigma^{(m-1)}$ and $\sigma^{(m)}$ lie in a common $(k+1)$-face, it follows that $s+e_F \in \mathcal E(\sigma^{(m-1)})$ by Corollary \ref{adjacency}.
          
          Consequently, the simplex $\sigma^{(m-1)}$ must be contained in $F$. Similarly, the simplex $\sigma^{(M+1)}$ must also be contained in $F$.
          
          Consider the restriction $\mathcal E_{|F}$.
          
          We have $\pi_F(s) \in \mathcal E_{|F}(\sigma^{(m-1)}) \cap \mathcal E_{|F}(\sigma^{(M+1)})$. 
          
          By induction hypothesis, there exists a path $\widetilde{\sigma}^{(1)}, \dots, \widetilde \sigma^{(L)}$ between $\sigma^{(m-1)}$ and $\sigma^{(M+1)}$in $F$ such that $\pi_F(s) = \mathcal E_{|F}(\widetilde \sigma^{(i)})$.

            Combining the paths, we obtain the desired path:
            \[\sigma^{(0)}, \sigma^{(1)}, \dots, \sigma^{(m-1)} =\widetilde \sigma^{(1)}, \dots, \widetilde \sigma^{(L)} =\sigma^{(M+1)}, \dots, \sigma^{(l)}\] 
            This completes the proof.
  \end{proof}

\begin{prop}\label{nborthant}
     Let $\Delta_n$ be a $n$-dimensional lattice simplex and $\mathcal E$ a $k$-real phase structure on $\Delta_n$. Then
     \[ |\mathcal E(\Delta_n)| = \sum_{i=k}^{n} \binom{n}{i}.\]
\end{prop}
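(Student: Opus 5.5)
We prove the formula by induction on $n$ (with $k$ arbitrary), using a deletion--contraction recurrence
\[ |\mathcal E(\Delta_n)| = |\mathcal E_{|F}(F)| + |\mathcal E^F(F)| \]
for a fixed facet $F$ of $\Delta_n$, valid when $0<k<n$. Here $\mathcal E_{|F}$ is a $k$-real phase structure on the $(n-1)$-simplex $F$ and $\mathcal E^F$ is a $(k-1)$-real phase structure on $F$. Granting the recurrence, induction gives
\[\sum_{i=k}^{n-1}\binom{n-1}{i}+\sum_{i=k-1}^{n-1}\binom{n-1}{i}=\sum_{i=k}^{n}\binom{n}{i}\]
by Pascal's rule. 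The base cases are immediate:
\begin{itemize}
\item For $k=0$, $\mathcal E(v)=\mathcal Q^n$, so $|\mathcal E(\Delta_n)|=2^n$.
\item For $k=n$, $\mathcal E(\Delta_n)$ is an affine subspace directed by $T_2^\perp(\Delta_n)=0$, hence a single point, matching $\binom nn=1$.
\end{itemize}

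To prove the recurrence, we look at the fibers $\{s,s+e_F\}$ of $\pi_F$ and classify each pair by how many of its two orthants lie in $\mathcal E(\Delta_n)$. We check three things.

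\emph{First}, $\pi_F(\mathcal E(\Delta_n))=\mathcal E^F(F)$. The inclusion $\supseteq$ holds because $\pi_F(s)\in\mathcal E^F(\sigma\cap F)$ means $s$ or $s+e_F$ lies in $\mathcal E(\sigma)$. For $\subseteq$, if $s\in\mathcal E(\sigma)$ with $\sigma\subset F$, parity on a $(k+1)$-simplex $\tau\supset\sigma$ not contained in $F$ gives another $\sigma'\subset\tau$, $\sigma'\not\subset F$, with $s\in\mathcal E(\sigma')$. Such a $\tau$ exists since $k<n$.

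\emph{Second}, the fibers with both points in $\mathcal E(\Delta_n)$ are exactly $\pi_F^{-1}(\mathcal E_{|F}(F))$, i.e. $\pi_F(s)\in\mathcal E_{|F}(F)$ iff $s\in\mathcal E(F)$, and then $s+e_F\in\mathcal E(F)$ too, since $e_F\in T_2^\perp(\sigma)$ for $\sigma\subset F$. The direction $\Leftarrow$ is clear. For $\Rightarrow$, suppose $s\in\mathcal E(\sigma)$ and $s+e_F\in\mathcal E(\sigma')$. We must show that some simplex contained in $F$ carries $s$.

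The natural tool is Lemma \ref{pathsimplex}, applied to connect $\sigma$ to a simplex carrying $s+e_F$, combined with Corollary \ref{adjacency}. Along a path of $k$-simplices, the first place where membership switches from $s$ to $s+e_F$ forces, by Corollary \ref{adjacency}, $s\in\mathcal E(F)$. To get such a path, apply Lemma \ref{pathsimplex} to the projection $\mathcal E^G$ for a facet $G$ dual to a shared vertex, exactly as in the proof of Lemma \ref{pathsimplex}. Alternatively, use the fact that $\mathcal E^F$-membership of $\pi_F(s)$ along a path means each simplex carries $s$ or $s+e_F$.

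Counting then gives $|\mathcal E(\Delta_n)| = 2|\mathcal E_{|F}(F)| + (|\mathcal E^F(F)|-|\mathcal E_{|F}(F)|)$, which is the recurrence.

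The main obstacle is this second step: showing that whenever both $s$ and $s+e_F$ appear in $\mathcal E(\Delta_n)$, they are realized on a face inside $F$. The plan is to take $\sigma\ni s$ and $\sigma'\ni s+e_F$, note that $\pi_F(s)$ lies in both $\mathcal E^F(\sigma\cap F)$ and $\mathcal E^F(\sigma'\cap F)$ (if neither simplex lies in $F$), and use the inductive path lemma on $\mathcal E^F$ to obtain a path on which each simplex carries $s$ or $s+e_F$. At the switching point, Corollary \ref{adjacency} yields $s\in\mathcal E(F)$, mirroring the argument in Lemma \ref{pathsimplex}.
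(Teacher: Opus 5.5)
Your proposal is correct and follows the paper's proof essentially step for step. Both split $\mathcal E(\Delta_n)$ into fibers $\{s,s+e_F\}$ of $\pi_F$. Both use Lemma \ref{pathsimplex} on $\mathcal E^F$ together with Corollary \ref{adjacency} at the switching index to show that doubly occupied fibers come from $\mathcal E(F)$, and both finish with Pascal's rule. Your explicit parity check that $\pi_F(\mathcal E(\Delta_n))=\mathcal E^F(F)$ fills in a step the paper leaves implicit, while the aside about projecting onto a facet $G$ dual to a shared vertex is unnecessary and can be dropped.
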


\begin{proof}
    For $k=0$, we have $\mathcal E(v) = \mathcal Q^n$ for any vertex, so 
    \[ |\mathcal E( \Delta_n)| = |\mathcal Q^n|=2^n=\sum_{i=0}^{n} \binom{n}{i}.\]
    For $k=n$, the vector space $T_2^\perp(\Delta_n)$ is trivial, hence $\mathcal E(\Delta_n)$ consists of a single point.
    Therefore,
    \[ |\mathcal E(\Delta_n)| = 1 = \binom{n}{n}. \]
    
    Otherwise, we proceed by induction on the dimension $n$ of $\Delta_n$.
    The base case $n=0$ is already included in the case $k=0$ above, so we may assume $n>k > 0$

    Fix a facet $F$ of $\Delta_n$.
    We divide each $s \in \mathcal E(\Delta_n)$ in two categories depending on their relationship with $F$.
    An orthant $s$ belongs to the first category if $s+e_F \in \mathcal E(\Delta_n)$; otherwise, $s$ belongs to the second category.

    In the first category, we have $s \in \mathcal E(F)$.
    Indeed, suppose $s \in \mathcal E(\sigma)$ and $s+e_F \in \mathcal E(\sigma')$ for some $k$-simplices $\sigma, \sigma'$ of $\Delta_n$.
    If either $\sigma$ or $\sigma'$ is in $F$, we are done.
    Otherwise, consider $\pi_F(s) \in \mathcal E^F(\sigma \cap F) \cap \mathcal E^F(\sigma' \cap F)$.
    By Lemma \ref{pathsimplex}, we have a path $\sigma^{(1)}, \dots, \sigma^{(l)}$ such that $\pi_F(s) \in \mathcal E^F(\sigma^{(i)} \cap F)$ for each $i$.
    Since $s \in \mathcal E(\sigma)$ and  $s+e_F \in \mathcal E(\sigma')$, we have an index $i$ such that $s \in \mathcal E(\sigma^{(i)})$ and $s +e_F \in \mathcal E(\sigma^{(i+1)})$.
    And the Corollary \ref{adjacency} implies that $s \in \mathcal E(F)$.

    In the second category, we have $s \notin \mathcal E(F)$. So $\pi_F(s) \notin \mathcal{E}_{|F}(F)$ but $\pi_F(s) \in \mathcal E^F(F)$.

    Denote $a$ the number of pair $\{s, s+e_F\}$ of the first category and $b$ the number of element $s$ of the second category.
    The previous discussion shows that $|\mathcal E(\Delta_n)| = 2a + b$, $|\mathcal E^F(F)| = a+b$ and $|\mathcal E_ {|F}(F)| = a$.
    Hence, $|\mathcal E(\Delta_n)| = |\mathcal E_{|F}(F)| + |\mathcal E^F(F)|$.
    Now we can apply the recursion hypothesis on $\mathcal E^F$ and $\mathcal E_{|F}$: 
    \[ |\mathcal E_{|F}(F)| = \sum_{i=k}^{n-1} \binom{n-1}{i} \text{ and } |\mathcal E(F)| = \sum_{i=k-1}^{n-1} \binom{n-1}{i}.\] 
    Now using Pascal's formula:
    \begin{equation*}
    \begin{split}
        |\mathcal E(\Delta_n)| & = |\mathcal E_{|F}(F)| + |\mathcal E^F(F)| \\
                        & =  \sum_{i=k}^{n-1} \binom{n-1}{i} + \sum_{i=k-1}^{n-1} \binom{n-1}{i} \\
                        & = \sum_{i=k}^{n-1} \left[ \binom{n-1}{i}+\binom{n-1}{i-1} \right] + \binom{n-1}{n-1} \\
                        & = \sum_{i=k}^{n-1} \binom{n}{i}  + \binom{n}{n} \\
                        & = \sum_{i=k}^n \binom{n}{i}
    \end{split}
    \end{equation*}
\end{proof}

\begin{proof}[Proof of Theorem \ref{nbfacet}]
    For each $n$-simplex $\sigma$ of $\Gamma$,
    the simplex $s(\sigma)$ contains a maximal cell of $X_{\mathcal{E}}$ if and only if $s \in \mathcal{E}(\sigma)$.
    Thus, by Theorem \ref{nborthant}, each $n$-simplex of $\gamma$ contributes to $\sum_{i=k}^{n} \binom{n}{i}$ maximal cells of $X_\mathcal{E}$.
    Since $\Gamma$ is unimodular, it contains a number of $n$-simplices equal to $\Vol(\Delta)$.

    Hence, the total number of maximal cells in the canonical decomposition of $X_\mathcal E$ is $\sum_{i=k}^{n} \binom{n}{i} \Vol(\Delta)$.
\end{proof}

If $s \in \mathcal E(\Delta_n)$, $s$ is called a non-empty orthant.

\subsection{Number of simplicial orthants}

In this subsection, we detail the necessary material to obtain the proof of the Theorem \ref{nbsimplex} on the number of simplicial cells.

\begin{lemma}\label{minorthant}
    Let $s \in \mathcal E(\Delta_n)$. Then one has
    \[|\{ \sigma \in F_k(\Delta_n)| s \in \mathcal E(\sigma) \}| \geq n-k+1 .\]
    In case of equality, the simplices of $\{ \sigma \in F_k(\Delta_n)| s \in \mathcal E(\sigma) \}$ are all adjacent but no three of them can lie in a common $(k+1)$-simplex.
\end{lemma}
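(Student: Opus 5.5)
The plan is to read $S := \{\sigma \in F_k(\Delta_n) \mid s \in \mathcal E(\sigma)\}$ as the vertex set of a regular graph and bound its size by the degree. The case $k=n$ is trivial: $S=\{\Delta_n\}$ has $1 = n-k+1$ element and the equality statement is vacuous. So assume $k<n$.

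\textbf{Step 1 (key local fact).} Let $\tau$ be a $(k+1)$-simplex of $\Delta_n$. I claim that $s$ lies in either zero or exactly two of the $k$-faces of $\tau$, and in the latter case these two faces are consecutive in the cyclic order of Lemma \ref{necklace}.

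By Lemma \ref{necklace}, $\mathcal E(\sigma_i) = \mathcal B_i \cup \mathcal B_{i+1}$ with $\mathcal B_{i+1} = \mathcal B_i + e_{F_i}$. So for $i \neq j$, the spaces $\mathcal B_j$ and $\mathcal B_i$ differ by a partial sum $e_{F_i} + \dots + e_{F_{j-1}}$ of a proper cyclic interval of the $F_l$. I will check, via the unique relation $\sum_{F} e_F = 0$ (exactly as in the proof of Corollary \ref{adjacency}), that such a partial sum never lies in $T_2^\perp(\tau) = \spn_{G \supset \tau} e_G$. Hence the affine spaces $\mathcal B_0, \dots, \mathcal B_{k+1}$ are pairwise disjoint. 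Therefore, if $s \in \mathcal E(\tau)$, then $s$ lies in exactly one $\mathcal B_i$, and so in exactly the two faces $\sigma_{i-1}$ and $\sigma_i$. This disjointness check is the only step requiring care; I expect it to be a short linear-algebra verification.

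\textbf{Step 2 (graph and degree).} Build a graph $G$ on $S$ by joining $\sigma, \sigma' \in S$ whenever they lie in a common $(k+1)$-simplex. Fix $\sigma \in S$. The $(k+1)$-simplices of $\Delta_n$ containing $\sigma$ are obtained by adding one of the $n-k$ vertices not in $\sigma$, so there are $n-k$ of them. Each such $\tau$ contains $\sigma \in S$, so by Step 1 it contains exactly one other element $\sigma_\tau$ of $S$. Distinct $\tau$ give distinct $\sigma_\tau$, since $\tau$ is determined as the join $\sigma \cup \sigma_\tau$. Thus every vertex of $G$ has degree exactly $n-k$. Since $S$ is non-empty (because $s \in \mathcal E(\Delta_n)$), we get $|S| \geq n-k+1$.

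\textbf{Step 3 (equality).} If $|S| = n-k+1$, a graph on $n-k+1$ vertices that is $(n-k)$-regular is complete. Hence any two simplices of $S$ are adjacent, i.e.\ they lie in a common $(k+1)$-simplex. Moreover, Step 1 says each $(k+1)$-simplex contains at most two elements of $S$, so no three of them lie in a common $(k+1)$-simplex. This is exactly the stated equality case.
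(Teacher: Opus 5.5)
Your proof is correct, and its counting skeleton is exactly the paper's: fix $\sigma\in S$, run over the $n-k$ simplices $\tau\supset\sigma$, and note that distinct $\tau$ give distinct partners. The difference lies in the local input.

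The paper uses only the parity condition: each $\tau\supset\sigma$ contains an even, hence $\geq 2$, number of elements of $S$. You instead invoke Lemma \ref{necklace} together with the pairwise disjointness of the $\mathcal B_i$ to get \emph{exactly} two. Your linear-algebra check does go through. A sum of $e_{F_l}$ over a nonempty proper cyclic interval, plus a combination of the $e_G$ with $G\supset\tau$, would be a nontrivial relation whose support omits some $F_l$. This contradicts the uniqueness of the relation $\sum_F e_F=0$.

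This input is more than the lemma needs. With parity alone, the sets $(S\cap F_k(\tau))\setminus\{\sigma\}$ for $\tau\supset\sigma$ are pairwise disjoint. So equality forces each $\tau\supset\sigma$ to contain exactly two elements of $S$, and forces $S\setminus\{\sigma\}$ to be exhausted by these partners. Letting $\sigma$ range over $S$ then gives both clauses of the equality case.

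Your route buys a stronger unconditional fact. Every $(k+1)$-simplex contains $0$ or $2$ elements of $S$, so the adjacency graph on $S$ is always $(n-k)$-regular. Consequently the ``no three in a common $(k+1)$-simplex'' clause holds for every non-empty orthant, not only the simplicial ones. Your Step 3 also has the merit of actually writing out the equality case, which the paper's proof leaves implicit.
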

\begin{proof}
    Since $s \in \mathcal E(\Delta_n)$, there exists a $k$-simplex $\sigma$ such that $s \in \mathcal E(\sigma)$.
    Now consider any $(k+1)$-simplex $\tau$ that contains $\sigma$. The parity condition ensures that there exists another $k$-simplex $\sigma'$ such that $s \in \mathcal E(\sigma')$.
    Each choice of $\tau$ corresponds to a choice of a vertex from $\Delta_n$ that is not already in $\sigma$. Since $\Delta_n$ has $n+1$ vertices, and $\sigma$ has  $k+1$ vertices, there are $n-k$ possible ways to choose $\tau$.
    Each such choice of $\tau$ yields a distinct $k$-simplex, since the intersection of two  distinct $(k+1)$-simplices can not contain two distinct $k$-simplices.
    Therefore, starting from the initial $k$-simplex $\sigma$, we can find $n-k$ additional $k$-simplices, leading to a total of at least $n-k+1$ such simplices.
\end{proof}

A non-empty orthant $s$ for which this inequality holds as equality is called \textbf{simplicial}.
The name come from the fact that for $s \in \mathcal E(\Delta_n)$, the $n-k$-cell $C_{\mathcal E}(s(\Delta_n))$ contains a number of 0-cells corresponding to $|\{ \sigma \in F_k(\Delta)| s \in \mathcal E(\sigma) \}|$.
Thus, if $s \in \mathcal E(\Delta_n)$ is simplicial, the cell $C_{\mathcal E}(s(\Delta_n))$ is a simplicial cell in the canonical cell decomposition of $X_\mathcal E$.
\begin{Ex}
    \begin{itemize}
        \item If $k=n$, the polytope $\Delta_n$ has only one $k$-simplex (which is $\Delta_n$ itself), thus the unique non-empty orthant of $\mathcal E$ is simplicial.
        \item If $k=0$, the $k$-simplices are the vertices of $\Delta_n$ and $\mathcal E(v) = \mathcal Q^n$. Thus, all orthants are simplicial.
        \item If $k=n-1$, all non-empty orthants are simplicial. Indeed, $C_{\mathcal E}(s(\Delta_n))$ is a cell of dimension $1$ and thus contains exactly two 0-cells.
        \item In the Example \ref{ex2}, the orthant $(-,+,-)$ is empty.
        The four orthants $(-,+,+)$, $(+,-,+)$,$(+,+,-)$ and $(-,-,-)$, represented in green by in Figure \ref{n3k1}, are simplicial.
        The three other orthants $(+,+,+)$, $(-,-,+)$ and $(+,-,-)$, represented in red, are non-simplicial.
    \end{itemize}
\end{Ex}

\begin{lemma}\label{l:gamma}
    Suppose $k \leq n-1$.
    Let $s \in \mathcal E(\Delta_n)$ be a simplicial orthant. Then there exists a $(k-1)$-simplex $\gamma$ such that for all $k$-simplex $\sigma$,
    \[ s \in \mathcal E(\sigma) \iff \gamma \subset \sigma. \]
\end{lemma}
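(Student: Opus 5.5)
The plan is to turn the equality case of Lemma \ref{minorthant} into a purely combinatorial statement about the vertex sets of the $k$-simplices in
\[ S := \{ \sigma \in F_k(\Delta_n) \mid s \in \mathcal E(\sigma) \}. \]
We then show that a family of $(k+1)$-subsets with these properties must be a "star" around a common $k$-subset $\gamma$.

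\textbf{Step 1: the structure of $S$.} I first re-derive the equality statement of Lemma \ref{minorthant} carefully, since its proof only gives the inequality. Fix $\sigma \in S$. Each of the $n-k$ simplices $\tau \in F_{k+1}(\Delta_n)$ containing $\sigma$ contains, by the parity condition, at least one other element of $S$, and these elements are pairwise distinct. Since $|S| = n-k+1$, these account for all of $S \setminus\{\sigma\}$, and each such $\tau$ contains \emph{exactly} one other element of $S$.

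Because $\sigma$ was arbitrary, we get two facts:
\begin{itemize}
\item any two elements of $S$ lie in a common $(k+1)$-simplex, i.e. they share exactly $k$ vertices;
\item no three elements of $S$ lie in a common $(k+1)$-simplex, i.e. the union of the vertex sets of any three has more than $k+2$ vertices.
\end{itemize}

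\textbf{Step 2: the combinatorial core.} Since $k \le n-1$, we have $|S| \ge 2$. Pick distinct $\sigma_1, \sigma_2 \in S$ and let $\gamma = \sigma_1 \cap \sigma_2$. This is a face with $k$ vertices, i.e. a $(k-1)$-simplex (for $k=0$ it is the empty face and the statement is trivial since all vertices lie in $S$). Write $\sigma_1 = \gamma \cup \{a\}$ and $\sigma_2 = \gamma \cup \{b\}$.

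Now let $\sigma_3 \in S$ be any third element and suppose, for contradiction, that $\gamma \not\subset \sigma_3$. Then $\sigma_3$ misses some vertex of $\gamma$. But $\sigma_3$ must share $k$ vertices with $\sigma_1$ and $k$ vertices with $\sigma_2$, and it has only $k+1$ vertices. A counting argument then forces $a \in \sigma_3$ and $b \in \sigma_3$, with the remaining $k-1$ vertices taken from $\gamma$. Hence $\sigma_3 \subset \gamma \cup \{a,b\} = \sigma_1 \cup \sigma_2$, which is a $(k+1)$-simplex containing three elements of $S$. This contradicts Step 1, so every element of $S$ contains $\gamma$.

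\textbf{Step 3: conclusion.} The $k$-simplices containing $\gamma$ are obtained by adding one of the $n+1-k$ vertices outside $\gamma$, so there are exactly $n-k+1$ of them. Since $S$ is contained in this set and $|S| = n-k+1$, the two sets coincide. This gives $s \in \mathcal E(\sigma) \iff \gamma \subset \sigma$.

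The only delicate point is Step 1, namely extracting rigorously the "exactly one other per $\tau$" statement from the equality $|S| = n-k+1$. Step 2 is then a short case check on vertex counts.
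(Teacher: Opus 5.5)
Your proof is correct and follows essentially the same route as the paper. The paper also relies on the two equality-case properties from Lemma~\ref{minorthant} (pairwise adjacency, and no three in a common $(k+1)$-simplex), and uses the $(k+1)$-simplex spanned by two of the simplices to show that they all share a common $(k-1)$-face; your Step 2 is the same argument phrased as a vertex count. Your version is slightly more complete in two respects: you derive those two properties from $|S| = n-k+1$, which the paper's proof of Lemma~\ref{minorthant} does not do since it only establishes the inequality, and your Step 3 makes explicit the counting that gives the converse implication $\gamma \subset \sigma \Rightarrow s \in \mathcal E(\sigma)$, which the paper leaves implicit.
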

    \begin{proof}
        First, notice that by the Lemma \ref{minorthant}, if $s$ is simplicial then all $k$-simplices $\sigma$ such that $s \in \mathcal E(\sigma)$ are adjacent to each other, and no three of them can lie in a same $(k+1)$-simplex.
        
        Fix three of these $k$-simplices: $\sigma^{(0)}$, $\sigma^{(1)}$ and $\sigma^{(2)}$. We only need to show that $\gamma^{(1)} = \sigma^{(1)} \cap \sigma^{(0)}$ and $\gamma^{(2)} = \sigma^{(2)} \cap \sigma^{(0)}$ are the same $(k-1)$-simplex.
        
        Since $\sigma^{(1)}$ and $\sigma^{(2)}$ are adjacent, they are both in a $(k+1)$-simplex $\tau$.
        The simplex $\tau \cap \sigma^{(0)}$ is contained in the $k$-simplex $\sigma^{(0)}$, and it contains the $(k-1)$-simplex $\sigma^{(1)} \cap \sigma^{(0)} = \gamma^{(1)}$.
        
        Thus, we have two possibilities: $\tau \cap \sigma^{(0)} = \sigma^{(0)}$ or $\tau \cap \sigma^{(0)} = \gamma^{(1)}$.
        
        If we are in the first case, then $\sigma^{(0)}$, $\sigma^{(1)}$ and $\sigma^{(2)}$ are both contained in the $(k+1)$-simplex $\tau$, which contradicts that $s$ is simplicial.
        Thus, we must be in the second case: $\tau \cap \sigma^{(0)} = \gamma^{(1)}$.

        Since by construction $\sigma^{(2)} \subset \tau$, we also have $\gamma^{(2)} = \sigma^{(2)} \cap \sigma^{(0)} \subset \tau \cap \sigma^{(0)} = \gamma^{(1)}$.
        By equality of the dimensions, we can conclude that $\gamma^{(1)} = \gamma^{(2)}$.
    \end{proof}

\begin{prop}\label{nbsimplicial}
    The number $S_{n,k}$ of simplicial orthants in $\mathcal E$ is at most $\frac{2}{k+1} \binom{n+1}{k}$.
\end{prop}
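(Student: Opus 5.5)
The plan is a double count over pairs $(s,\sigma)$, where $s$ is a simplicial orthant and $\sigma\in F_k(\Delta_n)$ satisfies $s\in\mathcal E(\sigma)$. The key local claim is that each $k$-simplex lies in at most two simplicial orthants.

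\emph{Trivial cases.} For $k=n$ the bound reads $\frac{2}{n+1}\binom{n+1}{n}=2\geq 1$. For $k=0$ it reads $2\binom{n+1}{0}=2$, whereas all $2^n$ orthants are simplicial. So for $k=0$, $n\geq 2$ the inequality as stated fails, and the case has to be excluded or handled separately. I will therefore assume $1\leq k\leq n-1$, so that Lemma \ref{l:gamma} applies.

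\emph{Counting pairs from the orthant side.} By Lemma \ref{l:gamma}, each simplicial orthant $s$ determines a $(k-1)$-simplex $\gamma_s$ such that $s\in\mathcal E(\sigma)$ exactly when $\gamma_s\subset\sigma$. There are exactly $n-k+1$ such $k$-simplices, one for each vertex of $\Delta_n$ not in $\gamma_s$. So $s$ appears in exactly $n-k+1$ pairs, consistent with Lemma \ref{minorthant}.

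\emph{The key claim.} For each fixed $k$-simplex $\sigma$, at most two simplicial orthants $s$ have $s\in\mathcal E(\sigma)$. Granting this, counting pairs gives
\[
S_{n,k}(n-k+1)\leq 2\binom{n+1}{k+1}=2\,\frac{n-k+1}{k+1}\binom{n+1}{k},
\]
which is exactly the bound. As a check, for $k=n-1$ every non-empty orthant is simplicial, Proposition \ref{nborthant} gives $n+1$ of them, and the bound also equals $n+1$.

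\emph{Proof of the key claim.} Let $s$ be simplicial with $s\in\mathcal E(\sigma)$. Then $\gamma_s\subset\sigma$, so $\gamma_s=\sigma\setminus\{v\}$ for some vertex $v$ of $\sigma$. Fix a vertex $w\notin\sigma$, which exists since $k<n$, and set $\tau=\sigma\cup\{w\}$. Among the $k$-faces of $\tau$, the orthant $s$ lies in exactly $\sigma$ and $\sigma':=\gamma_s\cup\{w\}$.

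Apply Lemma \ref{necklace} to $\tau$, giving a cyclic order $\sigma_0,\dots,\sigma_{k+1}$ and affine spaces $\mathcal B_i$ with $\mathcal E(\sigma_i)=\mathcal B_i\cup\mathcal B_{i+1}$. Since the $\mathcal B_i$ are distinct translates, $s$ lies in a single $\mathcal B_j$. Hence the only two faces of $\tau$ whose phase contains $s$ are consecutive in the necklace, and they share the space $\mathcal B_j$. Consequently $\sigma'$ is one of the two necklace neighbours of $\sigma$. This leaves at most two choices of $v$, hence of $\gamma_s$, and these choices depend only on $\sigma$ and $w$.

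It remains to show that $\gamma_s$ determines $s$. For every vertex $w'\notin\sigma$, the same argument applied to $\tau_{w'}=\sigma\cup\{w'\}$ shows that $s$ lies in the necklace space $\mathcal B$ shared by $\sigma$ and $\gamma_s\cup\{w'\}$. This space is a coset of $T_2^\perp(\tau_{w'})$ and depends only on $\gamma_s$ and $w'$. The intersection over all $w'\notin\sigma$ of the directions $T_2^\perp(\tau_{w'})$ is $T_2^\perp$ of the sublattice spanned by all $T(\tau_{w'})$. That sublattice is all of $\mathbb Z^n$, because the $\tau_{w'}$ together contain every vertex of $\Delta_n$. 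So this intersection of cosets contains at most one point, $s$ is unique given $\gamma_s$, and each $\sigma$ lies in at most two simplicial orthants.

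\emph{Main obstacle.} The delicate step is the necklace argument in the key claim. I must verify that the two faces containing $s$ really are forced to be consecutive, i.e.\ that $s$ cannot lie in two different $\mathcal B_i$. This needs care in the degenerate situation where some $\mathcal B_i$ coincide, and there the relation $\mathcal B_{i+1}=\mathcal B_i+e_{F_i}$ from Lemma \ref{necklace} must be used to rule it out.
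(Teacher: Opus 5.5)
Your proof is correct, and it takes a genuinely different route from the paper.

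\textbf{The paper's route.} The paper inducts on $n$. For a simplicial orthant $s$ with its $(k-1)$-simplex $\gamma$, the orthant $\pi_F(s)$ is simplicial for the restriction $\mathcal E_{|F}$ for each of the $n+1-k$ facets $F\supset\gamma$. A double count of pairs $(s,F)$ then gives $(n+1-k)S_{n,k}\le (n+1)S_{n-1,k}$. The recursion bottoms out at $k=n-1$, where all $n+1$ non-empty orthants are simplicial.

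\textbf{Your route.} You double count incidences $(s,\sigma)$ with $\sigma\in F_k(\Delta_n)$ directly, using the local fact that each $k$-face lies in at most two simplicial orthants. In the pseudohyperplane-arrangement picture, this says each vertex touches at most two simplicial chambers, generalizing the classical ``at most two triangles per vertex'' count for pseudoline arrangements.

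\textbf{What each buys.} Your version avoids induction and gives a sharper local statement: equality holds iff every $k$-face lies in exactly two simplicial orthants. The paper's version needs only the restriction operation and a trivial base case. However, its double count tacitly uses that $s\mapsto\pi_F(s)$ is injective on simplicial orthants with $\gamma_s\subset F$. That rests on $e_F\notin T_2^\perp(\gamma_s\cup\{v_F\})$, which fails exactly when $k=0$. So your remark that the statement is false for $k=0$, $n\ge 2$ (there $S_{n,0}=2^n>2$) is correct, and it applies to the paper's proof as well.

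\textbf{Your ``main obstacle'' is not one.} Take any $j$ with $s\in\mathcal B_j$. Then $\sigma_{j-1}\neq\sigma_j$ (distinct since $k+2\ge 3$) both contain $s$, so they must be $\sigma$ and $\sigma'$. No distinctness of the $\mathcal B_i$ is needed. It holds anyway, since $\mathcal B_{i+j}=\mathcal B_i+e_{F_i}+\dots+e_{F_{i+j-1}}$ and the only relation among the $e_F$ uses all $n+1$ of them.

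\textbf{Two smaller points.} Uniqueness of $s$ given $\gamma_s$ is immediate: $\bigcap_{\rho\supset\gamma_s}\mathcal E(\rho)$ has direction $\bigcap_{\rho\supset\gamma_s}T_2^\perp(\rho)=0$. Also, fix $w$ before $s$, so that the two admissible vertices $v$ depend only on $\sigma$.
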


\begin{proof}
    We proceed by induction on $n$.
    If $n=k$, there is only one orthant which is always simplicial. Thus, we have $S_{n,n} = 1 < \frac{2}{n+1}\binom{n+1}{n}$.

    If $n=k+1$, the bound is also trivial:
    \[ S_{n,n-1} = |\mathcal E(\Delta)| = n+1 = \frac{2}{n}\binom{n+1}{n-1} \]
    
    Suppose $k<n-1$.
    Let $s$ be simplicial orthant and $\gamma$ the $(k-1)$-simplex described in Lemma \ref{l:gamma}.
    For each face $F$ of $\Delta_n$ containing $\gamma$, observe that $\pi_F(s)$ is a simplicial orthant of $\mathcal E_{|F}$.
    Indeed, for every $k$-simplex $\sigma$ of $F$, we have the equivalence 
    \[ \pi_F(s) \in \mathcal E_{|F}(\sigma) \Longleftrightarrow s \in \mathcal E(\sigma).\]
    Among all $k$-simplices of $\Delta_n$ containing $\gamma$, exactly one does not lie in $F$, therefore
    $|\{ \sigma \in F_k(F)| s \in \mathcal E_{|F}(\sigma) \}| = |\{ \sigma \in F_k(\Delta_n)| s \in \mathcal E(\sigma) \}| -1 = n-k$.
    Hence, $\pi_F(s)$ is indeed simplicial in $\mathcal E_{|F}$.

    Consequently, a simplicial orthant $s$ in $\mathcal E$ gives a simplicial orthant in $\mathcal E_{|F}$ for each of the $n+1-k$ facets $F$ containing $\gamma$.

    By a double counting of the pairs $(s, F)$ where $s$ is a simplicial orthant  of $\mathcal E$ such that $\pi_F(s)$ is a simplicial orthant of $\mathcal E_{|F}$, we obtain
    $(n+1-k)S_{n,k} \leq (n+1) S_{n-1,k}$.
    By induction, we have $S_{n,k} \leq  \frac{n+1}{n+1-k} \frac{2}{k+1} \binom{n}{k} = \frac{2}{k+1} \binom{n+1}{k}$.
    \end{proof}

    \begin{proof}[Proof of Theorem \ref{nbsimplex}]
        For each $n$-simplex $\sigma$ of $\Gamma$,
        the simplex $s(\sigma)$ contains a simplicial cell of $X_{\mathcal{E}}$ if and only if $s$ is a simplicial orthant of $\mathcal E(\sigma)$.
        Thus, by Theorem \ref{nbsimplicial}, each $n$-simplex of $\gamma$ contributes to at most $\frac{2}{k+1} \binom{n+1}{k}$ simplicial orthants.
        Since $\Gamma$ is unimodular, it contains a number of $n$-simplices equal to $\Vol(\Delta)$.
    
        Hence, the total number of simplicial cells in the canonical decomposition of $X_\mathcal E$ is at $\frac{2}{k+1} \binom{n+1}{k} \Vol(\Delta)$.
    \end{proof}

\section{Bounds on the number of connected components of T-manifolds}

\subsection{Maximal T-curves}

In this section, we consider a lattice polytope $\Delta$  in $\mathbb R^n$ equipped with a unimodular triangulation $\Gamma$ and  a real phase structure $\mathcal E$ on the $(n-1)$-skeleton of $\Gamma$.
In this context, the T-manifold $X_\mathcal E$ is a curve. The canonical cell decomposition of $X_\mathcal E$ consists of 0-cells and 1-cells:
 \begin{itemize}
    \item The 0-cells $C_{\mathcal E}(s(\sigma))$ correspond to the $(n-1)$-simplices $\sigma$ of $F_{n-1}(\Gamma)$ with $s \in \mathcal E(\sigma)$.
    \item The 1-cells $C_{\mathcal E}(s(\tau))$ correspond to the $n$-simplices $\tau$ of $F_n(\Gamma)$ with $s \in \mathcal E(\tau)$ 
 \end{itemize}

We will study the number of connected components $b_0(X_{\mathcal E})$ of the T-curve $X_\mathcal E$.

 \begin{defi}\label{dualgraph}
     The dual graph $G_\Gamma$ of the subdivision $\Gamma$ is defined as follows:

     The set of vertices of $G_\Gamma$ consists of $v_\tau$ for each $n$-simplex $\tau \in F_n(\Gamma)$ and $u_\sigma$ for each $(n-1)$-dimensional simplex $\sigma \in F_n(\Gamma)$ that lies on the boundary of $\Delta$.
     
     The set of edges of $G_\Gamma$ consist of $e_\sigma$ for each $(n-1)$-simplex $\sigma$ of $F_{n-1}(\Gamma)$.
     
     \begin{itemize}
        \item If $\sigma$ is an interior $(n-1)$-simplex, the vertices adjacent to the edge $e_\sigma$ correspond to the two adjacent $n$-simplices: $v_{\tau}$ and $v_{\tau'}$.
        \item If $\sigma$ is a boundary $(n-1)$-simplex, the vertices adjacent to the edge $e_\sigma$ correspond to the vertex $v_\tau$ of the unique adjacent $n$-simplex and the vertex $u_\sigma$.
     \end{itemize}
 \end{defi}

 \begin{figure}
    \centering
   \includegraphics{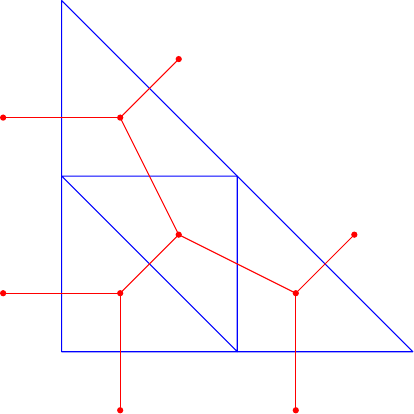}
   \caption{In blue, a subdivision and, in red, its dual graph.}
 \end{figure}

\begin{Rem}
    This kind of graph appears in tropical geometry.
    To a tropical hypersurface $\Sigma$ in $\mathbb R^n$ is associated a dual polytope known as the Newton polytope. The tropical hypersurface also induces a convex triangulation $\Gamma$
of the Newton polytope. In this case, the dual graph $G_\Gamma$ is the graph of the 1-skeleton of $\Sigma$ where the unbounded rays of $\Sigma$ have been compactified by vertices.

\end{Rem}

 For a triangulation $\Gamma$ of $\Delta \subset \mathbb R^n$, the vertices $v_\tau$  (corresponding to the $n$-simplices) of $G_\Gamma$ have degree $n+1$,
 while the vertices $u_\sigma$ (corresponding to the $(n-1)$-simplices on the boundary) have degree 1.

For a smooth real algebraic compact curve, by the Harnack-Klein theorem \cite{Klein}, the number of connected components of the real part is bounded by the genus of the complex part plus one.
As observed by Haas in \cite{Haas}, this inequality has a combinatorial analogue for T-curves in toric surfaces.
The following proposition generalizes this inequality to higher ambient dimensions and gives a condition for the equality to hold.
In this setting, the role of the genus of the complex part is played by the first Betti number $b_1(G_\Gamma)$ of the graph homology.

Recall that a graph $G$ is said planar if it can be embedded (with no crossing edges) in $\mathbb R^2$ or equivalently in the sphere $S^2$.

\begin{prop}\label{boundcc}
    Let $(\Delta, \Gamma)$ be a triangulated polytope of dimension $n$ and $\mathcal E$ be a real phase structure on the $(n-1)$-skeleton of $\Gamma$.
    We have the following bound:
    \begin{equation}\label{eq:boundcc}
        b_0(X_{\mathcal E}) \leq b_1(G_\Gamma)+1
    \end{equation}
    Moreover in the case of equality, the graph $G_\Gamma$ is planar.
\end{prop}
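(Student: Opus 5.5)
We map the T-curve $X_\mathcal E$ onto the dual graph $G_\Gamma$ and compare first Betti numbers. The map is $\pi\colon X_\mathcal E\to G_\Gamma$, induced by forgetting the orthant:
\begin{itemize}
\item a 1-cell $C_\mathcal E(s(\tau))$ is sent to the star of $v_\tau$, namely the union of the half-edges of $G_\Gamma$ at $v_\tau$;
\item a 0-cell $C_\mathcal E(s(\sigma))$ is sent to the midpoint of $e_\sigma$, or to the vertex $u_\sigma$ when $\sigma$ is a boundary simplex.
\end{itemize}

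For fixed $\tau$, each 1-cell $C_\mathcal E(s(\tau))$ has exactly two endpoints. These are the $(n-1)$-faces $\sigma$ of $\tau$ with $s\in\mathcal E(\sigma)$; by Lemma~\ref{necklace} (case $k=n-1$) they come from the cyclic necklace of $\tau$. So the 1-cells inside $\tau$ form, in the cyclic order $\sigma_0,\dots,\sigma_n$ of the facets of $\tau$, a cycle of $n+1$ arcs: the 1-cell attached to the orthant $s_i$ joins $\sigma_{i-1}$ and $\sigma_i$. Over each interior $\sigma$ the cells $C_\mathcal E(s(\sigma))$, $s\in\mathcal E(\sigma)$ (two points of $\mathcal Q^n/T_2^\perp(\sigma)$ modulo the gluing), sit above the single edge $e_\sigma$.

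The cleaner route is an Euler characteristic count. The curve $X_\mathcal E$ is a compact 1-manifold, so it is a disjoint union of circles and $b_0(X_\mathcal E)=b_1(X_\mathcal E)$. Build a graph $H$ by collapsing, for each $n$-simplex $\tau$, all the cells of $X_\mathcal E$ lying in the copies $s(\tau)$ to a single vertex; equivalently, take the curve modulo the equivalence generated by the $\tau$'s. Concretely:
\begin{itemize}
\item the vertices of $H$ are the $v_\tau$ together with the boundary points;
\item the edges of $H$ are the 0-cells of $X_\mathcal E$, each viewed as joining the two $n$-simplices (or the boundary vertex) adjacent to it.
\end{itemize}
Each $e_\sigma$ of $G_\Gamma$ is covered by exactly two edges of $H$, because $|\mathcal E(\sigma)|$ in $\widetilde\Delta$ equals $2$ once $\sigma$ is fixed. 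Hence $H$ is $G_\Gamma$ with every edge doubled (boundary edges included, with suitable endpoints).

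Since $X_\mathcal E$ is a union of cycles, contracting the $n+1$ arcs inside each $\tau$ kills one cycle per $\tau$. I would therefore aim for
\[
b_0(X_\mathcal E)=\#\{\text{cycles}\}\le b_1(G_\Gamma)+1
\]
through an explicit Euler-type count. Write $V$ for the number of $n$-simplices, $E_i$ and $E_b$ for the numbers of interior and boundary $(n-1)$-simplices, and use $b_1(G_\Gamma)=E_i-V+1$ (the boundary edges are leaves and contribute nothing). What has to be shown is that each $n$-simplex merges components in the way this count requires.

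The cleanest formulation I would try: embed the curve, thickened, in a surface $S$ built as a ribbon graph on $G_\Gamma$. Each $v_\tau$ becomes a disk whose boundary is read in the necklace cyclic order, each $e_\sigma$ becomes a band (twisted or untwisted, according to how the two points over $\sigma$ are matched on the two sides), and each leaf $u_\sigma$ becomes a cap. Then $X_\mathcal E$ is exactly the boundary $\partial S$: the arcs inside $\tau$ run along the boundary of the disk between consecutive facets $\sigma_{i-1},\sigma_i$, and the two points over each $\sigma$ are the two sides of the band. With the capped leaves, $\chi(S)=V-E_i=1-b_1(G_\Gamma)$, and $S$ is connected because $G_\Gamma$ is. The classification of surfaces gives
\[
\chi(S)=2-g'-b_0(\partial S),
\]
where $g'\ge 0$ is the (orientable or non-orientable) genus term. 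Hence
\[
b_0(X_\mathcal E)=b_0(\partial S)=2-g'-\chi(S)\le 1+b_1(G_\Gamma).
\]
Equality forces $g'=0$, so $S$ is a planar surface, i.e.\ a sphere with holes. Then $G_\Gamma$, being a spine of $S$, embeds in $S^2$ and is planar.

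The main obstacle is verifying carefully that $X_\mathcal E$ really is homeomorphic to the boundary of this ribbon surface. Two points need care. First, the necklace from Lemma~\ref{necklace} must give a genuine cyclic boundary of the disk at $v_\tau$. Second, the two orthants over each interior $\sigma$ must attach consistently as the two sides of a band, including the correct twisting coming from the identifications $s\sim s+e$ with $e\in T_2^\perp(\sigma)$.
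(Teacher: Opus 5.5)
Your proposal is correct and is essentially the paper's argument in dual form. The paper attaches a disk to $G_\Gamma$ along the cycle $\gamma_A$ traced by each component $A$ of $X_{\mathcal E}$, uses Lemma~\ref{necklace} to check that the result is a closed surface, and concludes from $\chi(S)=1-b_1(G_\Gamma)+b_0(X_{\mathcal E})\le 2$; that closed surface is exactly your ribbon surface with its $b_0(X_{\mathcal E})$ boundary circles capped off, and the twisting issue you flag is harmless because each band can be glued independently so that the two orthant labels of $\mathcal E(\sigma)$ match on both sides.
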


\begin{proof}
    First, observe that to each 0-cell $C_{\mathcal E}(s(\sigma))$, with $\sigma \in F_{n-1}(\Gamma)$ and $s \in \mathcal E(\sigma)$, we can associate the edge $e_\sigma$ in $G_\Gamma$.
    Conversely, since for $\sigma \in F_{n-1}(\Gamma)$ the affine space $\mathcal E(\sigma)$ is one dimensional, it consists of exactly two orthants $\{s, s'\}$.
    Thus, the edge $e_\sigma$ of $G_\Gamma$ is associated to two 0-cells $C_\mathcal E(s(\sigma))$ and $C_\mathcal E(s'(\sigma))$ in $X_\mathcal E$.
    If the two $(n-1)$-simplices $s(\sigma)$ and $s(\sigma')$ are identified in $\widetilde{\Delta}$ (which happen exactly when $\sigma$ is a boundary simplex),
    $C_\mathcal E(s(\sigma))$ and $C_\mathcal E(s'(\sigma))$ are in fact the same cell but, in this case, we count the cell twice.

    For each connected component $A$ of $X_\mathcal E$, the succession of 0-cells of $A$ defines a succession of edges in $G_\Gamma$,
    which in turn forms a cycle $\gamma_A$ in $G_\Gamma$ (potentially with repeating edges).
    
    For each cycle $\gamma_A$, attach a disk by gluing their boundary along $\gamma_A$. This defines a two-dimensional compact CW-complex $S$, which we claim is a topological surface.
    Indeed, since each edge correspond to two 0-cells, it appears either in two cycles $\gamma_A$ or twice in a single cycle. This implies that the interior of an edge locally looks like the gluing of two half-planes.
    Furthermore, for a $n$-simplex $\tau$, by Lemma \ref{necklace},
    we have a cyclic order $\sigma_0, \sigma_1, \dots, \sigma_n$ of the $(n-1)$-simplices in $\tau$ and orthant $s_i \in \mathcal Q^n$ such that $\mathcal E(\sigma^{(i)}) = \{s_i, s_{i+1}\}$ taking the value of $i$ modulo $n+1$.
    Thus, there are cycles $\gamma_{A_i}$ containing successively $e_{\sigma^{(i)}}$ and $e_{\sigma^{(i+1)}}$.
    Hence, the neighborhood of $v_\tau$ in $S$ is homeomorphic to $\mathbb R ^2$ (see Figure \ref{gluingS}).
    Finally, for a $(n-1)$-simplex $\sigma$ on the boundary with $\mathcal E(\sigma)=\{s_1, s_2\}$, the 1-cells $C_{\mathcal E}(s_1(\sigma))$ and $C_{\mathcal E}(s_2(\sigma))$ are adjacent and a fortiori belong to the same connected component $A$.
    Thus, the neighborhood of $u_\sigma$ is also homeomorphic to $\mathbb R^2$.
    Therefore, $S$ is indeed a closed connected surface.

    Let's compute its Euler characteristic $\chi(S)$:
    the complex $S$ is composed of $V$ vertices, $E$ edges and $F=b_0(X_{\mathcal E})$ faces,
    so $\chi(S) = V - E + F$.
    But looking at Euler characteristic of $G_\Gamma$, we have
    $V - E = \chi(G_\Gamma) = b_0(G_\Gamma) - b_1(G_\Gamma) = 1- b_1(G_\Gamma)$ since $G_\Gamma$ is connected.
    So $\chi(S) = b_0(X_{\mathcal E}) + 1 -b_1(G_\Gamma)$. But for a connected compact surface, the Euler characteristic is less than 2 with the equality case only for the sphere. 
    It follows that $b_0(X_{\mathcal E}) + 1 -b_1(G_\Gamma) \leq 2$ or  $b_0(X_{\mathcal E}) \leq b_1(G_\Gamma)+1$
    with equality only if $S$ is a sphere.
    Furthermore, in this case, the inclusion $G_\Gamma \subset S$ gives a planar embedding of $G_\Gamma$.
\end{proof}

\begin{figure}
    \includegraphics{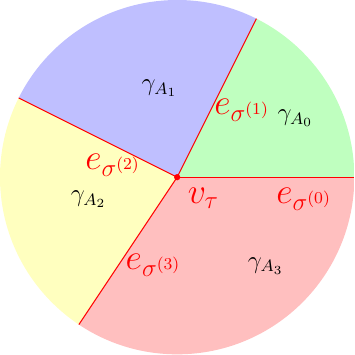}
    \includegraphics{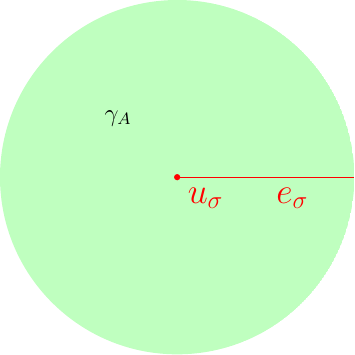}

\caption{An illustration of the gluing defining the surface $S$ in the proof of proposition \ref{boundcc}}
\label{gluingS}
\end{figure}

\begin{Rem}
    When $\Delta$ is smooth, it follows from the Theorem 1.4 of \cite{brugalle2024combinatorial} that the inequality \ref{boundcc}
    correspond to the case $k=n-1$ and $p=0$ of the inequality \ref{boundhodge}.
    More precisely, $h^{0,1}(Y_\Delta^{n-1}) = b_1(G_\Gamma)$ and $h^{0,0}(Y_\Delta^{n-1}) = 1$.
    In this light, we say that the T-curve $X_\mathcal E$ is maximal when $b_0(X_{\mathcal E}) = b_1(G_\Gamma) + 1$.

    The proof above provides a direct and elementary justification of this special case, which additionally gives a condition on the equality case.
\end{Rem}

When the dual graph is trivalent, a study of maximal T-curve can be found in \cite{Hassrevis}.

By Kuratowski's theorem \cite{KuratowskiThm}, a graph is planar if and only if it contains no
subgraph that is a subdivision of the complete graph $K_5$ or the complete bipartite graph $K_{3,3}$.

\begin{prop}\label{Kn}
    Let $\Delta$ be an $n$-dimensional lattice polytope with a unimodular triangulation $\Gamma$.
    If $\Delta$ has an interior lattice point $p$, then $G_\Gamma$ contains a subgraph that is a subdivision of the complete graph on $n+1$ vertices $K_{n+1}$.
\end{prop}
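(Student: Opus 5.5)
The plan is to use the star of the interior lattice point $p$. Since $\Gamma$ is unimodular and its vertex set is all of $\Delta\cap\mathbb Z^n$, the point $p$ is a vertex of $\Gamma$. Because $p$ is interior, its link $L=\mathrm{lk}(p)$ is a triangulated $(n-1)$-sphere. The $n$-simplices containing $p$ are the cones $p*\rho$ over the facets $\rho$ of $L$. Two such cones share an interior $(n-1)$-simplex exactly when the facets $\rho,\rho'$ share a ridge of $L$. So the subgraph of $G_\Gamma$ induced by the $v_{p*\rho}$ is the dual graph of the sphere $L$, that is, the graph of the polytopal-like dual cell complex of $L$. The goal is to find $n+1$ branch vertices in this subgraph, together with pairwise internally disjoint paths between them.

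First I would choose the branch vertices. The natural choice is a single $n$-simplex $\tau=p*\rho_0$ containing $p$, together with its $n$ neighbours. This does not directly give $K_{n+1}$, so instead I would argue with Menger-type connectivity. The cleaner route is to show that the dual graph $D(L)$ of a triangulated $(n-1)$-sphere is $n$-connected. It is $n$-regular, and Balinski's theorem holds for duals of polytopes; for general PL or simplicial spheres, the dual graph is $n$-connected by a known generalization (Barnette, or Klee for pseudomanifolds). With $n$-connectivity in hand, I would take the $n+1$ branch vertices to be $v_\tau$ and its $n$ neighbours $v_{\tau_1},\dots,v_{\tau_n}$, where $\tau_i$ shares the facet opposite vertex $i$ of $\rho_0$ with $\tau$.

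Next I would connect the neighbours pairwise by disjoint paths avoiding $v_\tau$. Here I would use the geometry more concretely. The $n$-simplices $\tau_i$ and $\tau_j$ both contain the $(n-2)$-face $p*(\rho_0\setminus\{i,j\})$. Going around that codimension-2 face in the link gives a cycle of $n$-simplices. Removing $\tau$ leaves a path from $\tau_i$ to $\tau_j$ through simplices containing $p*(\rho_0\setminus\{i,j\})$ but not $\rho_0$ itself. For distinct pairs $\{i,j\}\neq\{k,l\}$, the internal simplices of these paths contain different codimension-2 faces. An internal simplex other than $\tau_i,\tau_j$ lies in the star of only one such face of $\tau$, since containing two of them would force it to contain $\rho_0$ or to be some $\tau_m$. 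So the internal vertices are pairwise disjoint. The paths also avoid the branch vertices: by the same counting, an internal simplex containing $p*(\rho_0\setminus\{i,j\})$ and equal to $\tau_m$ would require $m\in\{i,j\}$.

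Together with the edges $v_\tau v_{\tau_i}$, this yields a subdivision of $K_{n+1}$ in $G_\Gamma$. The main obstacle is the disjointness check: I must verify that an $n$-simplex containing $p$ cannot contain two distinct $(n-2)$-faces of $\tau$ through $p$ unless it is $\tau$ or one of the $\tau_m$. This holds because two distinct such faces span an $(n-1)$-face of $\tau$, and unimodularity (really just the simplicial complex structure) then forces the simplex to be adjacent to $\tau$ along that face. I also need that the cycle around each codimension-2 face is a genuine cycle, which uses that $p$ is interior, so that the link of the face is a circle.
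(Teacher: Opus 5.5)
Your argument is correct and is essentially the paper's proof: take an $n$-simplex $\tau$ through $p$ and its $n$ neighbours $\tau_i$ across the facets containing $p$, join $\tau_i$ to $\tau_j$ by the complementary arc of the cycle of $n$-simplices around the interior $(n-2)$-face $\tau\cap\tau_i\cap\tau_j$, and obtain disjointness because a simplex containing two such faces contains an $(n-1)$-face of $\tau$ (or all of $\tau$). The $n$-connectivity detour via Barnette/Klee is never used, and would not by itself give pairwise disjoint paths, so you can drop it; your explicit check that the internal vertices of each path avoid the other branch vertices $\tau_m$ is a step the paper leaves implicit.
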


\begin{proof}
    Since $\Gamma$ is unimodular, the point $p$ is the vertex of an $n$-dimensional simplex $\sigma$ of $\Gamma$.
    Each facet of $\sigma$ containing $p$ is an interior simplex of $\Delta$ and thus is contained in another maximal simplex.
    Let $\sigma_1, \dots \sigma_n$ denote these maximal simplices.
    Consider the vertices $v, v_1, \dots v_n$ of the graph $G_\Gamma$ corresponding to the $n$-simplex $\sigma, \sigma_1, \dots, \sigma_n$.
    
    We claim that every pair of these $n+1$ vertices can be connected by a path in $G_\Gamma$ such that these paths are disjoint (excepted for their endpoints).
    Together, they form a subdivision of $K_{n+1}$.

    The path between $v$ and $v_i$ is just the edge $v-v_i$, since $\sigma$ and $\sigma_i$ share a facet.

    For $v_i$ and $v_j$, consider the intersection $\sigma \cap \sigma_i \cap \sigma_j$: this is a $(n-2)$-simplex $\gamma_{i,j}$ containing $p$.
    As $\gamma_{i,j}$ is an interior $(n-2)$-simplex, the vertices of $G_\Gamma$ corresponding to simplex containing $\gamma_{i,j}$ form a cycle.
    This cycle includes the path $v_i-v-v_j$ and the complementary part of the cycle provides the desired path between $v_i$ and $v_j$.

    We now show that the paths are disjoints. It is direct for the path $v-v_i$.
    Suppose $w$ is a vertex contained both in the path between $v_i$ and $v_j$ and in the path between $v_{i'}$ and $v_{j'}$, with $\{i,j\} \neq \{i',j'\}$.
    The vertices $w$ of $G_\Gamma$ must correspond to an $n$-simplex of $\Gamma$ containing both $\gamma_{i,j}$ and $\gamma_{i',j'}$.
    Since the simplex containing $\gamma_{i,j}$ and $\gamma_{i',j'}$ is a facet of $\sigma$ containing $p$ or $\sigma$ itself, the vertex $w$ can only be $v, v_1, \dots v_{n-1}$ or $v_n$.
    Hence, all our paths only meet at their endpoints.
\end{proof}

We immediately deduce the following restriction on the existence of maximal T-curves.

\begin{coro}\label{nomaxTcurve}
    Let $\Delta$ be a lattice polytope of dimension $n>3$ with an interior lattice point.
    For any real phase structure $\mathcal E$ on the $(n-1)$-skeleton of a triangulation of $\Delta$,
    the T-curve $X_\mathcal E$ is not maximal.
\end{coro}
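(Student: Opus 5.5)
The plan is to combine Proposition \ref{boundcc} with Proposition \ref{Kn} and Kuratowski's theorem.

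Let $\Gamma$ be the (unimodular) triangulation carrying $\mathcal E$, and argue by contradiction. Suppose $X_\mathcal E$ is maximal, meaning $b_0(X_\mathcal E) = b_1(G_\Gamma)+1$. Then the equality case of Proposition \ref{boundcc} says that the dual graph $G_\Gamma$ is planar.

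Now use the hypothesis that $\Delta$ has an interior lattice point $p$. Proposition \ref{Kn} then shows that $G_\Gamma$ contains a subgraph which is a subdivision of $K_{n+1}$. Since $n>3$, we have $n+1\ge 5$, so $K_{n+1}$ contains $K_5$ as a subgraph. Restricting the subdivision of $K_{n+1}$ to five of its branch vertices and the paths joining them gives a subdivision of $K_5$ inside $G_\Gamma$.

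Kuratowski's theorem \cite{KuratowskiThm} says a graph containing a subdivision of $K_5$ is not planar. This contradicts the planarity obtained above, so $X_\mathcal E$ is not maximal.

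No step is really an obstacle, since all the substantive work is in Propositions \ref{boundcc} and \ref{Kn}. The only points needing care are two. First, a subdivision of $K_{n+1}$ restricts to a subdivision of $K_5$, because the paths between the chosen five branch vertices remain internally disjoint. Second, "maximal" for T-curves here means equality in \eqref{eq:boundcc}, as fixed in the preceding Remark.
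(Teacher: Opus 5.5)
Your proposal is correct and follows the paper's proof. Proposition \ref{Kn} puts a subdivision of $K_{n+1}$ with $n+1\ge 5$ inside $G_\Gamma$, so $G_\Gamma$ is not planar, and the equality case of Proposition \ref{boundcc} then rules out maximality. Your extra remarks only make explicit what the paper leaves implicit: restricting to five branch vertices, and reading maximality as equality in \eqref{eq:boundcc}.
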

\begin{proof}
    By proposition \ref{Kn}, the graph $G_\Gamma$ contains a subdivision of $K_{n+1}$ with $n \geq 4$.
    Since $K_m$ is not planar for $m \geq 5$, the graph $G_\Gamma$ is not planar.
    Hence, by proposition \ref{boundcc}, the T-curve $X_\mathcal E$ is not maximal.
\end{proof}

Ehrhart theory of lattice polytope provides powerful tools to study the existence of interior points.
See \cite{Beck} for details on this topic.

\begin{defi}
    The codegree of a lattice polytope $\Delta$ is the minimal integer $k$ such that
    $k\Delta$ has no  interior lattice point.
\end{defi}

The codegree can be computed from the $f$-vector of a unimodular triangulation $\Gamma$.
But we restrict ourselves to this simple bound on the codegree.

\begin{prop}\label{codeg}
    The codegree of a lattice polytope $\Delta$ of dimension $n$ is less than $n+1$
    with equality if and only if $\Delta$ is a unimodular simplex.
\end{prop}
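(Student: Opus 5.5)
We read the codegree in the standard Ehrhart sense: the minimal integer $k \geq 1$ such that $k\Delta$ \emph{has} an interior lattice point. This is the reading under which the statement is true. For example, $k\Delta_n$ contains no interior lattice point for $k \le n$, so under the literal wording the codegree of $\Delta_n$ would be $1$, not $n+1$. The plan is to prove the upper bound by an elementary argument, handle the unimodular simplex by direct computation, and get the remaining direction of the equality case from Ehrhart theory.

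\emph{Upper bound.} Since $\Delta$ is $n$-dimensional, we can choose $n+1$ affinely independent vertices $v_0,\dots,v_n$ of $\Delta$ and set $S=\Conv(v_0,\dots,v_n)\subset\Delta$. The lattice point $p=v_0+\dots+v_n$ equals $(n+1)\cdot\frac{1}{n+1}\sum_i v_i$. Thus it lies in $(n+1)S$ with all barycentric coordinates equal to $1/(n+1)>0$, so it is in the interior of $(n+1)S$. Since $S$ is full-dimensional, the interior of $(n+1)S$ is contained in the interior of $(n+1)\Delta$. Hence $(n+1)\Delta$ has an interior lattice point and the codegree is at most $n+1$.

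\emph{Equality case, the unimodular simplex.} Up to a unimodular transformation we take $\Delta=\Delta_n$. An interior lattice point $x$ of $k\Delta_n$ must satisfy $x_i\ge 1$ for all $i$ and $\sum x_i\le k-1$. This is possible only when $k-1\ge n$, so the codegree is exactly $n+1$.

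\emph{Equality case, the converse.} Suppose the codegree is $n+1$. Write the Ehrhart series as $\sum_{k\ge0}|k\Delta\cap\mathbb Z^n|t^k = h^*(t)/(1-t)^{n+1}$. We will use three standard facts from \cite{Beck}. First, Stanley's nonnegativity gives $h^*_i\ge 0$. Second, $h^*(1)=\Vol(\Delta)$. Third, Ehrhart--Macdonald reciprocity gives $\deg h^* = n+1-\operatorname{codeg}(\Delta)$. With codegree $n+1$ we get $h^*=h^*_0=1$, hence $\Vol(\Delta)=1$. The lattice simplex $S$ above has lattice volume at least $1$ and is contained in $\Delta$, so $\Vol(S)=\Vol(\Delta)=1$. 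Both are convex bodies, so $S=\Delta$ and $\Delta$ is a unimodular simplex.

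The main obstacle is this last direction. The elementary bound does not detect when equality fails, so we fall back on reciprocity and the $h^*$-vector. If a self-contained proof were required, the first thing we would try is an elementary substitute: show directly that a lattice polytope which is not a unimodular simplex contains a lattice point outside some unimodular sub-simplex. We would then use that point to produce an interior lattice point of $n\Delta$.
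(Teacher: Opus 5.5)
Your proof is correct. The paper gives no proof to compare it with: Proposition~\ref{codeg} is stated as a known fact from Ehrhart theory, with a pointer to \cite{Beck}. Your argument is the standard way to fill in that citation: the upper bound via the lattice point $v_0+\dots+v_n$, the direct computation for $\Delta_n$, and the converse via $\deg h^*=n+1-\operatorname{codeg}(\Delta)$ and $h^*(1)=\Vol(\Delta)$. Your reading of the definition is also the right one. The paper's phrase ``has no interior lattice point'' is a slip, and ``less than'' means ``at most''. The corollary that follows relies on your reading.

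Two remarks. First, Stanley nonnegativity is never used. The equality $\deg h^*=n+1-\operatorname{codeg}(\Delta)$ comes from the lowest-order term of $t^{n+1}h^*(1/t)/(1-t)^{n+1}$, and then $h^*_0=1$ forces $h^*=1$.

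Second, and more importantly, the elementary fallback in your last paragraph would fail as stated. An empty non-unimodular simplex contains no unimodular lattice sub-simplex at all; an example is the Reeve tetrahedron $\Conv(0,e_1,e_2,e_1+e_2+re_3)$ with $r\ge 2$. An elementary proof of the converse needs two cases.
\begin{enumerate}
\item If $\Delta$ is not a simplex, pick a vertex $q$ of $\Delta$ other than the $v_i$; it cannot lie in $S$. Some barycentric coordinate $\lambda_j$ of $q$ with respect to $S$ is negative. Let $F_j$ be the facet of $S$ opposite $v_j$. Then $S$ and $\Conv(F_j\cup\{q\})$ are full-dimensional and lie on opposite sides of $F_j$. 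Hence the barycenter of $F_j$ is interior to $\Delta$, and $\sum_{i\neq j}v_i$ is an interior lattice point of $n\Delta$.
\item If $\Delta$ is a simplex with $\Vol(\Delta)\ge 2$, the half-open parallelepiped spanned by the vectors $(1,v_i)$ contains $\Vol(\Delta)$ lattice points. So it contains a nonzero one, $x=\sum_i\lambda_i(1,v_i)$ with $0\le\lambda_i<1$ and integer height $h=\sum_i\lambda_i\in\{1,\dots,n\}$. Then $\sum_i(1,v_i)-x$ has all coefficients in $(0,1]$. It yields an interior lattice point of $(n+1-h)\Delta$, with $n+1-h\le n$.
\end{enumerate}
The second case is just reciprocity for simplices made explicit, which your $h^*$ argument handles uniformly.
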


\begin{coro}
    Let $\Delta$ be a lattice polytope of dimension $n>3$ and $d \geq n+1$.
    For any real phase structure $\mathcal E$ on the $(n-1)$-skeleton of a triangulation of $d\Delta$,
    the T-curve $X_\mathcal E$ is not maximal.
    If $\Delta$ is not a unimodular simplex, we can even take $d=n$.
\end{coro}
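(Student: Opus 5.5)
The plan is to reduce the corollary to Corollary \ref{nomaxTcurve}: it suffices to show that the polytope $d\Delta$ has an interior lattice point. Since $d\Delta$ is again a lattice polytope of dimension $n>3$, Corollary \ref{nomaxTcurve} then says that for every triangulation of $d\Delta$ and every real phase structure $\mathcal E$ on its $(n-1)$-skeleton, the T-curve $X_\mathcal E$ is not maximal. (The triangulation is assumed unimodular throughout, as Proposition \ref{Kn} requires.)

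The key step is to produce the interior point from Proposition \ref{codeg}. Let $c$ be the codegree of $\Delta$. Proposition \ref{codeg} gives $c \leq n+1$, with $c \le n$ unless $\Delta$ is a unimodular simplex. The definition of codegree as stated is slightly ambiguous, and the corollary only makes sense with the standard Ehrhart convention. So I will read the codegree as the smallest $k\ge 1$ such that $k\Delta$ \emph{has} an interior lattice point. This matches the example of the standard simplex, where $n\Delta_n$ has no interior point and $(n+1)\Delta_n$ has $(1,\dots,1)$.

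I then need the monotonicity fact that if $c\Delta$ has an interior lattice point, so does $d\Delta$ for every $d\ge c$. To see this, pick any vertex $v$ of $\Delta$ (a lattice point) and let $p$ be an interior lattice point of $c\Delta$. Then $p+(d-c)v$ is a lattice point lying in the interior of $c\Delta+(d-c)\Delta = d\Delta$: it is an interior point of the first summand plus a point of the second, and a Minkowski sum of a full-dimensional convex body's interior with a convex set is interior.

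Applying this with $d\ge n+1\ge c$ gives an interior lattice point of $d\Delta$. When $\Delta$ is not a unimodular simplex, $c\le n$, so already $d=n$ works. The only real obstacle is this convention issue with the codegree definition. Once it is fixed, everything reduces to the Minkowski-sum observation and Corollary \ref{nomaxTcurve}.
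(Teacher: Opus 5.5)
Your proof is correct and takes the same approach as the paper: reduce to Corollary~\ref{nomaxTcurve} by producing an interior lattice point of $d\Delta$ via Proposition~\ref{codeg}. You also make explicit two points that the paper's one-line proof leaves implicit: the codegree has to be read as the least $k\ge 1$ such that $k\Delta$ \emph{has} an interior lattice point (the paper's stated definition has this reversed), and the Minkowski-sum argument $p+(d-c)v\in \mathrm{int}(d\Delta)$ shows that interior lattice points persist for every $d$ at or above the codegree.
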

\begin{proof}
    By proposition \ref{codeg}, the hypothesis of Corollary \ref{nomaxTcurve} is satisfied hence also its conclusion.
\end{proof}

For $n=2$, maximal T-curve can be constructed for any polygon\cite{Haas}.

For $n=3$, the assumption that $G_\Gamma$ is planar is already pretty restrictive.
However, we present in Section \ref{sec:maxcurve} a maximal T-curve in $d\Delta_3$, for any $d \geq 1$.

\subsection{Bounds on the number of connected components of T-curves}

In the previous subsection, we have shown there are no maximal T-curves in ambient dimension $n>3$ when the polytope contains interior points.
But can we still have T-curve approaching maximality? Is the bound of proposition \ref{boundcc} still, at least asymptotically, optimal?

Here, we construct a new bound on the number of connected component of T-curves by enumerating the number of 1-cells of a T-curve.
It turns out this bound is stronger than the one from \ref{boundcc} for dilation of polytope in high dimension.

\begin{theo}
    Let $\Delta$ be a lattice polytope of dimension $n$. For any T-curve $X_{\mathcal E}$ with  Newton polytope $\Delta$, one has
    \begin{equation}\label{newbound}
        b_0(X_\mathcal E) \leq \frac{n+1}{3} \Vol(\Delta)
    \end{equation}
\end{theo}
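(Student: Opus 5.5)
We bound $b_0(X_\mathcal E)$ by counting 1-cells of the canonical cell decomposition and showing that each connected component needs at least three of them. By Theorem \ref{nbfacet} with $k=n-1$, the number of 1-cells of $X_\mathcal E$ is
\[ \left(\binom{n}{n-1}+\binom{n}{n}\right)\Vol(\Delta) = (n+1)\Vol(\Delta). \]
Each connected component $A$ of the T-curve is a closed PL $1$-manifold, i.e. a circle, and its canonical cell decomposition is a regular cell complex. A regular cell decomposition of a circle needs at least two $1$-cells, since a single $1$-cell would have its two endpoints identified, which is not allowed for closed balls embedded as cells. This only gives $b_0 \le \frac{n+1}{2}\Vol(\Delta)$, so the heart of the proof is to exclude components made of exactly two $1$-cells.

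Suppose a component $A$ consists of exactly two $1$-cells $C_\mathcal E(s(\tau))$ and $C_\mathcal E(s'(\tau'))$ sharing both endpoints, which are $0$-cells $C_\mathcal E(t(\sigma_1))$ and $C_\mathcal E(t'(\sigma_2))$. Each $1$-cell $C_\mathcal E(s(\tau))$ has its endpoints at barycenters of two distinct $(n-1)$-faces of $s(\tau)$: the two elements of $\{\sigma\in F_{n-1}(\tau) : s\in\mathcal E(\sigma)\}$, of which there are exactly two by Lemma \ref{necklace}. So both cells join the barycenters of the same two distinct codimension-one faces. In $\widetilde\Gamma$, a pair of distinct $(n-1)$-simplices lies in at most one $n$-simplex, because two distinct $n$-simplices of a triangulation cannot share two distinct facets. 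Hence $s(\tau)=s'(\tau')$ as simplices of $\widetilde\Gamma$. The two $1$-cells are therefore $X_\mathcal E\cap s(\tau)$, which is a single cell by Proposition \ref{PLmanifold}, and so they coincide, a contradiction.

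The one point to check carefully is the boundary case. There $s(\tau)$ and $s'(\tau)$ with $s\neq s'$ are always distinct $n$-simplices of $\widetilde\Gamma$, since only lower-dimensional faces get identified, but they can share identified boundary facets. Still, $s(\tau)$ and $s'(\tau)$ share a facet only if $s-s'\in T_2^\perp(\sigma)$ for that boundary facet $\sigma$. Sharing two facets $\sigma_1\neq\sigma_2$ would force $0\neq s-s'\in T_2^\perp(\sigma_1)\cap T_2^\perp(\sigma_2)=T_2^\perp(\tau)=0$, using that $T(\sigma_1)+T(\sigma_2)=T(\tau)$. So the argument goes through and every component has at least three $1$-cells.

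Summing over components gives $3\,b_0(X_\mathcal E)\le (n+1)\Vol(\Delta)$, which is \eqref{newbound}. The main obstacle is the exclusion of the two-cell components, specifically verifying that distinct $n$-simplices of the extended triangulation $\widetilde\Gamma$ share at most one facet.
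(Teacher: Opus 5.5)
Your proposal is correct and follows the same route as the paper: you count the $(n+1)\Vol(\Delta)$ one-cells via Theorem~\ref{nbfacet}, then use that two distinct maximal simplices of $\widetilde\Gamma$ share at most one facet to force at least three one-cells per component, and your check via $T_2^\perp(\sigma_1)\cap T_2^\perp(\sigma_2)=T_2^\perp(\tau)=0$ makes explicit the boundary-identification case that the paper leaves implicit. Like the paper, you tacitly need $n\geq 2$: for $n=1$ the two facets of a segment are disjoint points, so $T(\sigma_1)+T(\sigma_2)=0\neq T(\tau)$, and indeed the unit segment yields a circle made of two one-cells, for which $b_0=1>\frac{2}{3}$ and the bound fails.
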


\begin{proof}
By Theorem \ref{nbfacet}, the number of 1-cells in the canonical cell decomposition of $X_\mathcal E$ is $(n+1)\Vol(\Delta)$.

However, because two maximal simplices share at most one facet, a connected component of $X_{\mathcal E}$ must pass through at least three different maximal simplices of $\widetilde {\Delta}$.
Thus, each of the $b_0(X_{\mathcal E})$ connected components of $X_\mathcal E$ is made of at least three 1-cells. Hence, the inequality:
\[ 3b_0(X_{\mathcal E}) \leq (n+1)\Vol(\Delta) \]
\end{proof}

To compare the bounds \ref{eq:boundcc} and \ref{newbound}, we need to relate the homology of $G_\Gamma$ with the volume of $\Delta$.

By a slight abuse of notation, we denote $\Vol(\partial \Delta)$ the sum, over all facet $F$ of $\Delta$, of the lattice volume $\Vol(F)$, where $F$ is considered as a full dimensional polytope in their tangent lattice $T(F)$.
For example, we have $\Vol(\partial \Delta_n) = n+1$.

\begin{lemma}\label{bettigraph}
    Let $\Delta$ be an $n$-dimensional polytope with $\alpha = \Vol(\Delta)$ and $\beta = \Vol(\partial \Delta)$.
    Then for any unimodular triangulation $\Gamma$, the first Betti number of $G_\Gamma$ is $b_1(G_\Gamma) = \frac{(n-1)\alpha-\beta}{2} +1$.
\end{lemma}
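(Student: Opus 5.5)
The plan is to compute $b_1(G_\Gamma)$ through the Euler characteristic of the graph. Since $G_\Gamma$ is connected (the dual graph of a triangulation of a convex, hence connected, polytope is connected), we have $b_1(G_\Gamma) = E - V + 1$. It therefore suffices to express the number of vertices $V$ and edges $E$ of $G_\Gamma$ in terms of $\alpha = \Vol(\Delta)$ and $\beta = \Vol(\partial\Delta)$.

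First, count the vertices. The vertices $v_\tau$ correspond to the $n$-simplices of $\Gamma$. Since $\Gamma$ is unimodular, each has lattice volume $1$, so there are exactly $\alpha$ of them. The vertices $u_\sigma$ correspond to the boundary $(n-1)$-simplices. The triangulation $\Gamma$ restricts to a triangulation of each facet $F$ of $\Delta$, and this restriction is unimodular with respect to the lattice $T(F)$. Indeed, a boundary $(n-1)$-simplex is a face of a unimodular $n$-simplex, so its edge vectors extend to a basis of $\mathbb Z^n$ and hence form a basis of $T(F)$. Therefore $F$ contains exactly $\Vol(F)$ boundary $(n-1)$-simplices, and summing over the facets gives $\beta$ vertices $u_\sigma$. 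So $V = \alpha + \beta$.

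Next, count the edges by a handshake argument. Each vertex $v_\tau$ has degree $n+1$ and each $u_\sigma$ has degree $1$. Every edge $e_\sigma$ has exactly two endpoints: two vertices $v_\tau, v_{\tau'}$ if $\sigma$ is interior, or $v_\tau$ and $u_\sigma$ if $\sigma$ is on the boundary. Hence $2E = (n+1)\alpha + \beta$. The step needing care is confirming that every $(n-1)$-simplex lies in exactly two $n$-simplices when interior and exactly one when on the boundary, which is the standard pseudomanifold property of a triangulation of a convex polytope. Equivalently, one can count directly $E = \#\{\text{interior}\} + \beta$ with $(n+1)\alpha = 2\#\{\text{interior}\} + \beta$.

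Finally, combine the two counts:
\[ b_1(G_\Gamma) = \frac{(n+1)\alpha+\beta}{2} - (\alpha+\beta) + 1 = \frac{(n-1)\alpha - \beta}{2} + 1. \]
The only genuine obstacle is the identification of the number of boundary simplices with $\Vol(\partial\Delta)$, that is, the unimodularity of the induced triangulations of the facets. The rest is bookkeeping.
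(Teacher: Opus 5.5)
Your proof is correct and is essentially the paper's argument: $\alpha+\beta$ vertices, $\frac{(n+1)\alpha+\beta}{2}$ edges by the handshake lemma, and $b_1(G_\Gamma)=E-V+1$ by connectedness. Your check that each facet inherits a unimodular triangulation (so there are exactly $\beta$ boundary $(n-1)$-simplices) makes explicit a step the paper takes for granted.

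The only loose point is your parenthetical justification of connectedness. Connectedness of $\Delta$ alone does not force the dual graph to be connected; what is really used is that removing the $(n-2)$-skeleton from the interior of $\Delta$ leaves a connected set. The paper likewise just asserts that $G_\Gamma$ is connected.
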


\begin{proof}
The unimodular triangulation $\Gamma$ contains $\alpha$ maximal simplices and $\beta$ $(n-1)$-simplices on the boundary of $\Delta$.
Thus, $G_\Gamma$  is made of $\alpha$ vertices of degree $n+1$ and $\beta$ vertices of degree 1.
By the handshaking lemma, the number of edges of $G_\Gamma$ is thus $\frac{(n+1)\alpha+\beta}{2}$.
Computing Euler characteristic leads to: $b_0(G_\Gamma) - b_1(G_\Gamma) = (\alpha+\beta) - \frac{(n+1)\alpha+\beta}{2}$.
Given that $G_\Gamma$ is connected, we obtain $b_1(G_\Gamma) = \frac{(n-1)\alpha-\beta}{2} +1$.
\end{proof}

Considering dilation of $\Delta$, we have the following volume relationships: 
\[\Vol(d\Delta) = d^n\Vol(\Delta)=d^n\alpha \text{ and } \Vol(\partial(d\Delta))= d^{n-1} \Vol(\partial \Delta)=d^{n-1}\beta.\]
Thus, for a T-curve $X_d$  with  Newton polytope $d\Delta$,
the proposition \ref{boundcc} gives us
 \begin{equation}\label{dbound}
    b_0(X_d) \leq  \frac{(n-1)d^n\alpha-d^{n-1}\beta}{2}+2
 \end{equation} with equality for maximal curves.
 Meanwhile, the bound (\ref{newbound}) is $b_0(X_d) \leq \frac{(n+1)d^n\alpha}{3}$.
For $n>5$, the latter bound is asymptotically better as $d$ tends to infinity.

\begin{prop}
    Let $\Delta$ be a lattice polytope of dimension $n>5$ with volume $\alpha =\Vol(\Delta)$ and boundary volume $\beta = \Vol(\partial \Delta)$.
    For a T-curve $X_d$ in $d\Delta$, the bound (\ref{newbound}) is stricter than the bound (\ref{eq:boundcc}), for $d$ large enough.
    More precisely, \[b_0(X_d) \leq \frac{n+1}{3}d^n \alpha < b_1(G_\Gamma)+1\]
    for any $d>2$ such that $(n-5)\alpha d \geq 3\beta$.
\end{prop}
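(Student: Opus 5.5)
The first inequality is already proved: it is the bound (\ref{newbound}) from the preceding theorem, applied to the dilated polytope $d\Delta$. Since $\Vol(d\Delta)=d^n\alpha$, it reads $b_0(X_d)\le \frac{n+1}{3}d^n\alpha$. So the only real content is the strict inequality
\[
\frac{n+1}{3}d^n\alpha < b_1(G_\Gamma)+1 .
\]
Here $\Gamma$ is the unimodular triangulation of $d\Delta$ underlying $X_d$.

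For the right-hand side I would use Lemma \ref{bettigraph} with $\Vol(d\Delta)=d^n\alpha$ and $\Vol(\partial(d\Delta))=d^{n-1}\beta$. This gives
\[
b_1(G_\Gamma)+1=\frac{(n-1)d^n\alpha-d^{n-1}\beta}{2}+2 ,
\]
which is exactly the right-hand side of (\ref{dbound}). Multiplying through by $6$, the inequality to prove becomes
\[
2(n+1)d^n\alpha < 3(n-1)d^n\alpha-3d^{n-1}\beta+12 ,
\]
that is,
\[
(n-5)d^n\alpha - 3d^{n-1}\beta + 12 > 0 .
\]
Factoring out $d^{n-1}$, this is $d^{n-1}\bigl((n-5)\alpha d-3\beta\bigr)+12>0$.

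Under the hypothesis $(n-5)\alpha d\ge 3\beta$, the bracket is nonnegative. Hence the left side is at least $12>0$ and the strict inequality holds. The conditions $d>2$ and $n>5$ seem to serve only to make the hypothesis achievable (for $n>5$ the coefficient $(n-5)\alpha$ is positive), so "$d$ large enough" is just a reformulation of $(n-5)\alpha d\ge 3\beta$. If needed, I would check that $d>2$ plays no further role.

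No step is genuinely hard. The main point of care is bookkeeping: using $\Vol(\partial(d\Delta))=d^{n-1}\beta$ correctly, since facet volumes scale in dimension $n-1$, and carrying the constant $+2$ from Lemma \ref{bettigraph} through to the constant $12$. That constant is what makes the inequality strict even in the boundary case $(n-5)\alpha d=3\beta$.
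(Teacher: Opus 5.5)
Your proof is correct and follows the paper's route: both apply Lemma \ref{bettigraph} with $\Vol(d\Delta)=d^n\alpha$ and $\Vol(\partial(d\Delta))=d^{n-1}\beta$, and reduce the claim to $(n-5)d^n\alpha-3d^{n-1}\beta+12>0$. The only difference is at the end: the paper divides by $d^{n-1}$ and uses $d\ge 2$, $n\ge 6$ and integrality to show this is in fact \emph{equivalent} to $(n-5)\alpha d\ge 3\beta$, whereas your factorization gives the needed direction directly and does not use $d>2$.
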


\begin{proof}
    By equation (\ref{inequality1}), we have $b_1(G_\Gamma)+1 = \frac{(n-1)d^n\alpha-d^{n-1}\beta}{2}+2$
    Thus, the inequality to check is
    \[ \frac{n+1}{3}d^n \alpha < \frac{(n-1)d^n\alpha-d^{n-1}\beta}{2}+2 \] which is equivalent to
    \begin{equation}\label{inequality1}
        3\beta < (n-5)d\alpha +12d^{1-n}.
    \end{equation}
    Since we supposed $n>5$ and $d>1$, we have $12d^{1-n}<\frac{3}{8} <1$.
    Hence, (\ref{inequality1}) is equivalent to $ 3\beta \leq (n-5)d\alpha -1$ since the quantities are integers.
\end{proof}

\begin{coro}
    Let $\Delta$ be a lattice polytope of dimension $n>5$.
    There is no asymptotically maximal family $(X_d)_{d \geq 1}$ of T-curve with  Newton polytope $\Delta$.
\end{coro}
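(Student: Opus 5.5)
The plan is to compare the two bounds on $b_0(X_d)$ as $d \to \infty$ and check that the new bound (\ref{newbound}) is too small for the Harnack-type quantity to be asymptotically attained. Throughout, write $\alpha = \Vol(\Delta)$ and $\beta = \Vol(\partial\Delta)$, and let $\Gamma_d$ be the triangulation of $d\Delta$ underlying $X_d$.

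\textbf{Step 1: identify the target quantity.} By the Remark following Proposition \ref{boundcc}, when $\Delta$ is smooth we have $h^{0,0}(Y^{n-1}_{d\Delta}) = 1$ and $h^{0,1}(Y^{n-1}_{d\Delta}) = b_1(G_{\Gamma_d})$. Hence asymptotic maximality for $p=0$ reads
\[ b_0(X_d) \sim b_1(G_{\Gamma_d}) + 1 . \]
If one prefers not to assume smoothness, take this as the meaning of asymptotic maximality for curves, consistent with that remark. Lemma \ref{bettigraph} applied to $d\Delta$ then gives
\[ b_1(G_{\Gamma_d}) + 1 = \frac{(n-1)d^n\alpha - d^{n-1}\beta}{2} + 2 . \]
The right-hand side is independent of the triangulation and is asymptotic to $\frac{n-1}{2}\alpha d^n$.

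\textbf{Step 2: apply the new bound.} By (\ref{newbound}) applied to $d\Delta$, whose volume is $d^n\alpha$, we get $b_0(X_d) \leq \frac{n+1}{3}\alpha d^n$. Therefore
\[ \limsup_{d\to\infty} \frac{b_0(X_d)}{b_1(G_{\Gamma_d})+1} \leq \frac{(n+1)/3}{(n-1)/2} = \frac{2(n+1)}{3(n-1)} . \]
This ratio is strictly less than $1$ exactly when $2n+2 < 3n-3$, that is, when $n > 5$. In that case the ratio $b_0(X_d)/(b_1(G_{\Gamma_d})+1)$ cannot tend to $1$, so the family is not asymptotically maximal. This contradiction proves the corollary. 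Alternatively, one can invoke the preceding Proposition directly: for $d$ large enough that $(n-5)\alpha d \geq 3\beta$, it gives a strict inequality with a uniform multiplicative gap.

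\textbf{Main obstacle.} The only delicate point is Step 1: correctly identifying $\sum_q h^{0,q}(Y^{n-1}_{d\Delta})$ with $b_1(G_{\Gamma_d})+1$ and confirming that it grows like $\frac{n-1}{2}\alpha d^n$. I would rely on the cited Remark (Theorem 1.4 of \cite{brugalle2024combinatorial}) for the identification. As a cross-check, the asymptotic lemma stated in the introduction with $k = n-1$ gives the leading coefficient $\frac{(n-1)!}{n!}{n \brace n-1} = \frac{1}{n}\binom{n}{2} = \frac{n-1}{2}$, which matches. The rest is elementary arithmetic.
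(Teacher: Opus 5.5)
Your proof is correct and follows the paper's route. You compare the bound $\frac{n+1}{3}\alpha d^n$ from (\ref{newbound}) with the leading term $\frac{n-1}{2}\alpha d^n$ of $b_1(G_{\Gamma_d})+1=\sum_q h^{0,q}(Y^{n-1}_{d\Delta})$, obtained from Lemma \ref{bettigraph} or equivalently from the Stirling-number lemma with $k=n-1$; the first is strictly smaller exactly when $n>5$. Your cross-check via the Stirling-number lemma is actually the cleaner justification when $\Delta$ is not assumed smooth, because it gives the leading coefficient directly without needing the identification $h^{0,1}(Y^{n-1}_{d\Delta})=b_1(G_{\Gamma_d})$.
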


 \subsection{Bounds on the number of connected components of T-surfaces}

 We now use a slight variation of the method used for T-curves to derive new bounds on the number of connected components of maximal T-surfaces.
 In this subsection $\mathcal E$ is $(n-2)$-real phase structure.
 
 \begin{theo}\label{boundsurface}
     Let $\Delta$ be an $n$-dimensional lattice polytope and let $X_\mathcal E$ be a T-surface in $\Delta$. Then,
     \[ b_0(X_\mathcal E) \leq \frac{7n^2+5n+12}{60}\Vol(\Delta).\]
 \end{theo}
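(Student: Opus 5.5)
The plan is to copy the T-curve argument: count the $2$-cells of the canonical cell decomposition of $X_\mathcal E$ using Section 3, then bound from below how many $2$-cells each connected component needs. Here $k=n-2$. By Theorem \ref{nbfacet}, the number of $2$-cells is
\[
N=\Bigl(\tbinom{n}{2}+\tbinom{n}{1}+\tbinom{n}{0}\Bigr)\Vol(\Delta)=\frac{n^2+n+2}{2}\Vol(\Delta).
\]
By Theorem \ref{nbsimplex}, the number $t$ of simplicial $2$-cells (triangles) satisfies
\[
t\le \frac{2}{n-1}\binom{n+1}{n-2}\Vol(\Delta)=\frac{n(n+1)}{3}\Vol(\Delta).
\]
Write $q=N-t$ for the number of non-simplicial $2$-cells, which are polygons with at least $4$ sides.

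The key local step is a combinatorial lemma about closed surfaces with a regular cell structure. By Proposition \ref{PLmanifold}, each connected component $A$ of $X_\mathcal E$ is a closed PL surface, and its canonical cells are PL discs forming a regular cell complex. I would prove that such a complex has at least $4$ two-cells, and at least $5$ two-cells if one of them has at least $4$ sides. Writing $t_A,q_A$ for the numbers of triangles and non-triangles in $A$, this gives
\[
\frac{t_A}{4}+\frac{q_A}{5}\ge 1 .
\]
The argument I would try:
\begin{itemize}
\item Regularity forces every edge to lie on exactly two faces, and two faces to meet in a single edge, vertex, or not at all. Every vertex then has degree at least $3$, since a vertex of degree $2$ would make two faces share two edges.
\item A face with $m$ sides has $m$ distinct neighbouring faces across its edges. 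So a triangle forces at least $4$ faces, and an $m$-gon with $m\ge 4$ forces at least $m+1\ge 5$ faces.
\end{itemize}
I expect this lemma to be the main obstacle. The delicate part is justifying that distinct edges of a face lead to distinct neighbours in this regular, possibly non-orientable, setting, for instance for components homeomorphic to $\mathbb RP^2$. If needed, I would supplement it with Lemma \ref{necklace} and the explicit cell structure inside a single simplex, which shows that two cells in adjacent orthants of one simplex meet along at most one edge.

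Summing the lemma over all components gives
\[
b_0(X_\mathcal E)\le \frac{t}{4}+\frac{q}{5}=\frac{N}{5}+\frac{t}{20}.
\]
Since the right-hand side is increasing in $t$, I would substitute the maximal value of $t$ from Theorem \ref{nbsimplex} and the exact value of $N$. This yields a bound of the form
\[
b_0(X_\mathcal E)\le \frac{7n^2+O(n)+12}{60}\Vol(\Delta).
\]
A first computation gives the linear coefficient as $7$, i.e. $\frac{7n^2+7n+12}{60}\Vol(\Delta)$, whereas the statement has $5n$, so the stated coefficient is slightly smaller than what I obtain. I would recheck this arithmetic first. If it stands, I would look for a slightly sharper input to close the gap, either a finer count of the simplicial orthants or a refinement of the per-component weights, for example using that a component made only of triangles and of Euler characteristic at most $1$ needs more than $4$ faces.
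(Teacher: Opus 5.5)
Your argument is the paper's argument. You count the $2$-cells with Theorem~\ref{nbfacet} and bound the triangles with Theorem~\ref{nbsimplex}. You use that each component has at least four $2$-cells, and at least five unless all are triangles. This gives $b_0(X_\mathcal E)\le \frac{4N+t}{20}$; your weights $\frac{t_A}{4}+\frac{q_A}{5}\ge 1$ are the paper's $p_m$ bookkeeping in another form.

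The discrepancy you found is real, and the error is the paper's, not yours. Indeed $\frac{2}{n-1}\binom{n+1}{n-2}=\frac{2}{n-1}\cdot\frac{(n+1)n(n-1)}{6}=\frac{n(n+1)}{3}$, whereas the paper's proof writes $t=\frac{n(n-1)}{3}\Vol(\Delta)$. Substituting that value into $\frac{4N+t}{20}$ gives exactly $\frac{7n^2+5n+12}{60}$. So this method proves $b_0(X_\mathcal E)\le\frac{7n^2+7n+12}{60}\Vol(\Delta)$, and it does not prove the statement as written. Correspondingly, the threshold in the subsequent corollary becomes $n\ge 69$ (from $n^2-69n+26>0$) rather than $65$.

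Your proposed repairs will not close the gap.
\begin{itemize}
\item The per-simplex bound $\frac{n(n+1)}{3}$ cannot be lowered to $\frac{n(n-1)}{3}$. In Example~\ref{ex2} ($n=3$, $k=1$) the four orthants $(-,+,+)$, $(+,-,+)$, $(+,+,-)$, $(-,-,-)$ are simplicial, whereas $\frac{n(n-1)}{3}=2$.
\item The Euler-characteristic refinement only rules out non-spherical all-triangle components with four faces. A sphere made of four triangles remains possible, so the worst-case weight $\frac14$ per triangle stands.
\item Given only the data you use ($N$ cells, at most $t$ triangles, at least $4$ resp.\ $5$ cells per component), $\frac{4N+t}{20}$ is already the optimal bound. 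Reaching $5n$ would need a genuinely new, global input, which neither you nor the paper supplies.
\end{itemize}

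Two smaller points.

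First, ``two faces meet in at most one edge'' does not follow from regularity alone. On the torus $\mathbb R^2/(2\mathbb Z\times 3\mathbb Z)$ tiled by unit squares, two horizontally adjacent squares share two edges. The fact holds here because two $n$-simplices of $\widetilde\Gamma$ share at most one $(n-1)$-face. For two copies $s(\tau)\neq s'(\tau)$ sharing boundary facets $\rho_1,\rho_2$, one would need $s'-s\in T_2^\perp(\rho_1)\cap T_2^\perp(\rho_2)=T_2^\perp(\tau)=0$. This is the same fact the paper uses for T-curves.

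Second, Theorem~\ref{nbsimplex} fails for $k=0$. Lemma~\ref{l:gamma} needs a $(k-1)$-simplex, and for $k=0$ all $2^n$ orthants are simplicial, exceeding the bound $2$. So for $n=2$, where $X_\mathcal E=\widetilde\Delta$ and $b_0=1$, both your bound $\frac{54}{60}\Vol(\Delta)$ and the stated $\frac{50}{60}\Vol(\Delta)$ fail for a unimodular triangle. You should assume $n\ge 3$, or treat $n=2$ directly with $t=N=4\Vol(\Delta)$, which gives $b_0\le\Vol(\Delta)$.
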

 \begin{proof}
     By Theorem \ref{nbfacet}, $X_\mathcal E$ consists of $f =[\frac{n(n-1)}{2}+n+1]\Vol(\Delta)$ faces, and by Theorem \ref{nbsimplex}, at most $t =\frac{2}{n-1}\binom{n+1}{n-2}\Vol(\Delta)=\frac{n(n-1)}{3}\Vol(\Delta)$ of these are triangles.
     
      Let $p_m$ be the number of connected components of $X_\mathcal E$ with exactly $m$ 2-faces.
      The total number of connected components is $b_0(X_\mathcal E) = \sum_{m} p_m$.
      Since a face of $X_\mathcal E$ has at least three adjacent faces, a connected component of $X_\mathcal E$ is made of at least 4 faces, thus $p_i=0$ for $i <4$. 
      In a component with exactly four faces (that is contributing to $p_4$), all these faces must be triangles.
      Thus, we obtain $4p_4 \leq t$.
      The total number of faces can be estimated as follows:
      $f = \sum_{m \geq 0} m p_m = 4p_4 + \sum_{m \geq 5}mp_m \geq 4p_4 + \sum_{m \geq 5} 5p_m = 5b_0(X_\mathcal E)-p_4$.
      Combining these inequalities yields
      \[b_0(X_\mathcal E) \leq \frac{4f+t}{20} = \frac{7n^2+5n+12}{60}\Vol(\Delta).\]
    \end{proof}
 
 To compare this bound to the maximality criterion (\ref{boundhodge}), we need to evaluate the Hodge numbers $h^{0,q}(Y_\Delta^k)$.
 Khovanskii \cite{Khovanskii} computed these numbers for any generic complete intersection in toric variety:
 
 \begin{align*}
 h^{0,q}(Y^k_\Delta) &= 
   \begin{cases}
     1 & \text{if } q=0\\
     \sum_{l=1}^k \binom{k}{l}(-1)^{k-l}|\text{int}(l\Delta) \cap \mathbb Z^n|\ & \text{if } q=n-k\\
     0 & \text{otherwise}
   \end{cases}
 \end{align*}
 
 This  expression can be rewritten in terms of the Ehrhart polynomial using the reciprocity law:
 \[h^{0,n-k}(Y_\Delta^k)= \sum_{l=1}^k\binom{k}{l}(-1)^{n+k-l}P_\Delta(-l),\]
 where $P_\Delta$ is the Ehrhart polynomial of $\Delta$, defined by $P_\Delta(t) = |t\Delta \cap \mathbb Z^n|$, for $t$ a positive integer.
 
 \begin{lemma}
     The sum $\sum_{q=0}^{n-k}h^{0,q}(Y_{d\Delta}^k)$ is a rational polynomial in $d$ whose leading term is
     \[\frac{k!}{n!} {n \brace k} \Vol(\Delta)d^n,\]
     where  ${n \brace k}$ denotes the Stirling number of the second kind.
 \end{lemma}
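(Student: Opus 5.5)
Apply Khovanskii's formula to $d\Delta$. Only $q=0$ and $q=n-k$ contribute, so
\[\sum_{q=0}^{n-k}h^{0,q}(Y^k_{d\Delta}) = 1 + \sum_{l=1}^k \binom{k}{l}(-1)^{k-l}\,|\mathrm{int}(ld\Delta)\cap\mathbb Z^n|.\]
(When $k=n$ the two indices $q=0$ and $q=n-k$ coincide; in that case I will simply read the sum as the $q=n-k$ term, which changes nothing at leading order.)

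Since $d\Delta$ is a lattice polytope, Ehrhart reciprocity gives
\[|\mathrm{int}(ld\Delta)\cap\mathbb Z^n| = (-1)^n P_{\Delta}(-ld),\]
where $P_\Delta$ is a polynomial of degree $n$ with rational coefficients. Each summand is therefore a rational polynomial in $d$, and so is the whole sum. Its degree is at most $n$.

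The coefficient of $d^n$ in $(-1)^nP_\Delta(-ld)$ is $l^n$ times the leading coefficient of $P_\Delta$. That leading coefficient is the Euclidean volume of $\Delta$, namely $\Vol(\Delta)/n!$. Hence the coefficient of $d^n$ in the whole sum is
\[\frac{\Vol(\Delta)}{n!}\sum_{l=1}^k\binom{k}{l}(-1)^{k-l}\,l^n.\]
The constant $1$ and all lower-order Ehrhart terms only affect lower powers of $d$.

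It remains to recognise the alternating sum. The classical inclusion–exclusion count of surjections from an $n$-set onto a $k$-set gives
\[\sum_{l=0}^k \binom{k}{l}(-1)^{k-l}\,l^n = k!\,{n \brace k}.\]
The $l=0$ term is $0$ because $n\ge 1$, so the sum from $l=1$ has the same value. Substituting, the leading term is $\frac{k!}{n!}{n\brace k}\Vol(\Delta)d^n$. This coefficient is nonzero because $1\le k\le n$, so it really is the leading term.

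I expect the only delicate points to be bookkeeping: getting the sign convention of the reciprocity rewrite right, and recalling the surjection identity. No real obstacle is anticipated.
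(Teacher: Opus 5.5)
Your proof is correct and follows the paper's argument step for step: Khovanskii's formula applied to $d\Delta$, Ehrhart reciprocity applied to $P_\Delta(-ld)$, the leading coefficient $\Vol(\Delta)/n!$ of $P_\Delta$, and the surjection formula $\sum_{l=0}^k(-1)^{k-l}\binom{k}{l}l^n = k!\,{n \brace k}$. Your explicit factor $(-1)^n$ in the reciprocity step is cleaner than the paper's intermediate display, which drops it. One small correction to your parenthetical: for $k=n$ the Euler characteristic computation shows that $h^{0,0}$ is $1$ plus the sum, not the sum alone, but as you say this only changes the constant term.
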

 \begin{proof}
     Since $P_{d\Delta}(x) =P_\Delta(dx)$, we have
     \[ \sum_{q=0}^{n-k}h^{0,q}(Y_{d\Delta}^k) = 1 + \sum_{l=1}^k \binom{k}{l}(-1)^{k-l}P_\Delta(-dl).\]
     As $P_\Delta$ is a rational polynomial of degree $n = \dim \Delta$ with leading coefficient $\frac{\Vol(\Delta)}{n!}$, the sum of Hodge numbers is a polynomial in $d$ with leading term
     \[ \sum_{l=1}^k (-1)^{n+k-l}\binom{k}{l} \frac{\Vol(\Delta)}{n!}(-ld)^n.\]
     The formula for Stirling numbers of the second kind ${n \brace k} = \frac{1}{k!} \sum_{l=0}^k (-1)^{k-l}\binom{k}{l}l ^n$ gives the desired leading term $\frac{k!}{n!} {n \brace k} \Vol(\Delta)d^n$.
 \end{proof}
 
 For surfaces, we have $n-k=2$. Hence, the leading term is
  \[\frac{(n-2)!}{n!}{n \brace n-2} \Vol(\Delta)d^n = \frac{(n-2)(3n-5)}{24}d^n \Vol(\Delta)\]

 Therefore, an asymptotically maximal family of T-surfaces $(X_d)_{d \geq 1}$ with polytope $\Delta$ must have the following asymptotic number of connected components, as $d$ tends to infinity:
  $b_0(X_d) \sim \frac{(3n-5)(n-2)}{24}d^n \Vol(\Delta)$.
 
  \begin{coro}
      There is no family of asymptotically maximal T-surface in $\Delta$ for $n=\dim  \Delta \geq 65$.
  \end{coro}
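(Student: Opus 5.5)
We compare the upper bound of Theorem \ref{boundsurface} with the growth rate that asymptotic maximality forces on $b_0$. The statement is an asymptotic contradiction argument, so only the case $p=0$ of the definition of asymptotic maximality is needed. Suppose $(X_d)_{d\geq 1}$ is an asymptotically maximal family of T-surfaces with Newton polytopes $d\Delta$, where $n=\dim\Delta\geq 65$. Here $k=n-2$, and the definition applied with $p=0$ gives
\[ b_0(X_d)\sim_{d\to\infty}\sum_{q=0}^{2}h^{0,q}(Y^{n-2}_{d\Delta}). \]
By the preceding lemma on the leading term, together with the computation of ${n\brace n-2}$ carried out just above, the right-hand side is asymptotic to $\frac{(3n-5)(n-2)}{24}\Vol(\Delta)d^n$. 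This quantity is nonzero for $n\geq 3$, so the equivalence makes sense.

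Next, Theorem \ref{boundsurface} applies to each $X_d$, whose Newton polytope $d\Delta$ has volume $d^n\Vol(\Delta)$. This gives, for every $d$,
\[ b_0(X_d)\leq \frac{7n^2+5n+12}{60}\Vol(\Delta)\,d^n. \]
We divide both sides by $d^n\Vol(\Delta)$ and let $d\to\infty$. The asymptotic equivalence then forces
\[ \frac{(3n-5)(n-2)}{24}\leq\frac{7n^2+5n+12}{60}. \]

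The last step, and the only place where the threshold $65$ enters, is to show that this inequality fails for $n\geq 65$. Multiplying by $120$, the inequality reads $5(3n^2-11n+10)\leq 2(7n^2+5n+12)$, which is equivalent to $n^2-65n+26\leq 0$. The roots of $n^2-65n+26$ are $\frac{65\pm\sqrt{4121}}{2}$, approximately $0.4$ and $64.6$. Hence $n^2-65n+26>0$ for every integer $n\geq 65$, which is the desired contradiction.

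I do not expect any step to be a real obstacle; the point needing the most care is the constant in the surface bound. The introduction states it with denominator $30$, while Theorem \ref{boundsurface} has $60$. With $30$ the comparison would not yield the threshold $65$, so I will use the bound exactly as stated in Theorem \ref{boundsurface}, with denominator $60$. The computation above confirms that $65$ is precisely the integer threshold this constant produces.
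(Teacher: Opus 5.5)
Your argument is the paper's proof step for step: you apply Theorem~\ref{boundsurface} to $d\Delta$, compare it with the leading coefficient $\frac{(3n-5)(n-2)}{24}$, and reduce to $n^2-65n+26\le 0$. Your algebra is correct for that constant.

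The problem sits exactly at the point you singled out. It is inherited from the paper, not introduced by you. You resolved the $30$ versus $60$ discrepancy by picking the constant that reproduces the threshold $65$. That choice does not check the constant itself, and recomputing it from the proof of Theorem~\ref{boundsurface} reveals a slip. By Theorem~\ref{nbsimplex} with $k=n-2$, the number of triangular faces is at most
\[ t=\frac{2}{n-1}\binom{n+1}{n-2}\Vol(\Delta)=\frac{n(n+1)}{3}\Vol(\Delta), \]
not $\frac{n(n-1)}{3}\Vol(\Delta)$. Indeed, for $n=3$ the structure of Example~\ref{ex2} already has four simplicial orthants, more than $\frac{n(n-1)}{3}=2$.

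With $f=\frac{n^2+n+2}{2}\Vol(\Delta)$, the argument of Theorem~\ref{boundsurface} therefore only gives
\[ b_0(X_\mathcal E)\le\frac{4f+t}{20}=\frac{7n^2+7n+12}{60}\Vol(\Delta). \]
Your comparison then becomes $5(3n^2-11n+10)\le 2(7n^2+7n+12)$, i.e.\ $n^2-69n+26\le 0$, whose larger root is $\frac{69+\sqrt{4657}}{2}\approx 68.6$.

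So this route, yours and the paper's alike, proves the non-existence of asymptotically maximal families of T-surfaces only for $n\ge 69$. For $65\le n\le 68$ it rests on the sharper constant $7n^2+5n+12$, which the proof of Theorem~\ref{boundsurface} does not establish.
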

 
  \begin{proof}
     Let $X_d$ be a T-surface in $d\Delta$. By Theorem \ref{boundsurface},
      \[b_0(X_d) \leq \frac{7n^2+5n+12}{60}d^n\Vol(\Delta).\]
      Thus, for a family of T-surfaces $(X_d)_{d \geq 1}$ to be asymptotically maximal,
      we must have $\frac{(3n-5)(n-2)}{24} \leq \frac{7n^2+5n+12}{60}$.
      Reorganizing, we obtain the quadratic inequality $n^2-65n+26 \leq 0$, which is not satisfied for integers greater than or equal to 65. 
      
  \end{proof}
 
\section{Construction of $k$-real phase structures}\label{stableintersection}

Except in the case of codimensions 0, 1 and $n$, the description of real phase structure is rather intricate.
Even the existence of a real phase structure on the $k$-skeleton of a unimodular triangulation $\Gamma$ is not obvious at first sight.

In this section, we explain a construction that produces real phase structures of higher codimension from two real phase structures of lower codimension.

One way to interpret this construction is as a real analogue of stable intersection in tropical geometry.
Two tropical varieties $\Sigma_1$ and $\Sigma_2$ in $\mathbb R^n$ may not intersect transversely.
However, the Hausdorff limit of $\Sigma_1 \cap (tv+\Sigma_2)$, as $t$ goes to 0, defines a tropical variety, independent of the choice of a generic translation vector $v$,
called the stable intersection of $\Sigma_1$ and $\Sigma_2$ \cite[Section 4.3]{ TropicalBook}.

Likewise, the classical intersection of two T-manifolds is typically not a manifold. But, locally, if one slightly translates a T-manifold relatively from another, the result will generically be a transverse intersection,
but, this time, it is dependent of the direction of translation.
In this section, we introduce a combinatorial construction formalizing this idea of stable intersection at the level of real phase structures.

\subsection{Stable intersections of real phase structures}

Fix a polytope $\Delta$ and a unimodular triangulation $\Gamma$.

\begin{defi}
    A locally acyclic orientation $\mathcal O$ on $F_1(\Gamma)$ is an orientation of the graph of edges of $\Gamma$ such that no simplex of $\Gamma$ contains an oriented cycle.
\end{defi}

For any simplex $\sigma \in \Gamma$, a locally acyclic orientation $\mathcal O$ induces an ordering on the vertices of $\sigma$ by the following rule:
for two distinct vertices $u$ and $v$ of $\sigma$, $u<v$ if and only if the edge $[u,v]$ is oriented from $v$ to $u$.
This order is well-defined by acyclicity, and it is unique since the graph induce by the simplex $\sigma$ is complete.
From now on, if $\Gamma$ is a triangulation endowed with a locally acylcic orientation,
the notation $\sigma = [v_0, \dots v_k]$ for a simplex of $\Gamma$ also implies that $v_0 < \dots <v_n$.

 Let $\mathcal E_1, \mathcal E_2$ be real phase structures on the $k_1$ and $k_2$-skeleton of $\Gamma$ with $k_1+k_2 \leq n$, and let $\mathcal O$ be a locally acyclic orientation of $\Gamma$.
 We define the $(k_1+k_2)$-real phase structure $\mathcal E$ as follows.
 Let $\sigma= [v_0, \dots, v_{k_1+k_2}]$ be a $(k_1+k_2)$-simplex of $\Gamma$.
 Set the $k_1$-simplex $\sigma_1 =[v_0, v_1, \dots, v_{k_1}]$ and the $k_2$-simplex $\sigma_2 = [v_{k_1}, \dots v_{k_1+k_2}]$.
We define:
\[ (\mathcal E_1 \cap_{\mathcal O} \mathcal E_2)(\sigma) = \mathcal E_1 (\sigma_1) \cap \mathcal E_2(\sigma_2) \]

 \begin{lemma}\label{intersectionrps}
   The intersection structure $ \mathcal E = \mathcal E_1 \cap_{\mathcal O} \mathcal E_2$ defines a $(k_1+k_2)$-real phase structure.
   Moreover, the T-manifold $X_{\mathcal E}$ is contained in the intersection $X_{\mathcal E_1} \cap X_{\mathcal E_2}$.
 \end{lemma}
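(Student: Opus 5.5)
The plan is to check the two defining requirements of a real phase structure separately, namely the direction of $\mathcal E(\sigma)$ and the parity condition, and then to deduce the containment from the description of cells as full subcomplexes of $\Bary(\widetilde\Gamma)$. Throughout, $k=k_1+k_2$. I use that the order induced by $\mathcal O$ on a face of a simplex is the restriction of the order on the simplex, so the decomposition $\sigma\mapsto(\sigma_1,\sigma_2)$ is compatible with passing to faces.

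\emph{Direction.} Let $\sigma=[v_0,\dots,v_k]$. The simplices $\sigma_1$ and $\sigma_2$ share only the vertex $v_{k_1}$, and together they span $\sigma$. By unimodularity, the edge vectors $v_i-v_{k_1}$ for $i\neq k_1$ form part of a basis of $\mathbb Z^n$, and hence stay independent mod $2$. It follows that
\[ T_2(\sigma)=T_2(\sigma_1)\oplus T_2(\sigma_2). \]
Taking orthogonals gives $T_2^\perp(\sigma_1)\cap T_2^\perp(\sigma_2)=T_2^\perp(\sigma)$. Since $T_2(\sigma_1)\cap T_2(\sigma_2)=0$, we also get $T_2^\perp(\sigma_1)+T_2^\perp(\sigma_2)=(\mathbb F_2^n)^\vee$. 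Two affine subspaces whose directions sum to the whole space always meet. Therefore $\mathcal E_1(\sigma_1)\cap\mathcal E_2(\sigma_2)$ is nonempty, and it is an affine subspace directed by $T_2^\perp(\sigma)$.

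\emph{Parity.} Let $\tau=[v_0,\dots,v_{k+1}]$ and $s\in\mathcal Q^n$. Write $\tau^{j}$ for the facet of $\tau$ obtained by removing $v_j$. Set
\[ \tau_1=[v_0,\dots,v_{k_1+1}],\quad \tau_2=[v_{k_1},\dots,v_{k+1}],\quad \rho_1=[v_0,\dots,v_{k_1}],\quad \rho_2=[v_{k_1+1},\dots,v_{k+1}]. \]
Reading off the first $k_1+1$ vertices and the last $k_2+1$ vertices of $\tau^j$ gives two cases.
\begin{itemize}
\item If $j\le k_1$, then $(\tau^j)_1=\tau_1\setminus v_j$ and $(\tau^j)_2=\rho_2$.
\item If $j\ge k_1+1$, then $(\tau^j)_1=\rho_1$ and $(\tau^j)_2=\tau_2\setminus v_j$.
\end{itemize}
The $k_1$-faces of $\tau_1$ are the $\tau_1\setminus v_j$ for $j\le k_1$, together with the remaining face $\rho_1=\tau_1\setminus v_{k_1+1}$. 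The parity condition for $\mathcal E_1$ on $\tau_1$ therefore gives
\[ \#\{j\le k_1 : s\in\mathcal E_1(\tau_1\setminus v_j)\}\equiv[s\in\mathcal E_1(\rho_1)] \pmod 2. \]
Similarly, the $k_2$-faces of $\tau_2$ are the $\tau_2\setminus v_j$ for $j\ge k_1+1$, together with $\rho_2=\tau_2\setminus v_{k_1}$. The parity condition for $\mathcal E_2$ on $\tau_2$ gives
\[ \#\{j\ge k_1+1 : s\in\mathcal E_2(\tau_2\setminus v_j)\}\equiv[s\in\mathcal E_2(\rho_2)] \pmod 2. \]
Now count the facets $\tau^j$ with $s\in\mathcal E(\tau^j)$. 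Those with $j\le k_1$ contribute $[s\in\mathcal E_2(\rho_2)]$ times the first count. Those with $j\ge k_1+1$ contribute $[s\in\mathcal E_1(\rho_1)]$ times the second count. Modulo $2$ the total is
\[ 2\,[s\in\mathcal E_1(\rho_1)]\,[s\in\mathcal E_2(\rho_2)]\equiv 0, \]
which is the parity condition for $\mathcal E$. This bookkeeping is the only delicate step. The main point to double-check is that the leftover faces of $\tau_1$ and $\tau_2$ really are $\rho_1$ and $\rho_2$.

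\emph{Containment.} A simplex of $\Bary(\widetilde\Gamma)$ lying in $X_{\mathcal E}$ is a chain of barycenters $v_{s(\rho)}$, where the $\rho$ are nested faces of some $\tau$ and each satisfies $s\in\mathcal E(\rho)$. For such a $\rho$, pick a $k$-face $\sigma\subset\rho$ with $s\in\mathcal E(\sigma)$. Then $s\in\mathcal E_1(\sigma_1)$ and $\sigma_1\subset\rho$, so $s\in\mathcal E_1(\rho)$. Hence every vertex of the chain is a vertex of the full subcomplex $C_{\mathcal E_1}(s(\tau))$, and so the whole simplex lies in $X_{\mathcal E_1}$. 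The same argument with $\sigma_2$ shows it lies in $X_{\mathcal E_2}$.
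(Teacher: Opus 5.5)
Your proof is correct and follows the same route as the paper: the splitting $T_2(\sigma)=T_2(\sigma_1)\oplus T_2(\sigma_2)$ for the direction, the split of the facets of $\tau$ at index $k_1$ for parity, and the inclusion $\mathcal E(\rho)\subset\mathcal E_i(\rho)$ of cells for the containment. Your parity count is actually more careful than the paper's. The paper claims the two partial counts (over $j\le k_1$ and over $j\ge k_1+1$) are even, but, as you observe, they are only congruent to $[s\in\mathcal E_1(\rho_1)]$ and $[s\in\mathcal E_2(\rho_2)]$ respectively, and the total is even because each count is multiplied by the other indicator, giving $2\,[s\in\mathcal E_1(\rho_1)]\,[s\in\mathcal E_2(\rho_2)]$.
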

 \begin{proof}
 We keep the notation introduced above.
 Since $\Gamma$ (and hence $\sigma$) is unimodular, $T(\sigma) = T(\sigma_1) \oplus T(\sigma_2)$. Reducing modulo 2 and dualizing, we obtain that $T_2^\perp(\sigma) = T_2^\perp(\sigma_1) \cap T_2^\perp(\sigma_2)$. Since $T_2^\perp(\sigma_1) + T_2^\perp(\sigma_2) = \mathbb F_2^n$, the intersection of affine spaces $\mathcal E(\sigma) = \mathcal E_1 (\sigma_1) \cap \mathcal E_2(\sigma_2)$ can not be empty, thus it is an affine space directed by $T_2^\perp(\sigma)$, as required.

 To verify the parity condition, fix $s \in \mathcal Q^n$, and let $\tau =[v_0, v_1, \dots, v_{k_1+k_2+1}]$ be a $(k_1+k_2+1)$-simplex of $\Gamma$.
 For each $0 \leq i \leq k_1+k_2+1$, let $\sigma_i = [v_0, \dots, \hat{v_i}, \dots, v_{k_1+k_2+1}]$ be the $(k_1+k_2)$-simplex opposite to $v_i$ in $\tau$. 
 
 By definition of $\mathcal E$, for $0\le i\le k_1$,
\[
\mathcal E(\sigma_i)=\mathcal E_1(\eta_i)\cap \mathcal E_2(\gamma_2),
\quad
\eta_i=[v_0,\ldots,\widehat{v_i},\ldots,v_{k_1+1}],
\gamma_2=[v_{k_1+1},\ldots,v_{k_1+k_2+1}],
\]
and for $k_1+1\le i\le k_1+k_2+1$,
\[
\mathcal E(\sigma_i)=\mathcal E_1(\gamma_1)\cap \mathcal E_2(\eta_i),
\quad
\gamma_1=[v_0,\ldots,v_{k_1}],
\eta_i=[v_{k_1},\ldots,\widehat{v_i},\ldots,v_{k_1+k_2+1}].
\]
 
 By the parity condition of $\mathcal E_1$ with the $(k_1+1)$-simplex $[v_0, \dots, v_{k_1+1}]$, there is an even number $n_1$ of indices $i$ between 0 and $k_1$ such that $s \in \mathcal E_1(\eta_i)$. Similarly, there is an even number $n_2$ of indices $i$ between $k_1+1$ and $k_1+k_2+1$ such that $s \in \mathcal E_2(\eta_i)$.
 So the number of indices $i$ such that $s \in \mathcal E(\sigma_i)$ is $\delta_1 n_1 +\delta_2 n_2$, where $\delta_j$ is 1 if $ s \in \mathcal E_j(\gamma_j)$ and 0 otherwise, for $j=1,2$.
 Since this number is even, the parity condition on $\mathcal E$ is verified.

 Because of the inclusion $\mathcal E(\sigma) \subset \mathcal E_i(\sigma)$, each cell $C_\mathcal E(s(\sigma))$ is contained in the cell $C_{\mathcal E_i}(s(\sigma))$. This proves the inclusion $X_\mathcal  E \subset X_{\mathcal E_1} \cap X_{\mathcal E_2}$.
 \end{proof}

\begin{Rem}\label{symmetry}
The binary operations $\cap_\mathcal O$ on the real phase structures are not commutatives.
If $\mathcal O^T$ is the transposed orientation of $\mathcal O$, then $\mathcal E_1 \cap_{\mathcal O} \mathcal E_2 = \mathcal E_2 \cap_{\mathcal O^T} \mathcal E_1$.
Also notice that the unique 0-real phase structure $\mathcal E_0$ is the left and right identity for the operators $\cap_\mathcal O$.

\end{Rem}

\begin{theo}
    Let $\Gamma$ be triangulation of $\Delta$.
    For any $k$ between 0 and $n$, there exists a $k$-real phase structure on $\Gamma$.
\end{theo}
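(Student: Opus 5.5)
The plan is to build a $k$-real phase structure by iterating the stable intersection of Lemma \ref{intersectionrps}, starting from structures that exist trivially. Codimension $0$ is handled by Example \ref{ex0}: the structure $\mathcal E_0$ with $\mathcal E_0(v)=\mathcal Q^n$ always exists. For codimension $1$, Example \ref{signe} lets us take any sign distribution $\mu\colon F_0(\Gamma)\to\{+,-\}$, for instance the constant one, and obtain the $1$-real phase structure $\mathcal E_\mu$. So the only input still needed is a locally acyclic orientation $\mathcal O$ of the edges of $\Gamma$.

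Such an orientation is easy to produce. Choose a total order on the finite set $F_0(\Gamma)$ of vertices, for example a lexicographic order on the lattice coordinates, and orient every edge $[u,v]$ from the larger endpoint to the smaller. This orientation is globally acyclic, because every oriented path strictly decreases in the chosen order. In particular no simplex of $\Gamma$ contains an oriented cycle, so $\mathcal O$ is locally acyclic.

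Now define inductively $\mathcal E^{(0)}=\mathcal E_0$ and $\mathcal E^{(j+1)}=\mathcal E^{(j)}\cap_{\mathcal O}\mathcal E_\mu$ for $0\le j<n$. Lemma \ref{intersectionrps}, applied with $k_1=j$ and $k_2=1$, requires only $k_1+k_2=j+1\le n$. It shows that $\mathcal E^{(j+1)}$ is a $(j+1)$-real phase structure on $\Gamma$. By induction, $\mathcal E^{(k)}$ is then a $k$-real phase structure for every $0\le k\le n$, which proves the theorem.

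There is no real obstacle here. The only points to check are, first, that the lemma needs a locally acyclic orientation and places no restriction on $\Gamma$ beyond unimodularity, which is our standing assumption. Second, the unimodularity gives $T(\sigma)=T(\sigma_1)\oplus T(\sigma_2)$, so the intersection defining each $\mathcal E^{(j+1)}(\sigma)$ is nonempty. The step that most needs care is simply making sure that Lemma \ref{intersectionrps} applies at every stage, and this holds because the codimensions add up to at most $n$.
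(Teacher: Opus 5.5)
Your proposal is correct and is essentially the paper's own proof. The paper also fixes a sign distribution $\mu$ and a vertex ordering, which gives a globally acyclic orientation $\mathcal O$, and then iterates $\mathcal E_k=\mathcal E_{k-1}\cap_{\mathcal O}\mathcal E_\mu$ from $\mathcal E_0$ via Lemma~\ref{intersectionrps}; orienting edges from larger to smaller rather than from smaller to larger changes nothing.
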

    
\begin{proof}
    The existence (and uniqueness) of the 0-real phase structure $\mathcal E_0$ is discussed in example \ref{ex0}.

    We choose any sign distribution $\mu$ on the vertices of $\Gamma$, this defines a 1-real phase structure $\mathcal E_\mu$ (see Example \ref{signe}).
    We choose any ordering on the vertices of $\Gamma$, this defines a globally acyclic orientation $\mathcal O$ by orienting each edge from the smallest endpoint to the largest.
    For any $k$ between 1 and $n$, we construct inductively the $k$-real phase structure $\mathcal E_k = \mathcal E_{k-1} \cap_\mathcal O \mathcal E_\mu$.
\end{proof}

Not all real phase structures can be obtained by the operators $\cap_\mathcal O$.
Stronger than that, there exists T-manifolds that are not contained in any other T-manifold except themselves and the full T-manifold of codimension 0.
Figure \ref{fig:counter-ex} shows such a  2-real phase structure $\mathcal E_S$ on a polygon $\Delta_S$.
For each triangle $\tau$ of $\Delta_S$, the set $\mathcal E_S(\tau)$ is a singleton in $\mathcal Q^2 = \{++,+-,-+,--\}$, displayed directly on $\tau$ on the figure.
It is impossible to extend it such that the T-curve contains the 10 points of $X_{\mathcal E_S}$.
Indeed, if one starts by fixing the 1-real phase structure on the edge joining the two interior vertices, the structure $\mathcal E_S$  will impose how to extend the 1-real phase structure on the adjacent edges.
But this local spreading conditions are not consistent globally and the T-curve will always miss at least one of the ten points.

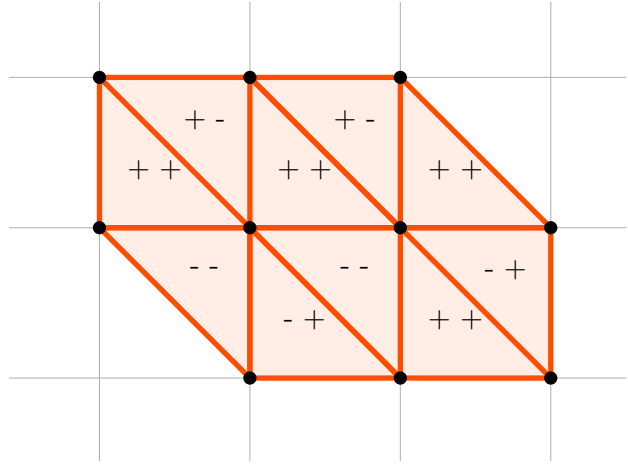
\begin{figure}
    \centering
    \input{polygon}
    \caption{The polygon $\Delta_S$ with the real phase structure on the 2-skeleton $\mathcal E_S$}
    \label{fig:counter-ex}
\end{figure} 

\subsection{Description for T-manifold of codimension 2}

When $k_1=k_2=1$, the real phase structure $\mathcal E_1$ and $\mathcal E_2$ can be encoded by sign distributions $\mu_1$ and $\mu_2$ (see Example \ref{signe}).
In this situation, the description of $\mathcal E = \mathcal E_1 \cap_\mathcal O \mathcal E_2$ becomes especially simple:
it is entirely described by the two sign distributions and the locally acyclic orientation.

As in classical combinatorial patchworking for T-hypersurfaces, all the data can be drawn directly on the triangulation,
and there is a straightforward rule for constructing the T-manifold of codimension 2 from these data.
This makes this description of these T-manifolds of codimension 2 especially easy to present and manipulate.
In particular, it is a convenient way to produce T-curves in 3-dimensional space, and we will use it in Section \ref{sec:maxcurve} to construct maximal curves.

Let $\tau$ be a triangle of $\widetilde{\Gamma}$ with vertices $v_0, v_1$ and $v_2$ ordered by $\mathcal O$.
We call $\tau$ a \emph{compatible triangle} if $\mu_1(v_0) \neq \mu_1(v_1)$ and $\mu_2(v_1) \neq \mu_2(v_2)$.
Equivalently, along the two consecutive edges with the same orientation, the endpoints of the first edge differ for the sign distribution $\mu_1$, and the endpoints of the second differ for $\mu_2$.

Figure \ref{fig:compatibletriangles} presents four acyclic triangles with twice the same sign distribution $\mu_1 = \mu_2$ (represented by the color of the vertices).
Only the top-left triangle is compatible.

\begin{figure}
    \centering
    \includegraphics[width = 5cm]{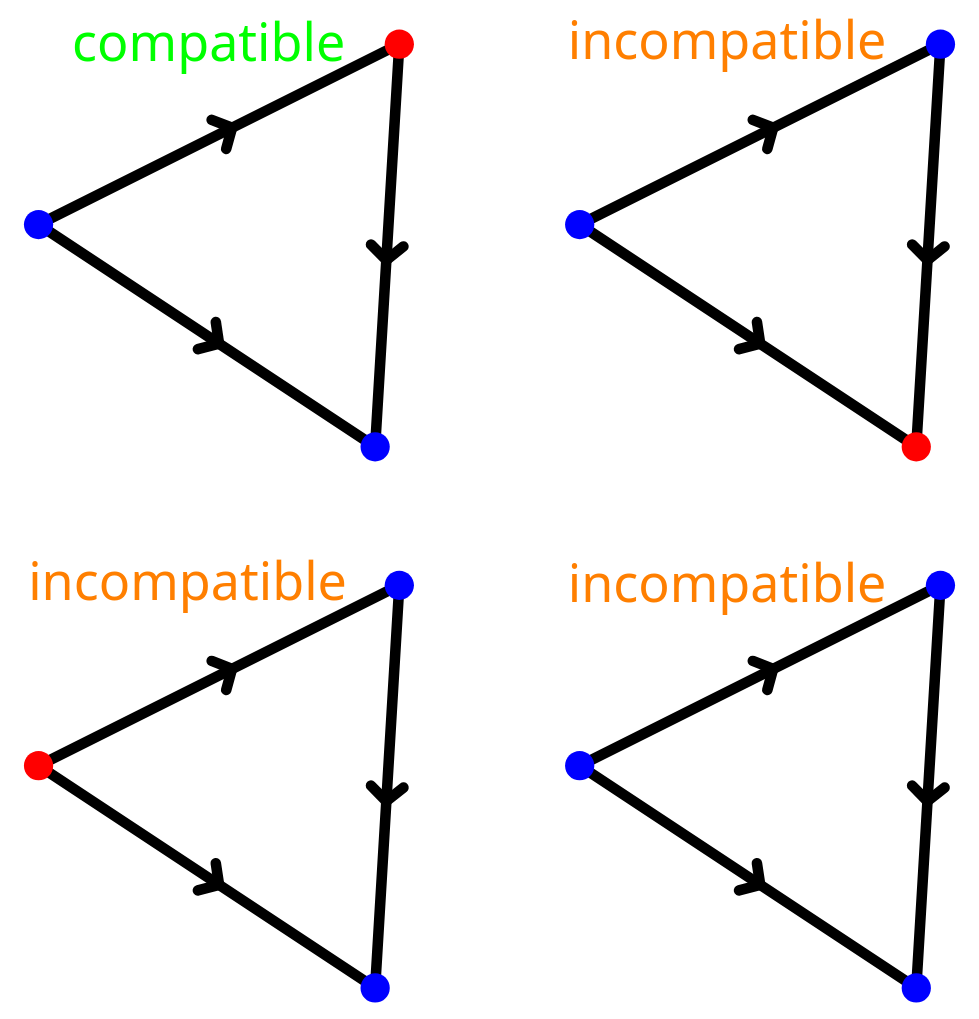}
    \caption{Compatible and incompatible triangles}
    \label{fig:compatibletriangles}
\end{figure}

By Lemma \ref{intersectionrps}, the set $\mathcal E(\tau) = \{s \in \mathcal Q^3 | s(\tau) \text{ is a compatible triangle} \}$ is exactly the 2-real phase structure $\mathcal E_{1} \cap_\mathcal O \mathcal E_{2}$.
Moreover, the T-manifold $X_\mathcal E$ is contained in the intersection of the two T-hypersurfaces define by $\mu_1$ and $\mu_2$.

\begin{Ex}
    Figure \ref{fig:tpoint} illustrates a part of the patchwork in a single triangle.
    The color of the upper half of the vertices of the triangle represents the first distribution of sign associated the orange T-curve.
    The color of the lower half of the vertices of the triangle represents the second distribution of sign associated the green T-curve.
    The T-manifold of codimension 2, which here is only a point, is represented in yellow.
\end{Ex}

\begin{figure}
\includegraphics{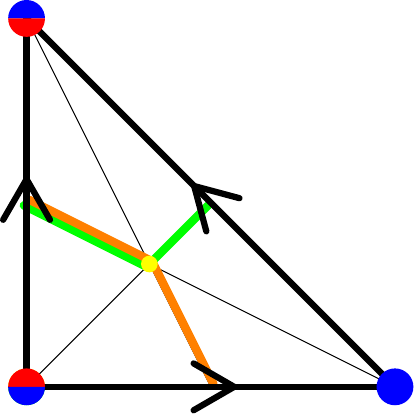}
\caption{Illustration of the real stable intersection}
\label{fig:tpoint}
\end{figure}

In this setting, we have the following realizability theorem for T-manifold of codimension 2.

\begin{theo}\label{realizibility}
    Let $\Delta$ be a smooth lattice polytope, $\Gamma$ be a convex triangulation of $\Delta$, $\mathcal O$ a globally acyclic orientation of $\Gamma$ and two sign distributions $\mu_1, \mu_2$.
    There exists two hypersurfaces $H_1$ and $H_2$ with Newton polytope $\Delta$ in $\mathbb RX_\Delta$ such that,
    for any $i=1,2$, the triple $(\mathbb RX_\Delta, H_i, H_1 \cap H_2)$ is homeomorphic to $(\widetilde{\Delta}, X_{\mathcal E_{\mu_i}}, X_{\mathcal E_{\mu_1} \cap_{\mathcal O} \mathcal E_{\mu_2}})$
    This homeomorphism is stratified in the sense of Theorem \ref{realtoric}.
\end{theo}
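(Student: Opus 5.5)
The plan is to reduce to Sturmfels' version of combinatorial patchworking for complete intersections \cite{sturmfels1994viro}. His theorem applies to a convex triangulation $\Gamma_\nu$ refined from a \emph{mixed} subdivision, via the Cayley trick. Our task is therefore to encode the data $(\Gamma,\mathcal O,\mu_1,\mu_2)$ as a pair of Viro polynomials. The coherent mixed subdivision they induce should be the one whose mixed cells recover exactly the compatible triangles, and the combinatorial model should be $X_{\mathcal E_{\mu_1}\cap_{\mathcal O}\mathcal E_{\mu_2}}$.

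\textbf{Step 1: the two Viro polynomials.} Let $\nu$ be a convex lifting defining $\Gamma$. Choose a linear extension of $\mathcal O$, that is, a global numbering $\lambda(v)$ of the vertices of $\Gamma$ respecting $\mathcal O$; this exists since $\mathcal O$ is globally acyclic. Set
\[ P_{1,t}(x)=\sum_{a}\mu_1(a)t^{\nu(a)-\epsilon\lambda(a)}x^a,\qquad P_{2,t}(x)=\sum_a\mu_2(a)t^{\nu(a)+\epsilon\lambda(a)}x^a \]
for a small $\epsilon>0$. Put $H_i=\{P_{i,t}=0\}$ for $t$ small.

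Each lifting $\nu\mp\epsilon\lambda$ is a small perturbation of $\nu$, so it still induces $\Gamma$. Viro's theorem then gives $(\mathbb RX_\Delta,H_i)\cong(\widetilde\Delta,X_{\mathcal E_{\mu_i}})$ for each $i$ separately.

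\textbf{Step 2: the mixed subdivision.} Next we compute the mixed subdivision of $2\Delta=\Delta+\Delta$ induced by the pair of liftings. Equivalently, we compute the triangulation of the Cayley polytope $\Conv(\Delta\times\{0\}\cup\Delta\times\{1\})$. Because $\epsilon$ is small, every cell projects into a sum $\sigma_1+\sigma_2$ of faces of a single simplex $\tau$ of $\Gamma$. The perturbation $\mp\epsilon\lambda$ acts as the staircase refinement of the product of simplices $\tau\times\tau$ associated to the vertex order. Concretely, the fully mixed cells (those of type $(1,\dots)$ relevant for intersecting two hypersurfaces) in $\tau=[v_0,\dots,v_n]$ take the form $[v_0..v_j]+[v_j..v_m]$. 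In particular, the cells of codimension-2 interest are $\sigma_1+\sigma_2$ with $\sigma_1=[v_{i},v_{i+1}]$ and $\sigma_2=[v_{i+1},v_{i+2}]$ for consecutive vertices of a triangle. This is exactly the combinatorics of $\cap_{\mathcal O}$. Verifying this staircase claim is a standard computation: the lower hull of the two lifts over $\tau\times\tau$ under a linear functional $\pm\epsilon\lambda$ gives the staircase triangulation. I will check it carefully, since it is where the order convention (first edge for $\mu_1$, second for $\mu_2$) must match the definition.

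\textbf{Step 3: apply Sturmfels.} Sturmfels' theorem says that the real intersection $H_1\cap H_2$ is isotopic, in each orthant, to the union over mixed cells of the products of the patchworked pieces. It also states that the isotopy is stratified and compatible with the toric strata, using smoothness of $\Delta$ for the boundary. In a mixed cell $[v_0,v_1]+[v_1,v_2]$ the piece is nonempty in orthant $s$ iff $s\in\mathcal E_{\mu_1}([v_0,v_1])\cap\mathcal E_{\mu_2}([v_1,v_2])$. Via Lemma \ref{intersectionrps}, this identifies the complete intersection with $X_{\mathcal E}$.

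The main obstacle is getting a \emph{simultaneous} homeomorphism of triples rather than three separate ones. I would construct all of them from a single deformation: for $t\to0$, the pair $(H_1,H_2)$ degenerates orthantwise to the pair of T-hypersurfaces, slightly displaced as in the stable-intersection picture. Then I would use the fact that both Viro isotopies come from the same moment-map/logarithmic-limit retraction. This reduces matters to checking transversality of the displaced T-hypersurfaces inside each simplex, which holds by the staircase structure.
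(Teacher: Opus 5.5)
Your Steps~1--2 follow the paper's architecture, and they make explicit something the paper only cites. The liftings $\nu\mp\epsilon\lambda$ still induce $\Gamma$, and on each prism $\tau\times[0,1]$ of the Cayley polytope the perturbation induces a staircase triangulation. So $\Gamma_{\mathcal O}$ is a coherent fine mixed subdivision; this is the ``if'' part of Proposition~\ref{regularmixed}, which the paper takes from De Loera--Rambau--Santos. Check your sign, though: if $\lambda$ increases along the vertex order $v_0<\dots<v_n$, your choice produces the reversed cells $[v_j,\dots,v_n]+[v_0,\dots,v_j]$.

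The genuine gap is Step~3. Sturmfels' theorem gives one homeomorphism from $(\mathbb RX_\Delta,H_1,H_1\cap H_2)$ to $(\widetilde{2\Delta},Y_{\Gamma_{\mathcal O},\mathcal E_{\mu_1}},Y_{\Gamma_{\mathcal O},\mathcal E_{\mu_1}}\cap Y_{\Gamma_{\mathcal O},\mathcal E_{\mu_2}})$. These models are subcomplexes of the barycentric subdivision of the \emph{mixed} subdivision of $\widetilde{2\Delta}$, not of $\Bary(\widetilde\Gamma)$, and they are not combinatorially the same as $(\widetilde\Delta,X_{\mathcal E_{\mu_1}},X_{\mathcal E})$. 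In particular, your list of $(1,1)$-cells is incomplete. For $\tau=[v_0,v_1,v_2,v_3]$, the parallelogram $[v_0,v_1]+[v_2,v_3]$:
\begin{itemize}
\item is a common face of $[v_0,v_1]+[v_1,v_2,v_3]$ and $[v_0,v_1,v_2]+[v_2,v_3]$;
\item lies in the interior of $2\tau$ and corresponds to no simplex of $\Gamma$;
\item in the two orthants where $\mu_1$ changes sign on $[v_0,v_1]$ and $\mu_2$ changes sign on $[v_2,v_3]$, has its barycenter on $Y_1\cap Y_2$.
\end{itemize}
Matching nonemptiness on the triangular cells $[u,v]+[v,w]$ only controls what happens on $\partial(2\tau)$. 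Lemma~\ref{intersectionrps} says nothing about this comparison. What you must prove is that inside each $s(2\sigma)$, the Sturmfels pieces of $Y_1$ and of $Y_1\cap Y_2$ assemble into balls with exactly the incidences of $\sigma$, $C_{\mathcal E_{\mu_1}}(\sigma)$ and $C_{\mathcal E}(\sigma)$, for the hypersurface and the intersection simultaneously.

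Your ``main obstacle'' paragraph misplaces the difficulty. Simultaneity on the algebraic side is already supplied by Sturmfels' theorem. The two separate Viro homeomorphisms of Step~1 cannot be assembled into a homeomorphism of triples. The degeneration/moment-map sketch does not produce the combinatorial identification.

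The paper supplies this identification as Theorem~\ref{posethomeo}:
\begin{itemize}
\item It builds regular cell decompositions $\mathcal C$ of $\widetilde\Delta$ and $\mathcal D$ of $\widetilde{2\Delta}$, both indexed by the simplices $\sigma$ of $\widetilde\Gamma$. On each side the cells are the two halves of $\sigma$ cut by the first hypersurface, the two halves of that hypersurface cut by the intersection, and the intersection itself.
\item It proves the $\mathcal C$-cells are balls via the weak PL Schoenflies theorem and coning from $\hat\sigma$.
\item It proves the $\mathcal D$-cells are balls by applying Sturmfels' theorem to the single simplex $2\sigma$, where the two hypersurfaces become zero sets of two linear forms in the positive orthant.
\item It checks that the face posets agree, and concludes since regular cell complexes with isomorphic face posets are cellularly homeomorphic; here the homeomorphism is stratified.
\end{itemize}
Without an argument of this kind, your proposal only shows that $H_1\cap H_2$ is homeomorphic to Sturmfels' model, not to $X_{\mathcal E_{\mu_1}\cap_{\mathcal O}\mathcal E_{\mu_2}}$.
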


The proof of this theorem is developed in the following subsection and is based on Sturmfels' patchworking theorem.

\subsection{Proof of Theorem \ref{realizibility}}

\subsubsection{Sturmfels' patchworking of complete intersection}

There is another distinct approach to define intersection of T-hypersurfaces.
Sturmfels had generalized Viro's theorem to complete intersections of any number of T-hypersurface \cite{sturmfels1994viro}.
Here we extend Sturmfels' construction for any two real phase structures and then restate its theorem for complete intersection of two T-hypersurfaces.

Fix two polytopes $\Delta^{(1)}$ and $\Delta^{(2)}$ with respective subdivisions $\Gamma^{(1)}$ and $\Gamma^{(2)}$.
Consider a subdivision $\Gamma$ of the Minkowski sum $\Delta = \Delta^{(1)}+ \Delta^{(2)}$.
The subdivision $\Gamma$ is called \emph{mixed} if each cell $\gamma$ of $\Gamma$ can be decomposed in the form $\gamma_1 + \gamma_2$, where $\gamma_1 \in \Gamma^{(1)}$ and $\gamma_2 \in \Gamma^{(2)}$,
and this decomposition is compatible with intersection: if $\gamma = \gamma_1 + \gamma_2$ and $\gamma' = \gamma'_1+\gamma'_2$, then $\gamma \cap \gamma' = (\gamma_1 \cap \gamma'_1)+(\gamma_2 \cap \gamma'_2)$.

If, in addition, for each cell, $\dim \gamma = \dim \gamma_1 + \dim \gamma_2$ then the mixed subdivision $\Gamma$ is said to be \emph{fine}.

A mixed subdivision $\Gamma$ of $\Delta=\Delta^{(1)}+\Delta^{(2)}$ is called \emph{coherent} if there are heights functions $\nu_i \colon F_0(\Gamma^{(i)}) \to \mathbb R$ such that $\Gamma^{(1)}$, $\Gamma^{(2)}$ and $\Gamma$ are respectively the convex subdivisions $\Gamma_{\nu_1}$, $\Gamma_{\nu_2}$ and $\Gamma_{\nu_1 \oplus \nu_2}$
where
\[ \nu_1 \oplus \nu_2(x) \colon= \min \{ \nu_1(x_1) + \nu_2(x_2) | x_1 \in \Delta^{(1)}, x_2 \in \Delta^{(2)} \text{ and } x_1+x_2=x\}.\]

Now suppose $\Gamma^{(1)}$ and $\Gamma^{(2)}$ are both (non necessarily convex) triangulations equipped with real phase structures $\mathcal E^{(1)}$ and $\mathcal E^{(2)}$ respectively.
Let $\Gamma$ be a fine mixed subdivision of $\Gamma^{(1)}$ and $\Gamma^{(2)}$.

As in Section \ref{sec:construction}, extend by symmetry the mixed subdivision $\Gamma$ of $\Delta$ into the subdivision $\widetilde{\Gamma}$ of $\widetilde{\Delta}$.
Fix $i=1$ or $2$.
Consider simplices $s(\gamma) \in \widetilde{\Gamma}$ with $\gamma = \gamma_1 + \gamma_2$, where $\gamma_1 \in \Gamma^{(1)}$ and $\gamma_2 \in \Gamma^{(2)}$,
such that $s \in \mathcal E^{(i)}(\gamma_i)$.
We define $Y_{\Gamma, \mathcal E^{(i)}}$ to be the full subcomplex of $\Bary(\widetilde{\Gamma})$ consisting of the barycenters of such simplices $s(\gamma)$.

Sturmfels' result can be restated as follows:

\begin{theo}
    Let $\Delta^{(1)}, \Delta^{(2)}$ be a lattice polytopes, $\Gamma_{\nu_1 \oplus \nu_2}$ a coherent fine mixed subdivision of $\Delta = \Delta^{(1)}+\Delta^{(2)}$ and $\mathcal E^{(1)}, \mathcal E^{(2)}$ real phase structures of codimension 1 respectively on $\Delta^{(1)}, \Delta^{(2)}$.
    There exists algebraic hypersurfaces $H_1, H_2$ with respective Newton polytopes $\Delta^{(1)},\Delta^{(2)}$ in $\mathbb RX_\Delta$ and a homeomorphism $h \colon \widetilde \Delta \to \mathbb RX_\Delta$ such that
    $H_1 \cap H_2$ is a smooth complete intersection and $h(Y_{\Gamma_{\nu_1 \oplus \nu_2}, \mathcal E^{(i)}}) = H_i$, for $i=1,2$.
    The homeomorphism $h$ is stratified in the sense of Theorem \ref{realtoric}.
\end{theo}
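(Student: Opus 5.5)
The plan is to deduce the statement from Sturmfels' original patchworking theorem for complete intersections \cite{sturmfels1994viro}, after translating the real phase structures into sign distributions. By Example \ref{signe}, each codimension-1 structure $\mathcal E^{(i)}$ comes, up to a global sign, from a sign distribution $\mu_i \colon F_0(\Gamma^{(i)}) \to \{\pm\}$. Since $\Gamma_{\nu_1\oplus\nu_2}$ is coherent, the triangulations $\Gamma^{(i)} = \Gamma_{\nu_i}$ are convex. So we may form the two Viro polynomials
\[ P^{(i)}_t(x) = \sum_{a \in F_0(\Gamma^{(i)})} \mu_i(a)\, t^{\nu_i(a)} x^a, \qquad i=1,2, \]
and set $H_i = \{P^{(i)}_t = 0\} \subset \mathbb RX_\Delta$ for $t>0$ small. (Each $H_i$ has Newton polytope $\Delta^{(i)}$; we view it in $\mathbb RX_\Delta$, which makes sense since $\Delta^{(i)}$ is a Minkowski summand of $\Delta$.) Sturmfels' theorem then gives the following for small $t$. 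The intersection $H_1\cap H_2$ is a smooth complete intersection, and there is a stratified homeomorphism $\widetilde\Delta \to \mathbb RX_\Delta$ carrying Sturmfels' combinatorial hypersurfaces to $H_1$ and $H_2$ simultaneously.

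The main step is to show that Sturmfels' combinatorial pieces coincide with our $Y_{\Gamma,\mathcal E^{(i)}}$. I would work cell by cell. Take a cell $s(\gamma)$ of $\widetilde\Gamma$ with $\gamma=\gamma_1+\gamma_2$. Fineness gives $\dim\gamma=\dim\gamma_1+\dim\gamma_2$, and unimodularity lets us identify $\gamma$ with the product $\gamma_1\times\gamma_2$ up to a lattice change of coordinates. Sturmfels' piece of the $i$-th hypersurface in $s(\gamma)$ is nonempty exactly when the signs $\mu_i(s(\cdot))$ on the vertices of $\gamma_i$ are not all equal. By Example \ref{signe} (and Lemma \ref{necklace} for simplices), this happens if and only if $s\in\mathcal E^{(i)}(\gamma_i)$. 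When it is nonempty, the piece is the preimage, under the projection of $\gamma$ to $\gamma_i$, of the T-hypersurface piece in $s(\gamma_i)$. That preimage is PL-isotopic, relative to faces, to the full subcomplex of $\Bary(s(\gamma))$ spanned by the barycenters of those faces $s(\gamma')$ with $\gamma'=\gamma'_1+\gamma'_2$ and $s\in\mathcal E^{(i)}(\gamma'_i)$. These local isotopies are canonical (built from barycentric coordinates) and compatible with the face relation, so they glue to a global isotopy of $\widetilde\Delta$. This global isotopy moves both of Sturmfels' hypersurfaces onto $Y_{\Gamma,\mathcal E^{(1)}}$ and $Y_{\Gamma,\mathcal E^{(2)}}$ at once. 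Composing it with Sturmfels' homeomorphism gives $h$. Stratification is preserved because every step respects the faces of $\Delta$ and the gluings defining $\widetilde\Delta$.

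I expect the main obstacle to be the simultaneous, cell-compatible identification in the previous step, not the algebraic input. The difficulty is that the isotopy has to carry both hypersurfaces correctly at once, including on the mixed cells where both $\gamma_1$ and $\gamma_2$ are positive-dimensional, and including on the boundary strata $\widetilde F$, where symmetric copies are identified. My first attempt would be to use on each product cell the explicit straight-line, product-compatible deformation. Since Sturmfels' pieces on $\gamma_1\times\gamma_2$ are products of the pieces on the factors, this should respect both hypersurfaces. I would then check by hand that this deformation commutes with restriction to faces.
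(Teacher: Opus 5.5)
Your proposal is correct and follows the paper's route. The paper gives no separate argument: it presents the statement as a restatement of Sturmfels' patchworking theorem for complete intersections, with exactly your Viro polynomials $P^{(i)}_t$, so your translation of $\mathcal E^{(i)}$ into sign distributions via Example~\ref{signe} and your cell-by-cell matching of Sturmfels' pieces with $Y_{\Gamma,\mathcal E^{(i)}}$ just make explicit the dictionary the paper leaves implicit.

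The step you flag as the main obstacle is easier than you expect. An affine identification $\gamma\cong\gamma_1\times\gamma_2$ suffices, so you need neither lattice unimodularity of the mixed cell nor Lemma~\ref{necklace}. Moreover, if you use barycentric T-pieces in the factors, the full subcomplex of $\Bary(s(\gamma))$ on the faces $\gamma_1'+\gamma_2'$ with $s\in\mathcal E^{(1)}(\gamma_1')$ equals $C_{\mathcal E^{(1)}}(s(\gamma_1))\times\gamma_2$ exactly, by the staircase triangulation of a product of simplices. Simultaneity for the two hypersurfaces therefore reduces to the face-compatible comparison of midpoint and barycentric T-pieces inside each simplex $\gamma_i$.
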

For $i=1,2$, let $\mu_i$ be a sign distribution such that $\mathcal E^{(i)}=\mathcal E_{\mu_i}$ (see Example \ref{signe}).
The hypersurfaces $H_i$ are given by the zero sets $\{P_t^{(i)}=0\} \subset \mathbb RX_\Delta$, for sufficiently small $t>0$, of the Viro polynomials

\[ P_t^{(i)}(x) = \sum_{a \in F_0(\Gamma_{\nu_i})} \mu_i(a) t^{\nu_i(a)}x^a.\]

\begin{Ex}
    Figure \ref{fig:Sturmfels} illustrates a part of the Sturmfels' construction.
    For each vertex $u=u_1+u_2$ of the mixed subdivision the upper half represents $\mu_1(u_1)$ while the lower half represents $\mu_2(u_2)$.
    The curve $Y_{\Gamma, \mathcal E_1}$ is represented in orange.
    The curve $Y_{\Gamma, \mathcal E_2}$ is represented in green.
    The intersection $Y_{\Gamma, \mathcal E_1} \cap Y_{\Gamma, \mathcal E_2}$, which here is only a point, is represented in yellow.
\end{Ex}

\begin{figure}
    \includegraphics{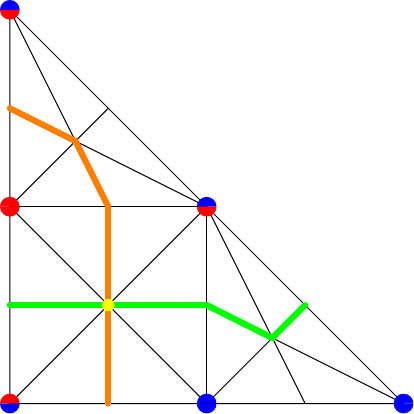}
    \caption{Illustration of Sturmfels's patchworking method}
    \label{fig:Sturmfels}
\end{figure}

\subsubsection{From Sturmfels' method to 2-real phases structures}

A locally acyclic orientation defines a mixed subdivision of $2\Delta= \Delta+ \Delta$ as follows.
Inside $2\Delta$ there is a triangulation $2\Gamma$ made of the simplices $2\tau = \tau + \tau$ with $\tau \in \Gamma$.
For each $n$-simplex $\tau = [v_0, v_1, \dots, v_n]$ ordered by $\mathcal O$,
we subdivide $2\tau$ by the $n+1$ mixed cells $[v_0, v_1, \dots, v_j] +[v_j, v_{j+1}, \dots, v_n]$ for $j=0,1 \dots n$.
All these mixed cells formed the fine mixed subdivision $\Gamma_\mathcal O$ of $2\Delta$.

\begin{prop}\label{regularmixed}
    Let $\Gamma$ be a triangulation.
    The map $\mathcal O \mapsto \Gamma_{\mathcal O}$ defines a bijection between locally acyclic orientation of $\Gamma$ and fine mixed subdivision refining $2\Gamma$.
    Moreover, the mixed subdivision $\Gamma_{\mathcal O}$ is coherent if and only if $\mathcal O$ is globally acyclic and  $\Gamma$ is convex.
\end{prop}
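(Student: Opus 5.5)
The proof splits into the bijection claim and the coherence claim; I treat each in turn.

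\emph{Bijection.} First, $\Gamma_{\mathcal O}$ is a fine mixed subdivision refining $2\Gamma$. For a single simplex $\tau=[v_0,\dots,v_n]$ with the order induced by $\mathcal O$, the cells $[v_0,\dots,v_j]+[v_j,\dots,v_n]$ are the standard staircase decomposition of $2\tau$. I check this in barycentric coordinates. A point of $2\tau$ has coordinates $(\lambda_0,\dots,\lambda_n)$ with $\sum\lambda_i=2$, and it lies in the $j$-th cell exactly when $\sum_{i<j}\lambda_i\le 1\le\sum_{i\le j}\lambda_i$. These conditions cover $2\tau$, and distinct cells meet along faces of the same shape. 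Each such cell has dimension $j+(n-j)=n$, so the subdivision is fine.

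Compatibility across a common face $\rho$ of two $n$-simplices holds because the order induced on $\rho$ is the restriction of both orders, so both staircases restrict to the staircase of $2\rho$. Faces of mixed cells of $2\tau$ are again sums $\gamma_1+\gamma_2$ of chains of the form above, and intersections are computed componentwise. So $\Gamma_{\mathcal O}$ is a fine mixed subdivision refining $2\Gamma$.

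Conversely, let $M$ be a fine mixed subdivision refining $2\Gamma$, and look at an edge $[u,v]$ of $\Gamma$. Then $2[u,v]$ is subdivided into either the single segment $[u,v]+[u,v]$, which is not fine, or the two cells $u+[u,v]$ and $[u,v]+v$. The point $u+v$ must be a vertex, and the two summands assigned to it decide the orientation: the cell $[u,v]+v$ means $u<v$. To see this is well defined, note that the mixed cell $\{u\}+[u,v]$ has first summand $u$ and second summand $[u,v]$. This orients every edge. Inside a triangle $2[a,b,c]$, a fine mixed subdivision is one of the staircase decompositions, and a cyclic orientation would not come from any of them. A short direct case check on $2\Delta_2$ settles this: the mixed subdivisions of $2\Delta_2$ into fine cells are exactly the $3!$ staircases. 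Hence the orientation is locally acyclic. Since a fine mixed subdivision of $2\tau$ is determined by its restriction to the edges (again via the staircase description), $M=\Gamma_{\mathcal O}$, and the two constructions are inverse to each other.

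\emph{Coherence.} Suppose $\Gamma=\Gamma_\nu$ is convex and $\mathcal O$ is globally acyclic. Choose a linear extension, i.e.\ a numbering $r(v)$ of the vertices increasing along $\mathcal O$. Take $\nu_1=\nu+\epsilon r$ and $\nu_2=\nu-\epsilon r$ for small $\epsilon>0$. Both heights still induce $\Gamma$, and $\nu_1\oplus\nu_2$ lifts $2\Gamma$ with a small perturbation. Within $2\tau$, minimizing $\nu_1(x_1)+\nu_2(x_2)$ pushes the weight of the first summand toward vertices that are small for $r$, and the weight of the second toward vertices that are large. I expect to verify this by writing $x_1,x_2$ in barycentric coordinates and checking that the minimizing decomposition is exactly the staircase one; this computation is the main technical step.

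Conversely, if $\Gamma_{\mathcal O}$ is coherent, then $\Gamma^{(1)}=\Gamma_{\nu_1}=\Gamma$ is convex. Along each edge $[u,v]$, the vertex $u+v$ splits as $u+v$ rather than $v+u$, which forces $\nu_1(u)+\nu_2(v)<\nu_1(v)+\nu_2(u)$. In other words, $\nu_1-\nu_2$ strictly decreases from $u$ to $v$ whenever $u<v$. Along a directed cycle this would give a strict chain of inequalities returning to its starting value, which is impossible. So $\mathcal O$ is globally acyclic. The delicate point is the inequality derived from which splitting is minimal, and in particular getting its direction right relative to the paper's convention for $u<v$.
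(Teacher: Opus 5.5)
Your route differs from the paper's. The paper converts the statement, via the Cayley trick, into a statement about triangulations of $\Delta\times\Delta^1$ refining $\Gamma\times\Delta^1$, and then cites Lemma 7.2.9 of De Loera--Rambau--Santos. A direct argument in the mixed-subdivision language is a reasonable alternative, and it has the advantage of making the heights $\nu\pm\epsilon r$ explicit.

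The weak point is surjectivity. Everything rests on the sentence ``a fine mixed subdivision of $2\tau$ is determined by its restriction to the edges (again via the staircase description)''. You have only checked $n=2$. For an $n$-simplex this is exactly the nontrivial content of the cited lemma (every triangulation of $\Delta^n\times\Delta^1$ is a staircase), and the planar case does not imply it. It can be supplied as follows.

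A full-dimensional fine cell of $2\tau$ has the form $\gamma_1+\gamma_2$ with $\gamma_1=\Conv(S\cup\{w\})$, $\gamma_2=\Conv(\{w\}\cup T)$, and $S\sqcup\{w\}\sqcup T$ the vertex set of $\tau$. Full dimension forces $\gamma_1\cup\gamma_2$ to contain all vertices and the two summands to share a vertex; fineness forces them to share exactly one.

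Now take $a\in S\cup\{w\}$ and $b\in T\cup\{w\}$ with $a\neq b$. The face of $\gamma_1+\gamma_2$ in a direction maximized on $\tau$ exactly along $[a,b]$ is $(\gamma_1\cap[a,b])+(\gamma_2\cap[a,b])$. This is a cell of $M$ inside $2[a,b]$ in which $a+b$ is decomposed with $a$ in the first summand. By the intersection axiom this agrees with the subdivision of $2[a,b]$, so $a<b$ for the orientation read off from $M$.

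That orientation is a tournament on the vertices of $\tau$ with no $3$-cycles, hence a total order. State this tournament fact explicitly when you pass from triangles to local acyclicity. The previous step then gives $S<w<T$, so every full-dimensional cell of $M$ in $2\tau$ is a staircase cell. Since the staircase cells already tile $2\tau$, $M$ coincides with $\Gamma_{\mathcal O}$ there.

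There are also smaller points.

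\begin{itemize}
\item Your dichotomy for $2[u,v]$ tacitly reads ``refining $2\Gamma$'' as: every cell contained in $2\tau$ is $\gamma_1+\gamma_2$ with $\gamma_1,\gamma_2$ faces of $\tau$. This is the Cayley-trick meaning, and you should say so. Under a purely geometric reading the claim fails. Take $\Delta=[-1,2]$ with its unit triangulation and heights $\nu_{1,2}(x)=\pm x+\delta x^2$ for small $\delta>0$. This gives a coherent fine mixed subdivision in which $2[0,1]$ is tiled by $\{-1\}+[1,2]$ and $[-1,0]+\{2\}$.
\item In the coherence part, the computation you defer is immediate. Extend the $\nu_i$ affinely on $\tau$ and fix $\lambda_i=a_i+b_i$. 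Then $\nu_1(x_1)+\nu_2(x_2)$ is a constant plus $\epsilon\bigl(2\sum a_i r(v_i)-\sum\lambda_i r(v_i)\bigr)$. Minimizing $\sum a_i r(v_i)$ subject to $0\le a_i\le\lambda_i$ and $\sum a_i=1$ is done greedily from the smallest $r$, which gives exactly the staircase decomposition. The stability of the triangulation $\Gamma_\nu$ under small perturbations keeps the optimal decompositions inside $2\tau$.
\item In the converse, $\nu_1(u)+\nu_2(v)<\nu_1(v)+\nu_2(u)$ says that $\nu_1-\nu_2$ strictly \emph{increases} from $u$ to $v$. Your ``decreases'' is a sign slip, harmless for acyclicity.
\item The strictness of that inequality needs fineness. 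Equality would make both decompositions of $u+v$ minimal, so the lower face through $u+v$ would have both summands containing $[u,v]$, contradicting fineness.
\end{itemize}
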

\begin{proof}
    After using the Cayley trick, this is the Lemma 7.2.9 of \cite{de2010triangulations}.
\end{proof}

If we give ourselves two sign distributions $\mu_1, \mu_2 \colon F_0(\Gamma) \to \{-,+\}$, we can create two manifolds of codimension 2:
\begin{itemize}
    \item the intersection $Y_{\Gamma_\mathcal O, \mathcal E_{\mu_1}} \cap Y_{\Gamma_\mathcal O, \mathcal E_{\mu_2}}$ obtained by Sturmfels' method
    \item the T-manifold $X_\mathcal{E}$ associated to the 2-real phase structure $\mathcal E = \mathcal E_{\mu_1} \cap_\mathcal O \mathcal E_{\mu_2}$.
\end{itemize}

These two constructions are strongly related by the following theorem:

\begin{theo}\label{posethomeo}
   Let $\Delta$ be a smooth lattice polytope, $\Gamma$ be a triangulation of $\Delta$, $\mathcal O$ a locally acyclic orientation of $\Gamma$ and two real phase structures $\mathcal E_1, \mathcal E_2$ of codimension 1 on $\Gamma$.
   For any $i=1,2$, the triples of topological spaces $(\widetilde{\Delta}, X_{\mathcal E_i}, X_{\mathcal E_1 \cap_\mathcal O \mathcal E_2})$ and $(\widetilde{2\Delta}, Y_{\Gamma_\mathcal O, \mathcal E_{i}}, Y_{\Gamma_\mathcal O, \mathcal E_{1}} \cap Y_{\Gamma_\mathcal O, \mathcal E_{2}})$ are homeomorphic.
   This homeomorphism is stratified in the sense of Theorem \ref{realtoric}.
\end{theo}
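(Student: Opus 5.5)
The plan is to build an explicit PL homeomorphism $\Phi\colon \widetilde{2\Delta}\to\widetilde{\Delta}$ simplex by simplex, compatible with the symmetric copies. We start with the dilation $x\mapsto x/2$, which sends $2\tau$ onto $\tau$ and $\widetilde{2\Delta}$ onto $\widetilde{\Delta}$. This map is stratified, since faces go to faces and orthants to orthants. The parity of $s\cdot v$ is unchanged under doubling only up to the identification of orthants. However, the Sturmfels sign rule uses $\mu_i(u_i)$ on the summands $u_i\in F_0(\Gamma)$, so the orthant bookkeeping matches that of $\Gamma$ directly.

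The key observation is local. Fix an $n$-simplex $\tau=[v_0,\dots,v_n]$ ordered by $\mathcal O$ and an orthant $s$. Inside $s(2\tau)$, the mixed cells are $\gamma^{(j)}=[v_0,\dots,v_j]+[v_j,\dots,v_n]$. In the dual language, the subcomplex $Y_{\Gamma_\mathcal O,\mathcal E_i}\cap s(2\tau)$ is determined as follows. A mixed face $\alpha_1+\alpha_2$ has its barycenter in $Y_{\Gamma_\mathcal O,\mathcal E_1}$ if and only if $\alpha_1$ contains an edge with $\mu_1$-sign change in orthant $s$, and the analogous statement holds for $\mathcal E_2$ with $\alpha_2$. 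Similarly, the barycenter of a face of $s(\tau)$ lies in $X_{\mathcal E_i}$ if and only if that face contains such an edge. The intersection $X_{\mathcal E_1\cap_\mathcal O\mathcal E_2}$ has vertices at the $2$-faces $[v_a,v_b,v_c]$, $a<b<c$, that are compatible triangles, meaning there is a $\mu_1$-change on $[v_a,v_b]$ and a $\mu_2$-change on $[v_b,v_c]$. This matches the minimal mixed cells $[v_a,v_b]+[v_b,v_c]$ lying in $Y_1\cap Y_2$.

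So the first step is to show that both triples restricted to $s(\tau)$ (resp. $s(2\tau)$) are PL-equivalent to a standard model determined only by the sign vectors of $\mu_1,\mu_2$ on $v_0,\dots,v_n$ in orthant $s$. I would do this by exhibiting a poset map. Send the mixed face $\alpha_1+\alpha_2$, with $\alpha_1=[v_{i_0},\dots,v_{i_p}]$ and $\alpha_2=[v_{j_0},\dots,v_{j_q}]$ where $i_p\le j_0$, to the face $\alpha_1\cup\alpha_2$ of $\tau$. Then check that the preimages of the relevant "cells" are contractible and compatible with the inclusions. Concretely, I would show that the full subcomplexes $Y_i\cap s(2\tau)$ and $Y_1\cap Y_2\cap s(2\tau)$ are PL balls whose boundaries restricted to each face of $2\tau$ are the corresponding objects for that face. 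Both sides are then cone-like: a ball pair over the boundary data. The homeomorphism can therefore be built by induction on skeleta using the Alexander trick (coning over barycenters), extending from $\partial s(2\tau)$ to $s(2\tau)$.

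To make the induction consistent, I would induct over faces $\sigma$ of $\Gamma$ by dimension. On $\widetilde{2\sigma}$ we assume a stratified homeomorphism onto $\widetilde{\sigma}$ carrying the two triples to each other. Each copy $s(2\sigma)$ is a PL ball, and so is $s(\sigma)$. By Proposition \ref{PLmanifold} and the analogous statement for Sturmfels' complexes (both are links/full subcomplexes of barycentric subdivisions), the pieces $X\cap s(\sigma)$ and $Y\cap s(2\sigma)$ are properly embedded PL balls of the right dimension, or empty. They are nonempty under the same combinatorial condition, as shown in the previous paragraph. A homeomorphism of boundary sphere triples that extends to ball pairs then extends by coning. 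Boundary faces are handled identically, and they respect the gluing in $\widetilde{\Delta}$ because the construction is functorial in faces. This gives stratification in the sense of Theorem \ref{realtoric}.

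The main obstacle is the claim that $Y_1\cap Y_2\cap s(2\sigma)$ (resp. $X_{\mathcal E}\cap s(\sigma)$), together with $Y_i\cap s(2\sigma)$, forms an unknotted ball pair. That is, the triple (ball, codim-1 ball, codim-2 ball) is standard, so that coning works. For $X$ this follows from Proposition \ref{PLmanifold} applied to $\mathcal E_i$ and $\mathcal E$ together with the inclusion $X_\mathcal E\subset X_{\mathcal E_i}$ of Lemma \ref{intersectionrps}. For $Y$ I would deduce it from Sturmfels' theorem locally: a single simplex with a linear height function is coherent, so $Y$ in $s(2\sigma)$ is homeomorphic to a transverse intersection of generic affine pieces, which is standard. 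If unknottedness fails to transfer cleanly, I would instead compare the face posets of the two cell complexes directly and invoke that regular CW complexes with isomorphic face posets are homeomorphic.
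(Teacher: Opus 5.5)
Your argument is correct in outline, but it is organised differently from the paper's.

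\textbf{How the two routes differ.} The paper never builds the map by hand. It coarsens both sides to regular cell complexes $\mathcal C$ and $\mathcal D$: each $s(\sigma)$ is cut into two halves by $X_{\mathcal E_1}$, the cell $C_{\mathcal E_1}(s(\sigma))$ into two halves by $X_{\mathcal E_1\cap_{\mathcal O}\mathcal E_2}$, and $s(2\sigma)$ likewise by the sign regions $M_i(\sigma)^\pm$. It then proves every such piece is a ball, using weak PL Schoenflies plus coning on the $X$ side and Sturmfels' theorem for the single simplex $\sigma$ on the $Y$ side. The homeomorphism comes from the isomorphism of face posets. In exchange, the paper obtains explicit adapted cell decompositions of both triples.

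Your skeleton-by-skeleton coning avoids Schoenflies, the half-cells and the face-poset theorem. It handles $i=1$ and $i=2$ directly, and never has to decide which half is labelled $+$. That labelling needs care in the poset route: on an edge $2[u,v]$ with $u<v$, the point of $Y_{\Gamma_{\mathcal O},\mathcal E_1}$ lies in the cell $[u,v]+[v]$, so the $\mu_2$-label of a $\mu_1$-changing edge must be read at its larger endpoint.

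What your route does need is that every local triple is, rel boundary, the cone over its boundary triple. On the $Y$ side this is exactly where local Sturmfels enters, as in the paper. It sends $(s(2\sigma),Y_i\cap s(2\sigma),Y_1\cap Y_2\cap s(2\sigma))$ to a simplex with one or two affine sections. Radial projection from an interior point of the smallest nonempty piece then gives the cone structure. Such a point exists, since that piece contains the barycenter of a maximal mixed cell as soon as it is nonempty.

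\textbf{Two corrections.}

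First, on the $X$ side the issue is not unknotting, and your justification would not give it. Proposition \ref{PLmanifold} plus the inclusion of Lemma \ref{intersectionrps} does not make a codimension-$2$ PL ball pair standard. What you actually need comes for free. The sets $C_{\mathcal E_i}(s(\sigma))$ and $C_{\mathcal E_1\cap_{\mathcal O}\mathcal E_2}(s(\sigma))$ are full subcomplexes of $\Bary(\widetilde\Gamma)$ spanned by up-closed sets of faces, which contain $s(\sigma)$ when nonempty. Hence they are literally cones from $\hat\sigma$ over their traces on $\partial s(\sigma)$. Standardness of the $X$-triples then follows from the construction rather than being an input.

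Nonemptiness only has to be matched in the lowest dimensions: edges in codimension $1$, and triangles in codimension $2$, which is your compatible-triangle check. Above that, each piece is nonempty if and only if its boundary trace is.

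Second, drop the side remarks about matching cells. The canonical complexes differ combinatorially. In $2[v_0,v_1,v_2,v_3]$, the set $Y_{\Gamma_{\mathcal O},\mathcal E_1}\cap Y_{\Gamma_{\mathcal O},\mathcal E_2}$ can have a vertex at the barycenter of the interior parallelogram $[v_0,v_1]+[v_2,v_3]$, which is not of the form $[v_a,v_b]+[v_b,v_c]$. So a poset map with contractible fibres would by itself only give a homotopy equivalence. Your fallback of comparing face posets only works after a coarsening such as $\mathcal C$, $\mathcal D$.
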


The proof of this theorem will need the following topological result:
\begin{theo}[Weak Schoenflies PL theorem, \cite{Newman, brown}]
    Let $M$ be a PL sphere of dimension $n$.
    For any PL sphere $N$ of dimension $n-1$ in $M$,
    the set $M \setminus N$ is made of two connected components, whose closures are topological $n$-balls.
\end{theo}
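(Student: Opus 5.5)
The plan is to reduce the statement to Brown's generalized Schoenflies theorem. That theorem says: if $\Sigma \subset S^n$ is a topologically embedded $(n-1)$-sphere that is \emph{bicollared}, then the closure of each component of $S^n \setminus \Sigma$ is a topological $n$-ball. Since $M$ is a PL sphere, it is in particular homeomorphic to $S^n$, so everything reduces to two facts about $N$. First, $M \setminus N$ has exactly two components. Second, $N$ is bicollared in $M$. The PL hypothesis is used only to establish the second fact. Only a \emph{topological} conclusion is claimed, which is why the statement is the weak form: the PL Schoenflies conjecture, asking for PL balls, is open in dimension $4$.

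The separation is standard. Alexander duality with $\mathbb{F}_2$ coefficients in $M \cong S^n$ gives $\tilde H_0(M \setminus N;\mathbb F_2) \cong \check H^{n-1}(N;\mathbb F_2) \cong \mathbb F_2$. Hence $M\setminus N$ has exactly two components $U_1$ and $U_2$. The same duality gives $\tilde H_q(N) = 0$ for $q<n-1$, which shows that $N$ is the common frontier of $U_1$ and $U_2$. As a consequence, each $\overline{U_i} = U_i \cup N$.

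The bicollaring, which is the main step, will be proved by induction on $n$ using links. Choose triangulations in which $N$ is a full subcomplex of $M$. For a vertex $v$ of $N$, the link $\operatorname{lk}(v,M)$ is a PL $(n-1)$-sphere, because $M$ is a PL manifold and a PL sphere. Likewise $\operatorname{lk}(v,N)$ is a PL $(n-2)$-sphere contained in it.
\begin{itemize}
\item By the induction hypothesis applied to $\operatorname{lk}(v,N) \subset \operatorname{lk}(v,M)$, the link $\operatorname{lk}(v,N)$ splits $\operatorname{lk}(v,M)$ into two topological $(n-1)$-balls.
\item Coning these two balls from $v$ gives a local model at $v$: the star of $v$ in $M$ is homeomorphic to a pair $(\mathbb R^n, \mathbb R^{n-1})$ near $v$.
\item Hence $N$ is locally flat at $v$. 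Points in the interiors of higher simplices are handled in the same way, using joins of links.
\end{itemize}
Brown's theorem that a locally flat codimension-one submanifold is locally collared on each side then gives a collar from each side. Together with the two-sidedness of the previous paragraph, this shows that $N$ is globally bicollared. The base case $n=1$ is immediate: two points on a circle cut it into two arcs.

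The delicate point is this local flatness step. The induction hypothesis only yields \emph{topological} balls in the link, not PL ones. Coning them therefore gives a topological local flatness chart, not a PL one. I expect this to be enough, since Brown's collaring and Schoenflies theorems are purely topological. Applying Brown's generalized Schoenflies theorem then concludes that both closures $\overline{U_1}$ and $\overline{U_2}$ are topological $n$-balls.
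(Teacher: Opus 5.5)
The paper gives no proof of this statement; it quotes it from Newman and Brown and uses it as a black box in the proof of Theorem~\ref{posethomeo}. Your argument is the standard way to derive it from Brown's generalized Schoenflies theorem, and it is sound. The induction on links produces only \emph{topological} local flatness. That is all Brown's collaring and Schoenflies theorems require, and it is exactly why the conclusion is about topological rather than PL balls.

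One justification needs correcting. The vanishing of $\tilde H_q(N)$ for $q<n-1$ only tells you, via Alexander duality, that $U_1$ and $U_2$ have no higher homology. It does not show that $N$ is their common frontier.

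The correct reason is that a proper closed subset $A\subsetneq N$ has $\check H^{n-1}(A;\mathbb F_2)=0$, so it does not separate $M$. Suppose some $x\in N$ had an open neighbourhood $V$ with $V\cap U_2=\emptyset$, and set $A=N\setminus V$. Then $U_2$ would be open and closed in $M\setminus A$, contradicting the connectedness of $M\setminus A$.

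You also use this frontier fact one dimension down without saying so. To pass from ``two topological $(n-1)$-balls'' to a local flatness chart at $v$, you need $B_1\cap B_2=\partial B_1=\partial B_2=\operatorname{lk}(v,N)$. Only then does the Alexander trick identify $(\operatorname{lk}(v,M),\operatorname{lk}(v,N))$ with the standard pair $(S^{n-1},S^{n-2})$ before you cone from $v$. With these two points made explicit, the proof goes through.
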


\begin{proof}[Proof of Theorem \ref{posethomeo}]
    We fix $\mu_1$ and $\mu_2$ signs distributions associated to $\mathcal E_1$ and $\mathcal E_2$.
    Recall that, by Remark \ref{symmetry}, the role of $\mathcal E_1$ and $\mathcal E_2$ can be interchanged. Thus, we only need to exhibit one of the two homeomorphisms.

    The proof proceeds by constructing regular cell subdivisions $\mathcal C$ of $\widetilde \Delta$ and $\mathcal D$ of $\widetilde{2\Delta}$ adapted to the respective triples and combinatorially isomorphic.
    This will directly yield our result since the combinatoric of regular cell complexes determines them up to cellular homeomorphism.
    
    We begin by defining the regular cell decomposition $\mathcal C$ of $\widetilde \Delta$ with subcomplexes $\mathcal C'$ and $\mathcal{C}''$ that subdivide $X_{\mathcal E_1}$ and $X_{\mathcal E_1 \cap_\mathcal O \mathcal E_2}$, respectively.
    The decomposition $\mathcal C$ refines $\widetilde \Delta$ and is coarser than $\Bary{\widetilde \Delta}$.
    Each simplex $\sigma \in \widetilde \Gamma$ intersecting $X_{\mathcal E_1}$ is divided in two by the T-hypersurface, and $X_{\mathcal E_1}$ is, in turn, divided in two by $X_{\mathcal E_1 \cap_{\mathcal O} \mathcal E_2}$, if they intersect.
    We define

    \[ \mathcal C := \{ \mathcal{C}(\sigma)^+, \mathcal{C}(\sigma)^-, \mathcal{C'}(\sigma)^+, \mathcal C'(\sigma)^-, \mathcal C''(\sigma) \mid \sigma \in \widetilde \Gamma \} \]
    with the following definitions:
    
    \[ \mathcal C(\sigma)^\pm := \bigcup_{\substack{u= \tau_0 < \dots < \tau_n = \sigma \\ \mu_1(u) =\pm}} [\hat \tau_0, \dots, \hat \tau_n], \]

    \[ \mathcal C'(\sigma)^\pm := \bigcup_{\substack{[u,v] = \tau_1 < \dots < \tau_n =\sigma \\ \mu_1(u)  \neq \mu_1(v) \\ \mu_2(u) = \pm}} [\hat \tau_1, \dots, \hat \tau_n]\]

    and

    \[ \mathcal C''(\sigma) = \bigcup_{\substack{[u,v,w] = \tau_2 < \dots < \tau_n = \sigma \\  \mu_1(u)  \neq \mu_1(v) \\ \mu_2(v) \neq \mu_2(w)}} [\hat \tau_2, \dots, \hat \tau_l] = C_{\mathcal E_1 \cap_\mathcal O \mathcal E_2}(\sigma).\]

    Note that some of that cells might be empty.
    First observe that the 
    \[\mathcal C(\sigma)^+ \cup  \mathcal C(\sigma)^- = \bigcup_{\tau_0 < \dots < \tau_n = \sigma} [\hat \tau_0, \dots, \hat \tau_n] = \sigma\]
    and
    \[\mathcal C'(\sigma)^+ \cup  \mathcal C'(\sigma)^- = \bigcup_{\substack{[u,v] =\tau_1 < \dots < \tau_n = \sigma \\ \mu_1(u)  \neq \mu_1(v)}} [\hat \tau_1, \dots, \hat \tau_n] = C_{\mathcal E_1}(\sigma).\]

    Secondly,
     \[\mathcal C(\sigma)^+ \cap  \mathcal C(\sigma)^- = \bigcup_{\substack{[u,v] =\tau_1 < \dots < \tau_n = \sigma \\ \mu_1(u)  \neq \mu_1(v)}} [\hat \tau_1, \dots, \hat \tau_n] = C_{\mathcal E_1}(\sigma)\]

     and 
    \[\mathcal C'(\sigma)^+ \cap  \mathcal C'(\sigma)^- = \bigcup_{\substack{[u,v,w] =\tau_2 < \dots < \tau_n = \sigma \\ \mu_1(v)  \neq \mu_1(w)\\ \mu_2(u) \neq \mu_2(v)}} [\hat \tau_1, \dots, \hat \tau_n] = C_{\mathcal E_1 \cap_\mathcal O \mathcal E_2}(\sigma).\]
    To justify the first equality of the last line, observe that given a triangle $[u,v,w]$ with two edges $[u_1, v_1]$ and $[u_2, v_2]$ such that $\mu_1(u_1) \neq \mu_1(v_1)$, $\mu_1(u_2) \neq \mu_1(v_2)$, $\mu_2(u_1) = +$ and $\mu_2(u_2) = -$, we have either $u_1<u_2$ or $u_2<u_1$. Then either $u=u_1 < v=u_2 < w=v_2$ or $u = u_2 < v=u_1 < w=v_1$ and, in both case, $\mu_2(u) \neq \mu_2(v)$ and $\mu_1(v) \neq \mu_1(w)$.
    
    Let's also define 

    \[ \underline{\mathcal C(\sigma)^\pm} := \bigcup_{\substack{u= \tau_0 < \dots < \tau_{n-1} < \sigma \\ \mu_1(u) =\pm}} [\hat \tau_0, \dots, \hat \tau_{n-1}] = C(\sigma)^\pm \cap \partial \sigma. \]

    Observe that
    
    \[ \underline{\mathcal C(\sigma)^+}  \cap \underline{\mathcal C(\sigma)^-} = \bigcup_{\substack{\tau_1 < \dots < \tau_{n-1} <\sigma \\ \tau_1 = [u,v], ~ \mu_1(u) \neq \mu_1(v)}} [\hat \tau_1, \dots, \hat \tau_{n-1}] = \partial C_{\mathcal E_1}(\sigma).\]
    
    Suppose $C_{\mathcal E_1}(\sigma)$ is non-empty. We apply the weak PL Schoenflies theorem to the pair $(\partial \sigma, \partial C_{\mathcal E_1}(\sigma))$, which are both PL spheres of respective dimension $n-1$ and $n-2$ (the first by definition of PL sphere and the second by Proposition \ref{PLmanifold}). The two closures $\underline{\mathcal C(\sigma)^\pm}$ of the connected components are $(n-1)$-balls.
    Taking the cone over the barycenter of $\sigma$, we recover the cells
    \[ \mathcal C(\sigma)^\pm = \bigcup_{\substack{u= \tau_0 < \dots < \tau_n = \sigma \\ \mu_1(u) =\pm}} [\hat \tau_0, \dots, \hat \tau_n] = \Cone_{\hat \sigma}(\underline{\mathcal C(\sigma)^\pm})  \]
    which thus are $n$-balls.

    Now we similarly subdivide $\partial C_{\mathcal{E}_1}(\sigma)$:

    \[ \underline{\mathcal C'(\sigma)^\pm} := \bigcup_{\substack{[u,v] = \tau_1 < \dots < \tau_{n-1} <\sigma  \\ \mu_1(u)  \neq \mu_1(v) \\ \mu_2(u) = \pm}} [\hat \tau_1, \dots, \hat \tau_{n-1}] = \mathcal C'(\sigma)^\pm \cap \partial \sigma \subset \partial C_{\mathcal E_1}(\sigma).\]

    Observe that
    
    \[ \underline{\mathcal C'(\sigma)^+}  \cap \underline{\mathcal C'(\sigma)^-} = \bigcup_{\substack{[u,v,w]=\tau_2 < \dots < \tau_{n-1} <\sigma \\ \mu_1(v) \neq \mu_1(w) \\ \mu_2(u) \neq \mu_2(v)}} [\hat \tau_1, \dots, \hat \tau_{n-1}] = \partial C_{\mathcal E_1 \cap_\mathcal O \mathcal E_2}(\sigma).\]
    
    Suppose $C_{\mathcal E_1 \cap_\mathcal O \mathcal E_2}(\sigma)$ is non-empty. We apply weak PL Schoenflies theorem again to $(\partial C_{\mathcal E_1}(\sigma), \partial C_{\mathcal E_1 \cap_\mathcal O \mathcal E_2}(\sigma))$.
    Indeed, $\partial C_{\mathcal E_1 \cap_\mathcal O \mathcal E_2}(\sigma)$ and $\partial C_{\mathcal E_1}(\sigma)$ are both PL spheres of respective dimension $n-2$ and $n-3$ by Proposition \ref{PLmanifold}.
    Thus, the closures $\underline{\mathcal C'(\sigma)^\pm}$ of the two components of $\partial C_{\mathcal E_1}(\sigma) \setminus \partial C_{\mathcal E_1 \cap_\mathcal O \mathcal E_2}(\sigma)$ are $(n-2)$-balls.
    Taking the cone over the barycenter of $\sigma$, we recover the cells of $\mathcal C$
     \[ \mathcal C'(\sigma)^\pm = \bigcup_{\substack{u= \tau_0 < \dots < \tau_n = \sigma \\ \mu_1(u) =\pm}} [\hat \tau_0, \dots, \hat \tau_n] = \Cone_{\hat \sigma}( \underline{\mathcal C'(\sigma)^\pm})  \] that are thus $(n-1)$-balls.
     
    Since $\mathcal C''(\sigma) = C_{\mathcal E_1 \cap_\mathcal O \mathcal E_2}(\sigma)$ is an $(n-2)$-ball, all the cells of $\mathcal C$ are balls.

    We verify that the boundary of each cell is a union of cells:
    \[ \partial \mathcal C(\sigma)^\pm = \underline{\mathcal C(\sigma)^\pm} \cup C_{\mathcal E_1}(\sigma) = \bigcup_{\tau < \sigma} \mathcal C(\tau)^\pm \cup \mathcal C'(\sigma)^+ \cup \mathcal C'(\sigma)^-, \]
    
    \[ \partial \mathcal C'(\sigma)^\pm = \underline{\mathcal C'(\sigma)^\pm} \cup C_{\mathcal E_1 \cap_\mathcal O \mathcal E_2}(\sigma) = \bigcup_{\tau < \sigma} \mathcal C'(\tau)^\pm \cup \mathcal C''(\sigma), \]
    and
    \[ \partial \mathcal C''(\sigma) = \bigcup_{\tau < \sigma} \mathcal C''(\tau).\]
    Finally, since the interior of each simplex of the barycentric subdivision  $\Bary(\widetilde \Gamma)$ lies in the interior of a unique cell of $\mathcal C$, any point $x \in \widetilde \Delta$ lies in the interior of one cell of $\mathcal C$.
    It follows that $\mathcal C$ is a regular cell subdivision of $\widetilde \Delta$ with subcomplexes $\mathcal C' = \{ \mathcal C'(\sigma)^+, \mathcal C'(\sigma)^-, \mathcal C''(\sigma) : \sigma \in \widetilde \Gamma \}$ subdividing $X_{\mathcal{E}_1}$ and $\mathcal C'' = \{ \mathcal C''(\sigma) : \sigma \in \widetilde \Gamma \}$ subdividing $X_{\mathcal E_1 \cap_\mathcal O \mathcal E_2}$ (with the canonical cell decomposition).
    
    On the other hand, we have a refinement $\mathcal D$ of the triangulation $\widetilde{2\Gamma}$ of $\widetilde{2\Delta}$ defines using the two T-hypersurfaces $Y_{\Gamma_\mathcal O, \mathcal E_1}$ and $Y_{\Gamma_\mathcal O, \mathcal E_2}$ given by Sturmfels' method.
    In a simplex $\sigma \in \widetilde{\Gamma}$, each T-hypersurface divides $2\sigma \in \widetilde{2\Gamma}$ in two halves:
      \[M_i(\sigma)^\pm := \bigcup_{\substack{u_1+u_2 = \tau_0 < \dots < \tau_{n} \subset 2\sigma  \\ \mu_i(u_i) = \pm}} [\hat \tau_0, \dots, \hat \tau_n]\]
      
     such that $M_i(\sigma)^+ \cap M_i(\sigma)^- = Y_{\Gamma_\mathcal O, \mathcal E_i} \cap \sigma$.

    We define 
    \[ \mathcal D = \{ \mathcal{D}(\sigma)^+, \mathcal{D}(\sigma)^-, \mathcal{D'}(\sigma)^+, \mathcal D'(\sigma)^-, \mathcal D''(\sigma): 2\sigma \in \widetilde{2\Gamma} \} \]  
    with
    \[\mathcal D(\sigma)^\pm := M_1(\sigma)^\pm,\]
    \[\mathcal D'(\sigma)^\pm := M_1(\sigma)^+ \cap M_1(\sigma)^- \cap M_2(\sigma)^\pm \]
    and
    \[ \mathcal D''(\sigma) := M_1(\sigma)^+ \cap M_1(\sigma)^- \cap M_2(\sigma)^+ \cap M_2(\sigma)^-. \]
    
    Here we use Sturmfels' theorem to see that each cell of $\mathcal D$ is topological ball.
    Indeed, for any simplex, the mixed subdivision $\sigma_\mathcal O$ of $2\sigma$ is always coherent and fine.
    Thus, by applying Sturmfels theorem and  considering only the positive orthant $Q^+ = \{[u_0, \dots, u_n], u_i \geq 0 \} \subset \mathbb RP^n$ we have two linear forms $l_1$ and $l_2$ and a homeomorphism $h \colon 2\sigma \to Q^+$ such that
    $h(\mathcal D(\sigma)^\pm) = \{\pm  l_1 \geq 0 \}$, $h(\mathcal D'(\sigma)^\pm) = \{l_1=0\} \cap \{ \pm l_2 \geq 0\}$ and $h(\mathcal D''(\sigma)) = \{l_1 =0\} \cap \{ l_2=0\}$.
    Thus, all the cells are homeomorphic to polyhedron and in particular to topological balls.
    
    By construction $\mathcal{D'} = \{ \mathcal{D}(\sigma)^-, \mathcal{D'}(\sigma)^+, \mathcal D'(\sigma)^-, \mathcal D''(\sigma): \sigma \in \widetilde{2\Gamma} \}$ is a subdivision of $Y_{\Gamma_\mathcal O, \mathcal E_1}$ and $\mathcal{D''} = \{\mathcal D''(\sigma): \sigma \in \widetilde{2\Gamma} \}$ of $Y_{\Gamma_\mathcal O, \mathcal E_1} \cap Y_{\Gamma_\mathcal O, \mathcal E_2}$.
    We also deduce the following boundaries:
     \[ \partial \mathcal D(\sigma)^\pm = \bigcup_{\tau < \sigma} \mathcal D(\tau)^\pm \cup \mathcal D'(\sigma)^+ \cup \mathcal D'(\sigma)^-, \]
    
    \[ \partial \mathcal D'(\sigma)^\pm =  \bigcup_{\tau < \sigma} \mathcal D'(\tau)^\pm \cup \mathcal D''(\sigma), \]
    and
    \[ \partial \mathcal D''(\sigma) = \bigcup_{\tau < \sigma} \mathcal D''(\tau).\]
    
    fhigbonmk 

The map
\[
f : \mathcal C \longrightarrow \mathcal D
\]
defined by
\[
f(\mathcal C(\sigma)^{\pm}) = \mathcal D(\sigma)^{\pm}, \qquad
f(\mathcal C'(\sigma)^{\pm}) = \mathcal D'(\sigma)^{\pm}, \qquad
f(\mathcal C''(\sigma)) = \mathcal D''(\sigma)
\]
is a bijection preserving inclusions.
It induces a homeomorphism $h \colon \widetilde \Delta \to \widetilde{2\Delta}$ such that a cell of $\mathcal C$ is sent on the corresponding cell of $\mathcal D$ (see \cite[Corolary 4.7.9]{OM}). 
In particular, $h$ maps $X_{\mathcal E_1}$ onto $Y_{\Gamma_\mathcal O, \mathcal E_1}$ and $X_{\mathcal E_1 \cap_\mathcal O \mathcal E_2}$ onto $Y_{\Gamma_\mathcal O, \mathcal E_1} \cap Y_{\Gamma_\mathcal O, \mathcal E_2}$.
In addition, since $h(\sigma) = 2\sigma$ for any simplex $\sigma \in \widetilde{\Gamma}$,
the homeomorphism $h$ is stratified.
\end{proof}

\begin{figure}
    \centering
    \begin{subfigure}[t]{0.45\textwidth}

    \includegraphics{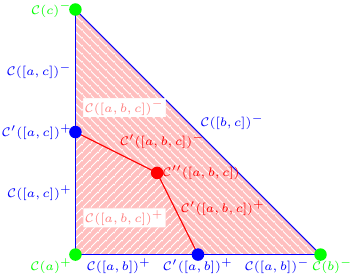}

    \caption{$\mathcal C$}
    \end{subfigure}
    \begin{subfigure}[t]{0.45\textwidth}
        \includegraphics{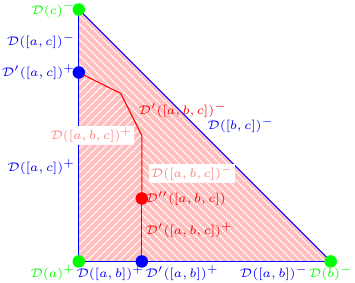}
        \caption{$\mathcal D$}

    \end{subfigure}
    \caption{Illustration of the two regular cell decompositions of the proof on a triangle $[a,b,c]$}
    \label{fig:regsubd}
\end{figure}

\begin{proof}[Proof of Theorem \ref{realizibility}]
    Since $\Gamma_\mathcal O$ is a coherent fine mixed subdivision thanks to Proposition \ref{regularmixed}, 
    Sturmfels' theorem gives the existence of algebraic hypersurfaces $H_1, H_2 \subset \mathbb RX_\Delta$ such that, for any $i=1,2$,
    $(\mathbb RX_\Delta, H_i, H_1 \cap H_2)$ and $(\widetilde{2\Delta}, Y_{\Gamma_\mathcal O, \mathcal E_{i}}, Y_{\Gamma_\mathcal O, \mathcal E_{1}} \cap Y_{\Gamma_\mathcal O,\mathcal E_{2}})$ are homeomorphic.
    By Theorem \ref{posethomeo}, there is a homeomorphism between $(\widetilde{\Delta}, X_{\mathcal E_i}, X_{\mathcal E_1 \cap_\mathcal O \mathcal E_2 })$ and $(\widetilde{2\Delta}, Y_{\Gamma_\mathcal O, \mathcal E_{i}}, Y_{\Gamma_\mathcal O, \mathcal E_{1}} \cap Y_{\Gamma_\mathcal O, \mathcal E_{2}})$.
    The composition of the two homeomorphisms, that both respect the stratification, gives the desired result.
\end{proof}

\subsection{The case of T-curve in dimension 3}

The goal of this subsection is to study the local combinatorics of T-curves obtained by applying the "stable intersection" method in dimension 3.
The results will be used for the analysis of the T-curves in the next section.

Let $(\Delta, \Gamma)$ be a 3-dimensional triangulated polytope, $\mu$ a sign distribution and $ \mathcal O$ a locally acyclic orientation.
We consider the T-curve $X_\mathcal E$ where $\mathcal E = \mathcal E_\mu \cap_\mathcal O \mathcal E_\mu$.

In a tetrahedron $\sigma$ of $\widetilde{\Gamma}$, either $X_\mathcal E$ does not meet $\sigma$ or it intersects two faces of $\sigma$.
In the latter case, we call the common edge of the two faces the axis of  $X_\mathcal E$ in $\sigma$.

For the tetrahedron $\sigma = [v_0,v_1,v_2,v_3]$, with the vertices ordered by $\mathcal O$,
the Table \ref{tab:loc} summarizes the possible axis of $X_\mathcal E$ according to the sign distribution $\mu$ on the vertices of $\sigma$.
When $X_\mathcal E$ does not intersect $\sigma$, the axis is indicated as "none" (empty octant).
It suffices to record the cases where $\mu(v_0) = +$, since inverting all signs does not change the real phase structure.

\begin{table}
    \centering
    \begin{tabular}{cccc|c}
        $v_0$ & $v_1$ & $v_2$ & $v_3$ & axis\\
        \hline
        + & + & + & + & none \\
        + & + & + & - & none \\
        + & + & - & + & $[v_2,v_3]$ \\
        + & + & - & - & none \\
        + & - & + & + & $[v_0,v_1]$ \\
        + & - & + & - & $[v_1,v_2]$ \\
        + & - & - & + & $[v_0,v_3]$ \\
        + & - & - & - & none \\
    \end{tabular}
    \caption{Possible axis of the T-curve $X_\mathcal E$ in a tetrahedron $\sigma$, depending on the sign distribution}
    \label{tab:loc}
\end{table}

We illustrate one case in Figure \ref{fig:localaxis}.
Here we represent the configuration corresponding to the fifth line of the table:
the axis of $X_\mathcal E$ is the edge $[v_0,v_1]$ (in grey).
The sign of a vertex is represented by its color (blue or red).  

\begin{figure}
    \centering
\includegraphics{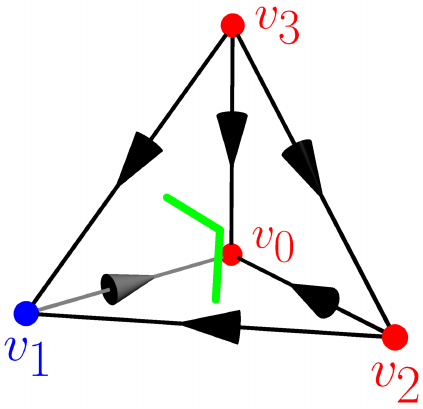}
\caption{Local configuration of the T-curve $X_\mathcal E$ in $[v_0,v_1,v_2,v_3]$ corresponding to the fifth line of table \ref{tab:loc}}
\label{fig:localaxis}
\end{figure}

\begin{prop}\label{propaxis}
    Let $\sigma=[v_0,v_1,v_2,v_3]$ be a tetrahedron in $\Gamma$ with vertices ordered by $\mathcal O$.
    Let $e$ be one of the edges $[v_0,v_1]$, $[v_1,v_2]$, $[v_2,v_3]$ or $[v_3,v_0]$.
    There exists a unique octant $s \in \mathcal Q^3$ such that $s(e)$ is the axis of $X_\mathcal E$ in $s(\sigma)$.
\end{prop}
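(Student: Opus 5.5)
The plan is to reduce the statement to two facts. The first is that, as $s$ runs over the eight octants, the sign patterns induced on the vertices of $s(\sigma)$ run over all eight patterns (up to a global sign) exactly once. The second is that Table \ref{tab:loc} is correct and each of the four listed edges occurs in exactly one of its rows.

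\emph{Step 1: bijection between octants and sign patterns.} By Example \ref{signe}, the extended distribution is $\mu(s(v))=(-1)^{s\cdot v}\mu(v)$. Consider the normalized pattern of $s(\sigma)$, namely the triple of relative signs $\mu(s(v_i))\mu(s(v_0))$ for $i=1,2,3$. It equals $\mu(v_i)\mu(v_0)(-1)^{s\cdot(v_i-v_0)}$. Because $\sigma$ is unimodular, the vectors $v_1-v_0,v_2-v_0,v_3-v_0$ form a basis of $\mathbb Z^3$, and hence also a basis of $\mathbb F_2^3$ after reduction. Therefore the map
\[ s\longmapsto \bigl(s\cdot(v_i-v_0) \bmod 2\bigr)_{i=1,2,3} \]
is a bijection $\mathcal Q^3\to\mathbb F_2^3$ (an affine bijection once an origin is fixed). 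So every pattern with $\mu(v_0)=+$ is realized by exactly one octant. Since $\mathcal E_\mu$ is invariant under a global sign change, it suffices to work with these normalized patterns, which are exactly the eight rows of Table \ref{tab:loc}.

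\emph{Step 2: justify the table.} By Lemma \ref{intersectionrps} and the description that follows it, $s\in\mathcal E(\tau)$ for a triangle $\tau$ if and only if $s(\tau)$ is a compatible triangle. Here $\mathcal E_1=\mathcal E_2=\mathcal E_\mu$, so for an ordered face $[a,b,c]$ with $a<b<c$ compatibility means $\mu(a)\neq\mu(b)\neq\mu(c)$, that is, the signs alternate along the order. For each of the eight rows I would list which of the four faces $[v_0,v_1,v_2]$, $[v_0,v_1,v_3]$, $[v_0,v_2,v_3]$, $[v_1,v_2,v_3]$ have alternating signs. For example, in the row $+,-,+,+$ only $[v_0,v_1,v_2]$ and $[v_0,v_1,v_3]$ alternate, so the axis is their common edge $[v_0,v_1]$. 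Lemma \ref{minorthant} (or directly the parity condition of $\mathcal E$ on $\sigma$) guarantees that in each row either zero or exactly two faces are compatible. In the latter case the axis is the common edge of the two compatible faces. This case check reproduces Table \ref{tab:loc}.

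\emph{Step 3: conclude.} Reading the table, the four rows with nonempty intersection have axes $[v_2,v_3]$, $[v_0,v_1]$, $[v_1,v_2]$ and $[v_0,v_3]$, which are pairwise distinct and are exactly the four edges in the statement. Combining this with the bijection of Step 1, for each such edge $e$ there is exactly one octant $s$ whose pattern is the corresponding row, so $s(e)$ is the axis of $X_\mathcal E$ in $s(\sigma)$. Every other octant gives either the empty intersection or a different axis.

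The only delicate point I expect is Step 1. One has to check carefully that the relative signs, and not the absolute signs, are what the construction sees, since $\mathcal E_\mu=\mathcal E_{-\mu}$. One also has to check that unimodularity gives a bijection onto all eight patterns, and not only a surjection onto the four patterns up to global sign.
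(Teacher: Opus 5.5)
Your proof is correct and follows the paper's argument. The paper likewise combines the fact that, by unimodularity, each sign pattern on the four vertices (up to global sign) occurs on exactly one of the eight copies $s(\sigma)$ with a reading of Table~\ref{tab:loc}. The differences are that the paper cites this fact from \cite{Alois} where you derive it from the mod-$2$ basis argument, and that you also verify the table row by row.

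One citation is imprecise: the parity condition and Lemma~\ref{minorthant} do not by themselves give ``zero or exactly two'' compatible faces, since parity still allows four and the lemma only gives at least two. Your explicit case check settles this directly, so nothing is lost.
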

\begin{proof}
    Since $\sigma$ is unimodular, each of the eight distributions of signs on four vertices (up to inversion of all the signs) appear on exactly one of the eight copies of $\sigma$ (see \cite{Alois} Lemma 1.3.3).
    As shown by Table \ref{tab:loc}, one of these sign distributions is such that the corresponding copy of $e$ is the axis of $X_\mathcal E$ in the corresponding copy of $\sigma$.
\end{proof}

\section{Maximal Curve in codimension 2}\label{sec:maxcurve}

Here we describe, for any positive integer $d$, a pair $(\Sigma_d, C_d)$ of maximal T-surface and maximal T-curve with Newton polytope $d\Delta_3$.
For this, we use the method of Section \ref{stableintersection}.
They will correspond, by Theorem \ref{realizibility}, to a maximal real algebraic surfaces of degree $d$ and maximal real algebraic curve in $\mathbb RP^3$.

In this section, we fix $d \geq 1$ and $\Delta = d \Delta_3$ inside $\mathbb R^3$ with coordinate $(x,y,z)$.

\subsection{Triangulation}

We describe the standard floor triangulation of the tetrahedron $\Delta$.
For each integer $k$ such that $0 \leq k \leq d$, define the slice $T_k = \Delta \cap \{z = d-k\}$.

We give the following triangulation to the slices $T_k$.

For even $k$, we subdivide $T_k$ by the segments $S_k^l = T_k \cap \{y = k-l\}$ for $0 \leq l \leq k$ and the diagonals between the points $(k,2m,d-k)$ and $(0,1+2m,d-k)$ for $0 \leq 2m \leq k$ (see Figure \ref{Tkeven}).

For odd $k$, we subdivide $T_k$ by the segments $S_k^l = T_k \cap \{x = k-l\}$ for $0 \leq l \leq k$ and the diagonals between the points $(2m,k-2m,d-k)$ and $(1+2m,0,d-k)$ for $0 \leq 2m \leq k$ (see Figure \ref{Tkodd}).

\begin{figure}
    \centering
    \begin{subfigure}[t]{0.4\textwidth}

   \includegraphics{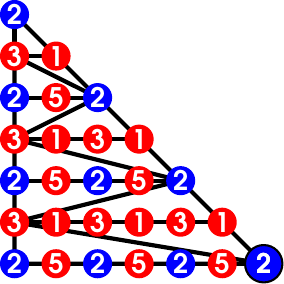}

    \caption{$T_k$ for $k$ even}
    \label{Tkeven}
    \end{subfigure}
    \begin{subfigure}[t]{0.4\textwidth}
        \includegraphics{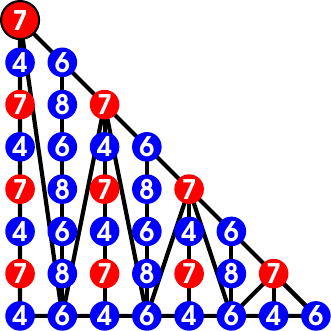}
        \caption{$T_k$ for $k$ odd}
        \label{Tkodd}
    \end{subfigure}
    \caption{The coarse triangulation of $T_k$ and the parities of the vertices}
    \label{fig:Tk}
\end{figure}
The triangulation of each $T_k$ can be uniquely refined in a unimodular triangulation.

Now, we use the triangulation of $T_k$ to define the triangulation of $\Delta$.

For even $k$, take the cone of $T_k$ and its triangulation over $(0,k+1,d-k-1)$, provided that $k+1<d$, and the cone over $(0,k-1,d-k+1)$, provided that $k>0$.

For odd $k$, take the cones of $T_k$ and its triangulation over $(k+1,0,d-k-1)$, provided that $k+1<d$, and the cone over $(k-1,0,d-k+1)$, provided that $k>0$.
The apex of these cones are represented as bigger circles in Figure \ref{fig:Tk}.

Finally, refine the join of the two segments $[(0,0,d-2l),(0,2l,d-2l)]$ and $[(0,0,d-2l - 1), (2l + 1,0,d-2l - 1)]$, provided that $0\leq 2l <d$, and
the join of the two segments $[(0,0,d-2l),(0,2l,d-2l)]$ and $[(0,0,d-2l + 1), (2l - 1,0,d-2l + 1)]$, provided that $0 < 2l \leq d$, in the unique unimodular triangulation.

The result is a unimodular triangulation $\Gamma$ of $\Delta$.

This type of floor triangulation have been used in \cite{Alois} and \cite{itenberg1997topology} for constructing maximal T-surface in dimension 3.
It has also been generalized in \cite{AsympmaxViro} for constructing asymptotically maximal T-hypersurface in any dimension.
In particular, they showed that the triangulation $\Gamma$ is convex.

\begin{figure}
    \centering
    \includegraphics[width=5cm]{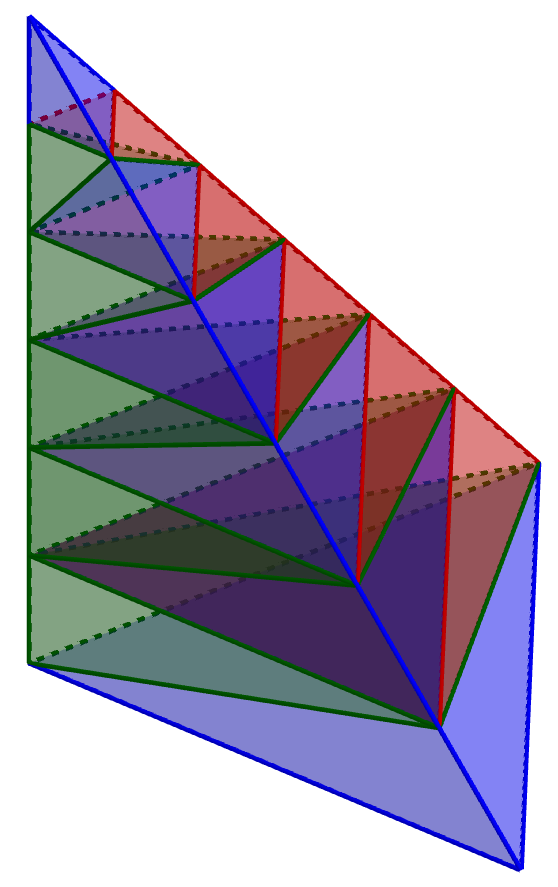}
    \caption{The floor triangulation of $\Delta$}
\end{figure}

\subsection{Parity code, coloring and orientation}
To each integer $v$ vertex of $\Delta$ of coordinate $(x,y,z) \in \mathbb Z^3$ we assign a \emph{parity code} $p(v) \in \{1, 2, 3, 4, 5, 6, 7, 8\}$, depending on the parity of $x,y$ and  $z$, using Table $\ref{tab:parity}$.
These parity codes are represented in the Figure \ref{fig:Tk}.

\begin{table}
    \centering
    \begin{tabular}{ccc|c|c}
        $x$ & $y$ & $d-z$ & parity code & sign\\
        \hline
        odd & odd & even & $1$ & - \\
        even & even & even & $2$ & + \\
        even & odd & even & $3$ & - \\
        even & even & odd & $4$ & + \\
        odd & even & even & $5$ & - \\
        odd & even & odd & $6$ & + \\
        even & odd & odd & $7$ & - \\
        odd & odd & odd & $8$ & + \\
    \end{tabular}
    \caption{Parity code of the vertices}
    \label{tab:parity}
\end{table}

For example $v=(0,0,0)$ has parity code $p(v)=2$ if $d$ is even, and $p(v)=4$ if $d$ is odd.

We give an orientation $\mathcal O$ of the edges $F_1(\Gamma)$ by the following rule. 
Consider an edge of $\Gamma$ with vertices $a$ and $b$. Since $\Gamma$ is unimodular, $a$ and $b$ have distinct parity codes $p(a) = i$ and $p(b) = j$.
We orient the edge from $a$ to $b$ if and only if $i > j$, and from $b$ to $a$ otherwise.
This orientation $\mathcal O$ is globally acyclic because a non-trivial oriented path necessary joins a vertex of parity code $i$ to a vertex of parity code $j$ with $i>j$.

We define the sign distribution $\mu$ as follows.
A vertex $a \in F_0(\Gamma)$ of parity code $p(a) = i$ has a sign $\mu(a) = +1$ if $i$ is even and $\mu(a) =-1$ if $i$ is odd.
See Table \ref{tab:parity}.

The information about the configuration is summarized in Figure \ref{fig:Tk}.
Each vertex is colored according to the sign distribution and numbered by its parity code.

Let us denote $S_d = X_{\mathcal E_\mu \cap_\mathcal O \mathcal E_\mu}$ the T-curve and $\Sigma_d = X_{\mathcal E_\mu}$ the T-surface obtained by patchwork.

\begin{theo}
    There is two real algebraic hypersurfaces $H_1, H_2$ of degree $d$ in $\mathbb RP^3$ such that
    the triples $(\mathbb RP^3, H_i, H_1 \cap H_2)$ is homeomorphic to $(\widetilde{\Delta}, \Sigma_d, C_d)$, for any $i=1,2$.
\end{theo}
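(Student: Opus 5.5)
This is a direct application of Theorem \ref{realizibility}, so the plan is to check its hypotheses in the present setting and then identify the ambient toric variety.

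The steps are as follows.

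First, $\Delta = d\Delta_3$ is a smooth lattice polytope. At each vertex the three incident edges have primitive directions forming a lattice basis of $\mathbb Z^3$. For example, at the origin they are $e_1,e_2,e_3$, and at $(d,0,0)$ they are $-e_1,\, e_2-e_1,\, e_3-e_1$.

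Second, the floor triangulation $\Gamma$ is unimodular by construction, and it is convex; the paper records convexity from \cite{AsympmaxViro}. If a self-contained argument is wanted, I would construct an explicit height function, lifting the slices $T_k$ by a large convex function of $k$ and each slice's subdivision by a small convex function. I will simply cite it.

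Third, the orientation $\mathcal O$ is globally acyclic. It is induced by the total preorder given by the parity code, and along any edge the codes differ by unimodularity, so oriented paths strictly decrease the code.

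Fourth, I take $\mu_1=\mu_2=\mu$. Theorem \ref{realizibility} then produces real algebraic hypersurfaces $H_1,H_2$ with Newton polytope $d\Delta_3$ in $\mathbb RX_{d\Delta_3}$. For $i=1,2$ there is a stratified homeomorphism of triples
\[
(\mathbb RX_{d\Delta_3},H_i,H_1\cap H_2)\cong(\widetilde{\Delta},X_{\mathcal E_\mu},X_{\mathcal E_\mu\cap_{\mathcal O}\mathcal E_\mu})=(\widetilde{\Delta},\Sigma_d,C_d).
\]
Here I read the notation $S_d$ in the definition as $C_d$.

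Finally, $\mathbb CX_{d\Delta_3}=\mathbb CX_{\Delta_3}=\mathbb CP^3$ via the Veronese embedding, so the real part is $\mathbb RP^3$. A polynomial with Newton polytope $d\Delta_3$ is, after homogenization, exactly a homogeneous polynomial of degree $d$. Hence $H_1$ and $H_2$ are real algebraic surfaces of degree $d$ in $\mathbb RP^3$.

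The main point of concern is that Theorem \ref{realizibility} is used with $\mu_1=\mu_2$. One might worry about genericity, but the Viro polynomials $P^{(1)}_t$ and $P^{(2)}_t$ come from different height functions $\nu_1,\nu_2$ in the coherent mixed subdivision $\Gamma_{\mathcal O}$ of Proposition \ref{regularmixed}. Sturmfels' theorem only requires this coherence and the fineness of the mixed subdivision, not distinct sign data, and it guarantees that $H_1\cap H_2$ is a smooth complete intersection. The hypotheses therefore hold.

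The other delicate input is the convexity of $\Gamma$. I will rely on the cited result rather than reprove it.
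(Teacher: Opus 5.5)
Your proposal is correct and follows the paper's proof, which is exactly an application of Theorem~\ref{realizibility} with $\mu_1=\mu_2=\mu$, using the convexity of the floor triangulation $\Gamma$ and the global acyclicity of $\mathcal O$. Your additional checks are details the paper leaves implicit: smoothness of $d\Delta_3$, the identification of $\mathbb RX_{d\Delta_3}$ with $\mathbb RP^3$ so that Newton polytope $d\Delta_3$ means degree $d$, the remark that $\mu_1=\mu_2$ causes no issue, and reading $S_d$ as $C_d$.
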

\begin{proof}
    Since $\Gamma$ is convex and $\mathcal O$ is globally acyclic, we can apply the Theorem \ref{realizibility}.
\end{proof}

The surface $\Sigma_d$ is a particular case of T-hypersurfaces studied by Itenberg:

\begin{prop}[\cite{itenberg1997topology}]
    The T-surface $\Sigma_d$ is maximal.
    Its connected components are all, except one, homeomorphic to a sphere.
\end{prop}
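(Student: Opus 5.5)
The proposition is attributed to Itenberg \cite{itenberg1997topology}, so the plan is to identify $\Sigma_d = X_{\mathcal E_\mu}$ with the T-surface of his construction and then check maximality directly. Concretely, $\mu$ depends only on the parity of the coordinates: $\mu(x,y,z)=+$ exactly when the parity code is even. Reading Table \ref{tab:parity} gives $\mu(x,y,z) = (-1)^{x+y+(d-z)}$ up to a global sign. With the rule $\mu(s(v)) = (-1)^{s\cdot v}\mu(v)$, every symmetric copy then carries a sign distribution of the same parity type. This is the "Harnack-type" distribution on the floor triangulation used in \cite{itenberg1997topology,Alois}.

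To prove maximality I would not cite Itenberg blindly. Instead I would count components, using the slice structure. In each slice $T_k$ (a dilated triangle carrying the triangulation of Figure \ref{fig:Tk}), the restriction of $\mu$ yields Haas/Harnack-type T-curves. The cones over the apices $(0,k\pm1,\cdot)$ and $(k\pm1,0,\cdot)$ contribute pieces of $\Sigma_d$ that are cones over these slice curves. The empty ovals of the slice curve in $T_k$ and the matching ovals in the neighbouring slice close up into spheres through the joins between consecutive floors. I would therefore count spheres floor by floor and add the one remaining non-spherical component, which runs along the boundary strata $\widetilde{T_k}$. Then I would compute $b_0$ and $b_1$. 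Since a closed surface has $b_2=b_0$, Poincaré duality over $\mathbb F_2$ reduces the check to $b_0(\Sigma_d)=\sum_q h^{0,q}=1+h^{0,2}$ and $b_1(\Sigma_d)=\sum_q h^{1,q}$. By Khovanskii's formula, $h^{0,2}$ of a degree $d$ surface is $\binom{d-1}{3}$, the number of interior points of $d\Delta_3$, and $h^{1,1}=\frac{2d^3-6d^2+7d}{3}$.

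A cleaner route to $b_1$ is through Euler characteristic. By Theorem \ref{nbfacet} and the cell counts, $\chi(\Sigma_d)$ equals the signature-type value $\frac{4d-d^3}{3}$ for any T-surface in $d\Delta_3$; this is Itenberg's and Bertrand's formula for primitive patchworks. So once $b_0 = 1+\binom{d-1}{3}$ is established with exactly one non-spherical component, $b_1$ is forced to equal $2-\chi+2\binom{d-1}{3}$, and this coincides with $\sum_q h^{1,q}$. Together with Theorem 1.1's bound $b_0\le 1+h^{0,2}$, it therefore suffices to exhibit $\binom{d-1}{3}$ sphere components plus one more component.

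The main obstacle is the combinatorial bookkeeping that identifies exactly $\binom{d-1}{3}$ spheres. The plan is to show that each interior lattice point of $\Delta$ whose parity code has a fixed parity pattern is the centre of a small octahedral/tetrahedral sphere of $\Sigma_d$ in a suitable orthant. The count comes from grouping these points by floor, which gives $\sum_k \binom{k-1}{2}$ per floor and hence $\binom{d-1}{3}$. I would then verify via the slices that no two of these spheres are connected and that everything else is one connected component. If the local verification becomes too messy, I would fall back on citing \cite{itenberg1997topology}, where precisely this configuration is analysed.
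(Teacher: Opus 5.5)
The paper gives no argument of its own. It quotes the statement from \cite{itenberg1997topology} and refers to \cite[section 2.1]{Alois} for another proof, so your fallback is exactly what the paper does. Your direct route is genuinely different, and its skeleton is correct.

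Over $\mathbb F_2$ you have $b_2=b_0$, and $\chi(\Sigma_d)$ does not depend on the phase structure, so $b_1=2b_0-\chi$. With $\chi=\frac{4d-d^3}{3}$ one checks that $b_1=h^{1,1}$ holds exactly when $b_0=1+\binom{d-1}{3}$. Combined with \eqref{boundhodge} for $p=0$, it therefore suffices to exhibit $\binom{d-1}{3}$ sphere components plus one further component. That further component cannot be a sphere, since otherwise $b_1=0<h^{1,1}$. In particular you never need to show that ``everything else is one component''.

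Two points on the $\chi$ step. Theorem \ref{nbfacet} only counts the $7d^3$ two-cells. You must also count $0$- and $1$-cells with the boundary identifications: an edge, resp. triangle, whose carrier face of $\Delta$ has dimension $j$ contributes $2^{j-1}$, resp. $3\cdot 2^{j-2}$, cells. Then use the $f$-vector of a unimodular triangulation of $d\Delta_3$. This does give $\frac{4d-d^3}{3}$; alternatively, cite Itenberg--Bertrand. Since $\Gamma$ is convex, Viro's theorem plus Smith--Thom would also give the upper bound on $b_0$ without \eqref{boundhodge}.

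Two corrections to the rest of the proposal.

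\textbf{The formula for $\mu$ is wrong.} Codes $1$ and $2$ both have $x+y+(d-z)$ even but opposite signs. In fact $\mu$ is not affine in the parity: on slices with $d-z$ even, $\mu=+$ iff $x,y$ are even; on the other slices, $\mu=-$ iff $x$ is even and $y$ is odd. This does not hurt you, because the sphere count only uses that $\mu$ depends on the parity class.

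\textbf{The spheres do not come from ovals of consecutive slices closing up through the joins.} The joins contain no interior vertex. Your later ``small octahedral sphere'' picture is the right one, and it applies to every interior lattice point, not to a fixed parity pattern.
\begin{itemize}
\item An interior lattice point $v$ lies in the interior of some $T_k$ with $1\le k\le d-1$.
\item Its only neighbours outside $T_k$ are the two cone apices, which have the same parity class.
\item Its link inside $T_k$ is a $4$-cycle alternating between two parity classes; the paper uses exactly this for the horizontal cycles.
\item Hence the link of $v$ in $\Gamma$ is an octahedron whose antipodal vertices satisfy $w_i\equiv w_i'\equiv v+q_i \pmod 2$, with $q_1,q_2,q_3$ a basis of $\mathbb F_2^3$.
\item Since $\mu$ depends only on parity, the three conditions $(-1)^{s\cdot q_i}=-\mu(v)\mu(w_i)$ single out a unique octant $s$ in which $s(v)$ has sign opposite to all its neighbours.
\item In that octant, $\Sigma_d\cap \mathrm{St}(s(v))$ is a whole sphere component.
\end{itemize}
This gives $\sum_{k=1}^{d-1}\binom{k-1}{2}=\binom{d-1}{3}$ distinct spheres, all lying in the interiors of the copies $s(\Delta)$. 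Meanwhile $\Sigma_d$ meets $\widetilde{\partial\Delta}$, because every boundary edge has non-empty $\mathcal E_\mu$; this provides the extra component.

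Compared with the citation, your argument is self-contained modulo an upper bound and the $\chi$ count. It also shows that any parity-determined sign distribution on this triangulation gives a maximal T-surface. The citation is shorter but hides this mechanism.
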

 
A different proof of the maximality of $\Sigma_d$ is given in \cite[section 2.1]{Alois}.

Our goal is to show the following result:

\begin{theo}\label{maxcurve}
    The T-curve $C_d$ is maximal.
\end{theo}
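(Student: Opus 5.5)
The plan is to reduce maximality to a lower bound on $b_0(C_d)$ and then prove that lower bound by exhibiting enough small components.

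\emph{Reduction.} Since $C_d$ is a closed $1$-manifold, each component is a circle, so $b_1(C_d)=b_0(C_d)$. Maximality therefore amounts to equality in Proposition \ref{boundcc}, that is $b_0(C_d)=b_1(G_\Gamma)+1$. For $\Delta=d\Delta_3$ we have $\alpha=d^3$ and $\beta=4d^2$. Lemma \ref{bettigraph} then gives $b_1(G_\Gamma)=d^3-2d^2+1$, which is the genus of a smooth complete intersection of two degree-$d$ surfaces in $\mathbb CP^3$, as expected. Proposition \ref{boundcc} already gives the upper bound. So it suffices to prove
\[ b_0(C_d)\ \geq\ d^3-2d^2+2 .\]

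\emph{Step 1: local analysis.} The structure is $\mathcal E=\mathcal E_\mu\cap_{\mathcal O}\mathcal E_\mu$, where both the sign $\mu$ and the orientation $\mathcal O$ are determined by the parity code. I will first list the finitely many tetrahedron types in the floor triangulation: cones over slices $T_k$ toward the apices, and joins of the two segments. For each type I read off the ordered parity codes of its vertices. Using Table \ref{tab:loc}, I then determine, for every octant, whether $C_d$ meets $s(\sigma)$ and, if so, which edge is its axis. Proposition \ref{propaxis} guarantees that each of the four "cyclic" edges $[v_0,v_1],[v_1,v_2],[v_2,v_3],[v_3,v_0]$ is the axis in exactly one octant.

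\emph{Step 2: small ovals.} Fix an interior edge $e$ of $\Gamma$ and an octant $s$ such that $s(e)$ is the axis of $C_d$ in every tetrahedron $s(\sigma)\supset s(e)$. Then the $1$-cells in these tetrahedra close up into a circle winding once around $s(e)$, and this circle is a whole component. Because the pattern of parity codes is periodic (period $2$ in each coordinate, with the alternation between even and odd floors), I will classify the edge directions that occur and check, for each direction, whether the ordering and signs make it an axis in all tetrahedra around it. I will then count these edges with their octants.

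\emph{Step 3: induction on floors.} To get the exact constant, I plan an induction on $d$, comparing $C_d$ with $C_{d-1}$ by removing the last floor (between $T_{d-1}$ and $T_d$), or equivalently by adding a floor. The new floor should contribute exactly
\[(d^3-2d^2+2)-\bigl((d-1)^3-2(d-1)^2+2\bigr)=3d^2-7d+3\]
new small ovals. These come from the interior edges of the floor counted in Step 2, corrected along the boundary faces $x=0$ and $y=0$. In addition, the component(s) crossing between floors must glue to those of the previous floor without merging distinct ovals. The base cases $d=1,2$ will be checked by hand: $d=1$ gives $1$ component, and $d=2$ gives $2$.

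\emph{Main obstacle.} The hardest step is the exact bookkeeping in Step 3 near the boundary of $\Delta$ and along the interface slices $T_k$. There, tetrahedra of different types meet, and the "long" components that are not small ovals must be shown not to absorb any counted oval. I would first try to show that all non-oval pieces inside floor $k$ form arcs ending on $\widetilde T_k\cup\widetilde T_{k+1}$, and that they connect across floors into a single (or controlled number of) component(s). This mirrors Itenberg's argument for $\Sigma_d$, where all components but one are spheres.
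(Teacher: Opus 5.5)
\textbf{The reduction is right, but Step~2 cannot supply enough components.} Your reduction coincides with the paper's: by Proposition~\ref{boundcc} and Lemma~\ref{bettigraph} it suffices to exhibit $d^3-2d^2+2$ components of $C_d$. The gap is in where those components come from.

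The ovals your Step~2 can detect are those around a single interior edge $s(e)$ that is the axis in every tetrahedron of its star. In the paper's enumeration these are the horizontal, transversal, pure join and boundary join cycles. There can be no others, because the paper's full list already attains the upper bound. Together they number only
\[ 2\bigl(\tfrac13 d^3-\tfrac32 d^2+\tfrac{13}{6}d-1\bigr)+\bigl(\tfrac13 d^3-d^2+\tfrac23 d\bigr)+(d-1)^2=(d-1)^3 , \]
so a new floor adds $3d^2-9d+7$ of them, not the $3d^2-7d+3$ you need.

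\textbf{The missing family.} The missing $(d-1)(d-2)$ components are the paper's \emph{axisless} cycles. Take any extremity $v_0$ of a segment $S^l_k$ with $1\le l\le k-1$ and $2\le k\le d-1$; it lies on a side facet $F$ of $\Delta$. There is an oval localized in the star of $s(v_0)$ in $\widetilde\Gamma$, or in a slightly larger full subcomplex. Along it the axis changes from tetrahedron to tetrahedron, and in general it runs through both octants $s$ and $s+e_F$ glued along $\widetilde F$.

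Your Step~2 criterion never detects these. Your unspecified ``correction along the boundary faces'' in Step~3 is exactly where they would have to appear. Your plan to show that the non-edge pieces join into a single long component, as for Itenberg's $\Sigma_d$, cannot succeed. Those pieces close up into $(d-1)(d-2)$ small ovals plus one long component. If they formed a single component, $C_d$ would have only $(d-1)^3+1$ components and would not be maximal.

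\textbf{Repairing the induction, and a simpler route.} With this family added, your floor-by-floor induction can be carried out. Per floor one gets:
\begin{itemize}
\item $2(d-2)^2$ horizontal and transversal cycles;
\item $(d-1)(d-2)$ pure join cycles;
\item $2d-3$ boundary join cycles;
\item $2(d-2)$ axisless cycles.
\end{itemize}
The old small ovals avoid $\widetilde T_{d-1}$, so they persist. But the induction is more than you need. Since only a lower bound is required, the paper simply lists pairwise distinct local cycles and certifies them with Proposition~\ref{propaxis} or direct inspection. It obtains the last component by observing that none of the listed cycles pass through a copy of $T_0$, while the parity condition forces $C_d$ to contain something beyond them. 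There is no need to control how long arcs glue across the slices $\widetilde T_k$.
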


\subsection{Connected components of $C_d$}

By the formula (\ref{dbound}), the T-curve $C_d$ is maximal if it has $d^3-2d^2+2$ connected components.
All these connected components are cycles homeomorphic to circles.
In the following part, we enumerate the cycles of $C_d$.

\subsubsection{Horizontal cycles}
Let $v_0$ be an interior vertex of $T_k$ and let $v_1 \in F_0(\Gamma)$ be an adjacent vertex of $v_0$ lying outside $T_k$.
Denote $i_0$ and $i_1$ the parities of $v_0$ and $v_1$ respectively.

Consider the star $St(v_0,v_1)$ of the edge $[v_0,v_1]$.
This star contains four other vertices with two distinct parity codes. We denote them $i_2$ and $i_3$, with $i_2 <i_3$.
Observe that there are eight possible quadruplets of parities $(i_0, i_1, i_2, i_3)$ (entirely determined by $i_0$): 
(1,7,2,3), (2,7,3,5), (3,7,1,2), (4,2,6,7), (5,7,2,3), (6,2,7,8), (7,2,4,6) or (8,2,6,7).
For any of these possibilities, we can apply the proposition \ref{propaxis} to find an octant $s$ where the axis of $C_d$ in all the four tetrahedron of $s(St(v_0,v_1))$ is $s([v_0,v_1])
$.
Thus, in the octant $s$, the T-curve $C_d$ contains a cycle around $[s(v_0), s(v_1)]$. See Figure \ref{fig:cyclcearround}.

\begin{figure}
    \centering
    \includegraphics{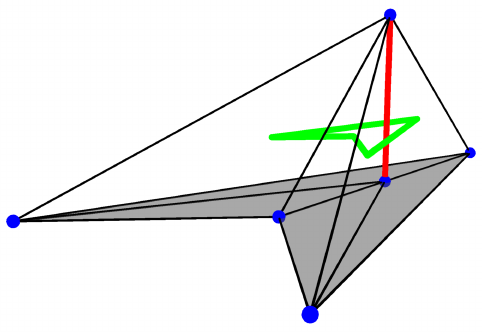}
        \caption{A horizontal cycle}
        The cycle is in green. The grey surface is a portion of a symmetric copy of $T_k$.
        The blue dots are the relevant vertices of $\widetilde \Gamma$.
        The different axis of the cycle are colored in red.
    \label{fig:cyclcearround}
\end{figure}

If $v_0$ lies in the interior of $T_k$, with $2 \leq k \leq d-1$, there are two choices for $v_1$ (one in $T_{k-1}$ and one in $T_{k+1}$).
If $v_0$ lies in the interior of $T_d$, there is only one choice for $v_1$ (in $T_{d-1}$).
Hence, the total number of horizontal cycles is
 \[\sum_{k=2}^{d-1} 2\binom{k-1}{2} + \binom{d-1}{2} = \frac{1}{3}d^3-\frac{3}{2}d^2+\frac{13}{6}d-1 .\]

\subsubsection{Transversal cycles}
Let $v_0$ be a vertex in the interior of $S_k^l$ and let $v_1$ be an adjacent vertex of $v_0$ inside $S_k ^{l \pm 1}$.
Denote $i_0$ and $i_1$ the parities of $v_0$ and $v_1$ respectively.

Consider the star $St(v_0, v_1)$ of the edge $[v_0,v_1]$.
This star contains four other vertices with two distinct parities $i_2$ and $i_3$, with $i_2 <i_3$.
Observe that there are eight possible quadruplets of parities $(i_0, i_1, i_2, i_3)$ (entirely determined by $i_0$): 
(1,2,3,7), (2,3,5,7), (3,2,1,7), (4,6,2,7), (5,3,2,7), (6,7,2,8), (7,6,2,4) or (8,7,2,6).
For any of these possibilities, we can apply the proposition \ref{propaxis} to find an octant $s$ where the axis of $C_d$ in all the four tetrahedron of the $s(St(v_0,v_1))$ is $s([v_0,v_1])
$.
Thus, in the octant $s$, $C_d$ contains a cycle around $[s(v_0), s(v_1)]$. See Figure \ref{fig:transversalcycle}.

If $v_0$ lies in the interior of $S_k^l$, with $1 \leq k \leq d-1$ and $1 \leq l \leq k-1$, there are two choices for $v_1$ (one in $S_k^{l-1}$ and one in $S_k^{l+1}$).
If $v_0$ lies in the interior of $S_k^k$, with $1 \leq k \leq d-1$, there is only one choice for $v_1$ (in $S_k^{k-1}$).
Hence, the total number of transversal cycle is 
\[\sum_{k=1}^{d-1} \sum_{l=1}^{k-1}2(l-1)+\sum_{k=1}^{d-1}(k-1) = \frac{1}{3}d^3-\frac{3}{2}d^2+\frac{13}{6}d-1.\]

\begin{figure}
    \centering
    \includegraphics{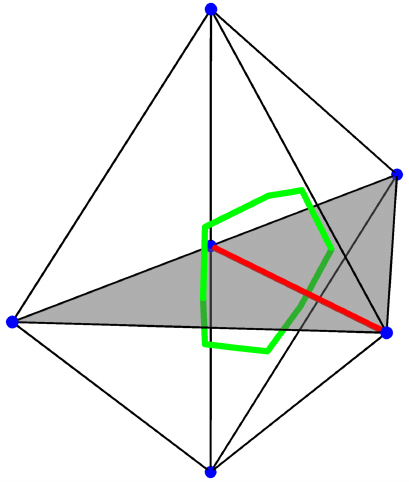}
        \caption{A transversal cycle}
        \label{fig:transversalcycle}
\end{figure}
   
\subsubsection{Pure join cycles}
Let $v_0$ be an interior vertex of $S_k^k$ (for $0 \leq k\leq d-1$) and let $v_1$ be an interior vertex of $S^{k+1}_{k+1}$.
These two vertices form an edge of $\Gamma$.
Denote $i_0$ and $i_1$ the parities of $v_0$ and $v_1$ respectively.

Consider the star $St(v_0,v_1)$ of the edge $[v_0,v_1]$.
This star contains four other vertices and four tetrahedra.
For every tetrahedron of $St(v_0,v_1)$, the set of parities of the vertices is always $\{2,4,5,7\}$
and the pair $\{i_0, i_1\}$ is one of the four possibilities: $\{2,4\}, \{2,7\}, \{5,4\}$ or $\{5,7\}$.
For any of these possibilities, we can apply the proposition \ref{propaxis} to find an octant $s$ where the axis of $C_d$ in all the four tetrahedron of $s(St(v_0,v_1))$ is $s([v_0,v_1])$.
Thus, in the octant $s$, $C_d$ contains a cycle around $[s(v_0),s(v_1)]$. See Figure \ref{fig:joincycle}.

Since $S_k^k$ contains $k-1$ interior vertices and $S^{k+1}_{k+1}$ contains $k$ interior vertices,
the total number of pure join cycles is
\[\sum_{k=0}^{d-1} k(k-1) = \frac{1}{3}d^3-d^2+\frac{2}{3}d.\]

\begin{figure}
    \includegraphics{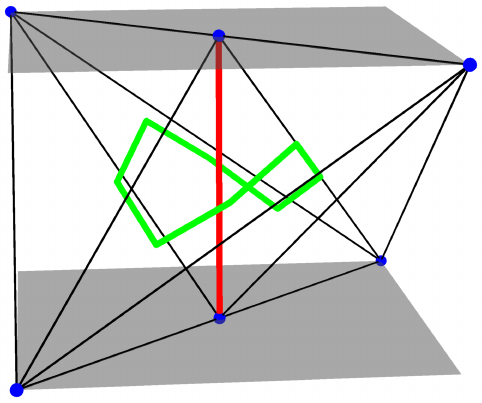}
    \caption{A pure join cycle}
    \label{fig:joincycle}
\end{figure}

\subsubsection{Boundary join cycles}
These cycles are similar in nature to pure join cycles, but one of their vertices lies at the cone point of the triangulation rather than strictly inside the join.

Let $v_0$ be an interior vertex of $S_k^k$ and $v_1$ be the extremity of $S^{k \pm 1}_{k \pm 1}$ on which the cone was taken in the triangulation $\Gamma$.
Denote by $i_0$ and $i_1$ the parities of $v_0$ and $v_1$, respectively.

Consider the star $St(v_0,v_1)$ of the edge $[v_0,v_1]$. This star contains four other vertices:
\begin{itemize}
    \item $v_2$ and $v_3$ inside $S_k^k$ both of parity code $i_2$
    \item $v_4$ in $S^{k \pm 1}_{k \pm 1}$ of parity code $i_3$ 
    \item $v_5$ an extremity of $S_k^{k-1}$ of parity code $i_4$.
\end{itemize}
The star $St(v_0,v_1)$ is the union of four tetrahedra: $[v_0,v_1,v_2,v_4]$, $[v_0,v_1,v_2,v_5]$, $[v_0,v_1,v_3,v_4]$ and $[v_0,v_1,v_3,v_5]$.

There are four possible quintuplets of parities $(i_0, i_1, i_2, i_3, i_4)$ (entirely determined by $i_0$): 
(2,7,5,4,3), (5,7,2,4,3), (4,2,7,5,6) or (7,2,4,5,6).

In each case there exists an octant $s$ where the axis of $C_d$ in all the four tetrahedra of $s(St(v_0,v_1))$ is $s([v_0,v_1])$.
These octants are, respectively $s=(+,+,-)$, $(+,-,+)$, $(-,+,-)$ and $(+,+,-)$.
Thus, there is always an octant $s$ such that the curve $C_d$ form a cycle around the edge $[s(v_0),s(v_1)]$. See Figure \ref{fig:bjcycle}

\begin{center}
    \includegraphics{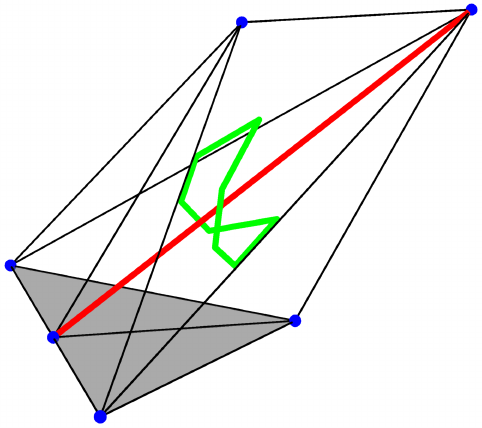}
    \captionof{figure}{A boundary join cycle}
    \label{fig:bjcycle}
\end{center}

\begin{figure}[b]

\end{figure}

If $v_0$ lies in the interior of $S_k^k$, with $2 \leq k \leq d-1$, there are two choices for $v_1$ (one in $S_{k+1}^{k+1}$ and one in $S_{k-1}^{k-1}$).
If $v_0$ lies in the interior $S_d^d$ there is only one choice for $v_1$ (in $S_{d-1}^{d-1}$).
Hence, the total number of such cycles is 
\[\sum_{k=2}^{d-1}2(k-1)+ (d-1) = d^2-2d+1.\]

\subsubsection{Axisless cycles}
Let $v_0$ be an extremity of $S_k^l$ with $1\leq l \leq k -1$. Denote $F$ the facet of $\Delta$ containing $v_0$ and $i_0$ the parity of $v_0$.
We distinguish three cases:

\subsubsection*{Case A} $v_0$ has exactly three neighbors inside $T_k$.

In $\widetilde \Gamma$, the star of $v_0$ consists of eight tetrahedra and six other vertices:
\begin{itemize}
    \item the two vertices of $S_k^{l \pm 1}$ adjacent to $v_0$. Denote by $i_1$ their parity code.
    \item the two vertices of $T_{k \pm 1}$  adjacent to $v_0$. Denote by $i_2$ their parity code.
    \item the vertex of $S_k^k$ adjacent to $v_0$ and its copy along the facet $F$ of $\Delta$.
            Denote by $i_3$ their parity code. 
\end{itemize}
There are four possible quadruplets of parities $(i_0, i_1, i_2, i_3)$: $(1, 2, 7, 3)$, $(2, 3, 7, 5)$, $(4, 6, 2, 7)$ or $(6, 7, 2, 8)$.
In each case there is a pair of octants $\{s, s'=s+e_F\}$ where a cycle of $C_d$ is contained in the star $St(s(v_0))$ of $s(v_0)$ in $\widetilde{\Gamma}$. See Figure \ref{Acycle}.
Observe that since $v_0 \in F$, $s(v_0) = s'(v_0)$.
The pair of octants is respectively $\{(+,+,-), (-,-,+)\}$, $\{(+,+,+), (-,+,+)\}$, $\{(-,+,-), (-,-,-)\}$ or $\{(+,+,-), (-,-,+)\}$.

\begin{center}
    \includegraphics{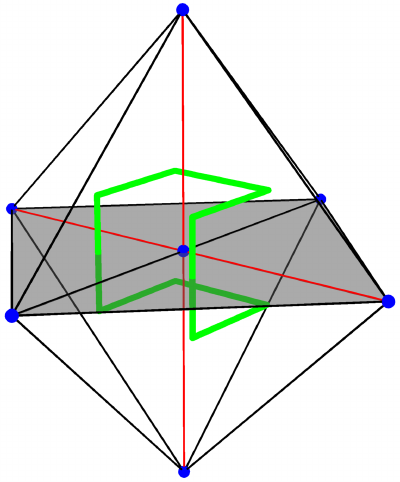}
    \captionof{figure}{An axisless cycle of type A}
    \label{Acycle}
\end{center}

\subsubsection*{Case B} $v_0$ has at least four neighbors inside $T_k$ and $l \neq 1$.

Let:
\begin{itemize}
    \item $v_1$ the vertex of $S_k^l$ adjacent to $v_0$, with parity code $i_1$
    \item $v_2$ (resp. $v_5$) the vertex of  $S^{l-1}_k$  (resp. $S^{l+1}_k$) adjacent to $v_0$ and $v_1$, with parity code $i_2$
    \item $v_3$ (resp. $v_6$) the vertex of $S_k^{l-1}$ (resp. $S^{l+1}_k$) adjacent to $v_2$ (resp. $v_4$), with parity code $i_3$
    \item $v_4$ (resp. $v_7$) the vertex of $T_{k-1}$ (resp. $T_{k+1}$) adjacent to $v_0$ with parity code $i_4$.
\end{itemize}
There are four possible quintuplets of parities $(i_0, i_1, i_2, i_3, i_4)$: $(2, 5, 3, 1, 7)$,
$(3, 1, 2, 5, 7)$, $(6, 8, 7, 4, 2)$ or $(7, 4, 6, 8, 2)$.
In each case there is an octant $s \in \mathcal Q^3$ where a cycle of $C_d$ in the full subcomplex of $\widetilde{\Gamma}$ defined by the vertices $s(v_i)
$, $0 \leq i \leq 7$. See Figure \ref{Bcycle}.
These octants are respectively $s=(+,+,-)$, $(-,+,-)$, $(-,-,+)$ and $(+,+,-)$.

\begin{center}

\includegraphics{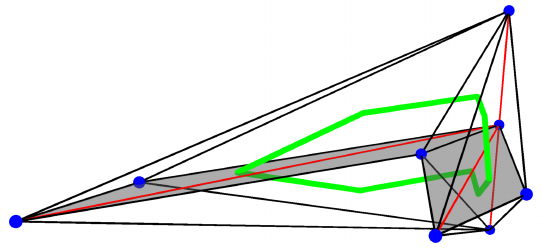}
    \captionof{figure}{An axisless cycle of type B}
    \label{Bcycle}
\end{center}

\subsubsection*{Case C} The remaining case is when $v_0$ is the extremity of $S_k^1$, having five neighbors in $T_k$.
The parity code of $v_0$ is either $3$ or $7$.
Let:
\begin{itemize}
    \item $v_1$ be the unique vertex of $S^0_k$,
    \item $v_2$ be the other extremity of $S^1_k$,
    \item $v_3$ and $v_4$ be the two vertices of $S_k^2$ not on the same edge of $T_k$ than $v_0$ and $v_1$,
    \item $v_5$ (resp. $v_6$) be the vertex of $T_{k-1}$ (resp. $T_{k+1}$) neighbor of the $v_i$, $0 \leq i \leq 4$.
\end{itemize}
Denote by $F$  the face of $\Delta$ containing $v_0, v_1, v_5$ and $v_6$. 
For an octant $s \in \mathcal Q^3$,  let $\mathcal H_s$ be the full subcomplex of $\widetilde{\Gamma}$ of the seven vertices
$s(v_0)=s'(v_0), s(v_1)=s'(v_1), s(v_2), s'(v_2), s(v_3), s(v_4), s(v_5)=s'(v_5)$ and $s(v_6)=s'(v_6)$.
There exists an orthant $s$ such that $C_d$ contains a unique cycle passing through every tetrahedron in $\mathcal H_s$.
See Figure \ref{Ccycle}.
The couple of octants $(s,s')$ is $((-,+,-), (+,+,-))$ if $p(v_0)=3$ and $((+,+,-), (+,+,+))$ if $p(v_0)=7$.

\begin{figure}[h]
    \includegraphics{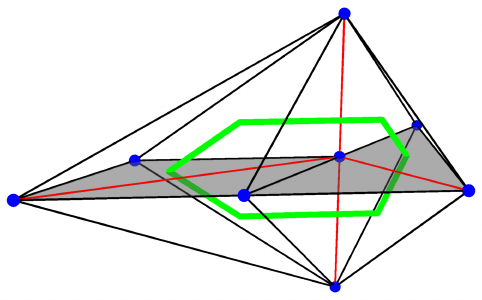}
        \caption{An axisless cycle of type C}
        \label{Ccycle}
\end{figure}

Combining cases A, B and C, we have described a cycle for each extremity of $S^l_k$ with $1 \leq l \leq k-1$ and $2 \leq k \leq d-1$.
Thus, the total number of such cycles is
\[\sum_{k=2}^{d-1} \sum_{l=1}^{k-1}2 = (d-1)(d-2)=d^2-3d+2.\]

\subsubsection{The last cycle}
There is one additional cycle, not localized in a small number of simplices.

This cycle can be detected by observing that, none of the previously described cycles pass through a copy of $T_0$.
But all interior faces must have exactly two copies intersecting $C_d$, this implies the existence of  an additional cycle.

\subsubsection{Conclusion}
If we add up all the cycles we have described we obtain a total of \[d^3 -2d^2+2\] distinct cycles.
So $b_0(C_d) \geq d^3 -2d^2+2$. Thus, $C_d$ is maximal.
This completes the proof of the Theorem \ref{maxcurve}.

\subsection{Concluding remarks}

\begin{Rem}
    By construction, we have the inclusion $C_d \subset \Sigma_d$.
    Being careful, it is possible to describe the topology of the embedding of $C_d$ in $\Sigma_d$.
    For instance, consider the star of an interior vertex of parity code $3$ in the orthant $(-,-,-)$.
    In this star, the T-surface $\Sigma_d$ has one spherical component enclosing the vertex of parity code 3,
    while the T-curve $C_d$ has exactly two horizontal cycles lying in this sphere. The situation is represented in Figure \ref{fig:relat}.
    This shows that the connected components of $C_d$ are distributed between different components of $\Sigma_d$, and do not all belong to a single one.
    
    In particular, consider the real algebraic surface $\mathcal S_d$ and curve $\mathcal C_d$ corresponding respectively to $\Sigma_d$ and $C_d$, by Theorem \ref{realizibility}.
    Then, the spreading of the connected components of $\mathbb R \mathcal C_d$ in multiple components of $\mathbb R \mathcal C_d$ implies that the pair $(\mathcal S_d, \mathcal C_d)$ is not maximal in the sense of Smith-Thom inequality:
    \[ \sum_i \dim H_i(\mathbb R \mathcal S_d, \mathbb R \mathcal C_d; \mathbb F_2) < \sum_i \dim H_i(\mathbb C \mathcal S_d, \mathbb C \mathcal C_d; \mathbb F_2) \]
\end{Rem}

\begin{figure}[h]
\centering
    \includegraphics{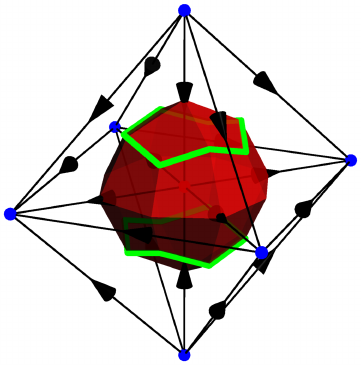}
\caption{A component of the T-surface (in red) containing two cycles of the T-curve (in green)}
\label{fig:relat}
\end{figure}

\begin{Rem}
    The triangulation $\Gamma$  possesses the special property that every tetrahedron has, at least, an edge on the boundary of $\Delta$.
    This condition is crucial: by an argument similar to the proof of Theorem \ref{Kn}, any tetrahedron lacking such a boundary edge yields a subdivision of $K_5$ in the dual graph $G_\Gamma$
    and consequently prevents the existence of a maximal T-curve.
    We think families of triangulations of dilations of $3$-polytopes sharing this property are rare.
    
    For example, consider $\Delta$ the pyramid over a Nakajima polygon defined as the convex hull of the vertices $(0,0,0)$, $(0,\mu,0)$, $(\lambda, \mu,0)$, $(\lambda +\alpha \mu,0,0)$ and $(0,0,1)$,
    where $\alpha, \lambda, \mu$ natural numbers such that $\lambda, \mu \neq 0$.
    Bertrand (\cite[chapter 8]{BertrandThese}) constructed maximal real algebraic curves in $\mathbb RX_\Delta$ using Sturmfels' patchworking method (rather than via real phase structure).
    
    The dilation $d\Delta$ of $\Delta$ can be triangulated in the same fashion as for $d\Delta_3$ by slicing it horizontally.
    See Figure \ref{fig:Nakajima}.
    However, for any choice of apex for the cone over a slice, one inevitably creates tetrahedra without any boundary edge (in green on Figure \ref{fig:Nakajima}).
    
    Thus, our construction cannot be directly extended to $\Delta$ to obtain maximal T-curve.
\end{Rem}

\begin{figure}[h]
    
\centering
\includegraphics{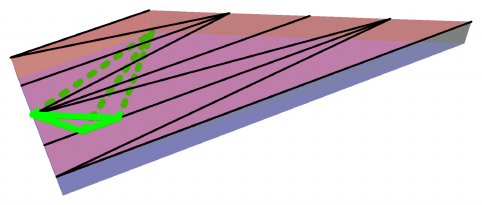}
\caption{The triangulation between two slices of Nakajima polygons, with in green a tetrahedron with no boundary edge in the whole pyramid}
\label{fig:Nakajima}
\end{figure}

\bibliographystyle{alpha}
\bibliography{references} 

\end{document}

%% file: polygon.tex
\pgfplotsset{compat=1.15}
\usetikzlibrary{arrows}
\pagestyle{empty}
\tikzstyle{every node}=[font=\large]
\definecolor{ffvvqq}{rgb}{1,0.3,0}
\definecolor{cqcqcq}{rgb}{0.7,0.7,0.7}
\begin{tikzpicture}[line cap=round,line join=round,>=triangle 45,x=1cm,y=1cm]
\draw [color=cqcqcq,, xstep=2cm,ystep=2cm] (4.8,-5.1) grid (13,1);
\fill[line width=2pt,color=ffvvqq,fill=ffvvqq,fill opacity=0.1] (6,-2) -- (6,0) -- (8,-2) -- cycle;
\fill[line width=2pt,color=ffvvqq,fill=ffvvqq,fill opacity=0.1] (6,-2) -- (8,-4) -- (8,-2) -- cycle;
\fill[line width=2pt,color=ffvvqq,fill=ffvvqq,fill opacity=0.1] (8,-4) -- (10,-4) -- (8,-2) -- cycle;
\fill[line width=2pt,color=ffvvqq,fill=ffvvqq,fill opacity=0.1] (10,-4) -- (10,-2) -- (8,-2) -- cycle;
\fill[line width=2pt,color=ffvvqq,fill=ffvvqq,fill opacity=0.1] (10,-4) -- (12,-4) -- (10,-2) -- cycle;
\fill[line width=2pt,color=ffvvqq,fill=ffvvqq,fill opacity=0.1] (12,-4) -- (10,-2) -- (12,-2) -- cycle;
\fill[line width=2pt,color=ffvvqq,fill=ffvvqq,fill opacity=0.1] (12,-2) -- (10,0) -- (10,-2) -- cycle;
\fill[line width=2pt,color=ffvvqq,fill=ffvvqq,fill opacity=0.1] (10,0) -- (8,0) -- (10,-2) -- cycle;
\fill[line width=2pt,color=ffvvqq,fill=ffvvqq,fill opacity=0.1] (8,0) -- (8,-2) -- (10,-2) -- cycle;
\fill[line width=2pt,color=ffvvqq,fill=ffvvqq,fill opacity=0.1] (8,0) -- (6,0) -- (8,-2) -- cycle;
\draw [line width=2pt,color=ffvvqq] (6,-2)-- (6,0);
\draw [line width=2pt,color=ffvvqq] (8,-2)-- (6,-2);
\draw [line width=2pt,color=ffvvqq] (6,-2)-- (8,-4);
\draw [line width=2pt,color=ffvvqq] (8,-4)-- (8,-2);
\draw [line width=2pt,color=ffvvqq] (8,-2)-- (6,-2);
\draw [line width=2pt,color=ffvvqq] (8,-4)-- (10,-4);
\draw [line width=2pt,color=ffvvqq] (10,-4)-- (8,-2);
\draw [line width=2pt,color=ffvvqq] (8,-2)-- (8,-4);
\draw [line width=2pt,color=ffvvqq] (10,-4)-- (10,-2);
\draw [line width=2pt,color=ffvvqq] (10,-2)-- (8,-2);
\draw [line width=2pt,color=ffvvqq] (8,-2)-- (10,-4);
\draw [line width=2pt,color=ffvvqq] (10,-4)-- (12,-4);
\draw [line width=2pt,color=ffvvqq] (12,-4)-- (10,-2);
\draw [line width=2pt,color=ffvvqq] (10,-2)-- (12,-2);
\draw [line width=2pt,color=ffvvqq] (12,-2)-- (12,-4);
\draw [line width=2pt,color=ffvvqq] (12,-2)-- (10,0);
\draw [line width=2pt,color=ffvvqq] (10,0)-- (10,-2);
\draw [line width=2pt,color=ffvvqq] (10,-2)-- (12,-2);
\draw [line width=2pt,color=ffvvqq] (10,0)-- (8,0);
\draw [line width=2pt,color=ffvvqq] (8,0)-- (10,-2);
\draw [line width=2pt,color=ffvvqq] (8,0)-- (8,-2);
\draw [line width=2pt,color=ffvvqq] (8,-2)-- (10,-2);
\draw [line width=2pt,color=ffvvqq] (10,-2)-- (8,0);
\draw [line width=2pt,color=ffvvqq] (8,0)-- (6,0);
\draw [line width=2pt,color=ffvvqq] (6,0)-- (8,-2);
\begin{scriptsize}
\fill(6,-2) circle (2.5pt);
\fill(6,0) circle (2.5pt);
\fill(8,-2) circle (2.5pt);
\draw (6.734205399189429,-1.2254614515649909) node {+ +};
\fill(8,-4) circle (2.5pt);
\draw (7.399636530249953,-2.5650793864631485) node {- -};
\fill(10,-4) circle (2.5pt);
\draw (8.730498792371002,-3.2305105175236712) node {- +};
\fill(10,-2) circle (2.5pt);
\draw (9.395929923431527,-2.5650793864631485) node {- -};
\fill(12,-4) circle (2.5pt);
\draw (10.735547858329689,-3.2305105175236712) node {+ +};
\fill(12,-2) circle (2.5pt);
\draw (11.400978989390213,-2.5650793864631485) node {- +};
\fill(10,0) circle (2.5pt);
\draw (10.735547858329689,-1.2254614515649909) node {+ +};
\fill(8,0) circle (2.5pt);
\draw (9.395929923431527,-0.5600303205044682) node {+ -};
\draw (8.730498792371002,-1.2254614515649909) node {+ +};
\draw (7.399636530249953,-0.5600303205044682) node {+ -};
\end{scriptsize}
\end{tikzpicture}